\newtheorem{thm}{Theorem}[section] 
\newtheorem{lem}[thm]{Lemma} 
\newtheorem{prop}[thm]{Proposition}
\theoremstyle{definition}
\newtheorem*{ack}{Acknowledgments}
\newtheorem{defi}[thm]{Definition} 
\newtheorem{rem}[thm]{Remark}
\title[]{Homology of the complexes of finite Verma modules over $CK_6$}
\author{Lucia Bagnoli}
\subjclass[2010]{08A05, 17B05 (primary), 17B65, 17B70 (secondary)}
\keywords{conformal superalgebras, linearly compact Lie superalgebras, finite Verma modules, singular vectors}
\address{Lucia Bagnoli, Dipartimento di matematica, Universit\`a di Bologna, Piazza di Porta San Donato 5, 40126 Bologna, Italy}
\email{luciabagnoli93@gmail.com, lucia.bagnoli4@unibo.it}
\DeclareMathOperator{\Sym}{Sym}
\DeclareMathOperator{\Hom}{Hom}
\DeclareMathOperator{\Ind}{Ind}
\DeclareMathOperator{\Cur}{Cur}
\DeclareMathOperator{\Lie}{Lie}
\DeclareMathOperator{\Sing}{Sing}
\DeclareMathOperator{\Ker}{Ker}
\DeclareMathOperator{\Ima}{Im}
\DeclareMathOperator{\End}{End}
\DeclareMathOperator{\I}{I}
\DeclareMathOperator{\degr}{deg}
\DeclareMathOperator{\spann}{Span}
\DeclareMathOperator{\Gr}{Gr}
\newcommand{\C}{\mathbb{C}}
\newcommand{\Z}{\mathbb{Z}}
\newcommand{\slq}{\mathfrak{sl}_4}
\newcommand{\inlinewedge}{\textrm{\raisebox{0.6mm}{\footnotesize $\bigwedge$}}}
\newcommand{\displaywedge}{\textrm{\raisebox{0.6mm}{\tiny $\bigwedge$}}}
\newcommand{\g}{\mathfrak {g}}
\begin{document}
	\maketitle
	\begin{abstract}
	We compute the homology of the first and third quadrants of the complexes of finite Verma modules over the annihilation superalgebra $\mathcal{A}(CK_{6})\cong E(1,6)$, associated with the conformal superalgebra $CK_6$, obtained in \cite{ck6}. This computation allows us to explicitly realize the irreducible quotients of degenerate finite Verma modules over $\mathcal{A}(CK_{6})$.
	\end{abstract}
\section{Introduction} 
Finite simple conformal superalgebras were completely classified in \cite{fattorikac} and consist of the following list: $\Cur \mathfrak{g}$, where $\mathfrak{g}$ is a simple finite$-$dimensional Lie superalgebra, $W_{n} (n\geq 0)$, $S_{n,b}$ and $\tilde{S}_{n}$ $(n\geq 2, \, b \in \mathbb{C})$, $K_{n} (n\geq 0, \, n \neq 4)$, $K'_{4}$, $CK_{6} $. The finite irreducible modules over the conformal superalgebras $\Cur \mathfrak{g}$, $K_{0}$, $K_{1}$ were classified in \cite{chengkac,chengkacE}. The classification of all the finite irreducible modules over the conformal superalgebras of type $W$ and $S$ was obtained in \cite{bklr}. The finite irreducible modules over the conformal superalgebras $S_{2,0}$, $K_{n}$, for $n =2,3, 4$ were classified in \cite{chenglam}. The classification of all the finite irreducible modules over the conformal superalgebras of type $K_{n}$ was obtained in \cite{kac1}. The classification and explicit realization of all the finite irreducible modules over the conformal superalgebra $K'_{4}$ were obtained in \cite{bagnolitesi,bagnolicaselli,bagnoli2}. The classification of all the finite irreducible modules over the conformal superalgebra $CK_{6}$ was obtained in \cite{ck6} and \cite{zm} with different techniques.  
	In \cite{ck6} the classification of all the finite irreducible modules over the conformal superalgebra $CK_6$ was obtained by their correspondence with irreducible \textit{finite conformal} modules over the annihilation superalgebra $\g:=\mathcal{A}(CK_{6})$, that is isomorphic to the exceptional Lie superalgebra $E(1,6)$, associated with $CK_6$. In order to obtain this classification, the authors classified all highest weight singular vectors, i.e. highest weight vectors which are annihilated by $\g_{>0}$, of finite Verma modules, that are the generalized Verma modules $\Ind(F)=U(\g) \otimes _{U(\g_{\geq 0})} F$ such that $F$ is a finite$-$dimensional irreducible $\g_{\geq 0}$-module \cite{kacrudakov,chenglam}. 
	Using the equivalence between the classification of singular vectors of finite Verma modules and the classification of morphisms between degenerate, i.e. not irreducible, finite Verma modules, in \cite{ck6} it was proved that these morphisms can be arranged in an infinite number of complexes as in Figure \ref{figurack6}, which is similar to those obtained for the exceptional Lie superalgebras $E(3,6)$, $E(3,8)$ and $E(5,10)$ (see \cite{kacrudakovE36,kacrudakov,kacrudakovE38,E36III, cantacaselliE510, cantacasellikacE510, rudakovE510}).\\
	In this work we compute the homology of the complexes in quadrants \textbf{A} and \textbf{C} in Figure \ref{figurack6} and then provide an explicit construction of all the irreducible quotients of finite Verma modules over $\mathcal A(CK_6)$ in such quadrants. We compute the homology of quadrant \textbf{A} through the theory of spectral sequences of bicomplexes, following \cite{kacrudakovE36} and \cite{bagnoli2}. Then we use an important result on conformal duality, shown in \cite{cantacasellikac} , to obtain the homology for quadrant  \textbf{C}. We show that the complexes in quadrants \textbf{A} and \textbf{C} in Figure \ref{figurack6} are exact in each point except for the origin of the first quadrant and the point of coordinates $(1,0)$ in the third quadrant, in which the homology spaces are isomorphic to the trivial representation. \\
	The computation of the homology allows us to show that all the irreducible quotients of finite Verma modules over $\mathcal A(CK_6)$ in those quadrants occur among cokernels, kernel and images of complexes in Figure \ref{figurack6}. In future it would be interesting to understand if analogous arguments can be applied for the computation of the homology for quadrant \textbf{B} and for the complexes recently obtained for $E(5,10)$ in \cite{cantacasellikacE510}.\\
	The paper is organized as follows. In Section 2 we recall some notions on conformal superalgebras. In Section 3 we recall the definition of the conformal superalgebra $CK_6$ and the classification of singular vectors obtained in \cite{ck6}. In Section 4 we find an explicit expression for the morphisms represented quadrant \textbf{A} and between quadrants \textbf{A} and \textbf{B} in Figure \ref{figurack6}. In Section 5 we recall some preliminaries on spectral sequences. In Section 6 we compute the homology of the complexes in quadrants \textbf{A} and \textbf{C} in Figure \ref{figurack6}.
 \section{Preliminaries on conformal superalgebras}
We recall some notions on conformal superalgebras. For further details see \cite[Chapter 2]{kac1vertex}, \cite{dandrea}, \cite{bklr}, \cite{kac1}, \cite{bagnolitesi}, \cite{bagnolicaselli}.
\begin{defi}[Conformal superalgebra]
A \textit{conformal superalgebra} $R$ is a left $\Z_{2}-$graded $\C[\partial]-$module endowed with a $\C-$linear map, called $\lambda-$bracket, $R \otimes R \rightarrow \C[\lambda]\otimes R$, $a \otimes b \mapsto [a_{\lambda}b]$, that satisfies the following properties for all $a,b,c \in R$:
\begin{align*}
(1)\,\, &conformal \,\, sesquilinearity: &&[\partial a_{\lambda}b]=-\lambda [a_{\lambda}b], \quad  [a_{\lambda} \partial b]=(\lambda+\partial)[a_{\lambda}b]; \\
(2)\,\, &skew-symmetry:             &&[a_{\lambda}b]=-(-1)^{p(a)p(b)}[b_{-\lambda-\partial}a] ;    \\
(3)\,\, &Jacobi \,\, identity:           &&[a_{\lambda}[b_{\mu}c]]=[[a_{\lambda}b]_{\lambda+\mu}c]+(-1)^{p(a)p(b)}[b_{\mu}[a_{\lambda}c]];
\end{align*}
where $p(a)$ denotes the parity of the element $a \in R$ and $p(\partial a)=p(a)$ for all $a \in R$.
\end{defi}
We call $n-$products the coefficients $(a_{(n)}b)$ that appear in $[a_{\lambda}b]=\sum_{j\geq 0} \frac{\lambda^{j}}{j!}(a_{(j)}b)$  and give an equivalent definition of conformal superalgebra.
\begin{defi}[Conformal superalgebra]
A \textit{conformal superalgebra} $R$ is a left $\Z_{2}-$graded $\C[\partial]-$module endowed with a $\C-$bilinear product $(a_{(n)}b): R\otimes R\rightarrow R$, defined for every $n \geq 0$, that satisfies the following properties for all $a,b,c \in R$, $m,n \geq 0$:
\begin{enumerate}
  \item  $(a_{(n)}b)=0, \,\, for \,\, n \gg 0$;
	\item $({\partial a}_{(0)}b)=0$ and $({\partial a}_{(n+1)}b)=-(n+1) (a_{(n)}b)$;
	\item $(a_{(n)}b)=-(-1)^{p(a)p(b)}\sum_{j \geq 0}(-1)^{j+n} \frac{\partial^{j}}{j!}(b_{(n+j)}a)$;
	\item $ (a_{(m)}(b_{(n)}c))=\sum^{m}_{j=0}\binom{m}{j}((a_{(j)}b)_{(m+n-j)}c)+(-1)^{p(a)p(b)}(b_{(n)}(a_{(m)}c))$;
\end{enumerate}
where $p(\partial a)=p( a)$ for all $a \in R$.
\end{defi}
Using conditions $(2)$ and $(3)$ it is easy to show that for all $a,b \in R$, $n \geq 0$:
\begin{equation*}
(a_{(n)}\partial b)=\partial (a_{(n)} b)+n (a_{(n-1)}b).
\end{equation*}
Due to this relation and $(2)$, the map $\partial: R\rightarrow R$, $a \mapsto \partial a$ is a derivation with respect to the $n-$products.
We say that a conformal superalgebra $R$ is \textit{finite} if it is finitely generated as a $\C[\partial]-$module. 
An \textit{ideal} $I$ of $R$ is a $\C[\partial]-$submodule of $R$ such that $a_{(n)}b\in I$ for every $a \in R$, $b \in I$, $n \geq 0$. A conformal superalgebra $R$ is \textit{simple} if it has no non-trivial ideals and the $\lambda-$bracket is not identically zero. We denote by $R'$ the \textit{derived subalgebra} of $R$, i.e. the $\C-$span of all $n-$products.
\begin{defi}
A module $M$ over a conformal superalgebra $R$ is a left $\Z_{2}-$graded $\C[\partial]-$module endowed with $\C-$linear maps $R \rightarrow \End_{\C} M$, $a\mapsto a_{(n)}$, defined for every $n \geq 0$, that satisfy the following properties for all $a,b \in R$, $v \in M$, $m,n \geq 0$:
\begin{enumerate}
  \item[(i)] $a_{(n)}v=0 \,\, for \,\, n \gg 0$;
	\item[(ii)] $(\partial a)_{(n)}v=[\partial,a_{(n)}]v=-n a_{(n-1)}v $;
	\item[(iii)] $[a_{(m)},b_{(n)}]v=\sum_{j=0}^{m} \binom{m}{j}(a_{(j)}b)_{(m+n-j)}v$.
\end{enumerate}
\end{defi}
For an $R-$module $M$, we define for all $a \in R$ and $v\in M$:
\begin{align*}
a_{\lambda}v=\sum_{n \geq 0}\frac{\lambda^{n}}{n!}a_{(n)}v.
\end{align*}
A module $M$ is called \textit{finite} if it is a finitely generated $\C[\partial]-$module.\\
Let us recall the construction of the annihilation superalgebra associated with a conformal superalgebra $R$.
Let $\widetilde{R}=R[y,y^{-1}]$, set $p(y)=0$ and $\widetilde{\partial}=\partial+\partial_{y}$. We define the following $n-$products on $\widetilde{R}$, for all $a,b \in R$, $f,g \in \C[y,y^{-1}]$, $k\geq 0$:
\begin{align*}
(af_{(k)}bg)=\sum_{j \in \Z_{+}}(a_{(k+j)}b) \Big(\frac{\partial_{y}^{j}}{j!}f \Big)g .
\end{align*}
In particular if $f=y^{m}$ and $g=y^{n}$ we have for all $n \geq 0$:
\begin{align*}
({ay^{m}}_{ (k)}by^{n})=\sum_{j \in \Z_{+}}\binom{m}{j}(a_{(k+j)}b)y^{m+n-j}.
\end{align*}
We observe that $\widetilde{\partial}\widetilde{R}$ is a two sided ideal of $\widetilde{R}$ with respect to the $0-$product. The quotient $\Lie R:=\widetilde{R}/ \widetilde{\partial}\widetilde{R}$ has a structure of Lie superalgebra with the bracket induced by the $0-$product, i.e. for all $a,b \in R$, $f,g \in \C[y,y^{-1}]$: 
\begin{align}
\label{bracketannihilation}
[af,bg]=\sum_{j \in \Z_{+}}(a_{( j)}b)\Big(\frac{\partial_{y}^{j}}{j!}f \Big)g .
\end{align}
\begin{defi}
The annihilation superalgebra $\mathcal{A}(R)$ of a conformal superalgebra $R$ is the subalgebra of $\Lie R$ spanned by all elements $ay^{n}$ with $n\geq 0$ and $a\in R$. \\
The extended annihilation superalgebra $\mathcal{A}(R)^{e}$ of a conformal superalgebra $R$ is the Lie superalgebra $\C \partial \ltimes \mathcal{A}(R)$. The semidirect sum $\C \partial \ltimes \mathcal{A}(R)$ is the vector space $\C \partial \oplus \mathcal{A}(R)$ endowed with the structure of Lie superalgebra determined by the bracket:
\begin{align*}
[\partial,ay^{m}]=-\partial_{y}(ay^{m})=-m a y^{m-1},
\end{align*}
for all $a \in R$ and the fact that $\C \partial$, $\mathcal{A}(R)$ are Lie subalgebras.
\end{defi}
For all $a \in R$ we consider the following formal power series in $\mathcal{A}(R)[[\lambda]]$:
\begin{align}
\label{powerseries}
a_{\lambda}=\sum_{n \geq 0}\frac{\lambda^{n}}{n!}ay^{n}.
\end{align}
For all $a,b \in R$, we have: $[a_{\lambda},b_{\mu}]=[a_{\lambda}b]_{\lambda+\mu}$ and $(\partial a)_{\lambda}=-\lambda a_{\lambda}$ (for a proof see \cite{cantacasellikac}).  
\begin{prop}[\cite{chengkac}]
\label{propcorrispmoduli}
Let $R$ be a conformal superalgebra.
	If $M$ is an $R$-module then $M$ has a natural structure of $\mathcal A(R)^e$-module, where the action of $ay^n$ on $M$ is uniquely determined by $a_\lambda v=\sum_{n \geq 0}\frac{\lambda^{n}}{n!}ay^{n}.v$ for all $v\in M$. Viceversa if $M$ is a $\mathcal A(R)^e$-module such that for all $a\in R$, $v\in M$ we have $ay^n.v=0$ for $n\gg0$, then $M$ is also an $R$-module by letting $a_\lambda v=\sum_{n}\frac{\lambda^{n}}{n!}ay^{n}.v$.
\end{prop}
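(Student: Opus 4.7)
The plan is to show that the two structures are really the same data packaged in two different ways, via the correspondence $(ay^{n}).v \leftrightarrow a_{(n)}v$, which is forced by the generating-series identity in the statement.

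For the forward direction, suppose $M$ is an $R$-module. I would define a $\C$-linear action of $\mathcal{A}(R)^{e}$ on $M$ by declaring $(ay^{n}).v := a_{(n)}v$ for $a\in R$, $n\geq 0$, $v\in M$, and letting $\partial\in\C\partial$ act through the pre-existing $\C[\partial]$-module structure. Since $\mathcal{A}(R)$ is defined as the quotient $\widetilde{R}/\widetilde{\partial}\widetilde{R}$, the first task is to check well-definedness: the generator $\widetilde{\partial}(ay^{n})=(\partial a)y^{n}+nay^{n-1}$ of the ideal must act by zero, i.e.\ $(\partial a)_{(n)}v+na_{(n-1)}v=0$, which is precisely axiom (ii) of an $R$-module (including the $n=0$ case, where the second term is absent). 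The Lie bracket relation on $\mathcal{A}(R)$ induced by \eqref{bracketannihilation}, applied to the generators $ay^{m}$ and $by^{n}$, translates into $[a_{(m)},b_{(n)}]v=\sum_{j=0}^{m}\binom{m}{j}(a_{(j)}b)_{(m+n-j)}v$, which is axiom (iii). Finally, the semidirect bracket $[\partial,ay^{m}]=-may^{m-1}$ is exactly the other half of axiom (ii). So all the defining relations of $\mathcal{A}(R)^{e}$ are satisfied, and assembling the action into a generating series recovers the formula $a_{\lambda}v=\sum \tfrac{\lambda^{n}}{n!}ay^{n}.v$.

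For the converse, suppose $M$ is an $\mathcal{A}(R)^{e}$-module with the local-finiteness property $ay^{n}.v=0$ for $n\gg 0$. I would simply reverse the assignment and set $a_{(n)}v:=(ay^{n}).v$. Axiom (i) of an $R$-module is then the hypothesis; axiom (ii) is extracted by applying the element $\widetilde{\partial}(ay^{n})=(\partial a)y^{n}+nay^{n-1}$, which vanishes in $\mathcal{A}(R)$, and then using the semidirect bracket $[\partial,ay^{m}]=-may^{m-1}$ to identify $[\partial,a_{(m)}]$; axiom (iii) is the Lie bracket on $\mathcal{A}(R)$ read through \eqref{bracketannihilation}. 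One also verifies that the two procedures are mutually inverse on the nose, because both amount to reading off the coefficients of $a_{\lambda}v$.

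Essentially no step is a serious obstacle; the whole argument is a dictionary translation between the $n$-product axioms for an $R$-module and the Lie bracket relations defining $\mathcal{A}(R)^{e}$. The only point that needs genuine care is the well-definedness in the forward direction, i.e.\ checking that the two-sided ideal $\widetilde{\partial}\widetilde{R}$ really does annihilate $M$ under the proposed action; and the reason one must restrict to $\mathcal{A}(R)^{e}$-modules satisfying the local-finiteness condition in the reverse direction, since without it the formal series $a_{\lambda}v=\sum_{n\geq 0}\tfrac{\lambda^{n}}{n!}ay^{n}.v$ need not be a polynomial in $\lambda$ and would fail axiom (i).
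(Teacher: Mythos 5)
Your proof is correct: the dictionary $(ay^{n}).v\leftrightarrow a_{(n)}v$ together with the checks that $\widetilde{\partial}(ay^{n})$ acts by zero (axiom (ii)), that the bracket \eqref{bracketannihilation} translates into axiom (iii), and that the locality hypothesis is exactly axiom (i), is precisely the standard argument. The paper itself gives no proof of this proposition — it is quoted from \cite{chengkac} — and your argument coincides with the one in that reference, so there is nothing further to compare.
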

Proposition \ref{propcorrispmoduli} reduces the study of modules over a conformal superalgebra $R$ to the study of a class of modules over its (extended) annihilation superalgebra.
The following proposition states that, under certain hypotheses, it is sufficient to consider the annihilation superalgebra. We recall that, given a $\Z-$graded Lie superalgebra $\g=\oplus_{i \in \Z}\g_{i}$, we say that $\g$ has finite depth $d\geq0$ if $\g_{-d}\neq 0$ and $\g_{i}=0$ for all $i<-d$.
\begin{prop}[\cite{kac1},\cite{chenglam}]
\label{keythmannihi}
Let $\g$ be the annihilation superalgebra of a conformal superalgebra $R$. Assume that $\g$ satisfies the following conditions:
\begin{description}
	\item[L1] $\g$ is $\Z-$graded with finite depth $d$;
	\item[L2] There exists an element whose centralizer in $\g$ is contained in $\g_{0}$;
	\item[L3] There exists an element $\Theta \in \g_{-d}$ such that $\g_{i-d}=[\Theta,\g_{i}]$, for all $i\geq 0$.
\end{description}
 Finite modules over $R$ are the same as modules $V$ over $\g$, called \textit{finite conformal}, that satisfy the following properties:
\begin{enumerate}
	\item for every $v \in V$, there exists $j_{0} \in \Z$, $j_{0}\geq -d$, such that $\g_{j}.v=0$ when $j\geq j_{0}$;
	\item $V$ is finitely generated as a $\C[\Theta]-$module.
\end{enumerate}
\end{prop}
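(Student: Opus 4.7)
The plan is to leverage Proposition~\ref{propcorrispmoduli} in order to reduce the statement to a direct comparison between two categories: $\mathcal{A}(R)^e$-modules $V$ that are finitely generated over $\C[\partial]$ and satisfy $ay^n.v=0$ for $n\gg 0$ on the one hand, and $\g$-modules satisfying (1) and (2) on the other. Under hypotheses L1--L3 these two categories ought to coincide, and I would split the argument into three tasks: matching the locality conditions, producing a canonical extension of the $\g$-action to a $\g^e$-action, and matching the two finite generation conditions.

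First I would settle the locality equivalence. By L1, $\g=\bigoplus_{j\geq -d}\g_j$ with finite depth, and $\g$ is spanned by elements of the form $ay^n$ with $a\in R$ and $n\geq 0$. Because $R$ is finite over $\C[\partial]$, I may choose a finite $\C[\partial]$-generating set $\{a_1,\dots,a_s\}$; the homogeneous $\g$-degree of $a_i y^n$ grows linearly with $n$ up to a bounded shift determined by the grading degrees of the $a_i$. So the pointwise condition ``for every $a\in R$, $v\in V$, $ay^n.v=0$ for $n\gg 0$'' and the graded condition (1) become equivalent by taking maxima over this finite set once the vector $v$ is fixed.

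Next I would handle the extension from $\g$ to $\g^e$, which is where L2 enters. Pick an element $X\in\g$ whose $\g$-centralizer is contained in $\g_0$. Inside $\g^e$ the identity $[\partial,ay^n]=-n\,ay^{n-1}$ characterises $\partial$ as an outer derivation of $\g$, and L2 implies that if $\partial$ acts on a $\g$-module $V$ in a manner compatible with this derivation, its action is rigid, since any two extensions would differ by an endomorphism commuting with the action of $X$ on $V$, and such an endomorphism must be scalar on each isotypic component by L2 combined with Schur-type reasoning. For existence, one exhibits a candidate $\partial$-operator on a finite generating set, and extends by the commutation relations, using L3 to propagate the definition into negative $\g$-degree. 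Finally, for the equivalence of finite generation over $\C[\partial]$ with (2), I would use L3 directly: the relation $\g_{i-d}=[\Theta,\g_i]$ for $i\geq 0$ implies that iterated application of $\Theta$ sweeps through the depth-$d$ graded layers of $V$, and a filtration argument (combined with (1), which makes each layer finite-dimensional per generator) converts a $\C[\partial]$-generating set of $V$ into a $\C[\Theta]$-generating set, and vice versa.

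The main obstacle is the extension argument of Step~2: producing a well-defined and compatible $\partial$-action on $V$ from the $\g$-action alone. L2 yields uniqueness in one stroke, but existence forces a careful verification of all $\g^e$-relations, and this is precisely where L2 and L3 must interact nontrivially, because the naive definition of $\partial$ on generators has to be checked to respect the action of $\Theta$ and of $\g_{<0}$. Once this extension is in hand, the rest of the proof reduces to checking that finite generation as a $\C[\partial]$-module on the $R$-side mirrors (2) on the $\g$-side, which is immediate from the filtration constructed in Step~3.
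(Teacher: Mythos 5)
A preliminary remark: the paper does not prove this proposition --- it is quoted from \cite{kac1} and \cite{chenglam} --- so there is no internal proof to compare with; what follows assesses your argument on its own terms. Your skeleton is the standard and correct one: reduce via Proposition \ref{propcorrispmoduli} to comparing $\mathcal{A}(R)^{e}$-modules (local and finitely generated over $\C[\partial]$) with $\g$-modules satisfying (1) and (2); then match the locality conditions, extend a $\g$-action to $\mathcal{A}(R)^{e}=\C\partial\ltimes\mathcal{A}(R)$, and compare the two finiteness conditions. The locality step is essentially fine.

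The genuine gap is in your Step 2, and it propagates into Step 3. The fact that powers the whole argument is that $\mathrm{ad}\,\partial$ and $\mathrm{ad}\,\Theta$ coincide on $\g$: here $[\partial,ay^{m}]=-m\,ay^{m-1}$, and Remark \ref{L3} exhibits $[\Theta,\cdot]$ acting by exactly the same formula, so $\partial-\Theta$ is central in $\mathcal{A}(R)^{e}$ and $\mathcal{A}(R)^{e}\cong\g\oplus\C(\partial-\Theta)$. This single identity yields the existence of the extension (simply let $\partial$ act as $\Theta$ does) and is also what lets one pass between finite generation over $\C[\partial]$ and over $\C[\Theta]$. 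Your proposal never isolates it, and the substitutes you offer do not hold up. Uniqueness of the extension is in fact false: two extensions of the $\partial$-action differ by an operator commuting with \emph{all} of $\g$ (not merely with the element furnished by L2), and such operators form a nontrivial family --- already replacing $\partial$ by $\partial+c$ for a scalar $c$ is compatible with sesquilinearity and produces a non-isomorphic $R$-module with the same underlying $\g$-module --- so no Schur-type argument can force rigidity; the correspondence is set up by making a definite choice of extension, not by proving there is only one. Likewise, ``define $\partial$ on a generating set and extend by the commutation relations'' begs precisely the well-definedness question that the identity $\mathrm{ad}\,\partial=\mathrm{ad}\,\Theta$ answers, and your Step 3 filtration presupposes a grading on $V$ with finite-dimensional layers that is not among the hypotheses and would itself have to be extracted from the same missing identity. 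Finally, note that L2 is really needed for Theorem \ref{keythmsingular} (the highest-weight analysis), not for this proposition; an argument for the present statement that leans heavily on L2 is a sign that the mechanism has been misidentified.
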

\begin{rem}
\label{gradingelement}
We point out that condition \textbf{L2} is automatically satisfied when $\g$ contains a \textit{grading element}, i.e. an element $t \in \g$ such that $[t,b]=\degr (b) b$ for all $b \in \g$.
\end{rem}
Let $\g=\oplus_{i \in \Z} \g_{i}$ be a $\Z-$graded Lie superalgebra. We will use the notation $\g_{>0}=\oplus_{i>0}\g_{i}$, $\g_{<0}=\oplus_{i<0}\g_{i}$ and $\g_{\geq 0}=\oplus_{i\geq 0}\g_{i}$. We denote by $U(\g) $ the universal enveloping algebra of $\g$.
\begin{defi}
Let $F$ be a $\g_{\geq 0}-$module. The \textit{generalized Verma module} associated with $F$ is the $\g-$module $\Ind (F)$ defined by:
\begin{equation*}
\Ind (F):= \Ind ^{\g}_{\g_{\geq 0}} (F)=U(\g) \otimes _{U(\g_{\geq 0})} F.
\end{equation*}
\end{defi}
If $F$ is a finite$-$dimensional irreducible $\g_{\geq 0}-$module we will say that $\Ind(F)$ is a \textit{finite Verma module}. We will identify $\Ind (F)$ with $U(\g_{<0}) \otimes  F$ as vector spaces via the Poincar\'e$-$Birkhoff$-$Witt Theorem. The $\Z-$grading of $\g$ induces a $\Z-$grading on $U(\g_{<0})$ and $\Ind (F)$. We will invert the sign of the degree, so that we have a $\Z_{\geq 0}-$grading on $U(\g_{<0})$ and $\Ind (F)$. We will say that an element $v \in U(\g_{<0})_{k}$ is homogeneous of degree $k$. Analogously an element $m \in U(\g_{<0})_{k}  \otimes  F$ is homogeneous of degree $k$. For a proof of the following proposition see \cite{bagnolicaselli}.
\begin{prop}
Let $\g=\oplus_{i \in \Z} \g_{i}$ be a $\Z-$graded Lie superalgebra. If $F$ is an irreducible finite$-$dimensional $\mathfrak{g}_{\geq 0}-$module, then $\Ind (F)$ has a unique maximal submodule. We denote by $\I (F)$ the quotient of $\Ind (F)$ by the unique maximal submodule.
\end{prop}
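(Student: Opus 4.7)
The plan is to identify the unique maximal submodule as $N := \sum_M M$, where the sum ranges over all proper $\g$-submodules of $\Ind(F)$; the statement then reduces to proving that $N$ itself is proper.

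The key observation is that any proper submodule $M \subset \Ind(F)$ satisfies $M \cap F = 0$, where $F \cong 1 \otimes F = \Ind(F)_0$ sits inside $\Ind(F)$ as the degree-$0$ component. Indeed, $M \cap F$ is a $\g_{\geq 0}$-submodule of $F$ (since $F$ is $\g_{\geq 0}$-stable by the construction of $\Ind(F)$, and $M$ is $\g$-stable), so by irreducibility of $F$ it is either $0$ or $F$; the second case is excluded because $F \subset M$ would force $M \supset U(\g_{<0})\cdot F = \Ind(F)$ by PBW.

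To pass from this pointwise statement to the sum, the plan is to show that every submodule of $\Ind(F)$ is homogeneous with respect to the $\Z_{\geq 0}$-grading. Assuming a grading element $t \in \g_{0}$ exists (Remark \ref{gradingelement}, which covers all settings of interest in the paper), Engel's theorem applied to the locally nilpotent $\g_{>0}$ forces the irreducible $F$ to be annihilated by $\g_{>0}$, so $F$ is irreducible as a $\g_{0}$-module and $t$ acts on $F$ by a scalar $c$ by Schur's lemma. A direct computation using $[t,X] = -kX$ for $X \in U(\g_{<0})_{k}$ then gives
\begin{equation*}
t\cdot (X\otimes f) = (Xt + [t,X])\otimes f = X\otimes cf - kX\otimes f = (c-k)\,X\otimes f,
\end{equation*}
so $t$ acts on $\Ind(F)_{k}$ by the scalar $c-k$. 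Since these scalars are pairwise distinct, the $t$-eigenspace decomposition coincides with the grading, and every $t$-stable subspace, in particular every $\g$-submodule, is automatically graded.

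Combining the two steps: for $n \in N\cap F$, write $n = \sum_{j} m_{j}$ with each $m_{j}$ in a proper submodule $M_{j}$; each $M_{j}$ being graded, $(m_{j})_{0} \in M_{j}\cap F = 0$, so $n = \sum_{j}(m_{j})_{0} = 0$. Hence $N\cap F = 0$, which forces $N \subsetneq \Ind(F)$, and $N$ is the desired unique maximal submodule. The main delicate point is the gradedness of submodules: it rests on the existence of a grading element together with Schur's lemma applied to $F$, after using Engel's theorem to reduce to a $\g_{0}$-action. Everything else in the argument is formal.
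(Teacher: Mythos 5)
The paper itself does not prove this proposition: it is quoted with the remark ``For a proof of the following proposition see \cite{bagnolicaselli}'', so there is no in-paper argument to compare against. Your proof is correct and is, as far as I can tell, the standard argument used in that reference and in the Kac--Rudakov literature: reduce everything to showing that the sum $N$ of all proper submodules meets $1\otimes F$ trivially, prove $M\cap F=0$ for each individual proper $M$ via irreducibility of $F$ and the PBW identification $\Ind(F)\cong U(\g_{<0})\otimes F$, and pass to the sum by showing that every submodule is graded. The individual steps all check out: elements of $\g_{j}$ with $j>0$ shift generalized $t$-eigenvalues of the finite-dimensional $F$ by $j$, so the image of $\g_{>0}$ in $\End(F)$ consists of nilpotent operators, Engel gives $F^{\g_{>0}}\neq 0$, and since $F^{\g_{>0}}$ is $\g_{\geq0}$-stable it equals $F$; then $t$ is central in $\g_{0}$, Schur gives the scalar $c$, the eigenvalues $c-k$ are pairwise distinct, and $t$-stable subspaces are graded. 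One point worth emphasizing: your insertion of the grading-element hypothesis is not a shortcut but a necessity, because the proposition as literally stated (for an arbitrary $\Z$-graded Lie superalgebra) is false. Take $\g=\g_{-1}=\C x$ abelian and purely even, so that $\g_{\geq 0}=0$, $F=\C$ and $\Ind(F)=\C[x]$ with $x$ acting by multiplication; then $(x)$ and $(x-1)$ are two distinct maximal submodules. Some hypothesis forcing submodules to be graded is therefore unavoidable, and the grading element $t$ of $\mathcal{A}(CK_6)\cong E(1,6)$ supplies it in every situation where the proposition is actually used in this paper; it would be worth stating this restriction explicitly rather than parenthetically.
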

\begin{defi}
Given a $\g-$module $V$, we call \textit{singular vectors} the elements of:
\begin{align*}
\Sing (V) =\left\{v \in V \,\, | \, \, \g_{>0}.v=0\right\}.
\end{align*}
Homogeneous components of singular vectors are still singular vectors so we often assume that singular vectors are homogeneous without loss of generality.
In the case $V=\Ind (F)$, for a $\g_{\geq 0}-$module $F$, we will call \textit{trivial singular vectors} the elements of $\Sing (V) $ of degree 0 and \textit{nontrivial singular vectors} the nonzero elements of $\Sing (V) $ of positive degree.
\end{defi}
\begin{thm}[\cite{kacrudakov},\cite{chenglam}]
\label{keythmsingular}
Let $\g$ be a Lie superalgebra that satisfies \textbf{L1}, \textbf{L2}, \textbf{L3}, then:
\begin{enumerate}
\item[(i)] if $F$ is an irreducible finite$-$dimensional $\mathfrak{g}_{\geq 0}-$module, then $\mathfrak{g}_{> 0}$ acts trivially on it;
	\item[(ii)] the map $F \mapsto \I (F)$ is a bijective map between irreducible finite$-$dimensional $\mathfrak{g}_{ 0}-$modules and irreducible finite conformal $\mathfrak{g}-$modules;
	\item[(iii)] the $\mathfrak{g}-$module $\Ind (F)$ is irreducible if and only if the $\mathfrak{g}_{0}-$module $F$ is irreducible and $\Ind (F)$ has no nontrivial singular vectors.
	\end{enumerate}
	\end{thm}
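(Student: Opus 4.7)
My plan is to treat parts (i), (iii), and (ii) in that order, since (ii) is the most delicate and leans on the other two. Throughout I make use of the grading element $t\in\g_0$ guaranteed in the settings of interest by Remark \ref{gradingelement}.

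For \textbf{(i)}, the strategy exploits the identity $[t,x]=\degr(x)\,x$ for homogeneous $x\in\g$. Since $F$ is finite-dimensional, $t$ has finitely many generalized eigenvalues on $F$, and a short computation shows that every $x\in\g_i$ with $i>0$ maps the generalized $t$-eigenspace $F_\lambda^{\mathrm{gen}}$ into $F_{\lambda+i}^{\mathrm{gen}}$. Choosing an eigenvalue $\lambda_0$ of maximal real part yields $\g_{>0}F_{\lambda_0}^{\mathrm{gen}}=0$, so $F^{\g_{>0}}\neq 0$. Since $\g_{>0}$ is an ideal of $\g_{\geq 0}$, this subspace is $\g_{\geq 0}$-stable, and irreducibility of $F$ forces it to equal $F$.

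For \textbf{(iii)}, the degree filtration of $\Ind(F)=U(\g_{<0})\otimes F$ makes the proof largely formal. If $F$ is reducible, any proper $\g_0$-submodule induces a proper $\g$-submodule of $\Ind(F)$. Assume $F$ is irreducible. A nontrivial singular vector $v$ generates $U(\g)v=U(\g_{<0})U(\g_0)v$, which lies in degrees $\geq\degr v>0$ and is therefore proper. Conversely, for any non-zero proper submodule $M\subseteq\Ind(F)$, pick a non-zero homogeneous $w\in M$ of minimal degree: $\g_{>0}w\subseteq M$ has strictly smaller degree and must vanish, so $w\in\Sing(\Ind(F))$. If $\degr w=0$ then $w\in F$, and irreducibility of $F$ would give $F\subseteq M$, whence $\Ind(F)=U(\g_{<0})F\subseteq M$, a contradiction; so $w$ has positive degree.

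For \textbf{(ii)}, the map $F\mapsto\I(F)$ is well-defined by (i) together with the proposition that gives the unique maximal submodule of $\Ind(F)$. That $\I(F)$ is finite conformal in the sense of Proposition \ref{keythmannihi} is first verified on $\Ind(F)$ and then descends: condition (1) holds because a homogeneous element of degree $k$ is annihilated by $\g_j$ for $j>k$ by PBW together with the grading, while condition (2) follows from \textbf{L3}, which lets us reconstruct $U(\g_{<0})$ from $\C[\Theta]$ via finitely many brackets with $\g_{\geq 0}$. Conversely, given an irreducible finite conformal $V$, the exhaustive, $\g_0$-stable filtration $V_n=\{v\in V:\g_jv=0\ \text{for all}\ j>n\}$ (available by condition (1) of Proposition \ref{keythmannihi}) has finite-dimensional pieces because of the $\C[\Theta]$-finite generation of $V$ and \textbf{L3}. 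Applying the eigenvalue argument of (i) to such a piece yields a non-zero element of $V^{\g_{>0}}$; choosing an irreducible $\g_0$-submodule $F\subseteq V^{\g_{>0}}$, Frobenius reciprocity produces a non-zero $\g$-morphism $\Ind(F)\to V$ which, by irreducibility of $V$, is surjective and factors through $\I(F)$, giving $V\cong\I(F)$. Injectivity of $F\mapsto\I(F)$ follows because $F$ is recovered as the unique irreducible $\g_0$-submodule of the degree-$0$ component of $\I(F)$.

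\textbf{Main obstacle.} The subtle point is producing a non-zero singular vector inside the typically infinite-dimensional $V$ above: the eigenvalue argument of (i) does not apply directly. The resolution is to carve out a finite-dimensional, $\g_0$-stable, grading-compatible slice of $V$, which requires the three axioms \textbf{L1}--\textbf{L3} and the finite conformal hypothesis to interact simultaneously.
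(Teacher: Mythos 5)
The paper does not prove this theorem: it is quoted from \cite{kacrudakov} and \cite{chenglam} with no internal argument, so there is nothing to compare your route against except those references, whose standard line of proof you are in fact following. Parts (i) and (iii) of your outline are essentially sound, with two caveats. First, you work throughout with a grading element, but \textbf{L2} only posits an element whose centralizer lies in $\g_0$; Remark \ref{gradingelement} says a grading element is \emph{sufficient} for \textbf{L2}, not that \textbf{L2} provides one, so your argument establishes the theorem only under a hypothesis that is (harmlessly, for $E(1,6)$, where $t$ is a grading element) stronger than stated. Second, in (iii) you ``pick a non-zero homogeneous $w\in M$ of minimal degree''; a submodule need not be graded a priori, so you should either invoke the central grading element $t$, whose eigenvalue on the degree-$k$ piece of $\Ind(F)$ is $n_0-k$ and therefore forces every submodule to be graded, or else take a non-zero $w\in M$ whose top homogeneous degree is minimal, conclude $\g_{>0}w=0$, and pass to the homogeneous components of $w$, which are singular vectors of $\Ind(F)$ even though they need not lie in $M$.

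The genuine gap is in the converse half of (ii). Everything hinges on the claim that $V_n=\{v\in V:\g_jv=0\ \text{for all } j>n\}$ has finite-dimensional pieces, and you assert this ``because of the $\C[\Theta]$-finite generation of $V$ and \textbf{L3}'' while simultaneously flagging it as the main obstacle. As written this is a statement of what must be proved, not a proof: finite generation over $\C[\Theta]$ controls $V$ transversally to $\Theta$ but says nothing by itself in the $\Theta$-direction, and one must actually exploit \textbf{L3} (surjectivity of $\mathrm{ad}\,\Theta:\g_i\to\g_{i-d}$) together with the expansion $x\Theta^k=\sum_{l}\binom{k}{l}\Theta^{k-l}(-\mathrm{ad}\,\Theta)^{l}(x)$ in $U(\g)$ to show that the $V_n$ cannot contain arbitrarily long $\C[\Theta]$-strings. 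That computation is the technical heart of the theorem in \cite{chenglam} and \cite{kac1}, and it is precisely the step your proposal leaves blank; until it is supplied, the existence of a non-zero singular vector in an arbitrary irreducible finite conformal module, and hence the surjectivity of $F\mapsto\I(F)$, is not established.
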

We recall the notion of duality for conformal modules (see for further details \cite{bklr}, \cite{cantacasellikac}, \cite{bagnoli2}). Let $R$ be a conformal superalgebra and $M$ a conformal module over $R$.
\begin{defi}
The conformal dual $M^{*}$ of $M$ is defined by:
\begin{align*}
M^{*}=\left\{f_{\lambda}:M\rightarrow \C[\lambda] \,\, | \,\, f_{\lambda}(\partial m)=\lambda f_{\lambda}(m), \,\, \forall m \in M\right\}.
\end{align*}
The structure of $\C[\partial]-$module is given by $(\partial f)_{\lambda}(m)=-\lambda  f_{\lambda}(m)$, for all $f \in M^{*}$, $ m \in M$. The $\lambda-$action of $R$ is given, for all $a \in R$, $m \in M$, $f \in M^{*}$, by:
\begin{align*}
(a_{\lambda}f)_{\mu}(m)=-(-1)^{p(a)p(f)}f_{\mu-\lambda}(a_{\lambda}m).
\end{align*}
\end{defi}
\begin{defi}
Let $T:M\rightarrow N$ be a morphism of $R-$modules, i.e. a linear map such that for all $a\in R$ and $m\in M$:
\begin{itemize}
	\item [i:] $T(\partial m)=\partial T(m)$,
	\item [ii:] $T(a_{\lambda} m)=a_{\lambda} T(m)$.
\end{itemize}
The dual morphism $T^{*}:N^{*} \rightarrow M^{*}$ is defined, for all $f \in N^{*}$ and $m \in M$, by:
\begin{align*}
\left[T^{*}(f)\right]_{\lambda}(m)=-f_{\lambda}\left(T(m)\right).
\end{align*}
\end{defi}
\begin{thm}[\cite{bklr}, Proposition 2.6]
\label{dualeconforme}
Let $R$ be a conformal superalgebra and $M,N$ $R-$modules. Let $T:M\longrightarrow N$ be a homomorphism of $R-$modules such that $N/ \Ima T$ is a finitely generated torsion$-$free $\C[\partial]-$module. Then the standard map $\Psi:N^{*} / \Ker T^{*}\longrightarrow(M/ \Ker T)^{*}$, given by $[\Psi(\overline{f})]_{\lambda}(\overline{m})=f_{\lambda}(T(m))$
 (where by the bar we denote the corresponding class in the quotient), is an isomorphism of $R-$modules.
\end{thm}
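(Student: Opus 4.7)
The plan is to verify the claim in four steps: well-definedness, injectivity, $R$-linearity, and surjectivity, treating the last as the main obstacle.

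First I would check that $\Psi$ is well-defined. If $f \in \Ker T^{*}$, then by definition $[T^{*}(f)]_{\lambda}(m) = -f_{\lambda}(T(m)) = 0$ for every $m \in M$, so the formula $[\Psi(\overline{f})]_{\lambda}(\overline{m}) = f_{\lambda}(T(m))$ is independent of the chosen representative of $\overline{f}$. Independence from the choice of representative of $\overline{m}$ is immediate since $T$ vanishes on $\Ker T$. The resulting element lies in $(M/\Ker T)^{*}$ because $f_{\lambda}(T(\partial m)) = f_{\lambda}(\partial T(m)) = \lambda f_{\lambda}(T(m))$. Injectivity is then built-in: if $[\Psi(\overline{f})]_{\lambda}(\overline{m}) = 0$ for all $m$, then $f_{\lambda}(T(m)) = 0$ for all $m$, which is precisely the condition $T^{*}(f) = 0$, so $\overline{f} = 0$ in $N^{*}/\Ker T^{*}$.

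Next I would verify that $\Psi$ is a morphism of $R$-modules. Using the definitions, $[a_{\lambda}\Psi(\overline{f})]_{\mu}(\overline{m}) = -(-1)^{p(a)p(f)}[\Psi(\overline{f})]_{\mu-\lambda}(a_{\lambda}\overline{m}) = -(-1)^{p(a)p(f)} f_{\mu-\lambda}(a_{\lambda}T(m))$, where I have used that $T$ is a morphism of $R$-modules to push the $\lambda$-action across $T$. The right-hand side equals $[a_{\lambda}f]_{\mu}(T(m)) = [\Psi(\overline{a_{\lambda}f})]_{\mu}(\overline{m})$, giving $R$-linearity; compatibility with $\partial$ is analogous and easier.

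The crux is surjectivity. Given $\overline{g} \in (M/\Ker T)^{*}$, the first isomorphism theorem identifies $M/\Ker T$ with $\Ima T$ as an $R$-module, so $\overline{g}$ corresponds to a conformal map $\tilde g : \Ima T \to \C[\lambda]$; we must extend $\tilde g$ to a map $f : N \to \C[\lambda]$ satisfying $f_{\lambda}(\partial n) = \lambda f_{\lambda}(n)$, after which the equality $\tilde g_{\lambda}(T(m)) = g_{\lambda}(\overline{m})$ forces $\Psi(\overline{f}) = \overline{g}$. The hypothesis is designed precisely for this: since $\C[\partial]$ is a PID, the finitely generated torsion-free $\C[\partial]$-module $N/\Ima T$ is in fact free, hence projective, so the short exact sequence
\begin{equation*}
0 \longrightarrow \Ima T \longrightarrow N \longrightarrow N/\Ima T \longrightarrow 0
\end{equation*}
splits as a sequence of $\C[\partial]$-modules. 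I would pick a $\C[\partial]$-linear splitting $N \cong \Ima T \oplus C$, and define $f$ as $\tilde g$ on the first summand and zero on $C$. Since $M^{*}$ is exactly the space of $\C[\partial]$-linear maps $M \to \C[\lambda]$ (with $\partial$ acting on $\C[\lambda]$ by multiplication by $\lambda$), $f$ is automatically in $N^{*}$ and restricts to $\tilde g$ on $\Ima T$, producing the desired preimage.

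The main obstacle is this extension step; everything else is formal bookkeeping. The finiteness and torsion-freeness hypotheses on $N/\Ima T$ are precisely what upgrade a short exact sequence in the category of $\C[\partial]$-modules to a split one, so that an abstract Hahn–Banach-type extension argument is replaced by the concrete free-module splitting used above.
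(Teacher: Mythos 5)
Your proof is correct. The paper does not supply its own proof of this statement --- it is quoted directly from \cite{bklr}, Proposition 2.6 --- but your argument (routine verification of well-definedness, injectivity and $R$-linearity, followed by surjectivity via the splitting of $0 \to \Ima T \to N \to N/\Ima T \to 0$ as $\C[\partial]$-modules, which holds because the finitely generated torsion-free module $N/\Ima T$ over the PID $\C[\partial]$ is free, hence projective) is exactly the standard argument for this result, with the key hypothesis used in the right place.
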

We denote by $\textbf{F}$ the functor that maps a conformal module $M$ over a conformal superalgebra $R$ to its conformal dual $M^{*}$ and maps a morphism between conformal modules $T:M \rightarrow N$ to its dual $T^{*}:N^{*} \rightarrow M^{*}$. For a proof of the following proposition, see \cite{bagnoli2}.
\begin{prop}[\cite{bagnoli2}]
\label{esattezzafuntoreduale}
The functor $\textbf{F}$ is exact if we consider only morphisms $T:M \rightarrow N$, where $N/ \Ima T$ is a finitely generated torsion free $\C[\partial]-$module.
\end{prop}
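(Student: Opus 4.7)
The plan is to show that $\textbf{F}$ sends any exact sequence $M_1 \xrightarrow{T_1} M_2 \xrightarrow{T_2} M_3$ of morphisms in our restricted category to the exact sequence $M_3^* \xrightarrow{T_2^*} M_2^* \xrightarrow{T_1^*} M_1^*$, and since every exact sequence decomposes into such three-term pieces it is enough to verify exactness at $M_2^*$. The inclusion $\Ima T_2^* \subseteq \Ker T_1^*$ is immediate, because $\Ima T_1 \subseteq \Ker T_2$ forces $T_2 \circ T_1 = 0$, and $T_1^* \circ T_2^* = (T_2 \circ T_1)^*$ by a direct unwinding of the sign conventions in the definition of the dual morphism.

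For the reverse inclusion $\Ker T_1^* \subseteq \Ima T_2^*$, take $f \in M_2^*$ with $T_1^*(f) = 0$, which unpacks to $f_\lambda(T_1(m_1)) = 0$ for every $m_1 \in M_1$; hence $f$ vanishes on $\Ima T_1 = \Ker T_2$ and therefore descends to a well-defined element $\bar f \in (M_2/\Ker T_2)^*$. The key step is to invoke Theorem \ref{dualeconforme} for $T_2 : M_2 \to M_3$: the hypothesis that $M_3/\Ima T_2$ is finitely generated torsion-free is exactly the condition cutting out our subcategory, so the standard map $\Psi : M_3^*/\Ker T_2^* \to (M_2/\Ker T_2)^*$ is an isomorphism. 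Lifting $\bar f$ through $\Psi$ produces $g \in M_3^*$ with $g_\lambda(T_2(m)) = f_\lambda(m)$ for all $m \in M_2$, and the defining formula $[T_2^*(g)]_\lambda(m) = -g_\lambda(T_2(m))$ then gives $T_2^*(-g) = f$, placing $f$ in $\Ima T_2^*$.

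The main obstacle is really just the bookkeeping needed to apply Theorem \ref{dualeconforme} in the correct direction: one must verify that the induced functional $\bar f$ on $M_2/\Ker T_2$ is genuinely an element of the conformal dual (which follows from $\Ker T_2$ being a $\C[\partial]$-submodule of $M_2$, a consequence of $T_2$ being a morphism of $R$-modules), and then interpret the isomorphism $\Psi$ as producing a lift rather than an abstract bijection. Exactness at the endpoints of a short exact sequence, when required, follows by the same pattern: if $T_2$ is surjective, injectivity of $T_2^*$ is a direct check; if $T_1$ is injective, the hypothesis on $T_1$ combined with Theorem \ref{dualeconforme} yields $M_2^*/\Ker T_1^* \cong M_1^*$ and hence surjectivity of $T_1^*$.
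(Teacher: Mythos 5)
Your proof is correct. Note that the paper itself does not prove Proposition \ref{esattezzafuntoreduale} (it cites \cite{bagnoli2} for it), but your argument is exactly the intended deduction: reduce to exactness at the middle of a three-term exact sequence, observe that $T_1^*\circ T_2^*$ vanishes because $T_2\circ T_1=0$, and obtain the reverse inclusion by descending $f\in\Ker T_1^*$ to $(M_2/\Ker T_2)^*$ and lifting it through the isomorphism $\Psi$ of Theorem \ref{dualeconforme}, whose hypothesis is precisely the torsion-freeness condition defining the restricted class of morphisms. The sign bookkeeping and the check that $\bar f$ lies in the conformal dual of the quotient are handled correctly.
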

\section{The conformal superalgebra $CK_{6}$}
In this section we recall the definition and some properties of the conformal superalgebra $CK_{6}$ from \cite{ck6}.
Let $\inlinewedge(N)$ be the Grassmann superalgebra in the $N$ odd indeterminates $\xi_{1},...,\xi_{N}$. Let $t$ be an even indeterminate and $\inlinewedge (1,N)=\C[t,t^{-1}] \otimes \inlinewedge(N)$. We recall that the Lie superalgebra of derivations of $\inlinewedge (1,N)$ is defined as follows:
\begin{equation*}
W(1,N)=\bigg\{ D=a \partial_{t}+\sum ^{N}_{i=1} a_{i} \partial_{i} \,\, | \,\, a,a_{i} \in \displaywedge (1,N)\bigg\},
\end{equation*}
where $\partial_{t}=\frac{\partial}{\partial{t}}$ and $\partial_{i} =\frac{\partial}{\partial{\xi_{i}}}$ for every $i \in \left\{1,...,N \right\}$.\\
Let us consider the contact form $\omega = dt-\sum_{i=1}^{N}\xi_{i} d\xi_{i} $. The contact Lie superalgebra $K(1,N)$ is defined by:
\begin{equation*}
K(1,N)=\left\{D \in W(1,N) \,\, | \,\, D\omega=f_{D}\omega \,\, for \,\, some \,\, f_{D} \in \displaywedge (1,N)\right\}.
\end{equation*}
Analogously, let $\inlinewedge  (1,N)_{+}=\C[t] \otimes \inlinewedge (N)$. We define the Lie superalgebra $W(1,N)_{+}$ (resp. $K(1,N)_{+}$) similarly to $W(1,N)$ (resp. $K(1,N)$) using $\inlinewedge  (1,N)_{+}$ instead of $\inlinewedge  (1,N)$.
We can define on $\inlinewedge (1,N)$ a Lie superalgebra structure as follows. For all $f,g \in \inlinewedge(1,N)$ we let:
\begin{equation}
\label{bracketlie}
[f,g]=\Big(2f-\sum_{i=1}^{N} \xi_{i}  \partial_{i} f \Big)(\partial_{t}{g})-(\partial_{t}{f})\Big(2g-\sum_{i=1}^{N} \xi_{i} \partial_{i} g\Big)+(-1)^{p(f)}\Big(\sum_{i=1}^{N} \partial_{i}  f \partial_{i}  g \Big).
\end{equation}
We recall that $K(1,N) \cong \inlinewedge(1,N)$ as Lie superalgebras via the following map (see \cite{{chengkac2}}):
\begin{gather*}
\displaywedge(1,N) \longrightarrow  K(1,N) \\
f \longmapsto 2f \partial_{t}+(-1)^{p(f)} \sum_{i=1}^{N} (\xi_{i} \partial_{t} f+ \partial_{i}f )(\xi_{i} \partial_{t} + \partial_{i}).
\end{gather*}
We will always identify elements of $K(1,N) $ with elements of $\inlinewedge(1,N)$ and we will omit the symbol $\wedge$ between the $\xi_{i}$'s. We consider on $K(1,N)$ the standard grading, i.e. for every $t^{m} \xi_{i_{1}} \cdots \xi_{i_{s}} \in K(1,N)$ we have $\degr(t^{m} \xi_{i_{1}} \cdots \xi_{i_{s}})=2m+s-2$.\\
We recall that the conformal superalgebra of type $K$ is defined as $K_{N}:=\C[\partial] \otimes \inlinewedge(N)$. 
On $K_{N}$ the $\lambda-$bracket for $f,g \in \inlinewedge(N)$, $f= \xi_{i_{1}} \cdots \xi_{i_{r}}$ and $g= \xi_{j_{1}} \cdots \xi_{j_{s}}$, is given by (see \cite{kac1},\cite{fattorikac}):
\begin{align*}
[f_{\lambda}g]=(r-2)\partial(fg)+(-1)^{r}\sum^{N}_{i=1}(\partial_{i}f)(\partial_{i}g)+\lambda(r+s-4)fg.
\end{align*}
The associated annihilation superalgebra is (see \cite{kac1},\cite{fattorikac}):
\begin{align*}
\mathcal{A}(K_{N})=K(1,N)_{+}.
\end{align*}
We adopt the following notation: we denote by $\mathcal I$ the set of finite sequences of elements in $\{1,\ldots,N\}$; we will write $I=i_1\cdots i_r$ instead of $I=(i_1,\ldots,i_r)$. 
If $I=i_1 \cdots i_r\in \mathcal I$  we let $\xi_{I}=\xi_{i_1}\cdots \xi_{i_r}$ and $|\xi_{I}|=|I|=r$. 
 We focus on $N=6$. We let $\xi_{*}=\xi_{123456}$.
Following \cite{ck6}, for $\xi_{I} \in \inlinewedge(6)$ we define the \textit{modified Hodge dual} $\xi^{*}_{I}$ to be the unique monomial such that $\xi_{I}\xi^{*}_{I}=\xi_{*}$. We extend the definition of modified Hodge dual to elements $\sum_{k,I} \alpha_{k,I}t^{k}\xi_{I}\in \inlinewedge(1,6)_{+}$ letting $(\sum_{k,I} \alpha_{k,I}t^{k}\xi_{I})^{*}=\sum_{k,I} \alpha_{k,I}t^{k}\xi_{I}^{*}$.\\
The conformal superalgebra $CK_{6}$ is the subalgebra of $K_{6}$ defined by (see construction in \cite{new6}):
\begin{align*}
CK_{6}=\C[\partial]-\spann\left\{f-i(-1)^{\frac{|f|(|f|+1)}{2}}(-\partial)^{3-|f|}f^{*}, \, f \in \displaywedge(6), 0 \leq |f| \leq 3   \right\}.
\end{align*}
We introduce the linear operator $A: K(1,6)_{+}  \longrightarrow K(1,6)_{+}$ that is given for monomials with $d$ odd variables by:
\begin{align*}
A(f)=(-1)^{\frac{d(d+1)}{2}}\left(\frac{d}{dt}\right)^{3-d}f^{*},
\end{align*}
where $\left(\frac{d}{dt}\right)^{-1}$ indicates the integration with respect to $t$; $A$ is extended by linearity.
The annihilation superalgebra associated with $CK_{6}$ is the subalgebra of $K(1,6)_{+}$ given by the image of $Id-iA$; it is isomorphic to the Lie superalgebra $\g:=E(1,6)$ (see \cite{chengkac2}), that is one of the exceptional Lie superalgebras classified in \cite{kac98,Shche}; on $\g$ the bracket is given by \eqref{bracketlie}.
The map $A$ preserves the $\Z-$grading, then $E(1,6)$ inherits the $\Z-$grading. The non$-$positive spaces of $E(1,6)$ and $K(1,6)_{+}$ coincide and are:
\begin{align*}
&E(1,6)_{-2}=\langle 1 \rangle, \\
&E(1,6)_{-1}=\langle \xi_{1},\xi_{2},...,\xi_{6} \rangle, \\
&E(1,6)_{0}=\langle t,\xi_{i}\xi_{j}, \,\, 1\leq i,j\leq 6 \rangle .
\end{align*}
In the following we will call $\Theta$ the element $-\frac{1}{2} \in \g_{-2}$.
Let us focus on $\g_{0}=\langle t, \xi_{i}\xi_{j} \quad 1 \leq i<j \leq 6 \rangle \cong  \C t \oplus \mathfrak{sl}(4)$. 
We take as a basis of a Cartan subalgebra $\mathfrak{h}$ the following:
\begin{align*}
h_{1}=-i \xi_{34}-i\xi_{56}, \quad h_{2}=-i \xi_{12}+i\xi_{34}, \quad h_{3}=-i \xi_{34}+i\xi_{56}.
\end{align*}
Let $\alpha_{1},\alpha_{2},\alpha_{3}  \in \mathfrak{h}^{*}$ such that $\alpha_{i}(h_{j})=2\delta_{i,j}$. The set of positive roots is $\Delta^{+}=\left\{\alpha_{1},\alpha_{2},\alpha_{3}\right\}$. We define:
\begin{align*}
e_{1}=\frac{-\xi_{35}+\xi_{46}+i\xi_{36}+i\xi_{45}}{2};\\
e_{2}=\frac{-\xi_{13}-\xi_{24}-i\xi_{14}+i\xi_{23}}{2};\\
e_{3}=\frac{-\xi_{35}-\xi_{46}-i\xi_{36}+i\xi_{45};}{2}\\
f_{1}=\frac{\xi_{35}-\xi_{46}+i\xi_{36}+i\xi_{45}}{2};\\
f_{2}=\frac{\xi_{13}+\xi_{24}-i\xi_{14}+i\xi_{23}}{2};\\
f_{3}=\frac{\xi_{35}+\xi_{46}-i\xi_{36}+i\xi_{45}}{2}.
\end{align*}
We have that $e_{i}$ is the root vector with respect to $\alpha_{i}$ and $f_{i}$ is the root vector with respect to $-\alpha_{i}$.
From now on we will always use the following identification of $\mathfrak{sl}(4)$ with vector fields:
\begin{gather*}
x_{2} \partial_{1}  \leftrightarrow f_{1}, \\
x_{1} \partial_{2}   \leftrightarrow e_{1}, \\
x_{1} \partial_{1}-x_{2} \partial_{2}   \leftrightarrow h_{1} ,\\
x_{3} \partial_{2}   \leftrightarrow f_{2}, \\
x_{2} \partial_{3}   \leftrightarrow e_{2} ,\\
x_{2} \partial_{2}-x_{3} \partial_{3}   \leftrightarrow h_{2}, \\
x_{4} \partial_{3} \leftrightarrow f_{3}, \\
x_{3} \partial_{4} \leftrightarrow e_{3}, \\
x_{3} \partial_{3}-x_{4} \partial_{4}   \leftrightarrow h_{3}.
\end{gather*}
\begin{rem}
\label{L3}
We point out that $\g$ satisfies \textbf{L1},\textbf{L2},\textbf{L3}. Indeed \textbf{L1} and \textbf{L2} are obvious since $\g$ is $\Z-$graded of finite depth 2 and $t$ is a grading element. \textbf{L3} is satisfied by the choice of $\Theta$: indeed it is a straightforward verification, using \eqref{bracketlie}, that $$t^{m}\xi_I-i(-1)^{\frac{|I|(|I|+1)}{2}}\left(\frac{d}{dt}\right)^{3-|I|}t^{m}\xi_I^{*}=-\frac{1}{m+1}\left[\Theta,t^{m+1}\xi_I-i(-1)^{\frac{|I|(|I|+1)}{2}}\left(\frac{d}{dt}\right)^{3-|I|}t^{m+1}\xi_I^{*}\right],$$
for all $m\geq 0$ and $I \in \mathcal I$.
\end{rem}
 We will denote by $M(n_{1},n_{2},n_{3},n_{0})$ the finite Verma module $\Ind (F(n_{1},n_{2},n_{3},n_{0}))$ over $\g$, where $n_{0}$ is the weight with respect to the central element $t \in \g_0$ and $n_{i}$ is the weight with respect to $h_{i}$. Let $\lambda=(n_{1},n_{2},n_{3})$ be a dominant weight for $\mathfrak{sl}(4)$, we use the notation $F(n_{1},n_{2},n_{3})$ to indicate the irreducible $\mathfrak{sl}(4)-$module of highest weight $\lambda$.
We point out that, since $M(n_{1},n_{2},n_{3},n_{0}) \cong U(\mathfrak{g}_{<0}) \otimes F(n_{1},n_{2},n_{3},n_{0})$, it follows that $M(n_{1},n_{2},n_{3},n_{0})\cong \C[\Theta] \otimes \inlinewedge(6) \otimes F(n_{1},n_{2},n_{3},n_{0})$. Indeed, let us denote by $\eta_{i}$ the image in $U(\g)$ of $\xi_{i} \in  \inlinewedge(6) $, for all $i \in \left\{1,2,3,4,5,6\right\}$.
	In $U(\g)$ we have that $\eta_{i}^{2}=\Theta$, for all $i \in \left\{1,2,3,4,5,6\right\}$: since $[\xi_{i},\xi_{i}]=-1$ in $\g$, we have $\eta_{i}\eta_{i}=-\eta_{i}\eta_{i}-1$ in $U(\g)$. From now on it is always assumed that $F(n_{1},n_{2},n_{3},n_{0})$ is a finite$-$dimensional irreducible $\mathfrak{g}_{\geq 0}-$module. 
\begin{rem}
\label{remwck6}
By a straightforward computation, it is possible to show that $\g_{-1}$ is an irreducible $\mathfrak{sl}_{4}-$module of highest weight (0,1,0) with respect to $h_{1},h_{2},h_{3}$. In particular $\g_{-1}$ is isomorphic to $\inlinewedge^{2}\C^{4}$ and the isomorphism is given by:
\begin{align}
\label{notazionig-1ck6}
&\xi_{2}+i\xi_{1} \longleftrightarrow x_{12}, &&\xi_{2}-i\xi_{1} \longleftrightarrow -x_{34},\\ \nonumber
&\xi_{4}+i\xi_{3} \longleftrightarrow x_{13}, &&\xi_{4}-i\xi_{3} \longleftrightarrow x_{24},\\ \nonumber
&\xi_{6}+i\xi_{5} \longleftrightarrow x_{14}, &&\xi_{6}-i\xi_{5} \longleftrightarrow -x_{23}.
\end{align}
Motivated by relations \eqref{notazionig-1ck6}, from now on we will use the notation:
\begin{align} 
\label{notazioinewck6}
&w_{12}=\eta_{2}+i\eta_{1}, &&w_{34}=-\eta_{2}+i\eta_{1},\\ \nonumber
&w_{13}=\eta_{4}+i\eta_{3}, &&w_{24}=\eta_{4}-i\eta_{3},\\ \nonumber
&w_{14}=\eta_{6}+i\eta_{5}, &&w_{23}=-\eta_{6}+i\eta_{5}.
\end{align}
We point out that $[w_{12},w_{34}]=-4\Theta$, $[w_{13},w_{24}]=4\Theta$, $[w_{14},w_{23}]=-4\Theta$ and all the other brackets between the $w$'s are zero. Moreover, in $U(\g_{<0})$, $w_{kl}^{2}=0$ for all $1 \leq k<l \leq 4$: indeed, for example, $w_{12}^{2}=(\eta_{2}+i\eta_{1})(\eta_{2}+i\eta_{1})=\Theta+i\eta_{2}\eta_{1}+i\eta_{1}\eta_{2}-\Theta=0$.
\end{rem}
\begin{rem}
\label{remlowest}
We have, by straightforward computation, that $E(1,6)_{1}$ is the sum of two irreducible $\g_{0}-$modules, in particular $\g_{1}\cong F(0,1,0) \oplus F(2,0,0)$ and the following are the corresponding lowest weight vectors in $E(1,6)_{1}$:
\begin{align*}
v_{1}&= t\xi_{1}+it\xi_{2},\\
v_{2}&=-\xi_{1}\xi_{3}\xi_{5}-i\xi_{2}\xi_{4}\xi_{6}+\xi_{2}\xi_{4}\xi_{5}+i\xi_{1}\xi_{3}\xi_{6}-\xi_{1}\xi_{4}\xi_{6}-i\xi_{2}\xi_{3}\xi_{5}-\xi_{2}\xi_{3}\xi_{6}-i\xi_{1}\xi_{4}\xi_{5}.
\end{align*}
Since $\g_{i}=\g_{1}^{i}$ for all $i\geq 2$ (see Section 4.2 in \cite{chengkac2}), then in order to check whether a vector $\vec{m} \in \Ind(F)$ is an highest weight singular vector it is sufficient to check that it is annihilated by $e_{1},e_{2},e_{3},v_{1},v_{2}$.
\end{rem}
In \cite{ck6}, Boyallian, Kac and Liberati, obtained a complete classification of all the highest weight singular vectors for $CK_{6}$. For the classification the authors used a bound on the degree of singular vectors that was proved in \cite{bagnoli1}. Since, by Remark \ref{L3}, $\g$ satisfies \textbf{L1},\textbf{L2},\textbf{L3}, by Theorem \ref{keythmsingular} Boyallian, Kac and Liberati obtained the following complete classification of degenerate finite Verma modules for $CK_6$.
\begin{thm}[\cite{ck6}]
\label{teoremavettorisingolarick6ultimo}
Let $F$ be an irreducible finite$-$dimensional $\g_{0}-$module of highest weight $\mu=(n_{1},n_{2},n_{3},n_{0})$. Therefore $ \Ind(F)$ is degenerate if and only if $\mu$ is one of the following:\\
\begin{enumerate}
	\item[\textbf{A}:] $\mu=(n_{1},n_{2},0,-n_{2}-\frac{n_{1}}{2})$ with $n_{1} \geq 0$ and $n_{2} \geq 0$,\\
	\item[\textbf{B}:] $\mu=(n_{1},0,n_{3},\frac{n_{3}}{2}-\frac{n_{1}}{2}+2)$ with $n_{1} \geq 0$ and $n_{3} \geq 0$,\\
	\item[\textbf{C}:] $\mu=(0,n_{2},n_{3},n_{2}+\frac{n_{3}}{2}+4)$ with $n_{2} \geq 0$, $n_{3} \geq 0$  and $(n_{2},n_{3})\neq (0,0)$.
	\end{enumerate}
\end{thm}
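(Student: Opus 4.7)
The plan is to apply Theorem \ref{keythmsingular}(iii), which, since $F$ is assumed irreducible, tells us that $\Ind(F)$ is degenerate if and only if $\Sing(\Ind(F))$ contains a nonzero homogeneous element of positive degree. The task therefore reduces to classifying the highest weights $\mu=(n_1,n_2,n_3,n_0)$ of $\g_0$ for which the finite Verma module $\Ind(F)$ admits a nontrivial highest weight singular vector, and then reading off the possible $\mu$'s.

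By Remark \ref{remlowest}, a homogeneous element $\vec m\in \Ind(F)$ is a highest weight singular vector if and only if $e_1\vec m=e_2\vec m=e_3\vec m=0$ and $v_1\vec m=v_2\vec m=0$. Using the identification $\Ind(F)\cong \C[\Theta]\otimes \inlinewedge(6)\otimes F$ from the discussion after Remark \ref{L3}, together with the $w_{kl}$-basis for $\g_{-1}$ introduced in Remark \ref{remwck6}, I would write an arbitrary candidate $\vec m$ of fixed homogeneous degree as a linear combination of tensors of the form $\Theta^a w_{k_1 l_1}\cdots w_{k_r l_r}\otimes f$ with $f$ in a weight subspace of $F$, the coefficient weight matching the prescribed highest weight. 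The five annihilation conditions above then translate into a finite linear system on the coefficients; the $e_i$'s contribute linear relations within each multidegree, $v_1$ shifts degree by $-1$ via the brackets $[v_1,w_{kl}]$, and $v_2$ similarly shifts degree by $-1$ but via the cubic expression in the $\xi_i$'s.

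The next step is to bound the degree of $\vec m$: the a priori bound on the degree of singular vectors proved in \cite{bagnoli1} (and invoked in \cite{ck6}) reduces the problem to a finite, explicit list of possible degrees depending on $\mu$. For each such degree, I would enumerate the $\g_0$-weights that can occur in $U(\g_{<0})_{\le N}\otimes F$ by using that $\g_{-1}\cong \inlinewedge^2 \C^4$ as $\mathfrak{sl}(4)$-module (Remark \ref{remwck6}), decompose $\inlinewedge(\g_{-1})\otimes F$ into $\mathfrak{sl}(4)$-isotypic components, and look for those isotypic pieces whose highest weight matches a candidate $\nu$; the $t$-weight $n_0$ is then forced by the homogeneity. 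Solving the resulting linear systems at each allowed degree singles out exactly the three families \textbf{A}, \textbf{B}, \textbf{C} in the statement.

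The main obstacle is the action of $v_2$: it is a lowest weight vector of the cubic $\mathfrak{sl}(4)$-summand $F(2,0,0)\subset \g_1$, so its bracket with a monomial $w_{k_1l_1}\cdots w_{k_rl_r}$ produces a lengthy alternating sum, substantially more intricate than the quadratic $v_1$. Handling the resulting combinatorics requires an efficient bookkeeping, most naturally by exploiting $\mathfrak{sl}(4)$-equivariance to reduce the five scalar conditions to conditions on highest weight vectors of selected isotypic components of $U(\g_{<0})\otimes F$. Once this reduction is in place, the classification of admissible $\mu$ follows by a case analysis on which of $n_1, n_2, n_3$ vanish, producing exactly the dominant weights in \textbf{A}, \textbf{B}, \textbf{C}, with the condition $(n_2,n_3)\neq (0,0)$ in \textbf{C} forced to exclude the overlap with the trivial $\Ind$ (handled separately).
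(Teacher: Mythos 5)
This theorem is not proved in the paper: it is quoted from \cite{ck6}, and the paper's only ``proof'' is the observation that $\g$ satisfies \textbf{L1}--\textbf{L3} (Remark \ref{L3}), so that Theorem \ref{keythmsingular} reduces degeneracy of $\Ind(F)$ to the existence of nontrivial singular vectors, whose classification is carried out in \cite{ck6} using the degree bound of \cite{bagnoli1}. Your outline reproduces exactly this strategy --- reduction via Theorem \ref{keythmsingular}(iii), the criterion of Remark \ref{remlowest} involving $e_1,e_2,e_3,v_1,v_2$, the a priori degree bound, and a weight-by-weight case analysis --- so it is the same approach as the cited source; just be aware that what you have written is a plan, and the actual content of the theorem lies in the lengthy explicit computations (showing only degrees $1$, $3$, $5$ occur and solving the linear systems in each case) that \cite{ck6} performs and that your sketch defers.
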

We introduce the following notation
\begin{align*}
M^{n_{1},n_{2}}_{A}&:=M\left( n_{1},n_{2},0,-n_{2}-\frac{n_{1}}{2}\right)=U(\g_{-}) \otimes V^{n_{1},n_{2}}_{A},\\
M^{n_{1},n_{3}}_{B}&:=M\left(n_{1},0,n_{3}, \frac{n_{3}}{2}-\frac{n_{1}}{2}+2\right)=U(\g_{-}) \otimes V^{n_{1},n_{3}}_{B},\\
M^{n_{2},n_{3}}_{C}&:=M\left( 0,n_{2},n_{3},n_{2}+\frac{n_{3}}{2}+4\right)=U(\g_{-}) \otimes V^{n_{2},n_{3}}_{C},
\end{align*}
where as $\mathfrak{sl}(4)-$modules, $V^{n_{1},n_{2}}_{A}\cong F(n_{1},n_{2},0)$, $V^{n_{1},n_{3}}_{B}\cong F(n_{1},0,n_{3})$, $V^{n_{2},n_{3}}_{C}\cong F(0,n_{2},n_{3})$. The element $t$ acts as multiplication by $-n_{2}-\frac{n_{1}}{2}$ on $V^{n_{1},n_{2}}_{A}$, as multiplication by $\frac{n_{3}}{2}-\frac{n_{1}}{2}+2$ on $V^{n_{1},n_{3}}_{B}$ and as multiplication by $n_{2}+\frac{n_{3}}{2}+4$ on $V^{n_{2},n_{3}}_{C}$.
\begin{rem}
 We will think $V^{n_{1},n_{2}}_{A}$ as the irreducible submodule of 
\begin{equation*}
\Sym^{n_{1}}(\C^{4}) \otimes \Sym^{n_{2}}(\displaywedge^{2}\C^{4} ) 
\end{equation*}
generated by the highest weight vector $v_{\lambda}:={x_{1}}^{n_{1}}x_{12}^{n_{2}}$, where $\left\{x_{1},x_{2},x_{3},x_{4}\right\}$ is the standard basis of $\C^{4}$ and $\left\{x_{ij}=x_i \wedge x_j,1\leq i <j \leq 4\right\}$ is the standard basis of $\displaywedge^{2}\C^{4}$. 
We will think $V^{n_{1},n_{3}}_{B}$ as the irreducible submodule of  
\begin{equation*}
   \Sym^{n_{1}}(\C^{4})  \otimes \Sym^{n_{3}}((\C^{4})^{*})
\end{equation*}
generated by the highest weight vector $v_{\lambda}:=x_{1}^{n_{1}}{x_{4}^{*}}^{n_{3}}$, where $\left\{x_{1}^{*},x_{2}^{*},x_{3}^{*},x_{4}^{*}\right\}$ is a basis for $(\C^{4})^{*}$.\\
We will think $V^{n_{2},n_{3}}_{C}$ as the irreducible submodule of 
\begin{equation*}
 \Sym^{n_{2}}(\displaywedge^{2}(\C^{4})^{*})\otimes  \Sym^{n_{3}}((\C^{4})^{*})  
\end{equation*}
generated by the highest weight vector $v_{\lambda}:= {x^{*}_{34}}^{n_{2}}{x^{*}_{4}}^{n_{3}}$, where $\left\{x^{*}_{ij}=x^{*}_{i} \wedge x^{*}_{j},1\leq i <j \leq 4\right\}$ is a basis for $\inlinewedge^{2}(\C^{4})^{*}$.\\
We observe that $t$ acts as $-\frac{x_{1}\partial_{x_{1}}+x_{2}\partial_{x_{2}}+x_{3}\partial_{x_{3}}+x_{4}\partial_{x_{4}}}{2}$ on vectors of $V^{n_{1},n_{2}}_{A}$, as $-\frac{x_{1}\partial_{x_{1}}+x_{2}\partial_{x_{2}}+x_{3}\partial_{x_{3}}+x_{4}\partial_{x_{4}}}{2}+2$ on vectors of $V^{n_{1},n_{3}}_{B}$ and as $-\frac{x_{1}\partial_{x_{1}}+x_{2}\partial_{x_{2}}+x_{3}\partial_{x_{3}}+x_{4}\partial_{x_{4}}}{2}+4$ on vectors of $V^{n_{2},n_{3}}_{C}$.
\end{rem}
\begin{lem}
\label{L1L2L3L4}
Let us point out that 
\begin{align*}
V^{n_{1},n_{2}}_{A}=\left\{p \in \Sym^{n_{1}}(\C^{4}) \otimes \Sym^{n_{2}}(\displaywedge^{2}\C^{4}) \,\,\, |\,\,\, p \in \Ker D \cap \bigcap^{4}_{i=1}\Ker D_{i}\right\},
\end{align*}
where
\begin{align*}
D=\partial_{12}\partial_{34}-\partial_{13}\partial_{24}+\partial_{14}\partial_{23},\\
D_{1}=\partial_{23}\partial_{4}-\partial_{24}\partial_{3}+\partial_{34}\partial_{2},\\
D_{2}=\partial_{13}\partial_{4}-\partial_{14}\partial_{3}+\partial_{34}\partial_{1},\\
D_{3}=\partial_{12}\partial_{4}-\partial_{14}\partial_{2}+\partial_{24}\partial_{1},\\
D_{4}=\partial_{12}\partial_{3}-\partial_{13}\partial_{2}+\partial_{23}\partial_{1}.
\end{align*}
Thus in the following we will write equivalently that $p \in V^{n_1,n_2}_{A}$, or $p \in \Sym^{n_{1}}(\C^{4}) \otimes \Sym^{n_{2}}(\displaywedge^{2}\C^{4})$ and $p \in \Ker D\cap\bigcap^{4}_{i=1} \Ker D_i$, or $p \in \Sym^{n_{1}}(\C^{4}) \otimes \Sym^{n_{2}}(\displaywedge^{2}\C^{4})$ and $p$ can be obtained by the action of elements of $\slq$ on $x^{n_1}_{1}x^{n_2}_{12}$.
\end{lem}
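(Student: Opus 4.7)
The plan is to split the equivalence of the three descriptions of $V^{n_1,n_2}_A$ into separate pieces. The equivalence between $p \in V^{n_1,n_2}_A$ and the assertion that $p$ lies in the $U(\mathfrak{sl}_4)$-orbit of $v_\lambda = x_1^{n_1}x_{12}^{n_2}$ is immediate from the definition, since $V^{n_1,n_2}_A$ is by construction the irreducible $\mathfrak{sl}_4$-submodule of $\Sym^{n_1}(\C^4) \otimes \Sym^{n_2}(\wedge^2 \C^4)$ generated by $v_\lambda$. The substantive content is therefore the kernel characterization.

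First I would prove the inclusion $V^{n_1,n_2}_A \subseteq \Ker D \cap \bigcap_{i=1}^{4} \Ker D_i$. The key observation is that each of $D, D_1, \ldots, D_4$ is $\mathfrak{sl}_4$-equivariant. Indeed, $D$ is the constant-coefficient differential operator dual to multiplication by the Pl\"ucker quadric $\Omega = x_{12}x_{34}-x_{13}x_{24}+x_{14}x_{23}$ (an $\mathfrak{sl}_4$-invariant), and each $D_i$ is built from the canonical $\mathfrak{sl}_4$-equivariant contraction $\wedge^2\C^4 \otimes \C^4 \to \wedge^3\C^4 \cong (\C^4)^{*}$; equivariance can be verified by a routine direct check against the realization $x_j\partial_k$ of $\mathfrak{sl}_4$. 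Granting this, the joint kernel is automatically an $\mathfrak{sl}_4$-submodule. Then I would verify by inspection that $v_\lambda$ lies in the joint kernel: every summand of $D$ contains a factor $\partial_{jk}$ with $\{j,k\}\neq\{1,2\}$, which annihilates $x_{12}^{n_2}$, and every summand of $D_i$ contains either such a factor or a derivative $\partial_j$ with $j\neq 1$ that annihilates $x_1^{n_1}$. Equivariance then propagates the vanishing from $v_\lambda$ to all of $V^{n_1,n_2}_A$.

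For the reverse inclusion, I would decompose the ambient space $\Sym^{n_1}(\C^4) \otimes \Sym^{n_2}(\wedge^2\C^4)$ into irreducible $\mathfrak{sl}_4$-summands and show that only $F(n_1,n_2,0)$ is entirely contained in the joint kernel. The clean route is the harmonic decomposition $\Sym^{n_2}(\wedge^2\C^4) = \bigoplus_{k\geq 0} \Omega^k \cdot \mathcal{H}_{n_2-2k}$, with $\mathcal{H}_m := \Ker D \cap \Sym^m(\wedge^2\C^4) \cong F(0,m,0)$, combined with the Pieri rule applied to $\Sym^{n_1}(\C^4) \otimes \mathcal{H}_{m}$. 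This lists the highest weight vectors of all isotypic components explicitly, and one checks that $D$ detects any factor of $\Omega$ while the $D_i$'s detect the ``divergence'' components produced by Pieri, so only $F(n_1,n_2,0)$ survives.

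The main obstacle will be this last bookkeeping step: tracking every $\mathfrak{sl}_4$-isotypic component produced by the plethysm and Pieri rule, and checking that the four operators $D_1, \ldots, D_4$ jointly separate it from $F(n_1,n_2,0)$. A cleaner but essentially equivalent alternative would be to compute the dimension of the joint kernel and match it against $\dim F(n_1,n_2,0) = \frac{(n_1+1)(n_2+1)(n_2+2)(n_1+n_2+2)(n_1+n_2+3)}{12}$ given by Weyl's dimension formula, though carrying out this count rigorously requires the same underlying decomposition.
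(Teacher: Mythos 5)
Your argument for the inclusion $V^{n_1,n_2}_A\subseteq \Ker D\cap\bigcap_{i=1}^4\Ker D_i$ is sound, with one caveat: the individual operators $D_1,\dots,D_4$ are \emph{not} $\mathfrak{sl}_4$-equivariant (for instance $[x_3\partial_4,D_4]=-D_3$); what is true, and what your description via the contraction $\inlinewedge^2\C^4\otimes\C^4\to(\C^4)^*$ actually gives, is that their span is an $\mathfrak{sl}_4$-stable four-dimensional space of operators, which is still enough to make the joint kernel a submodule and to propagate the vanishing from $x_1^{n_1}x_{12}^{n_2}$. The genuine gap is in the reverse inclusion: you reduce it to a harmonic/Pieri decomposition of $\Sym^{n_1}(\C^4)\otimes\Sym^{n_2}(\inlinewedge^2\C^4)$ together with a check that the operators $D,D_1,\dots,D_4$ kill no isotypic component other than $F(n_1,n_2,0)$, and you then defer exactly this computation, identifying it yourself as the main obstacle. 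Since that is where all the content of the lemma lies, the proof is not complete as written.

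The paper avoids this bookkeeping altogether. It quotes the result of \cite{rudakovE510} that for $\mathfrak{sl}_5$ the irreducible submodule $F(n_1,n_2,0,0)\subset\Sym^{n_1}(\C^5)\otimes\Sym^{n_2}(\inlinewedge^2\C^5)$ generated by $x_1^{n_1}x_{12}^{n_2}$ equals the joint kernel of all the analogous quadratic operators in the indices $1,\dots,5$, and then proves that the four-index joint kernel $S^{n_1,n_2}_4$ is irreducible over $\mathfrak{sl}_4$ by a restriction argument: a hypothetical $\mathfrak{sl}_4$-highest weight vector $v\in S^{n_1,n_2}_4$ different from $x_1^{n_1}x_{12}^{n_2}$ involves no variable carrying the index $5$, hence is annihilated by $x_4\partial_5$ and by every five-index operator containing the index $5$, so it would be an $\mathfrak{sl}_5$-highest weight vector of the irreducible module $F(n_1,n_2,0,0)$ other than its generator --- a contradiction. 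To finish your proof along your own lines you would have to carry out the full decomposition and evaluate the $D_i$ on each highest weight vector it produces; the shorter route is to import the $\mathfrak{sl}_5$ statement as the paper does.
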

\begin{proof}
We recall that the irreducible $\mathfrak{sl}_{5}-$module $F(n_1,n_2,0,0)$, where the weights are written with respect to $x_{1}\partial_{1}-x_{2}\partial_{2},x_{2}\partial_{2}-x_{3}\partial_{3},x_{3}\partial_{3}-x_{4}\partial_{4},x_{4}\partial_{4}-x_{5}\partial_{5}$, can be seen as the irreducible $\mathfrak{sl}_{5}-$submodule of $\Sym^{n_1}(\C^{5})\otimes \Sym^{n_2}(\inlinewedge^{n_2}\C^{5})$ generated by $x^{n_1}_{1}x^{n_2}_{12}$. In \cite{rudakovE510} it was shown that
\begin{align}
\label{sl5irr}
F(n_1,n_2,0,0)=\big\{ & p \in \Sym^{n_1}(\C^{5})\otimes \Sym^{n_2}(\displaywedge^{n_2}\C^{5}), \,\, | \,\, for \,\, all \,\, 1 \leq a,b,c,d \leq 5,\\ \nonumber
&(\partial_{ab}\partial_{cd}-\partial_{ac}\partial_{bd}+\partial_{ad}\partial_{bc})p=0, (\partial_{ab}\partial_{c}-\partial_{ac}\partial_{b}+\partial_{bc}\partial_{a})p=0  \big\}:=S^{n_1,n_2}_{5}.
\end{align}
We claim that 
\begin{align}
\label{sl4irr}
V^{n_{1},n_{2}}_{A}=\big\{ & p \in \Sym^{n_1}(\C^{4})\otimes \Sym^{n_2}(\displaywedge^{n_2}\C^{4}), \,\, | \,\, for \,\, all \,\, 1 \leq a,b,c,d \leq 4,\\ \nonumber
&(\partial_{ab}\partial_{cd}-\partial_{ac}\partial_{bd}+\partial_{ad}\partial_{bc})p=0, (\partial_{ab}\partial_{c}-\partial_{ac}\partial_{b}+\partial_{bc}\partial_{a})p=0  \big\}:=S^{n_1,n_2}_{4}.
\end{align}
By \eqref{sl5irr}, it is obvious that $V^{n_{1},n_{2}}_{A} \subset S^{n_1,n_2}_{4}$. We now show that the equality holds by proving that $S^{n_1,n_2}_{4}$ is irreducible as $\mathfrak{sl}_{4}-$module. Indeed let us suppose that $S^{n_1,n_2}_{4}$ is not irreducible as $\mathfrak{sl}_{4}-$module. Since $S^{n_1,n_2}_{5}$ is completely reducible as $\mathfrak{sl}_{4}-$module, this implies that there exists in $S^{n_1,n_2}_{4}$ a highest weight vector $v$, with respect to $\mathfrak{sl}_{4}$, different from $x^{n_1}_{1}x^{n_2}_{12}$. Since $ v \in S^{n_1,n_2}_{4}$, $x_{4} \partial_{5}v=0$; hence this implies that in $F(n_1,n_2,0,0)$ $v$ is a  highest weight vector, with respect to $\mathfrak{sl}_{5}$, different from $x^{n_1}_{1}x^{n_2}_{12}$. This contradicts \eqref{sl5irr}.
\end{proof}
\begin{rem}
\label{teoremavettorisingolarick6ultimo}
We point out that, in \cite{ck6}, the authors proved that there are only highest weight singular vectors of degree $5$, $3$ and $1$ and they obtained the explicit expressions for all the highest weight singular vectors. In particular, they showed that the modules $M^{n_{1},n_{2}}_A$, with $n_{1}\geq 0$ and $n_{2}\geq 0$, contain a unique (up to scalars) highest weight singular vector of degree $1$ given by:
$$\vec{m}_{1a}=w_{12} \otimes {x_{1}}^{n_{1}}{x_{12}}^{n_{2}}.$$
The modules $M^{n_{1},n_{3}}_B$, with $n_{1}\geq 0$ and $n_{3}\geq 1$, contain a unique (up to scalars) highest weight singular vector of degree $1$ that we call $\vec{m}_{1b}$.
The modules $M^{n_{1},0}_B$, with $n_{1}\geq 0$, contain a unique (up to scalars) highest weight singular vector of degree $3$ given by:
$$\vec{m}_{3b}=w_{12}w_{13}w_{14} \otimes x_{1}^{n_{1}}.$$
The modules $M^{n_{2},n_{3}}_C$, with $n_{2}\geq 1$ and $n_{3}\geq 0$, contain a unique (up to scalars) highest weight singular vector of degree $1$, that we call $\vec{m}_{1c}$.
The modules $M^{0,n_{3}}_C$, with $n_{3}\geq 1$, contain a unique (up to scalars) highest weight singular vector of degree $3$, that we call $\vec{m}_{3c}$.
The module $M^{0,1}_C$ contains a unique (up to scalars) highest weight singular vector of degree $5$, given by
\begin{align}
\label{m5c}
\vec{m}_{5c}=&\Theta^{2}w_{12}\otimes x^{*}_{2}+\Theta^{2}w_{13}\otimes x^{*}_{3}+\Theta^{2}w_{14}\otimes x^{*}_{4}+\Theta w_{12}w_{13}w_{14}\otimes x^{*}_{1}\\ \nonumber
&+\frac{\Theta w_{12}}{4}(w_{23}w_{14}-w_{14}w_{23}+w_{13}w_{24}-w_{24}w_{13})\otimes x^{*}_{2}+\frac{\Theta w_{13}}{4}(w_{23}w_{14}-w_{14}w_{23}-w_{12}w_{34}+w_{34}w_{12})\otimes x^{*}_{3}\\ \nonumber
&+\frac{\Theta w_{14}}{4}(w_{13}w_{24}-w_{24}w_{13}-w_{12}w_{34}+w_{34}w_{12})\otimes x^{*}_{4}-\frac{iw_{12}}{16}(w_{13}w_{24}-w_{24}w_{13})(w_{23}w_{14}-w_{14}w_{23})x^{*}_{2}\\ \nonumber
&-\frac{iw_{13}}{16}(w_{34}w_{12}-w_{12}w_{34})(w_{23}w_{14}-w_{14}w_{23})x^{*}_{3}-\frac{iw_{14}}{16}(w_{34}w_{12}-w_{12}w_{34})(w_{13}w_{24}-w_{24}w_{13})x^{*}_{4}.
\end{align}
\end{rem}
In the next remark we recall that there is a correspondence between singular vectors and morphisms of $\g-$modules.
\begin{rem}
\label{esistenzamappe}
	We point out that, given $M(n_{1},n_{2},n_{3},n_{0})$ and $M(\widetilde{n}_{1},\widetilde{n}_{2},\widetilde{n}_{3},\widetilde{n}_{0})$ finite Verma modules over $\g$, we can construct a non trivial morphism of $\g-$modules from the former to the latter if and only if there exists a highest weight singular vector $\vec{m}$ in $M(\widetilde{n}_{1},\widetilde{n}_{2},\widetilde{n}_{3},\widetilde{n}_{0})$ of highest weight $\mu=(n_{1},n_{2},n_{3},n_{0})$. The map is uniquely determined by:
	\begin{align*}
	\nabla: \, M(n_{1},n_{2},n_{3},n_{0})&\longrightarrow M(\widetilde{n}_{1},\widetilde{n}_{2},\widetilde{n}_{3},\widetilde{n}_{0})\\
	v_{\mu} &\longmapsto \vec{m},
	\end{align*}
	where $v_{\mu}$ is a highest weight vector of $F(n_{1},n_{2},n_{3},n_{0})$: indeed $M(n_{1},n_{2},n_{3},n_{0})$ is generated as $\g-$module by $v_{\mu}$.
	If $\vec{m}$ is a singular vector of degree $d$, we say that $\nabla$ is a morphism of degree $d$.
\end{rem} 
Using Remark \ref{esistenzamappe}, in \cite{ck6} the authors obtained the sequences of morphisms between finite Verma modules in Figure \ref{figurack6}. In Figure \ref{figurack6} the modules $M_{A}$ are represented in the first quadrant, the modules $M_{B}$ in the second quadrant, the modules $M_{C}$ in the third quadrant.
\section{The morphisms}
\begin{figure}
     \begin{center}
             \begin{tikzpicture}
             \draw[fill=white]{(0,0) circle(3pt)};
             \draw[fill=white]{(1,0) circle(3pt)};
             \draw[fill=none]{(2,0) circle(3pt)};
             \draw[fill=none]{(3,0) circle(3pt)};
             \draw[fill=none]{(4,0) circle(3pt)};
             \draw[fill=none]{(5,0) circle(3pt)};
             \draw[fill=none]{(6,0) circle(3pt)};
             \draw[fill=none]{(6,1) circle(3pt)};
             \draw[fill=none]{(6,2) circle(3pt)};
             \draw[fill=none]{(6,3) circle(3pt)};
             \draw[fill=none]{(6,4) circle(3pt)};
             \draw[fill=none]{(6,5) circle(3pt)};
             \draw[fill=none]{(-6,-6) circle(3pt)};
             \draw[fill=none]{(-1,-6) circle(3pt)};
             \draw[fill=none]{(-2,-6) circle(3pt)};
             \draw[fill=none]{(-3,-6) circle(3pt)};
             \draw[fill=none]{(-4,-6) circle(3pt)};
             \draw[fill=none]{(-5,-6) circle(3pt)};
             \draw[fill=none]{(0,1) circle(3pt)};
             \draw[fill=none]{(1,1) circle(3pt)};
             \draw[fill=none]{(2,1) circle(3pt)};
             \draw[fill=none]{(3,1) circle(3pt)};
             \draw[fill=none]{(4,1) circle(3pt)};
             \draw[fill=none]{(5,1) circle(3pt)};
             \draw[fill=none]{(0,2) circle(3pt)};
             \draw[fill=none]{(1,2) circle(3pt)};
             \draw[fill=none]{(2,2) circle(3pt)};
             \draw[fill=none]{(3,2) circle(3pt)};
             \draw[fill=none]{(4,2) circle(3pt)};
             \draw[fill=none]{(5,2) circle(3pt)};
             \draw[fill=none]{(0,3) circle(3pt)};
             \draw[fill=none]{(1,3) circle(3pt)};
             \draw[fill=none]{(2,3) circle(3pt)};
             \draw[fill=none]{(3,3) circle(3pt)};
             \draw[fill=none]{(4,3) circle(3pt)};
             \draw[fill=none]{(5,3) circle(3pt)};
             \draw[fill=none]{(0,4) circle(3pt)};
             \draw[fill=none]{(1,4) circle(3pt)};
             \draw[fill=none]{(2,4) circle(3pt)};
             \draw[fill=none]{(3,4) circle(3pt)};
             \draw[fill=none]{(4,4) circle(3pt)};
             \draw[fill=none]{(5,4) circle(3pt)};
             \draw[fill=none]{(0,5) circle(3pt)};
             \draw[fill=none]{(1,5) circle(3pt)};
             \draw[fill=none]{(2,5) circle(3pt)};
             \draw[fill=none]{(3,5) circle(3pt)};
             \draw[fill=none]{(4,5) circle(3pt)};
             \draw[fill=none]{(5,5) circle(3pt)};

             \draw[fill=none]{(0.5,-6.5) circle(3pt)};
             \draw[fill=none]{(1.5,-6.5) circle(3pt)};
             \draw[fill=none]{(2.5,-6.5) circle(3pt)};
             \draw[fill=none]{(3.5,-6.5) circle(3pt)};
             \draw[fill=none]{(4.5,-6.5) circle(3pt)};
             \draw[fill=none]{(5.5,-6.5) circle(3pt)};
             \draw[fill=none]{(0.5,-1.5) circle(3pt)};
             \draw[fill=none]{(1.5,-1.5) circle(3pt)};
             \draw[fill=none]{(2.5,-1.5) circle(3pt)};
             \draw[fill=none]{(3.5,-1.5) circle(3pt)};
             \draw[fill=none]{(4.5,-1.5) circle(3pt)};
             \draw[fill=none]{(5.5,-1.5) circle(3pt)};
             \draw[fill=none]{(0.5,-2.5) circle(3pt)};
             \draw[fill=none]{(1.5,-2.5) circle(3pt)};
             \draw[fill=none]{(2.5,-2.5) circle(3pt)};
             \draw[fill=none]{(3.5,-2.5) circle(3pt)};
             \draw[fill=none]{(4.5,-2.5) circle(3pt)};
             \draw[fill=none]{(5.5,-2.5) circle(3pt)};
             \draw[fill=none]{(0.5,-3.5) circle(3pt)};
             \draw[fill=none]{(1.5,-3.5) circle(3pt)};
             \draw[fill=none]{(2.5,-3.5) circle(3pt)};
             \draw[fill=none]{(3.5,-3.5) circle(3pt)};
             \draw[fill=none]{(4.5,-3.5) circle(3pt)};
             \draw[fill=none]{(5.5,-3.5) circle(3pt)};
             \draw[fill=none]{(0.5,-4.5) circle(3pt)};
             \draw[fill=none]{(1.5,-4.5) circle(3pt)};
             \draw[fill=none]{(2.5,-4.5) circle(3pt)};
             \draw[fill=none]{(3.5,-4.5) circle(3pt)};
             \draw[fill=none]{(4.5,-4.5) circle(3pt)};
             \draw[fill=none]{(5.5,-4.5) circle(3pt)};
             \draw[fill=none]{(0.5,-5.5) circle(3pt)};
             \draw[fill=none]{(1.5,-5.5) circle(3pt)};
             \draw[fill=none]{(2.5,-5.5) circle(3pt)};
             \draw[fill=none]{(3.5,-5.5) circle(3pt)};
             \draw[fill=none]{(4.5,-5.5) circle(3pt)};
             \draw[fill=none]{(5.5,-5.5) circle(3pt)};
             \draw[fill=none]{(-1,-7) circle(3pt)};
             \draw[fill=none]{(-2,-7) circle(3pt)};
             \draw[fill=none]{(-3,-7) circle(3pt)};
             \draw[fill=none]{(-4,-7) circle(3pt)};
             \draw[fill=none]{(-5,-7) circle(3pt)};
             \draw[fill=none]{(-6,-7) circle(3pt)};
             \draw[fill=none]{(-6,-1) circle(3pt)};
             \draw[fill=none]{(-1,-1) circle(3pt)};
             \draw[fill=none]{(-2,-1) circle(3pt)};
             \draw[fill=none]{(-3,-1) circle(3pt)};
             \draw[fill=none]{(-4,-1) circle(3pt)};
             \draw[fill=none]{(-5,-1) circle(3pt)};
             \draw[fill=none]{(-6,-2) circle(3pt)};
             \draw[fill=none]{(-1,-2) circle(3pt)};
             \draw[fill=none]{(-2,-2) circle(3pt)};
             \draw[fill=none]{(-3,-2) circle(3pt)};
             \draw[fill=none]{(-4,-2) circle(3pt)};
             \draw[fill=none]{(-5,-2) circle(3pt)};
             \draw[fill=none]{(-6,-3) circle(3pt)};
             \draw[fill=none]{(-1,-3) circle(3pt)};
             \draw[fill=none]{(-2,-3) circle(3pt)};
             \draw[fill=none]{(-3,-3) circle(3pt)};
             \draw[fill=none]{(-4,-3) circle(3pt)};
             \draw[fill=none]{(-5,-3) circle(3pt)};
             \draw[fill=none]{(-6,-4) circle(3pt)};
             \draw[fill=none]{(-1,-4) circle(3pt)};
             \draw[fill=none]{(-2,-4) circle(3pt)};
             \draw[fill=none]{(-3,-4) circle(3pt)};
             \draw[fill=none]{(-4,-4) circle(3pt)};
             \draw[fill=none]{(-5,-4) circle(3pt)};
             \draw[fill=none]{(-6,-5) circle(3pt)};
             \draw[fill=none]{(-1,-5) circle(3pt)};
             \draw[fill=none]{(-2,-5) circle(3pt)};
             \draw[fill=none]{(-3,-5) circle(3pt)};
             \draw[fill=none]{(-4,-5) circle(3pt)};
             \draw[fill=none]{(-5,-5) circle(3pt)};
             \draw[-latex, black](-6.1,-1)--(-6.5,-1);
             \draw[black](0.1,0)--(0.9,0);
             \draw[black](1.1,0)--(1.9,0);
             \draw[black](2.1,0)--(2.9,0);
             \draw[black](3.1,0)--(3.9,0);
             \draw[black](4.1,0)--(4.9,0);
             \draw[black](5.1,0)--(5.9,0);
             \draw[black](6.1,0)--(6.5,0);
             \draw[black](0.6,-1.5)--(1.4,-1.5);
             \draw[black](1.6,-1.5)--(2.4,-1.5);
             \draw[black](2.6,-1.5)--(3.4,-1.5);
             \draw[black](3.6,-1.5)--(4.4,-1.5);
             \draw[black](4.6,-1.5)--(5.4,-1.5);
             \draw[black](5.6,-1.5)--(6.4,-1.5);

             \draw[black](0.5,-1.6)--(0.5,-2.4);
             \draw[black](0.5,-2.6)--(0.5,-3.4);
             \draw[black](0.5,-3.6)--(0.5,-4.4);
             \draw[black](0.5,-4.6)--(0.5,-5.4);
             \draw[black](0.5,-5.6)--(0.5,-6.4);
             \draw[black](0.5,-6.6)--(0.5,-7);
             \draw[black](-1,-7.1)--(-1,-7.5);
             \draw[black](-1,-1.1)--(-1,-1.9);
             \draw[black](-1,-2.1)--(-1,-2.9);
             \draw[black](-1,-3.1)--(-1,-3.9);
             \draw[black](-1,-4.1)--(-1,-4.9);
             \draw[black](-1,-5.1)--(-1,-5.9);
             \draw[black](-1,-6.1)--(-1,-6.9);

             \draw[-latex, black](0,0.9)--(0,0.1);
             \draw[-latex, black](1,0.9)--(1,0.1);
             \draw[-latex, black](2,0.9)--(2,0.1);
             \draw[-latex, black](3,0.9)--(3,0.1);
             \draw[-latex, black](4,0.9)--(4,0.1);
             \draw[-latex, black](5,0.9)--(5,0.1);
             \draw[-latex, black](6,0.9)--(6,0.1);
             \draw[-latex, black](0,1.9)--(0,1.1);
             \draw[-latex, black](1,1.9)--(1,1.1);
             \draw[-latex, black](2,1.9)--(2,1.1);
             \draw[-latex, black](3,1.9)--(3,1.1);
             \draw[-latex, black](4,1.9)--(4,1.1);
             \draw[-latex, black](5,1.9)--(5,1.1);
             \draw[-latex, black](6,1.9)--(6,1.1);
             \draw[-latex, black](0,2.9)--(0,2.1);
             \draw[-latex, black](1,2.9)--(1,2.1);
             \draw[-latex, black](2,2.9)--(2,2.1);
             \draw[-latex, black](3,2.9)--(3,2.1);
             \draw[-latex, black](4,2.9)--(4,2.1);
             \draw[-latex, black](5,2.9)--(5,2.1);
             \draw[-latex, black](6,2.9)--(6,2.1);
             \draw[-latex, black](0,3.9)--(0,3.1);
             \draw[-latex, black](1,3.9)--(1,3.1);
             \draw[-latex, black](2,3.9)--(2,3.1);
             \draw[-latex, black](3,3.9)--(3,3.1);
             \draw[-latex, black](4,3.9)--(4,3.1);
             \draw[-latex, black](5,3.9)--(5,3.1);
             \draw[-latex, black](6,3.9)--(6,3.1);
             \draw[-latex, black](0,4.9)--(0,4.1);
             \draw[-latex, black](1,4.9)--(1,4.1);
             \draw[-latex, black](2,4.9)--(2,4.1);
             \draw[-latex, black](3,4.9)--(3,4.1);
             \draw[-latex, black](4,4.9)--(4,4.1);
             \draw[-latex, black](5,4.9)--(5,4.1);
             \draw[-latex, black](6,4.9)--(6,4.1);
             \draw[-latex, black](0,5.5)--(0,5.1);
             \draw[-latex, black](1,5.5)--(1,5.1);
             \draw[-latex, black](2,5.5)--(2,5.1);
             \draw[-latex, black](3,5.5)--(3,5.1);
             \draw[-latex, black](4,5.5)--(4,5.1);
             \draw[-latex, black](5,5.5)--(5,5.1);
             \draw[-latex, black](6,5.5)--(6,5.1);
             \draw[-latex, black](-5.1,-7)--(-5.9,-7);
             \draw[-latex, black](-1.1,-7)--(-1.9,-7);
             \draw[-latex, black](-2.1,-7)--(-2.9,-7);
             \draw[-latex, black](-3.1,-7)--(-3.9,-7);
             \draw[-latex, black](-4.1,-7)--(-4.9,-7);
             \draw[-latex, black](-6.1,-7)--(-6.5,-7);
             \draw[-latex, black](-6.1,-2)--(-6.5,-2);
             \draw[-latex, black](-6.1,-3)--(-6.5,-3);
             \draw[-latex, black](-6.1,-4)--(-6.5,-4);
             \draw[-latex, black](-6.1,-5)--(-6.5,-5);
             \draw[-latex, black](-6.1,-6)--(-6.5,-6);
             \draw[-latex, black](-5.1,-1)--(-5.9,-1);
             \draw[-latex, black](-1.1,-1)--(-1.9,-1);
             \draw[-latex, black](-2.1,-1)--(-2.9,-1);
             \draw[-latex, black](-3.1,-1)--(-3.9,-1);
             \draw[-latex, black](-4.1,-1)--(-4.9,-1);
             \draw[-latex, black](-5.1,-2)--(-5.9,-2);
             \draw[-latex, black](-1.1,-2)--(-1.9,-2);
             \draw[-latex, black](-2.1,-2)--(-2.9,-2);
             \draw[-latex, black](-3.1,-2)--(-3.9,-2);
             \draw[-latex, black](-4.1,-2)--(-4.9,-2);
             \draw[-latex, black](-5.1,-3)--(-5.9,-3);
             \draw[-latex, black](-1.1,-3)--(-1.9,-3);
             \draw[-latex, black](-2.1,-3)--(-2.9,-3);
             \draw[-latex, black](-3.1,-3)--(-3.9,-3);
             \draw[-latex, black](-4.1,-3)--(-4.9,-3);
             \draw[-latex, black](-5.1,-4)--(-5.9,-4);
             \draw[-latex, black](-1.1,-4)--(-1.9,-4);
             \draw[-latex, black](-2.1,-4)--(-2.9,-4);
             \draw[-latex, black](-3.1,-4)--(-3.9,-4);
             \draw[-latex, black](-4.1,-4)--(-4.9,-4);
             \draw[-latex, black](-5.1,-5)--(-5.9,-5);
             \draw[-latex, black](-1.1,-5)--(-1.9,-5);
             \draw[-latex, black](-2.1,-5)--(-2.9,-5);
             \draw[-latex, black](-3.1,-5)--(-3.9,-5);
             \draw[-latex, black](-4.1,-5)--(-4.9,-5);
             \draw[-latex, black](-5.1,-6)--(-5.9,-6);
             \draw[-latex, black](-1.1,-6)--(-1.9,-6);
             \draw[-latex, black](-2.1,-6)--(-2.9,-6);
             \draw[-latex, black](-3.1,-6)--(-3.9,-6);
             \draw[-latex, black](-4.1,-6)--(-4.9,-6);

             \draw[-latex, black](1.4,-1.6)--(0.6,-2.4);
             \draw[-latex, black](2.4,-1.6)--(1.6,-2.4);
             \draw[-latex, black](3.4,-1.6)--(2.6,-2.4);
             \draw[-latex, black](4.4,-1.6)--(3.6,-2.4);
             \draw[-latex, black](5.4,-1.6)--(4.6,-2.4);
             \draw[-latex, black](6,-2)--(5.6,-2.4); 
             \draw[-latex, black](1.4,-2.6)--(0.6,-3.4);
					   \draw[-latex, black](2.4,-2.6)--(1.6,-3.4);
             \draw[-latex, black](3.4,-2.6)--(2.6,-3.4);
             \draw[-latex, black](4.4,-2.6)--(3.6,-3.4);
             \draw[-latex, black](5.4,-2.6)--(4.6,-3.4);
             \draw[-latex, black](6,-3)--(5.6,-3.4);
             \draw[-latex, black](1.4,-3.6)--(0.6,-4.4);
             \draw[-latex, black](2.4,-3.6)--(1.6,-4.4);
						
             \draw[-latex, black](3.4,-3.6)--(2.6,-4.4);
             \draw[-latex, black](4.4,-3.6)--(3.6,-4.4);
             \draw[-latex, black](5.4,-3.6)--(4.6,-4.4);
             \draw[-latex, black](6,-4)--(5.6,-4.4);
             \draw[-latex, black](1.4,-4.6)--(0.6,-5.4);
             \draw[-latex, black](2.4,-4.6)--(1.6,-5.4);
             \draw[-latex, black](3.4,-4.6)--(2.6,-5.4);
             \draw[-latex, black](4.4,-4.6)--(3.6,-5.4);
             \draw[-latex, black](5.4,-4.6)--(4.6,-5.4);
             \draw[-latex, black](6,-5)--(5.6,-5.4);
             \draw[-latex, black](1.4,-5.6)--(0.6,-6.4);
             \draw[-latex, black](2.4,-5.6)--(1.6,-6.4);
             \draw[-latex, black](3.4,-5.6)--(2.6,-6.4);
             \draw[-latex, black](4.4,-5.6)--(3.6,-6.4);
						
             \draw[-latex, black](5.4,-5.6)--(4.6,-6.4);
             \draw[-latex, black](6,-6)--(5.6,-6.4);
             \draw[-latex, black](1.4,-6.6)--(1.1,-6.9);
             \draw[-latex, black](2.4,-6.6)--(2.1,-6.9);
             \draw[-latex, black](3.4,-6.6)--(3.1,-6.9);
             \draw[-latex, black](4.4,-6.6)--(4.1,-6.9);
             \draw[-latex, black](5.4,-6.6)--(5.1,-6.9);

             \draw[-latex, black](0.9,-0.1)--(-0.9,-1.9);
             \draw[-latex, black](1.9,-0.1)--(0.6,-1.4);
						\draw[-latex, black](0.4,-1.6)--(-0.9,-2.9);
             \draw[-latex, black](2.9,-0.1)--(1.6,-1.4);
             \draw[-latex, black](3.9,-0.1)--(2.6,-1.4);
             \draw[-latex, black](4.9,-0.1)--(3.6,-1.4);
             \draw[-latex, black](5.9,-0.1)--(4.6,-1.4);
             \draw[-latex, black](6.5,-0.5)--(5.6,-1.4);
             \draw[-latex, black](0.4,-2.6)--(-0.9,-3.9);
             \draw[-latex, black](0.4,-3.6)--(-0.9,-4.9);
             \draw[-latex, black](0.4,-4.6)--(-0.9,-5.9);
             \draw[-latex, black](0.4,-5.6)--(-0.9,-6.9);
             \draw[-latex, black](0.4,-6.6)--(-0.3,-7.3);

%
             \node at (-7,-1) {$n_{2}$};
             \node at (7,0) {$n_{1}$};
             \node at (7,-1.5) {$n_{1}$};
             \node at (0,5.8) {$n_{2}$};
             \node at (-1,-7.8) {$n_{3}$};
             \node at (0.5,-7.8) {$n_{3}$};
             \node at (3.5,6){$M(n_{1},n_{2},0,-n_{2}-\frac{n_{1}}{2})$};
						\node at (6,6){\textbf{A}};
             \node at (-4,-8){$M(0,n_{2},n_{3},n_{2}+\frac{n_{3}}{2}+4)$};
						\node at (-7,-8){\textbf{C}};
             \node at (3.5,-7.8){$M(n_{1},0,n_{3},\frac{n_{3}}{2}-\frac{n_{1}}{2}+2)$};
						\node at (6.6,-7.8){\textbf{B}};
             \end{tikzpicture}
     \end{center}
		\caption{} 
	\label{figurack6}
\end{figure}
The aim of this section is to find an explicit expression for the maps in the first quadrant and between the first and second quadrant of Figure \ref{figurack6}.
We call $M_{X}=\oplus_{m,n}M^{m,n}_{X}$ and $V_{X}=\oplus_{m,n}V^{m,n}_{X}$, for $X=A,B,C$.\\
We follow the notation in \cite{kacrudakovE36} and define, for every $u \in U(\g_{<0}) $ and $\phi \in \Hom(V_{X},V_{Y})$, the map $u \otimes \phi :  M_{X}\longrightarrow M_{Y}$ by:
\begin{align}
\label{uuprimo}
(u \otimes \phi)(u' \otimes v)= u' \,u \otimes \phi(v),
\end{align}
for every $ u' \otimes v \in U(\g_{<0}) \otimes V_{X}$. From this definition it is clear that the map $u \otimes \phi$ commutes with the action of $\g_{<0}$.\\
We now recall some results on maps constructed as in \eqref{uuprimo} from \cite{bagnoli2}.
\begin{lem}[\cite{bagnoli2}]
\label{commutacong0basiduali}
Let $u \otimes \phi$ be a map as in \eqref{uuprimo}. Let us suppose that $u \otimes \phi=\sum_{i}u_{i} \otimes \phi_{i}$ where $\left\{u_{i}\right\}_{i}$ and $\left\{\phi_{i}\right\}_{i}$ are bases of dual $\g_{0}-$modules and $u_{i}$ is the dual of $\phi_{i}$ for all $i$. Then $u \otimes \phi$ commutes with $\g_{0}$. 
\end{lem}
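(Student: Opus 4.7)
The plan is to verify the commutation relation $g_{0}\cdot (u\otimes \phi)(u'\otimes v)=(u\otimes \phi)(g_{0}\cdot (u'\otimes v))$ directly for $g_{0}\in \g_{0}$ and arbitrary $u'\otimes v\in U(\g_{<0})\otimes V_{X}$, and then reduce it to a statement about invariants of dual modules.

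First I would unfold both sides using the formula for the $\g_{0}$-action on $\Ind(F)=U(\g_{<0})\otimes F$, namely $g_{0}\cdot(u'\otimes v)=[g_{0},u']\otimes v+u'\otimes g_{0}v$, which is valid because $\g_{0}$ normalizes $\g_{<0}$ and we tensor over $U(\g_{\geq 0})$. Combined with the defining formula $(u\otimes \phi)(u'\otimes v)=u'u\otimes \phi(v)$, a short computation yields
\begin{align*}
g_{0}\cdot (u\otimes \phi)(u'\otimes v)-(u\otimes \phi)(g_{0}\cdot (u'\otimes v))
= u'[g_{0},u]\otimes \phi(v)+u'u\otimes (g_{0}\cdot \phi)(v),
\end{align*}
where $(g_{0}\cdot \phi)(v):=g_{0}\phi(v)-\phi(g_{0}v)$ denotes the natural $\g_{0}$-action on $\Hom(V_{X},V_{Y})$.

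Next I would substitute $u\otimes \phi=\sum_{i}u_{i}\otimes \phi_{i}$ and rewrite the right-hand side using the convention \eqref{uuprimo} as the image of $u'\otimes v$ under the operator $\sum_{i}[g_{0},u_{i}]\otimes \phi_{i}+\sum_{i}u_{i}\otimes(g_{0}\cdot \phi_{i})$. Thus the lemma amounts to the vanishing of this tensor in $U(\g_{<0})\otimes \Hom(V_{X},V_{Y})$, which is exactly the statement that the element $\sum_{i}u_{i}\otimes \phi_{i}$ is invariant under the diagonal $\g_{0}$-action (adjoint on $U(\g_{<0})$, natural on $\Hom(V_{X},V_{Y})$).

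Finally, I would invoke the duality hypothesis: since $\{u_{i}\}$ and $\{\phi_{i}\}$ are dual bases of dual $\g_{0}$-modules, the element $\sum_{i}u_{i}\otimes \phi_{i}$ corresponds, under the canonical isomorphism $U\otimes U^{*}\cong \End(U)$, to the identity endomorphism of $U:=\spann\{u_{i}\}$, hence is $\g_{0}$-invariant. Concretely, if $[g_{0},u_{i}]=\sum_{j}a_{ji}(g_{0})u_{j}$ then by duality $g_{0}\cdot \phi_{i}=-\sum_{j}a_{ij}(g_{0})\phi_{j}$, and reindexing makes the two sums cancel. The main bookkeeping obstacle is merely making sure the sign in the action on the dual module is consistent with the chosen pairing; once this is pinned down, the cancellation is immediate, concluding the proof.
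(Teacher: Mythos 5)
Your argument is correct: the reduction of the commutation to the vanishing of $\sum_{i}[g_{0},u_{i}]\otimes \phi_{i}+\sum_{i}u_{i}\otimes(g_{0}\cdot \phi_{i})$, followed by the observation that $\sum_i u_i\otimes\phi_i$ is the canonical $\g_0$-invariant element of $U\otimes U^{*}\cong\End(U)$, is exactly the standard proof of this fact (note that $\g_0$ consists of even elements here, so no super-signs interfere with the Leibniz step $[g_0,u'u]=[g_0,u']u+u'[g_0,u]$). The present paper does not reprove the lemma but cites it from \cite{bagnoli2}, where the argument is the same invariant-element computation, so your proposal matches the intended proof.
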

\begin{lem}[\cite{bagnoli2}]
\label{g0commuta}
Let us consider a map $u \otimes \phi \in U(\g_{<0}) \otimes \Hom(V_{X},V_{Y}) $. In order to show that $u \otimes \phi$ commutes with $\g_{0}$, it is sufficient to show that $w u \otimes \phi(v)=u \otimes \phi(w.v)$ for all $v \in V_{X}$, $w \in \g_{0}$.
\end{lem}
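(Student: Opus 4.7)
The plan is to reduce $\mathfrak g_{0}$-equivariance of $u\otimes \phi$ on the whole module $M_{X}=U(\mathfrak g_{<0})\otimes V_{X}$ to its verification on the generating subspace $1\otimes V_{X}$, and then show that on this subspace the required identity is exactly the hypothesis $wu\otimes\phi(v)=u\otimes\phi(w.v)$.

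First I would record two structural facts. By the very definition in \eqref{uuprimo}, $u\otimes\phi$ acts on $u'\otimes v$ by right multiplication by $u$ in the first tensor factor, so it commutes with left multiplication by any element of $\mathfrak g_{<0}$; this was already noted after \eqref{uuprimo}. Second, since $[\mathfrak g_{0},\mathfrak g_{<0}]\subset \mathfrak g_{<0}$, for any $w\in\mathfrak g_{0}$ and $u'\in U(\mathfrak g_{<0})$ the bracket $[w,u']$ again lies in $U(\mathfrak g_{<0})$, and therefore
\[
w.(u'\otimes v)=[w,u'].(1\otimes v)+u'.(w.(1\otimes v)) =[w,u']\otimes v+u'\otimes w.v,
\]
using that $w\in\mathfrak g_{\geq 0}$ acts on $1\otimes v$ through $V_{X}$.

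Next I would compute both sides of the commutation identity on a general element $u'\otimes v$. Applying $u\otimes\phi$ to the displayed formula above and using the $\mathfrak g_{<0}$-commutation from the first step yields
\[
(u\otimes\phi)\bigl(w.(u'\otimes v)\bigr)=[w,u']\,u\otimes\phi(v)+u'u\otimes\phi(w.v).
\]
On the other hand, expanding the action of $w$ on $M_{Y}$ gives
\[
w.\bigl((u\otimes\phi)(u'\otimes v)\bigr)=w.(u'u\otimes\phi(v))=[w,u']u\otimes\phi(v)+u'[w,u]\otimes\phi(v)+u'u\otimes w.\phi(v).
\]
Subtracting and factoring $u'$ out of the first tensor factor, the equality of the two expressions is equivalent, for all $u'\in U(\mathfrak g_{<0})$, to
\[
u'\bigl([w,u]\otimes\phi(v)+u\otimes w.\phi(v)\bigr)=u'\bigl(u\otimes\phi(w.v)\bigr).
\]
By $\C$-linearity in $u'$ (take $u'=1$), this is equivalent to $[w,u]\otimes\phi(v)+u\otimes w.\phi(v)=u\otimes\phi(w.v)$, and since $uw\otimes\phi(v)=u\otimes w.\phi(v)$ in $U(\mathfrak g)\otimes_{U(\mathfrak g_{\geq 0})}V_{Y}$, the left-hand side is exactly $wu\otimes\phi(v)$. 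Thus the sufficient condition in the statement is both necessary and sufficient, which finishes the argument.

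The argument is essentially bookkeeping; the only point requiring care is the correct handling of $wu$ as an element of $U(\mathfrak g)$ acting on $M_{Y}=U(\mathfrak g)\otimes_{U(\mathfrak g_{\geq 0})}V_{Y}$, namely that $uw$ can be absorbed into the $V_{Y}$-factor via the $\mathfrak g_{\geq 0}$-action. I do not anticipate any serious obstacle.
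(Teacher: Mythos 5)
Your proof is correct: the reduction to the generating subspace $1\otimes V_{X}$ via the identity $w.(u'\otimes v)=[w,u']\otimes v+u'\otimes w.v$ (which uses $[\g_{0},\g_{<0}]\subset\g_{<0}$ and the fact that $\g_{\geq 0}$ acts on $1\otimes V_X$ through $V_X$), together with the absorption $uw\otimes\phi(v)=u\otimes w.\phi(v)$ in $U(\g)\otimes_{U(\g_{\geq 0})}V_{Y}$, is exactly the intended argument. The paper does not reprove this lemma but imports it from \cite{bagnoli2}, where the proof proceeds along the same lines, so there is nothing to add.
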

\begin{lem}[\cite{bagnoli2}]
\label{nablag0g+}
Let $\Phi:M_{X}\rightarrow M_{Y}$ be a linear map. Let us suppose that $\Phi$ commutes with $\g_{\leq 0}$ and that $\Phi(v)$ is a singular vector for every $v$ highest weight vector in $V_{X}^{m,n}$ and for all $m,n \in \Z$. Then $\Phi$ is a morphism of $\g-$modules.
\end{lem}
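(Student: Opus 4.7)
The plan is to show that $\Phi$ commutes with $\g_{>0}$, since commutation with $\g_{\leq 0}$ is already assumed. By Remark \ref{remlowest}, $\g_i = \g_1^i$ for $i \geq 2$, so $\g_{>0}$ is generated as a Lie algebra by $\g_1$; hence by an easy induction (operators commuting with $\Phi$ form a Lie subalgebra of $\End(M_Y, M_X)$ under the bracket), it suffices to prove that $\Phi$ commutes with every $X \in \g_1$.

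Fix $X \in \g_1$ and introduce the defect
\[
\Psi_X \colon M_X \longrightarrow M_Y, \qquad \Psi_X(m) := \Phi(X.m) - X.\Phi(m),
\]
whose vanishing is equivalent to the desired commutation. First I would verify that $\Psi_X$ commutes with $\g_{<0}$: for $Y \in \g_{<0}$ one has $[X,Y] \in \g_{\leq 0}$, and writing $X.Y = Y.X + [X,Y]$ while using that $\Phi$ commutes with both $Y$ and $[X,Y]$ gives
\[
\Psi_X(Y.m) = Y.\Psi_X(m).
\]
Since $M_X$ is generated by $V_X$ as a $U(\g_{<0})$-module, this reduces the problem to checking $\Psi_X|_{V_X} = 0$.

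To handle the restriction to $V_X$, I would bundle the defects into a single linear map
\[
\mathcal{T} \colon \g_1 \otimes V_X \longrightarrow M_Y, \qquad \mathcal{T}(X \otimes v) := \Psi_X(v),
\]
and verify, using $[\g_0, \g_1] \subset \g_1$ together with the $\g_0$-equivariance of $\Phi$, that $\mathcal{T}$ is $\g_0$-equivariant (the correction terms produced by $X.(w.v) - w.(X.v) = [X,w].v$ cancel against the analogous ones from $X.\Phi(w.v) - w.X.\Phi(v)$). Since $V_X^{m,n}$ is irreducible and generated by its highest weight vector $v_0$ as a $\g_0$-module, the tensor product $\g_1 \otimes V_X^{m,n}$ is generated by $\g_1 \otimes v_0$ as a $\g_0$-module, so it suffices to check that $\mathcal{T}(X \otimes v_0) = 0$ for every $X \in \g_1$. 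But $X.v_0 = 0$ in $M_X$ since $\g_{>0}$ acts trivially on $V_X^{m,n} \subset F$ by Theorem \ref{keythmsingular}(i), and $X.\Phi(v_0) = 0$ because $\Phi(v_0)$ is singular by hypothesis. Therefore $\mathcal{T}(X \otimes v_0) = 0$, which gives $\Psi_X|_{V_X} = 0$ and, via the $\g_{<0}$-equivariance, $\Psi_X \equiv 0$ on all of $M_X$.

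The main technical step I anticipate is the verification of the $\g_0$-equivariance of $\mathcal{T}$, which is a short but careful bracket computation; the remaining steps are structural reductions (to $\g_1$ via $\g_i = \g_1^i$, to $V_X$ via $\g_{<0}$-equivariance of $\Psi_X$, and to the highest weight vector via $\g_0$-equivariance of $\mathcal{T}$) that follow cleanly from the hypotheses.
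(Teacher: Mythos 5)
Your argument is correct and complete; note that the paper states this lemma without proof, citing \cite{bagnoli2}, so there is no in-text argument to compare against, but your chain of reductions is the standard one and every step checks out. In particular: the set of elements of $\g$ whose action commutes with $\Phi$ is closed under brackets, so $\g_i=\g_1^{i}$ for $i\geq 2$ (Remark \ref{remlowest}) reduces everything to $\g_1$; the defect $\Psi_X$ intertwines the $\g_{<0}$-action (strictly speaking one picks up a factor $(-1)^{p(X)p(Y)}$ when both $X$ and $Y$ are odd, which is harmless for the vanishing argument); $\g_1\otimes V_X^{m,n}$ is indeed generated over $\g_0$ by $\g_1\otimes v_0$ because $V_X^{m,n}=U(\g_0).v_0$ and $[\g_0,\g_1]\subset\g_1$; and the $\g_0$-equivariance of $\mathcal{T}$ follows from $\mathcal{T}(X\otimes v)=-X.\Phi(v)$ together with $[w,X].\Phi(v)=w.X.\Phi(v)-X.w.\Phi(v)$ for even $w\in\g_0$. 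A slightly shorter route, closer in spirit to how the paper uses the lemma, is to observe that $M_X^{m,n}=U(\g_{\leq 0}).v_0$ and that, by Remark \ref{esistenzamappe}, the singular vector $\Phi(v_0)$ already determines a genuine $\g$-morphism $\nabla$ with $\nabla(v_0)=\Phi(v_0)$; since both $\Phi$ and $\nabla$ commute with $\g_{\leq 0}$ and agree on $v_0$, they coincide. Your version has the advantage of being self-contained, since it does not presuppose the existence statement of Remark \ref{esistenzamappe}.
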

We define, using \eqref{uuprimo}, the following map between the modules $M_{A}$ in the first quadrant:
\begin{gather}
\label{nablaA}
\nabla_{A}: M\left(n_{1},n_{2},0,-n_{2}-\frac{n_{1}}{2}\right) \longrightarrow M\left(n_{1},n_{2}-1,0,-(n_{2}-1)-\frac{n_{1}}{2}\right)\\ \nonumber
\nabla_{A}=w_{12} \otimes \partial_{12}+w_{13} \otimes \partial_{13}+w_{23} \otimes \partial_{23}+w_{14} \otimes \partial_{14}+w_{24} \otimes \partial_{24}+w_{34} \otimes \partial_{34},
\end{gather}
where $\partial_{ij} $ denotes the derivative with respect to $x_{ij}$. We assume that $\partial_{ij}=-\partial_{ji}  $ for all $i,j$.
\begin{rem}
\label{ck6nablavettsing}
The map $\nabla_{A}$ is constructed so that it sends the highest weight vector $x_{1}^{n_{1}}x_{12}^{n_{2}} $ of $M(n_{1},n_{2},0,-n_{2}-\frac{n_{1}}{2})$ to $\vec{m}=w_{12} \otimes n_{2} x_{12}^{n_{2}-1} x_{1}^{n_{1}}$ that is the highest weight singular vector $\vec{m}_{1a}$ of $M\left(n_{1},n_{2}-1,0,-(n_{2}-1)-\frac{n_{1}}{2}\right)$ (see Remark \ref{teoremavettorisingolarick6ultimo}).
\end{rem}
\begin{lem}
\label{nablaAcommuta}
The maps $\nabla_{A}$, defined in \eqref{nablaA}, are the explicit expression of the morphisms of $\g-$modules of degree 1 represented in quadrant \textbf{A} and $\nabla_{A}^{2}=0$.
\end{lem}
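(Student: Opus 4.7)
The plan has two parts. First, I will show that $\nabla_A$ is a well-defined $\g$-module morphism of degree $1$, which by Remark \ref{esistenzamappe} must coincide with the arrow in quadrant \textbf{A} of Figure \ref{figurack6} because of the singular vector it hits. Second, I will compute $\nabla_A\circ\nabla_A$ directly and show it vanishes using the relations among the $w_{ij}$'s together with the constraint $V^{n_1,n_2}_A\subset \Ker D$ from Lemma \ref{L1L2L3L4}.

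For the morphism property I invoke Lemma \ref{nablag0g+}. By the definition \eqref{uuprimo}, $\nabla_A$ already commutes with $\g_{<0}$. Commutation with $\g_0$ follows from Lemma \ref{commutacong0basiduali}: the set $\{w_{ij}\}_{1\le i<j\le 4}$ is a basis of $\g_{-1}\cong \inlinewedge^2\C^4$ by Remark \ref{remwck6}, and the operators $\{\partial_{ij}\}$ act on $\Sym^{n_2}(\inlinewedge^2\C^4)$ as the dual basis in $(\inlinewedge^2\C^4)^*$; the $\mathfrak{sl}_4$-structures are manifestly dual, and the $t$-weights match because $w_{ij}\in\g_{-1}$ has $t$-weight $-1$ while each $\partial_{ij}$ shifts the $t$-action on $V^{n_1,n_2}_A$ by $+1$. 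The remaining hypothesis of Lemma \ref{nablag0g+}, that highest weight vectors are sent to singular vectors, is Remark \ref{ck6nablavettsing}. Well-definedness of $\nabla_A$ (i.e.\ that its image lies in $U(\g_{<0})\otimes V^{n_1,n_2-1}_A$) is automatic from Lemma \ref{L1L2L3L4}: each $\partial_{ij}$ is a constant-coefficient differential operator and therefore commutes with $D$ and all $D_k$, so it preserves $\Ker D\cap\bigcap_k \Ker D_k$.

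The identity $\nabla_A^2=0$ is then a direct computation. Iterating \eqref{uuprimo} gives
\begin{align*}
\nabla_A^2(u'\otimes v)=\sum_{a,b} u'\, w_a w_b\otimes \partial_b \partial_a(v),
\end{align*}
where $a,b$ range independently over the six unordered pairs $\{ij\}\subset\{1,2,3,4\}$. Since the $\partial_{ij}$ commute and $w_a^2=0$ by Remark \ref{remwck6}, the diagonal terms vanish, and the off-diagonal pairs combine into $(w_a w_b+w_b w_a)\otimes \partial_a \partial_b=[w_a,w_b]\otimes\partial_a\partial_b$. The only non-zero brackets, again by Remark \ref{remwck6}, are $[w_{12},w_{34}]=-4\Theta$, $[w_{13},w_{24}]=4\Theta$, $[w_{14},w_{23}]=-4\Theta$, so the sum collapses to
\begin{align*}
\nabla_A^2(u'\otimes v)=-4\Theta u'\otimes(\partial_{12}\partial_{34}-\partial_{13}\partial_{24}+\partial_{14}\partial_{23})(v)=-4\Theta u'\otimes D(v),
\end{align*}
which vanishes because $v\in V^{n_1,n_2}_A\subset \Ker D$ by Lemma \ref{L1L2L3L4}. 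There is no serious obstacle here; the only conceptual step is recognizing that the relation $V_A\subset \Ker D$ is exactly the identity needed, and that no input from the $D_k$ is required because $\partial_k$ does not appear in $\nabla_A$.
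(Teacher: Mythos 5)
Your proof is correct. The first half --- commutation with $\g_{<0}$ via \eqref{uuprimo}, with $\g_{0}$ via Lemma \ref{commutacong0basiduali}, and the singular-vector condition via Remark \ref{ck6nablavettsing} feeding into Lemma \ref{nablag0g+} --- is exactly the paper's argument, with the welcome extra observation that $\partial_{ij}$ preserves $\Ker D\cap\bigcap_k\Ker D_k$, which the paper leaves implicit. Where you genuinely diverge is in proving $\nabla_A^2=0$: the paper does not compute anything, but argues that the composite, being a morphism of Verma modules of degree $2$, would have to send a highest weight vector to a highest weight singular vector of degree $2$, and no such singular vectors exist by the classification in Remark \ref{teoremavettorisingolarick6ultimo}. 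You instead expand $\nabla_A^2(u'\otimes v)=\sum_{a,b}u'w_aw_b\otimes\partial_b\partial_a v$, kill the diagonal via $w_a^2=0$, collect the off-diagonal pairs into anticommutators, and use the structure constants $[w_{12},w_{34}]=-4\Theta$, $[w_{13},w_{24}]=4\Theta$, $[w_{14},w_{23}]=-4\Theta$ to arrive at $\nabla_A^2=-4\Theta\otimes D$, which vanishes on $V_A\subset\Ker D$ by Lemma \ref{L1L2L3L4}; this computation is correct (note $\Theta$ is central in $U(\g_{<0})$, so the placement of $u'$ is harmless). The paper's route is shorter but leans on the full classification of singular vectors from \cite{ck6}; yours is self-contained, uses only the $\g_{-1}$ structure constants and the characterization of $V_A$, and has the added virtue of identifying the precise algebraic reason for the vanishing, namely that $\nabla_A^2$ is $-4\Theta$ times the Pl\"ucker-type operator $D$ that cuts out $V_A$.
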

\begin{proof}
The map $\nabla_{A}$ commutes with $\g_{<0}$ by \eqref{uuprimo}.
By Remark \ref{ck6nablavettsing} and Lemmas \ref{commutacong0basiduali}, \ref{nablag0g+} it follows that $\nabla_{A}$ is a morphism of $\g-$modules. The property $\nabla_{A}^{2}=0$ follows from the fact that $\nabla_{A}$ is a map between Verma modules that contain only highest weight singular vectors of degree 1 and there are no singular vectors of degree 2, by Remark \ref{teoremavettorisingolarick6ultimo}.
\end{proof}
We define, using \eqref{uuprimo}, the following map from the first to the second quadrant of Figure \ref{figurack6}:
\begin{align}
\label{explicitnabla3}
\centering
\nabla_{3}: M\left(n_{1},0,0,-\frac{n_1}{2}\right)\rightarrow M\left(n_{1}-2,0,0,-\frac{(n_1-2)}{2}+2\right) 
\end{align}
\begin{align*}
\nabla_{3}&=w_{12} w_{13} w_{14} \otimes \partial_{1}^{2}-w_{12}w_{24}w_{23}\otimes \partial_{2}^{2}-w_{13}w_{34}w_{23}\otimes \partial_{3}^{2}-w_{34}w_{24}w_{14}\otimes \partial_{4}^{2}\\
&-\frac{w_{12}}{2}(w_{14}w_{23}- w_{23}w_{14}+ w_{24}w_{13}- w_{13}w_{24})\otimes \partial_{1}\partial_{2}-\frac{w_{13}}{2}(w_{14}w_{23}- w_{23}w_{14}-w_{34}w_{12}+ w_{12}w_{34})\otimes \partial_{1}\partial_{3}\\ 
&-\frac{w_{14}}{2}(w_{24}w_{13}- w_{13}w_{24}+ w_{12}w_{34}- w_{34}w_{12})\otimes \partial_{1}\partial_{4}+\frac{w_{23}}{2}(-w_{13}w_{24}+ w_{24}w_{13}- w_{12}w_{34}+ w_{34}w_{12})\otimes \partial_{2}\partial_{3}\\
&+\frac{w_{24}}{2}(w_{34}w_{12}- w_{12}w_{34}+ w_{14}w_{23}- w_{23}w_{14})\otimes \partial_{2}\partial_{4}-\frac{w_{34}}{2}(w_{24}w_{13}- w_{13}w_{24}-w_{14}w_{23}+ w_{23}w_{14})\otimes \partial_{3}\partial_{4},
\end{align*}
where $\partial_{i} $ denotes the derivative with respect to $x_{i}$.
\begin{rem}
\label{ck6nablavettsing3}
The map $\nabla_{3}$ is constructed so that it sends the highest weight vector $x_{1}^{n_{1}} $ of $M\left(n_{1},0,0,-\frac{n_{1}}{2} \right)$ to $\vec{m}=n_{1}(n_{1}-1)w_{12} w_{13} w_{14} \otimes x_{1}^{n_{1}-2}$ that is the highest weight singular vector $\vec{m}_{3b}$ of $M\left(n_{1}-2,0,0,-\frac{n_{1}-2}{2}+2\right)$ (see Remark \ref{teoremavettorisingolarick6ultimo}).
\end{rem}
\begin{prop}
\label{morfismonabla3}
The maps $\nabla_{3}$ are the explicit expression of the morphisms of $\g-$modules of degree 3 from quadrant \textbf{A} to quadrant \textbf{B} and $\nabla_{3}\nabla_{A}=0$.
\end{prop}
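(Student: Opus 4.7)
The proof follows the same three-step template as Lemma \ref{nablaAcommuta}. First, the formula \eqref{explicitnabla3} is of the form \eqref{uuprimo}, so $\nabla_{3}$ automatically commutes with $\g_{<0}$. Next, I would check that $\nabla_{3}$ commutes with $\g_{0}$ by applying Lemma \ref{commutacong0basiduali}: the cubic $U(\g_{<0})$-terms appearing in \eqref{explicitnabla3} span an irreducible $\mathfrak{sl}_{4}$-submodule of $U(\g_{<0})$ with highest weight vector $w_{12}w_{13}w_{14}$, of $\mathfrak{sl}_{4}$-weight $(2,0,0)$, hence isomorphic to $F(2,0,0)\cong \Sym^{2}(\C^{4})$; the paired second-order differential operators $\partial_{i}\partial_{j}$ span the dual module $\Sym^{2}(\C^{4})^{*}\cong F(0,0,2)$; and the specific symmetrized combinations in \eqref{explicitnabla3} are precisely the expression of a dual-basis pairing. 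Equivalently, one may invoke Lemma \ref{g0commuta} and verify $wu\otimes \phi(v)=u\otimes \phi(w.v)$ directly for $w$ ranging over Chevalley generators of $\g_{0}$.

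Having established commutativity with $\g_{\leq 0}$, I would invoke Remark \ref{ck6nablavettsing3}, which says $\nabla_{3}$ sends the highest weight vector $x_{1}^{n_{1}}\in V^{n_{1},0}_{A}$ to $n_{1}(n_{1}-1)\vec{m}_{3b}$, a singular vector. By $\g_{0}$-equivariance this extends to: every highest weight vector in $V^{n_{1},0}_{A}$ is sent to a singular vector. Lemma \ref{nablag0g+} then concludes that $\nabla_{3}$ is a morphism of $\g$-modules of degree $3$ as claimed.

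For $\nabla_{3}\nabla_{A}=0$, note that the composition is defined only where the image of $\nabla_{A}$ lies in the domain of $\nabla_{3}$, namely on $M(n_{1},1,0,-1-\frac{n_{1}}{2})$, giving a $\g$-morphism of degree $4$ into $M(n_{1}-2,0,0,-\frac{n_{1}-2}{2}+2)$. By Remark \ref{esistenzamappe} such a morphism would correspond to a highest weight singular vector of degree $4$ in the target, but by the classification recalled in Remark \ref{teoremavettorisingolarick6ultimo} only singular vectors of degrees $1,3,5$ exist, so the composition vanishes. Alternatively, one may check directly: $\nabla_{A}(x_{1}^{n_{1}}x_{12})=w_{12}\otimes x_{1}^{n_{1}}$, and the only term of $\nabla_{3}$ that does not annihilate $x_{1}^{n_{1}}$ is $w_{12}w_{13}w_{14}\otimes \partial_{1}^{2}$, producing $w_{12}^{2}w_{13}w_{14}\otimes n_{1}(n_{1}-1)x_{1}^{n_{1}-2}=0$ because $w_{12}^{2}=0$ in $U(\g_{<0})$ by Remark \ref{remwck6}; the composition then vanishes on the generator and hence on all of $M(n_{1},1,0,-1-\frac{n_{1}}{2})$.

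The main technical obstacle is the $\g_{0}$-equivariance in the first step. Because of the nonzero brackets $[w_{12},w_{34}]=-4\Theta$, $[w_{13},w_{24}]=4\Theta$, $[w_{14},w_{23}]=-4\Theta$ in $U(\g_{<0})$, products of three $w$'s do not form a symmetric algebra, so the explicit antisymmetrizations appearing in \eqref{explicitnabla3} (such as $w_{14}w_{23}-w_{23}w_{14}+w_{24}w_{13}-w_{13}w_{24}$) are forced upon us in order to realize the $\mathfrak{sl}_{4}$-module $F(2,0,0)$ correctly inside $U(\g_{<0})_{3}$. Checking that this realization yields the dual basis to $\{\partial_{i}\partial_{j}\}$ is the one nontrivial bookkeeping calculation underlying the whole proposition.
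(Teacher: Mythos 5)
Your proposal is correct and follows essentially the same route as the paper: commutation with $\g_{<0}$ from \eqref{uuprimo}, $\g_{0}$-equivariance via the lemmas of Section 4, Remark \ref{ck6nablavettsing3} together with Lemma \ref{nablag0g+} to conclude that $\nabla_{3}$ is a $\g$-morphism, and the absence of degree-$4$ singular vectors to get $\nabla_{3}\nabla_{A}=0$. The only divergence is that the paper establishes $\g_{0}$-equivariance by direct computation via Lemma \ref{g0commuta} rather than by the dual-basis argument of Lemma \ref{commutacong0basiduali} that you propose as the primary route, but you offer the direct verification as an equivalent alternative, so the argument matches.
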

\begin{proof}
 First we show that the map $\nabla_{3}$ is a morphism of $\g-$modules. It commutes with $\g_{<0}$ due to \eqref{uuprimo}.
	Due to Lemmas \ref{g0commuta} and \ref{nablag0g+}, it is sufficient to show that $w u \otimes \nabla_{3}(v)=u \otimes \nabla_{3}(w.v)$ for every $w \in \g_{0}$, $v \in V_{A}$. This property follows by direct computations.
 The property $\nabla_{3}\nabla_{A}=0$ follows from the fact that there are no singular vectors of degree 4, by Remark \ref{teoremavettorisingolarick6ultimo}.
\end{proof}
In the following we call $\nabla_B$ (resp. $\nabla_C$) the $\g-$morphism from $M^{n_1,n_3}_{B}$ to $M^{n_1-1,n_3+1}_{B}$ (resp. from $M^{n_2,n_3}_{C}$ to $M^{n_2+1,n_3}_{C}$) that maps the highest weight vector of $V^{n_1,n_3}_{B}$ (resp. $V^{n_2,n_3}_{C}$) to the highest weight singular vector of degree 1 of $M^{n_1-1,n_3+1}_{B}$ (resp. $M^{n_2+1,n_3}_{C}$). We point out that $\nabla_{B}^{2}=\nabla_{C}^{2}=0$ since there are no highest weight singular vectors of degree 2 by Remark \ref{teoremavettorisingolarick6ultimo}. We call $\nabla_{5}$ the $\g-$morphism, from $M^{1,0}_{A}$ to $M^{0,1}_{C}$, that maps the highest weight vector $x_{1}$ of $V^{1,0}_{A}$ to the highest weight singular vector of degree 5 of $M^{0,1}_{C}$ (see Remark \ref{teoremavettorisingolarick6ultimo}). We point out that $\nabla_{5}\nabla_{A}=\nabla_{C}\nabla_{5}=0$ since there are no highest weight singular vectors of degree 6 by Remark \ref{teoremavettorisingolarick6ultimo}. The morphism $\nabla_{5}$ is represented from the first to the third quadrant in Figure \ref{figurack6}. We call $\widetilde{\nabla}_{3}: M^{0,n_3}_{B} \rightarrow M^{0,n_3+2}_{C}$ the $\g-$morphisms that map the highest weight vector of $V^{0,n_3}_{B}$ to the unique highest weight singular vector of degree 3 in $M^{0,n_3+2}_{C}$ (see Remark \ref{teoremavettorisingolarick6ultimo}). These are the morphisms of degree 3 represented in the Figure \ref{figurack6} from the quadrant \textbf{B} to the quadrant \textbf{C}. We point out that $\widetilde{\nabla}_{3}\nabla_{B}=\nabla_{C}\widetilde{\nabla}_{3}=0$ since there are no highest weight singular vectors of degree 4 by Remark \ref{teoremavettorisingolarick6ultimo}.\\
The following sections are dedicated to the computation of the homology of complexes in the first an third quadrants of Figure \ref{figurack6}. We will use techniques of spectral sequences, that we briefly recall for the reader's convenience in the next section.
\section{Preliminaries on spectral sequences}
In this section we recall some notions about spectral sequences; for further details see \cite[Appendix]{kacrudakovE36}, \cite[Chapter XI]{maclane} and \cite{bagnoli2}. We follow the notation used in \cite{kacrudakovE36}.\\
Let $A$ be a module with a filtration:
\begin{align}
\label{filtration}
...\subset F_{p-1}A \subset F_{p}A \subset F_{p+1}A\subset...  ,
\end{align}
where $p \in \Z$. A filtration is called \textit{convergent above} if $A=\cup_{p}\, F_{p}A$. 
We suppose that $A$ is endowed with a differential $d:A\longrightarrow A$ such that:
\begin{align}
\label{differential}
 d^{2}=0 \quad \text{and} \quad d( F_{p}A)\subset F_{p-s+1}A,
\end{align}
 for fixed $s$ and all $p$ in $\Z$. The classical case studied in \cite[Chapter XI, Section 3]{maclane} corresponds to $s=1$. We will need the case $s=0$. The filtration \eqref{filtration} induces a filtration on the module $H(A)$ of the homology spaces of $A$: indeed, for every $p \in \Z$, $F_{p}H(A) $ is defined as the image of $H(F_{p}A) $ under the injection $F_{p}A \longrightarrow A$.
\begin{defi}
\label{famigliacondiff}
Let $E=\left\{E_{p}\right\}_{p\in \Z}$ be a family of modules. A differential $d:E\longrightarrow E$ of degree $-r\in \Z$ is a family of homorphisms $\left\{d_{p}: E_{p}\longrightarrow E_{p-r}\right\}_{p \in \Z}$ such that $d_{p}\circ d_{p+r}=0$ for all $p \in \Z$. We denote by $H(E)=H(E,d)$ the homology of $E$ under the differential $d$ that is the family $\left\{H_{p}(E,d)\right\}_{p\in \Z}$, where:
\begin{align*}
H_{p}(E,d)=\frac{\Ker(d_{p}: E_{p}\longrightarrow E_{p-r})}{\Ima (d_{p+r}: E_{p+r}\longrightarrow E_{p})}.
\end{align*}
\end{defi}
\begin{defi}[Spectral sequence]
\label{defispec}
A \textit{spectral sequence} $E=\left\{(E^{r},d^{(r)})\right\}_{r\in \Z}$ is a sequence of families of modules with differential $(E^{r},d^{(r)})$ as in Definition \ref{famigliacondiff}, such that, for all $r$, $d^{(r)}$ has degree $-r$ and:
\begin{align*}
H(E^{r},d^{(r)}) \cong E^{r+1}.
\end{align*}
\end{defi}
\begin{prop}
\label{spectral1}
Let $A$ be a module with a filtration as in \eqref{filtration} and differential as in \eqref{differential}. Therefore it is uniquely determined a spectral sequence, as in Definition \ref{defispec}, $E=\left\{(E^{r},d^{(r)})\right\}_{r \in \Z}$ such that:
\begin{gather}
 H(E^{r},d^{(r)}) \cong E^{r+1}, \label{spectral1prima} \\
E^{r}_{p} \cong F_{p}A / F_{p-1}A \quad \text{for}  \quad r\leq s-1, \label{spectral1seconda} \\
d^{(r)}=0 \quad \text{for}  \quad r <s-1, \label{spectral1terza}\\
d^{(s-1)}=\Gr d, \label{spectral1quarta}\\
E^{s}_{p} \cong H(F_{p}A / F_{p-1}A) . \label{grado0graded} 
\end{gather}
\end{prop}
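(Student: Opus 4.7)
The strategy is to imitate the classical construction in \cite[Chapter XI, Section 3]{maclane} while carefully tracking the shift $s$ in the hypothesis $d(F_pA)\subset F_{p-s+1}A$. For each $r\geq s-1$ and each $p\in\Z$, I would define the relative cycles and boundaries
\[
Z^r_p=F_pA\cap d^{-1}(F_{p-r}A),\qquad B^r_p=F_pA\cap d(F_{p+r-s+1}A),
\]
so that $Z^r_p$ consists of elements in degree $p$ whose differential drops the filtration by $r$, and $B^r_p$ of those exact elements pulled back from filtration $p+r-s+1$. Note that $Z^r_p=F_pA$ and $B^r_p\subset B^{r-1}_p\subset\cdots$ in the range $r\leq s-1$ because of the automatic drop by $s-1$ enforced by \eqref{differential}. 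I would then set
\[
E^r_p=\frac{Z^r_p}{Z^{r-1}_{p-1}+B^{r-1}_p},
\]
with the convention that in the range $r\leq s-1$ the numerator collapses to $F_pA$ and the denominator collapses to $F_{p-1}A$, yielding \eqref{spectral1seconda} at once.

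Next I would define the differential $d^{(r)}\colon E^r_p\to E^r_{p-r}$ as the map induced by $d$: for $a\in Z^r_p$ one has $da\in F_{p-r}A$ by definition, and $d(da)=0\in F_{p-2r}A$ so $da\in Z^r_{p-r}$; a short diagram chase shows that $Z^{r-1}_{p-1}+B^{r-1}_p$ is sent into $Z^{r-1}_{p-r-1}+B^{r-1}_{p-r}$, so $d^{(r)}$ is well defined, and $d^{(r)}\circ d^{(r)}=0$ is immediate from $d^2=0$. For $r<s-1$ the image $d(F_pA)\subset F_{p-s+1}A\subset F_{p-1}A$ vanishes in $E^r_{p-r}=F_{p-r}A/F_{p-r-1}A$, giving \eqref{spectral1terza}; for $r=s-1$ the same identification yields $d^{(s-1)}=\Gr d$, i.e.\ \eqref{spectral1quarta}. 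Then \eqref{grado0graded} is the computation $E^s_p\cong H(F_pA/F_{p-1}A,\Gr d)$, which is just the definition of $E^s$ as the homology of $E^{s-1}$ under $d^{(s-1)}$.

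The core technical step is the isomorphism $H(E^r,d^{(r)})\cong E^{r+1}$ of \eqref{spectral1prima}, and this I expect to be the main obstacle: it requires showing that $\ker d^{(r)}$ on $E^r_p$ is represented by $Z^{r+1}_p+Z^{r-1}_{p-1}$ and that $\Ima d^{(r)}$ into $E^r_{p-r}$ corresponds to $B^r_{p-r}$, so that the quotient is exactly $Z^{r+1}_{p-r}/(Z^r_{p-r-1}+B^r_{p-r})=E^{r+1}_{p-r}$ (after reindexing). This is a straightforward but delicate diagram chase; I would carry it out by a two-step argument, first lifting a representative $a\in Z^r_p$ of a class in $\ker d^{(r)}$ to an element with $da\in F_{p-r-1}A$ (using that the obstruction in $F_{p-r}A/F_{p-r-1}A$ is killed modulo $B^{r-1}_{p-r}+Z^{r-1}_{p-r-1}$), and then identifying the image term. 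Uniqueness of the spectral sequence will follow because the properties \eqref{spectral1seconda}--\eqref{spectral1quarta} pin down $E^{s-1}$ and $d^{(s-1)}$ from the filtered complex, and \eqref{spectral1prima} then determines every subsequent page inductively.
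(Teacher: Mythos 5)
Your overall strategy is the right one: the paper gives no argument for this proposition but simply cites the Appendix of \cite{kacrudakovE36}, and what is done there is precisely the Mac Lane construction of the spectral sequence of a filtered complex, adapted to the shift $s$. Your cycle groups $Z^r_p=F_pA\cap d^{-1}(F_{p-r}A)$, the verification that $d^{(r)}=0$ for $r<s-1$ and $d^{(s-1)}=\Gr d$, and the plan for the diagram chase establishing $H(E^r,d^{(r)})\cong E^{r+1}$ are all correct in outline.

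However, your boundary groups are mis-indexed, and this is not cosmetic: it breaks the statement in exactly the case $s=0$ that the paper needs. At page $r$ the boundaries accumulated in filtration degree $p$ are the images of the differentials $d^{(j)}$ for $j<r$, namely $\bigcup_{j<r}\bigl(F_pA\cap d(F_{p+j}A)\bigr)=F_pA\cap d(F_{p+r-1}A)=dZ^{r-1}_{p+r-1}$; this formula does not involve $s$. With your definition $B^r_p=F_pA\cap d(F_{p+r-s+1}A)$, the denominator of $E^r_p$ becomes $Z^{r-1}_{p-1}+F_pA\cap d(F_{p+r-s}A)$, which agrees with the correct one only when $s=1$. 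For $s=0$ and $r=s-1=-1$ your formula gives $E^{-1}_p=F_pA/\bigl(F_{p-1}A+d(F_{p-1}A)\bigr)$, which is already the cokernel of $\Gr d$ rather than $F_pA/F_{p-1}A$, so \eqref{spectral1seconda}, \eqref{spectral1quarta} and \eqref{grado0graded} all fail; more generally your $E^r_p$ quotients by the image of $d^{(r)}$ itself, one page too early. The repair is to set $B^r_p=F_pA\cap d(F_{p+r}A)=dZ^r_{p+r}$ and $E^r_p=Z^r_p/(Z^{r-1}_{p-1}+B^{r-1}_p)$: the parameter $s$ should enter only through the observations that $Z^r_p=F_pA$ and that $dZ^{r-1}_{p+r-1}\subset d(F_{p+r-1}A)\subset F_{p+r-s}A\subset F_{p-1}A$ for $r\le s-1$ (which together yield \eqref{spectral1seconda}), and through the vanishing of $d^{(r)}$ for $r<s-1$ — not through the definition of the pages themselves. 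Two smaller points: the inclusion you record should read $B^{r-1}_p\subset B^r_p$ (the boundary groups increase with $r$), and the listed properties alone do not determine the differentials $d^{(r)}$ for $r>s-1$, so the uniqueness assertion refers to the explicit construction rather than following formally from \eqref{spectral1prima}--\eqref{grado0graded}.
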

\begin{proof}
For the proof see \cite[Appendix]{kacrudakovE36}.
\end{proof}
We point out that \eqref{grado0graded} states that $E^{s}$ is isomorphic to the homology of the module $\Gr A$ with respect to the differential induced by $d$. 
\begin{rem}
Let $\left\{(E^{r},d^{(r)})\right\}_{r \in \Z}$ be a spectral sequence as in Definition \ref{defispec}. We know that $E^{1}_{p}\cong H_{p}(E^{0},d^{(0)})$. We denote $E^{1}_{p}\cong C^{0}_{p}/B^{0}_{p}$, where $C^{0}_{p}=\Ker d^{(0)}_{p}$ and $B^{0}_{p}=\Ima d^{(0)}_{p+r}$. Analogously $E^{2}_{p}\cong H_{p}(E^{1},d^{(1)})$ and $E^{2}_{p}\cong C^{1}_{p}/B^{1}_{p}$, where $C^{1}_{p}/B^{0}_{p}=\Ker d^{(1)}_{p}$, $B^{1}_{p}/B^{0}_{p}=\Ima d^{(1)}_{p+r}$ and $B^{1}_{p} \subset C^{1}_{p}$. Thus, by iteration we obtain the following inclusions:
\begin{align*}
B^{0}_{p}\subset B^{1}_{p} \subset B^{2}_{p} \subset... \subset C^{2}_{p} \subset C^{1}_{p}\subset C^{0}_{p}.
\end{align*}
\end{rem}
\begin{defi}
Let $A$ be a module with a filtration as in \eqref{filtration} and differential as in \eqref{differential}. Let $\left\{(E^{r},d^{(r)})\right\}_{r \in \Z}$ be the spectral sequence determined by Proposition \ref{spectral1}.
We define $E^{\infty}_{p}$ as:
\begin{align*}
E^{\infty}_{p}=\frac{\bigcap_{r}C^{r}_{p}}{\bigcup _{r}B^{r}_{p}}.
\end{align*}
Let $B$ be a module with a filtration as in \eqref{filtration}. We say that the spectral sequence converges to $B$ if, for all $p$:
\begin{align*}
E^{\infty}_{p}\cong F_{p}B/F_{p-1}B.
\end{align*}
\end{defi}
\begin{prop}
\label{spectral2}
Let $A$ be a module with a filtration as in \eqref{filtration} and differential as in \eqref{differential}. Let us suppose that the filtration is convergent above and, for some $N$, $F_{-N}A=0$. Then the spectral sequence converges to the homology of $A$, that is:
$$E^{\infty}_{p} \cong F_{p} H(A) / F_{p-1}H(A).$$
\end{prop}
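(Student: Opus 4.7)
The plan is to prove this by making each stage of the spectral sequence explicit inside $A$, so that the passage $r \to \infty$ becomes a statement about stabilization of nested submodules. Concretely, for each $r \geq 0$ and $p \in \Z$, I would introduce the \emph{intrinsic cycles} and \emph{intrinsic boundaries}
\begin{align*}
Z^{r}_{p} &:= \{a \in F_{p}A \,:\, d(a) \in F_{p-r}A\},\\
B^{r}_{p} &:= d(F_{p+r-s}A) \cap F_{p}A,
\end{align*}
where the shift by $s$ reflects the hypothesis \eqref{differential} (for the classical case $s=1$ these become the standard Cartan--Eilenberg submodules; for the case $s=0$ used in this paper all indices are just rigidly reindexed). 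The first thing to verify is that these give an intrinsic model of the spectral sequence: by induction on $r$, using the recursive identification $E^{r+1}\cong H(E^{r},d^{(r)})$ from Definition \ref{defispec} and the initial data \eqref{spectral1seconda}--\eqref{spectral1quarta} from Proposition \ref{spectral1}, I would check
$$E^{r}_{p} \;\cong\; \frac{Z^{r}_{p} + F_{p-1}A}{B^{r-1}_{p} + Z^{r-1}_{p-1} + F_{p-1}A},$$
with $d^{(r)}$ induced by $d$ itself. This identification also pins down $C^{r}_{p}$ and $B^{r}_{p}$ in the paper's notation as the images of $Z^{r+1}_{p}$ and $B^{r}_{p}$ in the displayed quotient.

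Second, I would take the limit using the two finiteness hypotheses. The assumption $F_{-N}A = 0$ for some $N$ forces $F_{p-r}A = 0$ as soon as $r \geq p+N$; hence for such $r$ the defining condition of $Z^{r}_{p}$ reduces to $d(a)=0$, giving
$$\bigcap_{r} Z^{r}_{p} \;=\; F_{p}A \cap \Ker d.$$
Dually, the convergence-above assumption $A = \bigcup_{p} F_{p}A$ means every element of $d(A) \cap F_{p}A$ arises as $d(a')$ with $a' \in F_{p+r-s}A$ for $r$ large enough, so
$$\bigcup_{r} B^{r}_{p} \;=\; d(A) \cap F_{p}A,$$
and similarly the $Z^{r-1}_{p-1}$ terms in the denominator stabilize to $F_{p-1}A \cap \Ker d$.

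Combining these two stabilizations yields
$$E^{\infty}_{p} \;\cong\; \frac{F_{p}A \cap \Ker d}{(F_{p-1}A \cap \Ker d) + (d(A) \cap F_{p}A)}.$$
Finally I would match this with $F_{p}H(A)/F_{p-1}H(A)$: unwinding the definition of the induced filtration, a class in $F_{p}H(A)$ is represented by some $a \in F_{p}A \cap \Ker d$, two such representatives give the same class in $H(A)$ iff they differ by an element of $d(A)$, and the quotient by $F_{p-1}H(A)$ is exactly the further identification by classes represented in $F_{p-1}A \cap \Ker d$. This gives precisely the displayed quotient above, proving $E^{\infty}_{p} \cong F_{p}H(A)/F_{p-1}H(A)$.

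The main obstacle is the inductive identification of $E^{r}_{p}$ with $Z^{r}_{p}/(B^{r-1}_{p}+Z^{r-1}_{p-1}+F_{p-1}A)$: one has to keep careful track of the filtration shift built into \eqref{differential} and the fact that the differential on $E^{r}$ is induced rather than literally equal to $d$, and in particular one must check inductively that $d^{(r)}$ at the quotient level lifts back to genuine $d$-boundaries inside $A$. Once this bookkeeping is set up (as in \cite[Appendix]{kacrudakovE36} and \cite[Chapter XI]{maclane}), the passage to $E^{\infty}$ and the final identification with $\Gr H(A)$ are essentially a direct application of the two finiteness conditions in the hypothesis.
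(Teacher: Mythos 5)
Your proposal is correct and is essentially the argument this paper relies on: the paper offers no proof of this proposition beyond the citation to \cite[Appendix]{kacrudakovE36}, and the proof there (as in \cite[Ch. XI]{maclane}) is exactly your scheme — identify $E^{r}_{p}$ with a quotient of the intrinsic cycles $Z^{r}_{p}=\{a\in F_{p}A \,:\, d(a)\in F_{p-r}A\}$ by intrinsic boundaries, then let the two towers stabilize using $F_{-N}A=0$ (so $F_{p-r}A=0$ for $r\gg0$, forcing $\bigcap_{r}Z^{r}_{p}=F_{p}A\cap\Ker d$) and $A=\bigcup_{p}F_{p}A$ (so the boundaries exhaust $d(A)\cap F_{p}A$), and finally match the resulting quotient with $F_{p}H(A)/F_{p-1}H(A)$ via the second isomorphism theorem. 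The one point to watch is the index bookkeeping forced by the shift $s$ in \eqref{differential} (your $B^{r}_{p}$ convention differs by one from the classical $s=1$ normalization), but as you note this is immaterial in the limit, and your final identification of $E^{\infty}_{p}$ with $\bigl(F_{p}A\cap\Ker d\bigr)/\bigl((F_{p-1}A\cap\Ker d)+(d(A)\cap F_{p}A)\bigr)$ is the correct one.
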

\begin{proof}
For the proof see \cite[Appendix]{kacrudakovE36}.
\end{proof}
\begin{rem}
\label{spectralgraduato}
Let $A$ be a module with a filtration as in \eqref{filtration} and differential as in \eqref{differential}. We moreover suppose that $A=\oplus_{n\in \Z}A_{n}$ is a $\Z-$graded module and $d:A_{n}\longrightarrow A_{n-1}$ for all $n\in \Z$. Therefore the filtration \eqref{filtration} induces a filtration on each $A_{n}$. The family $\left\{F_{p}A_{n}\right\}_{p,n \in \Z}$ is indexed by $(p,n)$. It is customary to write the indices as $(p,q)$, where $p$ is the degree of the filtration and $q=n-p$ is the complementary degree. The filtration is called \textit{bounded below} if, for all $n\in \Z$, there exists a $s=s(n)$ such that $F_{s}A_{n}=0$.\\
In this case the spectral sequence $E=\left\{(E^{r},d^{(r)})\right\}_{r \in \Z}$, determined as in Proposition \ref{spectral1}, is a family of modules $E^{r}=\left\{E^{r}_{p,q}\right\}_{p ,q\in \Z}$ indexed by $(p,q)$, where $E^{r}_{p}=\sum_{q \in \Z}E^{r}_{p,q}$, with the differential $d^{(r)}=\left\{d^{(r)}_{p,q}: E_{p,q}\longrightarrow E_{p-r,q+r-1}\right\}_{p ,q\in \Z}$ of bidegree $(-r,r-1)$ such that $d_{p,q}\circ d_{p+r,q-r+1}=0$ for all $p,q \in \Z$.
 Equations \eqref{spectral1prima}, \eqref{spectral1seconda} ,\eqref{spectral1terza}, \eqref{spectral1quarta} and \eqref{grado0graded} can be written so that the role of $q$ is explicit. For instance, Equation \eqref{spectral1prima} can be written as:
\begin{align*}
H_{p,q}(E^{r},d^{(r)})=\frac{\Ker(d^{(r)}_{p,q}: E^{r}_{p,q}\longrightarrow E^{r}_{p-r,q+r-1})}{\Ima (d^{(r)}_{p+r,q-r+1}: E^{r}_{p+r,q-r+1}\longrightarrow E^{r}_{p,q})}\cong E^{r+1}_{p,q}.
\end{align*}
for all $p,q \in \Z$.
Equation \eqref{grado0graded} can be written as $E^{s}_{p,q} \cong H(F_{p}A_{p+q} / F_{p-1}A_{p+q})$ for all $p,q \in \Z$.
\end{rem}
We now recall some results on spectral sequences of bicomplexes; for further details see \cite{kacrudakovE36} and \cite[Chapter XI, Section 6]{maclane}.
\begin{defi}[Bicomplex]
\label{definizionebicomplesso}
A bicomplex $K$ is a family $\left\{K_{p,q}\right\}_{p,q \in \Z}$ of modules endowed with two families of differentials, defined for all integers $p,q$, $d'$ and $d''$ such that
\begin{align*}
d':K_{p,q}\longrightarrow K_{p-1,q} , \quad d'':K_{p,q}\longrightarrow K_{p,q-1}
\end{align*}
and ${d'}^{2}={d''}^{2}=d'd''+d''d'=0$.
\end{defi}
We can also  think $K$ as a $\Z-$bigraded module where $K=\sum_{p,q \in \Z}K_{p,q}$.
\begin{defi}[Second homology]
Let $K$ be a bicomplex. The \textit{second homology} of $K$ is the homology computed with respect to $d''$, i.e.:
\begin{align*}
H''_{p,q}(K)=\frac{\Ker(d'':K_{p,q}\longrightarrow K_{p,q-1})}{d''(K_{p,q+1})}.
\end{align*}
The second homology of $K$ is a bigraded complex with differential $d':H''_{p,q}(K)\longrightarrow H''_{p-1,q}(K)$ induced by the original $d'$. Its homology is defined as:
\begin{align*}
H'_{p}H''_{q}(K)=\frac{\Ker(d':H''_{p,q}(K)\longrightarrow H''_{p-1,q})}{d'(H''_{p+1,q}(K))},
\end{align*}
and it is a bigraded module.
\end{defi}
\begin{defi}[First homology]
Let $K$ be a bicomplex. The \textit{first homology} of $K$ is the homology computed with respect to $d'$, i.e.:
\begin{align*}
H'_{p,q}(K)=\frac{\Ker(d':K_{p,q}\longrightarrow K_{p-1,q})}{d'(K_{p+1,q})}.
\end{align*}
The first homology of $K$ is a bigraded complex with differential $d'':H'_{p,q}(K)\longrightarrow H'_{p,q-1}(K)$ induced by the original $d''$. Its homology is defined as:
\begin{align*}
H''_{q}H'_{p}(K)=\frac{\Ker(d'':H'_{p,q}(K)\longrightarrow H'_{p,q-1})}{d''(H'_{p,q+1}(K))},
\end{align*}
and it is a bigraded module. 
\end{defi}
\begin{defi}[Total complex]
A bicomplex $K$ defines a single complex $T=Tot(K)$:
\begin{align*}
T_{n}=\sum_{p+q=n}K_{p,q}, \quad d=d'+d'':T_{n}\longrightarrow T_{n-1}.
\end{align*}
From the properties of $d'$ and $d''$, it follows that $d^{2}=0$. 
\end{defi}
The first filtration $F'$ of $T=Tot(K)$ is defined as:
\begin{align*}
(F'_{p}T)_{n}=\sum_{h\leq p}K_{h,n-h}.
\end{align*}
The associated spectral sequence $E'$ is called \textit{first spectral sequence}. Analogously we can define the second filtration and the \textit{second spectral sequence}.
\begin{prop}[\cite{maclane},\cite{kacrudakovE36}]
\label{spectralbicomplex}
Let $(K,d',d'')$ be a bicomplex with total differential $d$. The first spectral sequence $E'=\left\{(E'^{r},d^{(r)})\right\}$, $E'^{r}=\sum_{p,q}E'^{r}_{p,q}$ has the property:
\begin{align*}
(E'^{0},d^{(0)})\cong (K,d'') , \quad (E'^{1},d^{(1)})\cong (H(K,d''),d'), \quad E'^{2}_{p,q}\cong H'_{p}H''_{q}(K).
\end{align*}
The second spectral sequence $E''=\left\{(E''^{r},\delta^{(r)})\right\}$, $E''^{r}=\sum_{p,q}E''^{r}_{p,q}$ has the property:
\begin{align*}
(E''^{0},\delta^{(0)})\cong (K,d') , \quad (E''^{1},\delta^{(1)})\cong (H(K,d'),d''), \quad E''^{2}_{p,q}\cong H''_{q}H'_{p}(K).
\end{align*}
If the first filtration is bounded below and convergent above, then the first spectral sequence converges to the homology of $T$ with respect to the total differential $d$.\\
If the second filtration is bounded below and convergent above, then the second spectral sequence converges to the homology of $T$ with respect to the total differential $d$.\\
Any of the above spectral sequences converges to the homology of $T$ with respect to the total differential $d$ if for every $n$ the set
$$\left\{(p,q) \,\,| \,\, p+q=n, \,\, E'^{2}_{p,q}\neq 0\right\} $$ is finite.
\end{prop}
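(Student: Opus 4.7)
The plan is to apply Proposition \ref{spectral1} and Proposition \ref{spectral2} to the total complex $T=Tot(K)$ equipped successively with its first and second filtrations, and then to identify the low pages of the resulting spectral sequences with the intrinsic data $(K,d',d'')$ of the bicomplex.

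First I would consider the first filtration $F'_pT_n=\sum_{h\leq p}K_{h,n-h}$. Since $d'$ has bidegree $(-1,0)$ and $d''$ has bidegree $(0,-1)$, both send $F'_pT$ into $F'_pT$, so the total differential $d=d'+d''$ satisfies $d(F'_pT)\subset F'_pT$. This places us in the setting $s=1$ of Proposition \ref{spectral1}, giving $E'^0_{p,q}\cong F'_pT_{p+q}/F'_{p-1}T_{p+q}\cong K_{p,q}$. Under this identification the induced differential $d^{(0)}=\Gr d$ is precisely the component of $d$ that preserves the filtration degree, namely $d''$, since $d'$ strictly lowers filtration and vanishes in the associated graded. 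Hence $(E'^0,d^{(0)})\cong(K,d'')$.

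Next I would compute $E'^1=H(E'^0,d^{(0)})\cong H(K,d'')$, the second homology of $K$. The induced differential $d^{(1)}$ on $E'^1$ is determined by a short diagram chase: for $[a]\in H''_{p,q}(K)$ with $d''a=0$, the class of $da=d'a+d''a=d'a$ modulo $\Ima d''$ is well-defined because $d''(d'a)=-d'(d''a)=0$ by the anticommutation ${d'd''+d''d'=0}$. Therefore $d^{(1)}$ coincides with the map induced by $d'$ on second homology, so $(E'^1,d^{(1)})\cong (H(K,d''),d')$, and iterating once more yields $E'^2_{p,q}\cong H'_pH''_q(K)$. The analogous argument with the second filtration $F''_qT_n=\sum_{k\leq q}K_{n-k,k}$, interchanging the roles of $d'$ and $d''$, gives the stated description of the second spectral sequence.

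For convergence, when either filtration is bounded below and convergent above, Proposition \ref{spectral2} applies directly and yields $E'^\infty_{p,q}\cong F'_pH_{p+q}(T)/F'_{p-1}H_{p+q}(T)$, and symmetrically for $E''$. Under the finiteness hypothesis, for each fixed $n$ only finitely many bidegrees $(p,q)$ with $p+q=n$ contribute to $E'^2$, and consequently to every later page, so for $r$ large enough the differentials $d^{(r)}$ out of or into $E'^r_{p,q}$ hit or come from zero modules; this forces $E'^r_{p,q}$ to stabilize and the induced filtration on each $H_n(T)$ is effectively finite, giving convergence. The step that requires the most care is the identification of $d^{(1)}$ with the map induced by $d'$ on $H''(K)$, because this is precisely where the sign relation $d'd''+d''d'=0$ is essential; the rest is bookkeeping with the associated graded.
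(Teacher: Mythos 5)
Your argument is correct and is exactly the standard derivation that the paper delegates to its references (\cite{maclane} and the appendix of \cite{kacrudakovE36}): apply Proposition \ref{spectral1} with $s=1$ to $T=Tot(K)$ filtered by the first (resp.\ second) filtration, identify $\Gr d$ with $d''$ (resp.\ $d'$), check via the anticommutation relation that $d^{(1)}$ is induced by the complementary differential, and invoke Proposition \ref{spectral2} for convergence. The paper itself gives no proof, so there is nothing to contrast; your sketch, including the closing remark on the finiteness condition forcing stabilization of the pages and an essentially finite filtration on each $H_n(T)$, is the intended argument.
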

\begin{rem}
\label{d2esplicito}
Let $K$ be a bicomplex with total differential $d=d'+d''$, such that $d'^{2}=d''^{2}=d'd''+d''d'=0.$ as in Definition \ref{definizionebicomplesso}. By Proposition \ref{spectralbicomplex} we know that it is possible to construct a spectral sequence such that $$(E'^{0},d^{(0)})=(K,d''), \quad (E'^{1},d^{(1)})=(H(K,d''),d'), \quad E'^{2}_{p,q}\cong H'_{p}H''_{q}(K).$$ 
We want to write explicitly how $d_{p,q}^{(2)}:E'^{2}_{p,q}\rightarrow E'^{2}_{p-2,q+1}$ works. We recall that $E'^{1}_{p,q}=\frac{\Ker d''_{p,q}}{\Ima d''_{p,q+1}}$.  
\begin{align*}
 d^{(1)}_{p,q}=\overline{d'_{p,q}}:\frac{\Ker d''_{p,q}}{\Ima d''_{p,q+1}}&\rightarrow \frac{\Ker d''_{p-1,q}}{\Ima d''_{p-1,q+1}}\\
[x]&\mapsto [d'_{p,q}(x)],
\end{align*}
where the brackets $[,]$ emphasize that we are considering a class modulo $\Ima d''$ and $\overline{d'_{p,q}}$ emphasizes that it is the map induced by $d'_{p,q}$ at the quotient. Hence $E'^{2}_{p,q}=\frac{\Ker \overline{d'_{p,q}}}{\Ima\overline{d'_{p+1,q}}}$.
Thus $[x]\in \Ker \overline{d'_{p,q}}$ is equivalent to: $x \in \Ker d''_{p,q}$ and $d'_{p,q}(x)\in\Ima d''_{p-1,q+1}$. Let $y_x$ be such that $d''_{p-1,q+1}(y_x)=d'_{p,q}(x)$. We consider $z_x=d'_{p-1,q+1}(y_x) \in K_{p-2,q+1}$. We claim that $d_{p,q}^{(2)}([[x]])=[[z_x]] \in E'^{2}_{p-2,q+1}$ and it is well defined. Indeed:
\begin{enumerate}
	\item Let us show that $z_x \in \Ker d'_{p-2,q+1}$. Indeed $d'_{p-2,q+1}(z_x)=d'_{p-2,q+1}(d'_{p-1,q+1}(y_x))=0$, since $d'^{2}=0$.
	\item Let us show that $z_x \in \Ker d''_{p-2,q+1}$. Indeed
	$$d''_{p-2,q+1}(z_x)=d''_{p-2,q+1}(d'_{p-1,q+1}(y_x))=-d'_{p-1,q}(d''_{p-1,q+1}(y_x))=-d'_{p-1,q}(d'_{p,q}(x))=0.$$
	\item Let us show the independence from the choice of $y_x$. Let $y^{1}_{x}$ and $y^{2}_{x}$ such that $d''_{p-1,q+1}(y^{1}_{x})=d''_{p-1,q+1}(y^{2}_{x})=d'_{p,q}(x)$. Hence $d''_{p-1,q+1}(y^{1}_{x}-y^{2}_{x})=0$, i.e. $y^{1}_{x}-y^{2}_{x} \in \Ker d''_{p-1,q+1}$ and $[y^{1}_{x}-y^{2}_{x}] \in E'^{1}_{p-1,q+1}$ is well defined.
	Obviously $\overline{d'_{p-1,q+1}}([y^{1}_{x}-y^{2}_{x}]) \in \Ima \overline{d'_{p-1,q+1}}$. Hence
	$$ 0=[\overline{d'_{p-1,q+1}}([y^{1}_{x}-y^{2}_{x}]) ]=[[d'_{p-1,q+1}(y^{1}_{x}-y^{2}_{x})]]=[[z^{1}_{x}-z^{2}_{x}]].$$
	\item Let us show that if $x \in \Ima d''_{p,q+1} $, then $[[z_x]]=0$. Indeed let $x=d''_{p,q+1} (u)$. Let us choose $y_x=-d'_{p,q+1}(u)$. Then $z_x=d'_{p-1,q+1}(y_x)=d'_{p-1,q+1}(-d'_{p,q+1}(u))=0.$ Hence $[[z_x]]=0$.
	\item Let us show that if $[x] \in \Ima \overline{d'_{p+1,q} }$, then $[[z_x]]=0$. Indeed let $[x] = \overline{d'_{p+1,q} }([u])$. Then $x=d'_{p+1,q}(u)+d''_{p,q+1}(v)$, for a suitable $v$, and $d'_{p,q}(x)=d'_{p,q}(d''_{p,q+1}(v))=-d''_{p-1,q+1}(d'_{p,q+1}(v))$. Hence it is possible to choose $y_x=-d'_{p,q+1}(v)$ and thus $z_x=0$.
	\item The fact that $d^{(2)}_{p-2,q+1} \circ d^{(2)}_{p,q}=0$ is an obvious verification.
\end{enumerate}
\end{rem}
\section{homology}
The aim of this section is the computation of the homology for the first and third quadrants. 
Following \cite{kacrudakovE36}, let us consider the filtration on $U(\g_{<0})$ as follows: for all $i\geq 0$, $ F_{i}U(\g_{<0})$ is the subspace of $U(\g_{<0})$ spanned by elements with at most $i$ terms of $\g_{<0}$. Therefore:
\begin{align*}
\C=F_{0}U(\g_{<0}) \subset F_{1}U(\g_{<0})\subset ...\subset F_{i-1}U(\g_{<0})\subset F_{i}U(\g_{<0})\subset... \, ,
\end{align*}
where $F_{i}U(\g_{<0})=\g_{<0}F_{i-1}U(\g_{<0})+F_{i-1}U(\g_{<0})$.
We call $F_{i}M_{A}=F_{i}U(\g_{<0})\otimes V_{A}$. We have that $\nabla_{A} F_{i}M_{A} \subset F_{i+1}M_{A} $ and the filtration is bounded below. Then we can use the theory of spectral sequences; we first study $\Gr M_{A}$.
We consider the subalgebra $\g_{\bar{0}}$ of $\g$ given by the even elements, that, since the grading is consistent, is $\g_{\bar{0}}=\oplus_{i \geq -1} \g_{2i}$. On $\g_{\bar{0}}$ we consider the filtration $\g_{\bar{0}} =L_{-1}\supset L_{0}=\oplus_{i \geq 0} \g_{2i} \supset L_{1} =\oplus_{i \geq 1} \g_{2i}... \,$.
\begin{lem}
For all $j\geq-1$ and $i\geq 0$, we have:
\begin{align}
\label{filtrazioneWck6}
L_{j} F_{i}M_{A} \subset F_{i-j}M_{A}.
\end{align}
\end{lem}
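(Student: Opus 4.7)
The plan is to prove a slightly stronger, parity-blind claim and deduce the stated inclusion as a corollary. Specifically, I would establish by induction on $i$ the auxiliary inclusion
\begin{equation*}
\g_{k}\cdot F_{i}M_{A}\subset F_{i-\lfloor k/2\rfloor}M_{A}\qquad \text{for every }k\in\Z\text{ and }i\geq 0.
\end{equation*}
The desired inclusion \eqref{filtrazioneWck6} then follows at once, because $L_{j}=\bigoplus_{k\geq j}\g_{2k}$ and $\lfloor 2k/2\rfloor=k\geq j$ whenever $k\geq j$.

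For the base case $i=0$ one has $F_{0}M_{A}=V_{A}$, and the auxiliary claim reduces to three standard facts: $\g_{>0}\cdot V_{A}=0$ by item (i) of Theorem \ref{keythmsingular}; $\g_{0}\cdot V_{A}\subset V_{A}$ from the $\g_{0}$-module structure of $V_{A}$; and $(\g_{-1}\oplus\g_{-2})\cdot V_{A}\subset F_{1}M_{A}$ directly from the Verma construction. Matching each case against $\lfloor k/2\rfloor$ yields all required inclusions.

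For the inductive step, by linearity it suffices to treat elements $u\otimes f$ with $u=xv$, $x\in\g_{<0}$ and $v\in F_{i-1}U(\g_{<0})$, the contribution from a term $w\in F_{i-1}U(\g_{<0})$ being absorbed by the inductive hypothesis. Using $\ell x=[\ell,x]+(-1)^{|\ell||x|}x\ell$ in $U(\g)$ one writes
\begin{equation*}
\ell\cdot(xv\otimes f)=[\ell,x]\cdot(v\otimes f)+(-1)^{|\ell||x|}\,x\cdot\bigl(\ell\cdot(v\otimes f)\bigr).
\end{equation*}
The second summand is controlled by the induction ($\ell\cdot(v\otimes f)\in F_{i-1-\lfloor k/2\rfloor}M_{A}$) together with the fact that left multiplication by $x\in\g_{<0}$ raises the filtration by exactly one. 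For the first summand, $[\ell,x]\in\g_{k+\degr x}$ gives, by induction on $i$, $[\ell,x]\cdot(v\otimes f)\in F_{i-1-\lfloor(k+\degr x)/2\rfloor}M_{A}$; the inequality $\lfloor k/2\rfloor-\lfloor(k+\degr x)/2\rfloor\leq 1$, verified for $\degr x=-2$ (where it is an equality) and for $\degr x=-1$ (equal to $1$ for $k$ even, to $0$ for $k$ odd), places the outcome in $F_{i-\lfloor k/2\rfloor}M_{A}$.

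The main obstacle is precisely the subcase $x\in\g_{-1}$: then $[\ell,x]$ is an odd element of $\g_{k-1}$ which, for $\ell$ even, falls outside the even filtration $L_{\bullet}$, so an induction confined to $\g_{\bar 0}$ stalls. Passing to the parity-blind formulation with reduction $\lfloor k/2\rfloor$ is exactly what accommodates the half-step contributed by odd commutators and closes the induction uniformly.
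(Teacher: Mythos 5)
Your argument is correct: the parity-blind strengthening $\g_{k}F_{i}M_{A}\subset F_{i-\lfloor k/2\rfloor}M_{A}$ is precisely the device needed to absorb the odd-degree commutators $[\ell,\eta_{j}]$ that an induction confined to the even filtration $L_{\bullet}$ cannot handle, and the deduction of \eqref{filtrazioneWck6} is immediate since $\lfloor 2k/2\rfloor=k\geq j$. The paper gives no argument of its own here (it defers to Lemma 6.5 of \cite{bagnoli2}); your induction on $i$ via $\ell x=[\ell,x]\pm x\ell$, with the base case handled by Theorem \ref{keythmsingular}(i), is the standard proof that the citation points to, with the even and odd cases merged by the floor-function bookkeeping.
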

\begin{proof}
The proof is analogous to Lemma 6.5 in \cite{bagnoli2}.
\end{proof}
By \eqref{filtrazioneWck6}, we know, since $\g_{\bar{0}} \cong \Gr \g_{\bar{0}}$, that the action of $\g_{\bar{0}}$ on $M_{A}$ descends on $\Gr M_{A}$.\\
 We point out that, using the Poincar\'e$-$Birkhoff$-$Witt Theorem, we have $\Gr U(\g_{<0}) \cong S(\g_{-2}) \otimes \inlinewedge(\g_{-1})$; indeed we have already noticed that in $U(\g_{<0})$, for all $i \in \left\{1,2,3,4,5,6\right\}$, $\eta^{2}_{i}=\Theta$. Therefore, as $\g_{\bar{0}}-$modules:
\begin{align*}
\Gr M_{A}=\Gr U(\g_{<0}) \otimes V_{A}\cong S(\g_{-2}) \otimes \displaywedge(\g_{-1})\otimes V_{A}.
\end{align*}
From \eqref{filtrazioneWck6}, it follows that $L_{1}$ annihilates $G_{A}:=\inlinewedge(\g_{-1}) \otimes V_{A}$. Therefore, as $\g_{\bar{0}}-$modules:
\begin{align}
\label{graduato}
\Gr M_{A} \cong S(\g_{-2}) \otimes (\displaywedge(\g_{-1}) \otimes V_{A}) \cong \Ind ^{\g_{\bar{0}}}_{L_{0}}(\displaywedge(\g_{-1}) \otimes V_{A}).
\end{align}
We observe that $\Gr M_{A}$ is a complex with the morphism induced by $\nabla_{A}$, that we still call $\nabla_{A}$.
Indeed $\nabla_{A} F_{i}M_{A} \subset F_{i+1}M_{A} $ for all $i$, therefore it is well defined the induced morphism
\begin{align*}
\nabla_{A}: \Gr_{i }M_{A}= F_{i}M_{A}/F_{i-1}M_{A}   \longrightarrow \Gr_{i+1 }M_{A}= F_{i+1}M_{A}/F_{i}M_{A},
\end{align*}
that has the same formula as $\nabla_{A}$ defined in \eqref{nablaA}, apart from the fact that the multiplication by the $w$'s must be seen as multiplication in $\Gr U(\g_{<0})$ instead of $ U(\g_{<0})$.\\
Therefore we have that $(G_{A}, \nabla_{A})$ is a subcomplex of $(\Gr M_{A}, \nabla_{A})$: indeed it is sufficient to restrict $\nabla_{A}$ to $G_{A}$; the complex $(\Gr M_{A}, \nabla_{A})$ is obtained from $(G_{A}, \nabla_{A})$ extending the coefficients to $S(\g_{-2})$.\\
We point out that also the homology spaces $H^{m,n}(G_{A})$ are annihilated by $L_{1}$. Therefore, as $\g_{\bar{0}}-$modules:
\begin{align}
\label{keyhomologyck6}
H^{m,n}(\Gr M_{A}) \cong S(\g_{-2}) \otimes H^{m,n}(G_{A}) \cong \Ind ^{\g_{\bar{0}}}_{L_{0}}(H^{m,n}(G_{A}) ).
\end{align}
From \eqref{keyhomologyck6} and Proposition \ref{spectral1}, it follows that:
\begin{prop}
\label{keyhomologyspectralck6}
If $H^{m,n}(G_{A})=0$, then $H^{m,n}(\Gr M_{A})=0$ and therefore $H^{m,n}(M_{A})=0$.
\end{prop}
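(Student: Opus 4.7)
The plan is to prove the two implications separately. The first, from $H^{m,n}(G_{A})=0$ to $H^{m,n}(\Gr M_{A})=0$, is immediate from the isomorphism \eqref{keyhomologyck6}: since the induction functor $\Ind^{\g_{\bar 0}}_{L_{0}}$ applied to the zero module yields the zero module, vanishing at the $G_{A}$-level transfers directly to $\Gr M_{A}$. Thus the substantive content is the deduction $H^{m,n}(\Gr M_{A})=0 \Rightarrow H^{m,n}(M_{A})=0$, which I will carry out via the spectral sequence machinery of Section 5.

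Concretely, I will apply Proposition \ref{spectral1} to the filtered differential module $(M_{A},\nabla_{A})$ in the $s=0$ case, which is the relevant one here because $\nabla_{A}(F_{i}M_{A})\subset F_{i+1}M_{A}$. This yields a spectral sequence whose page $E^{0}$ is isomorphic to the homology of $\Gr M_{A}$ at the appropriate indices. In order to invoke Proposition \ref{spectral2} and conclude that the spectral sequence converges to the homology of $M_{A}$, I will verify that the filtration is bounded below, which holds since $F_{-1}M_{A}=0$, and convergent above, which holds since $\cup_{i}F_{i}U(\g_{<0})=U(\g_{<0})$ by the PBW theorem. Under the hypothesis $H^{m,n}(\Gr M_{A})=0$, the entry $E^{0}$ vanishes in the relevant $(m,n)$-bidegree; since each $E^{r+1}$ is a subquotient of $E^{r}$, a straightforward induction gives $E^{r}=0$ for all $r\geq 0$ in that bidegree, and hence $E^{\infty}=0$. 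Convergence then forces every graded piece $F_{p}H^{m,n}(M_{A})/F_{p-1}H^{m,n}(M_{A})$ to vanish; combined with $F_{-1}H^{m,n}(M_{A})=0$ and $\cup_{p}F_{p}H^{m,n}(M_{A})=H^{m,n}(M_{A})$, this yields $H^{m,n}(M_{A})=0$.

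I do not anticipate a genuine obstacle, as the argument is essentially a formal corollary of the spectral sequence of a filtered complex together with \eqref{keyhomologyck6}. The one place requiring care is bookkeeping: the indices $(m,n)$ parametrise the position of $M^{m,n}_{A}$ inside the complex (i.e.\ the weight parameters of the Verma module), while the filtration index $p$ is a separate integer, and one must check that the spectral sequence respects the decomposition $M_{A}=\oplus_{m,n}M^{m,n}_{A}$. This is automatic because $\nabla_{A}$ preserves $m$ and shifts $n$ by one, while the PBW filtration is stable under the $\g_{0}$-action and hence respects the weight grading that distinguishes the summands $M^{m,n}_{A}$.
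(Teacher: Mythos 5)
Your argument is correct and follows the same route the paper intends: the first implication is read off from the isomorphism \eqref{keyhomologyck6}, and the second is the standard convergence argument for the spectral sequence of Propositions \ref{spectral1} and \ref{spectral2} applied to the PBW filtration with $s=0$, where \eqref{grado0graded} identifies the starting page with the homology of $\Gr M_{A}$. The paper leaves these details implicit, but you have filled them in faithfully, including the verification that the filtration is bounded below and convergent above.
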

We introduce the notation, for all $n_{1}\geq 0$: $V_{A^{'}}^{n_{1}}:=V_{A}^{0,n_{1}}$ and $V_{B^{'}}^{n_{1}}:=V_{B}^{0,n_{1}}$.
We will call $V_{A^{'}}=\oplus_{n_{1}} V_{A^{'}}^{n_{1}}$, $V_{B^{'}}=\oplus_{n_{1}} V_{B^{'}}^{n_{1}}$, $G_{A^{'}}=\inlinewedge(\g_{-1}) \otimes V_{A^{'}}$ and $G_{B^{'}}=\inlinewedge(\g_{-1}) \otimes V_{B^{'}}$. Let us consider the evaluation map from $V_{A}$ to $V_{A^{'}}$, which maps $x_{12},x_{13},x_{14},x_{23},x_{24},x_{34} $ to zero and is the identity on all other elements; we can compose this map with $\nabla_{3}$ and obtain a new map, that we still call $\nabla_{3}$, from $G_{A}$ to $G_{B^{'}}$. We define:
\begin{align*}
G_{A^{\circ}}=\Ker (\nabla_{3}:G_{A} \longrightarrow G_{B^{'}} ). 
\end{align*}
The map $\nabla_{A}$ is still defined on $G_{A^{\circ}}$ since $\nabla_{3}\nabla_{A}=0$.
\begin{rem}
\label{ck6circ}
From its definition, it is obvious that $G_{A^{\circ}}^{n_{1},n_{2}}=G_{A}^{n_{1},n_{2}}$ if $n_{2}>0$. Therefore:
\begin{align*}
H^{n_{1},n_{2}}(G_{A})=H^{n_{1},n_{2}}(G_{A^{\circ}}) .
\end{align*}
On the other hand $H^{n_{1},0}(G_{A})=\frac{G^{n_{1},0}_{A}}{\Ima: G^{n_{1},1}_{A}\rightarrow G^{n_{1},0}_{A} }$ and $H^{n_{1},0}(G_{A^{\circ}})=\frac{\Ker(\nabla_3: G^{n_{1},0}_{A}\rightarrow G^{n_{1}-2,0}_{B})}{\Ima: G^{n_{1},1}_{A}\rightarrow G^{n_{1},0}_{A} } $.
\end{rem} 
\begin{rem}
\label{remtecnico}
We recall that, on the elements of $V_A$, $-t$ acts as:
$$-t=\frac{x_{1}\partial_{x_{1}}+x_{2}\partial_{x_{2}}+x_{3}\partial_{x_{3}}+x_{4}\partial_{x_{4}}}{2}=\frac{x_{1}\partial_{1}+x_{2}\partial_{2}+x_{3}\partial_{3}-x_{4}\partial_{4}}{2}+x_{4}\partial_{4}.$$
We point out that:
\begin{align*}
\left[\frac{x_{1}\partial_{1}+x_{2}\partial_{2}+x_{3}\partial_{3}-x_{4}\partial_{4}}{2},w_{12}\right]=w_{12}, \quad & \, \,\left[\frac{x_{1}\partial_{1}+x_{2}\partial_{2}+x_{3}\partial_{3}-x_{4}\partial_{4}}{2},w_{14}\right]=0,\\
\left[\frac{x_{1}\partial_{1}+x_{2}\partial_{2}+x_{3}\partial_{3}-x_{4}\partial_{4}}{2},w_{13}\right]=w_{13}, \quad & \, \, \left[ \frac{x_{1}\partial_{1}+x_{2}\partial_{2}+x_{3}\partial_{3}-x_{4}\partial_{4}}{2},w_{24}\right]=0,\\
\left[\frac{x_{1}\partial_{1}+x_{2}\partial_{2}+x_{3}\partial_{3}-x_{4}\partial_{4}}{2},w_{23}\right]=w_{23}, \quad & \, \, \left[\frac{x_{1}\partial_{1}+x_{2}\partial_{2}+x_{3}\partial_{3}-x_{4}\partial_{4}}{2},w_{34}\right]=0,
\end{align*}
and
\begin{align*}
\left[x_{4}\partial_{4},w_{12}\right]=0, \quad & \, \,\left[x_{4}\partial_{4},w_{14}\right]=w_{14},\\
\left[x_{4}\partial_{4},w_{13}\right]=0, \quad & \, \,\left[x_{4}\partial_{4},w_{24}\right]=w_{24},\\
\left[x_{4}\partial_{4},w_{23}\right]=0, \quad & \, \,\left[x_{4}\partial_{4},w_{34}\right]=w_{34}.
\end{align*}
\end{rem}
Motivated by Remark \ref{remtecnico}, we introduce an additional bigrading:
\begin{align}
\label{bigradingck6}
&(V_{A})_{[p,q]}=\left\{f \in V_{A} \, : \, (x_{1}\partial_{1}+x_{2}\partial_{2}+x_{3}\partial_{3}-x_{4}\partial_{4} ).f=pf \,, \, (2x_{4}\partial_{4}).f=qf\right\},\\ \nonumber
&(G_{A})_{[p,q]}=\displaywedge(\g_{-1}) \otimes (V_{A})_{|[p,q]}.
\end{align}
We observe that, for elements in $(V_{A}^{n_{1},n_{2}})_{|[p,q]}$, we have $p+q=2n_{2}+n_{1}$, which is the eigenvalue of $-2t$ on $V_{A}^{n_{1},n_{2}}$. We point out that if $p<-\frac{q}{2}$, then $(V_{A}^{n_{1},n_{2}})_{|[p,q]}=0$. 
The bigrading can be defined also for $G_{A^{\circ}}$.\\
We define $d':=w_{12} \otimes \partial_{12}+w_{13} \otimes \partial_{13}+w_{23} \otimes \partial_{23}$ and $d''=w_{14} \otimes \partial_{14}+w_{24} \otimes \partial_{24}+w_{34}\otimes \partial_{34}$, so that $d'+d''=\nabla_{A}$. Using Remark \ref{remtecnico} and notation \eqref{notazioinewck6}, it is easy to check that $d':(G_{A})_{|[p,q]}\longrightarrow  (G_{A})_{|[p-2,q]}$ and $d'':(G_{A})_{|[p,q]}\longrightarrow  (G_{A})_{|[p,q-2]}$. By Remark \ref{remwck6}, it follows that that $d'^{2}=d''^{2}=d'd''+d''d'=0$. We point out that:
 $$\nabla_{A}:\oplus_{n_{2}+\frac{n_{1}}{2}=k}G^{n_{1},n_{2}}_{A}\longrightarrow \oplus_{n_{2}+\frac{n_{1}}{2}=k-1}G^{n_{1},n_{2}}_{A}.$$
Therefore $\oplus_{n_{2}+\frac{n_{1}}{2}=k}G^{n_{1},n_{2}}_{A}$ is a bicomplex with bigrading \eqref{bigradingck6}, differentials  $d',d''$ and total differential $\nabla_{A}=d'+d''$, the same holds for $\oplus_{n_{2}+\frac{n_{1}}{2}=k}G^{n_{1},n_{2}}_{A^{\circ}}$. Now let:
\begin{align*}
\displaywedge_{+}^{i}=\displaywedge^{i}\langle w_{14},w_{24},w_{34} \rangle \,\,\, \text{and}\,\,\,\displaywedge_{-}^{i}=\displaywedge^{i}\langle w_{12},w_{13},w_{23}\rangle.
\end{align*}
We define
\begin{align*}
&G_{A}(a,b)_{[p,q]}:=\displaywedge_{-}^{\frac{a-p}{2}}\displaywedge_{+}^{\frac{b-q}{2}}(V_{A})_{[p,q]}.
\end{align*}
We point out that $a-p$ and $b-q$ are always even: indeed $a $ (resp. $b $) is the eigenvalue of $x_{1}\partial_{1}+x_{2}\partial_{2}+x_{3}\partial_{3}-x_{4}\partial_{4}$ (resp. $2x_{4}\partial_{4}$) on elements in $\inlinewedge_{-}^{\frac{a-p}{2}}\inlinewedge_{+}^{\frac{b-q}{2}}(V_{A})_{[p,q]}$ and the elements of $\inlinewedge_{-}^{\frac{a-p}{2}}$ (resp. $\inlinewedge_{+}^{\frac{b-q}{2}}$) have even eigenvalue with respect to $x_{1}\partial_{1}+x_{2}\partial_{2}+x_{3}\partial_{3}-x_{4}\partial_{4}$ (resp. $2x_{4}\partial_{4}$), due to Remark \ref{remtecnico}. If $a $ is even (resp. odd), only even (resp. odd) values of $p$ occur. The values of $b$ and $q$ are always even. We point out that $a+b \geq 0$. Indeed this value represents the eigenvalue of $-2t$ on elements of $G_{A}(a,b)_{[p,q]}$ and $-2t$ has non negative eigenvalues on $V_{A}$ and eigenvalue 2 for elements of $\g_{-1}$. Moreover $b \geq 0$ since $(V_{A})_{[p,q]}\neq 0$ only for $q \geq 0$. We have that $G_{A}=\oplus_{a,b}G_{A}(a,b)$, where $G_{A}(a,b)=\oplus _{p,q}G_{A}(a,b)_{[p,q]}$. By Remark \ref{remtecnico}, it follows that $\nabla_{A}:G_{A}(a,b)\rightarrow G_{A}(a,b)$. The same definition holds for $G_{A^{\circ}}(a,b)_{[p,q]}$. The computation of homology spaces of $G_{A}$ and $G_{A^{\circ}}$ can be reduced to the computation for $G_{A}(a,b)$ and $G_{A^{\circ}}(a,b)$.\\
In order to compute the homology of those spaces, we need the following lemmas that state some properties of $D, D_{1},D_{2},D_{3},D_{4}$ introduced in Remark \ref{L1L2L3L4}. In particular, by Remark \ref{L1L2L3L4} we know that the elements of the $\g_0$-module $V_A =\oplus_{n_1,n_2}V^{n_1,n_2}_{A}$ are exactly the elements of $\Sym(\C^{4}) \otimes \Sym(\displaywedge^{2}\C^{4})$ that lie in $\Ker D \cap \bigcap^{4}_{i=1} \Ker  D_i$. In the following lemmas, we try to understand under which assumptions, given $P \in V_A$, there exists in $V_A$ a primitive of $P$, with respect of one of the variables in $\left\{x_{lk},  1\leq l<k\leq 4\right\}$.
From now on, given $P \in \Sym(\C^{4}) \otimes \Sym(\displaywedge^{2}\C^{4})$ and $z \in \left\{x_i,x_{lk}, 1 \leq i \leq 4, 1\leq l<k\leq 4\right\}$ such that $\partial_z P=0$, we use the notation $P(\widehat{z})$ to denote $P$ in order to emphasize that it does not depend on $z$.  Moreover, we point out that, given $P \in V_A$, we can always assume that $P$ is bihomogeneous of degree $n_1$ in the variables $x_1,x_2, x_3, x_4$ and of degree $n_2$ in the variables $x_{12},x_{13},x_{23},x_{14},x_{24},x_{34}$, for suitable $n_1$ and $n_2$: indeed $D, D_{1},D_{2},D_{3},D_{4},d',d''$ are graded operators.
\begin{lem}
\label{propL1} 
Let $i,j,h,k\in\left\{1,2,3\right\}$ with $i<j$ and $h<k$. Let $Q(\widehat{x}_{ij})\in  \Sym(\C^{4}) \otimes \Sym(\displaywedge^{2}\C^{4})$. Then there exists $\widetilde{Q}(\widehat{x}_{ij})\in  \Sym(\C^{4}) \otimes \Sym(\displaywedge^{2}\C^{4})$ such that $D_{4}\widetilde{Q}=Q$. Moreover, let $Q(\widehat{x}_{ij},\widehat{x}_{hk})\in  \Sym(\C^{4}) \otimes \Sym(\displaywedge^{2}\C^{4})$. Then there exists $\widetilde{Q}(\widehat{x}_{ij},\widehat{x}_{hk})\in  \Sym(\C^{4}) \otimes \Sym(\displaywedge^{2}\C^{4})$ such that $D_{4}\widetilde{Q}=Q$.
\end{lem}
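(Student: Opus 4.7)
The plan is to reduce to a monomial computation after observing that $D_{4}=\partial_{12}\partial_{3}-\partial_{13}\partial_{2}+\partial_{23}\partial_{1}$ involves only the six variables $x_{1},x_{2},x_{3},x_{12},x_{13},x_{23}$; the remaining variables $x_{4},x_{14},x_{24},x_{34}$ act as inert parameters, so any antiderivative performed in the active variables automatically preserves non-dependence on the inactive ones. By $\C$-linearity and bihomogeneity of $D_{4}$, it is enough to construct $\widetilde Q$ for each bihomogeneous monomial $Q$.

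For the second assertion I would observe that on $\Ker\partial_{ij}\cap\Ker\partial_{hk}$ two of the three summands of $D_{4}$ vanish identically on $\widetilde Q$, leaving a single surviving term of the form $\pm\,\partial_{a}\partial_{bc}$, where $\{b,c\}$ is the remaining unordered pair in $\{1,2,3\}$ and $a$ is the remaining index. For instance, if $(i,j)=(1,2)$ and $(h,k)=(1,3)$, then $D_{4}|_{\Ker\partial_{12}\cap\Ker\partial_{13}}=\partial_{23}\partial_{1}$, and the iterated antiderivative $\widetilde Q:=\int\!\int Q\,dx_{1}\,dx_{23}$, taken with zero constants of integration, furnishes a preimage that still does not involve $x_{12}$ or $x_{13}$ because $Q$ does not. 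The remaining choices of the unordered pair $\{(i,j),(h,k)\}$ are treated identically.

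For the first assertion, by the evident symmetry of $\{1,2,3\}$ I may assume $(i,j)=(1,2)$, so that $D_{4}|_{\Ker\partial_{12}}=-\partial_{13}\partial_{2}+\partial_{23}\partial_{1}$. For a single monomial $Q_{0}=x_{1}^{a}x_{2}^{b}x_{13}^{c}x_{23}^{d}$ (suppressing an inert factor in the parameter variables) I propose
$$
\widetilde Q_{0}=\sum_{k=0}^{\min(b,c)}\alpha_{k}\,x_{1}^{a+k+1}x_{2}^{b-k}x_{13}^{c-k}x_{23}^{d+k+1},
$$
with $\alpha_{0}=1/((a+1)(d+1))$ and the recursion $\alpha_{k}(a+k+1)(d+k+1)=\alpha_{k-1}(b-k+1)(c-k+1)$ for $k\geq 1$; all denominators are strictly positive, so the coefficients are well defined. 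A direct termwise computation shows that $-\partial_{13}\partial_{2}+\partial_{23}\partial_{1}$ sends the $k$-th summand to the sum of $(a+k+1)(d+k+1)\,x_{1}^{a+k}x_{2}^{b-k}x_{13}^{c-k}x_{23}^{d+k}$ and $-(b-k)(c-k)\,x_{1}^{a+k+1}x_{2}^{b-k-1}x_{13}^{c-k-1}x_{23}^{d+k+1}$. The recursion is engineered precisely so that the second monomial produced at index $k-1$ cancels the first monomial produced at index $k$, leaving a telescoping sum whose only surviving contribution is $Q_{0}$ itself, arising at $k=0$. The sum terminates because $(b-k)(c-k)=0$ at $k=\min(b,c)$. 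The cases $(i,j)=(1,3)$ and $(i,j)=(2,3)$ follow by the same construction after permuting indices and adjusting signs according to the sign pattern of $D_{4}$.

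The main obstacle, which is really only bookkeeping, is to verify that the telescoping cancels every spurious monomial of $\widetilde Q_{0}$ and leaves exactly $Q_{0}$; the boundary cases $\min(a,b,c,d)=0$ cause no additional trouble since they only shorten the sum, and in every case $\widetilde Q_{0}$ manifestly does not involve $x_{12}$, as required.
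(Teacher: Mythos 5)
Your proof is correct, but it takes a genuinely different and more economical route than the paper's. For the first assertion the paper (after fixing $(i,j)=(1,2)$) runs a case analysis over which of the variables in $S=\{x_{1},x_{2},x_{3},x_{13},x_{23}\}$ the monomial involves, organized around the complementary pairs $\{x_{1},x_{23}\}$ and $\{x_{2},x_{13}\}$; the only hard case, where both pairs occur, is handled by an open-ended recursion that repeatedly trades degree in one pair for degree in the other until an exponent is exhausted. Your telescoping sum $\widetilde{Q}_{0}=\sum_{k}\alpha_{k}x_{1}^{a+k+1}x_{2}^{b-k}x_{13}^{c-k}x_{23}^{d+k+1}$ is precisely the closed form of what that recursion produces, and verifying it directly absorbs all of the paper's subcases at once (vanishing exponents merely truncate the sum); I checked the reindexing: the $k=0$ term of $\partial_{23}\partial_{1}$ gives $Q_{0}$, the recursion kills the coefficients of the intermediate monomials, and the boundary term at $k=\min(b,c)+1$ carries the factor $(b-k+1)(c-k+1)$ evaluated where one factor vanishes. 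For the second assertion your observation that two of the three terms of $D_{4}$ annihilate any polynomial independent of $x_{ij}$ and $x_{hk}$, leaving a single term $\pm\partial_{a}\partial_{bc}$ that is inverted by a double antiderivative, replaces the paper's second five-case analysis with one line. One small point you should make explicit: once $x_{12}$ is excluded, the variable $x_{3}$ also becomes inert for the restricted operator $-\partial_{13}\partial_{2}+\partial_{23}\partial_{1}$ (its partner $\partial_{12}\partial_{3}$ has been killed), so the general monomial carries an extra factor $x_{3}^{e}$ that must be folded into your ``inert parameters'' alongside $x_{4},x_{14},x_{24},x_{34}$; this is harmless but is not covered by your opening list of inert variables.
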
 
\begin{proof}
We prove the result for monomials. The result for a polynomial is obtained by combining the results for all the monomials involved in it.
For sake of simplicity, let us fix $i,j=1,2$; the other cases are analogous. Let $\mathcal I_{<}$ be the set of finite increasingly ordered sequences of elements in $\{1,2,3\}$; we write $I = i_1 \cdots i_r$ instead of $I = (i_1,..., i_r)$. We denote by $I^{c}$ the increasingly ordered sequence whose elements are the elements of the complement of the underlying set of $I$. If $I=k$, then $x_I=x_k$; if $I=kl$, then $x_I=x_{kl}$. We set $S=\left\{x_{1},x_{2},x_{3},x_{13},x_{23}\right\}$. We split the proof of the first thesis in the following steps.   
\begin{itemize}
	\item[i:] Let us suppose that $Q$ does not depend on any of the variables in $S$. Therefore we can choose $\widetilde{Q}=Q x_{1}x_{23}$.
	\item[ii:] Let us suppose that $Q$ depends only on one of the variables in $S$, that is $Q=P(x_{4},x_{14},x_{24},x_{34})x_I^{m}$, where $x_I \in S $. If $I\neq 3$, we can choose, up to a sign, $\widetilde{Q}=P\frac{x_I^{m+1}}{m+1}x_{I^{c}}$. If $I= 3$, we can choose $\widetilde{Q}=Px_{3}^{m}x_{1}x_{23}$.
	\item[iii:] Let us suppose that $Q$ depends only on two variables of $S$, that is $Q=P(x_{4},x_{14},x_{24},x_{34})x_{I_1}^{m_{1}}x_{I_2}^{m_{2}}$, where $x_{I_1},x_{I_2} \in S$. If $I_2=I_1^{c}$, then $x_{I_1}$ and $x_{I_2}$ are different from $x_{3}$ and we can choose, up to a sign, $\widetilde{Q}=P\frac{x_{I_1}^{m_{1}+1}}{m_{1}+1}\frac{x_{I_2}^{m_{2}+1}}{m_{2}+1}$. If  $I_2\neq I_1^{c}$, then at least one among $x_{I_1}$ and $x_{I_2}$ is different from  $x_{3}$. Let us suppose that $I_1\neq 3$. Hence we can choose, up to a sign, $\widetilde{Q}=P\frac{x_{I_1}^{m_{1}+1}}{m_{1}+1}x_{I_1^{c}}x_{I_2}^{m_{2}}$.
	\item[iv:] Let us suppose that $Q$ depends only on three variables of $S$, that is $Q=P(x_{4},x_{14},x_{24},x_{34})x_{I_1}^{m_{1}}x_{I_2}^{m_{2}}x_{I_3}^{m_{3}}$, where $x_{I_1},x_{I_2},x_{I_3} \in S$. Let us suppose that two of these three variables are complementary, for instance $I_2=I_1^{c}$. Then $x_{I_1}$ and $x_{I_2}$ are different from $x_{3}$ and we can choose, up to a sign, $\widetilde{Q}=P\frac{x_{I_1}^{m_{1}+1}}{m_{1}+1}\frac{x_{I_2}^{m_{2}+1}}{m_{2}+1}x_{I_3}^{m_{3}}$. If none of the $x_{I_k}$'s has its complementary among $x_{I_1}$,$\, x_{I_2}$,$\,x_{I_3}$, then at least one of them is different from  $x_{3}$. Let us suppose that $I_1 \neq 3$. Then we can choose, up to a sign, $\widetilde{Q}=P\frac{x_{I_1}^{m_{1}+1}}{m_{1}+1}x_{I_1^{c}}x_{I_2}^{m_{2}}x_{I_3}^{m_{3}}$.
	\item[v:] Let us suppose that $Q$ depends only on four variables of $S$, that is $Q=P(x_{4},x_{14},x_{24},x_{34})x_{I_1}^{m_{1}}x_{I_2}^{m_{2}}x_{I_3}^{m_{3}}x_{I_4}^{m_{4}}$, where $x_{I_1}, x_{I_2}, x_{I_3}, x_{I_4}\in S$. If $I_2=I_1^{c}$ and $I_4\neq I_3^{c}$, then $x_{I_1}$ and $x_{I_2}$ are different from $x_{3}$ and we can choose, up to a sign, $\widetilde{Q}=P\frac{x_{I_1}^{m_{1}+1}}{m_{1}+1}\frac{x_{I_2}^{m_{2}+1}}{m_{2}+1}x_{I_3}^{m_{3}}x_{I_4}^{m_{4}}$. 
	Now let us suppose $I_2=I_1^{c}$, $I_4= I_3^{c}$ and $m_{3} \leq m_{4}$. Let us therefore suppose that $\left\{x_{I_1},x_{I_2}\right\}=\left\{x_1,x_{23}\right\}$ and $\left\{x_{I_3},x_{I_4}\right\}=\left\{x_2,x_{13}\right\}$. We point out that
	\begin{align*}
	D_{4}\Big(P\frac{x_{I_1}^{m_{1}+1}}{m_{1}+1}\frac{x_{I_2}^{m_{2}+1}}{m_{2}+1}x_{I_3}^{m_{3}}x_{I_4}^{m_{4}}\Big)=P x_{I_1}^{m_{1}}x_{I_2}^{m_{2}}x_{I_3}^{m_{3}}x_{I_4}^{m_{4}}-\frac{m_{3}m_{4}Px_{I_1}^{m_{1}+1}x_{I_2}^{m_{2}+1}x_{I_3}^{m_{3}-1}x_{I_4}^{m_{4}-1}}{(m_{1}+1)(m_{2}+1)}.
	\end{align*}
	Moreover
	\begin{align*}
  &D_{4}\Big(m_{3}m_{4}P\frac{x_{I_1}^{m_{1}+2}x_{I_2}^{m_{2}+2}x_{I_3}^{m_{3}-1}x_{I_4}^{m_{4}-1}}{(m_{1}+1)(m_{2}+1)(m_{1}+2)(m_{2}+2)}\Big)=\\
	&\frac{m_{3}m_{4}P x_{I_1}^{m_{1}+1}x_{I_2}^{m_{2}+1}x_{I_3}^{m_{3}-1}x_{I_4}^{m_{4}-1}}{(m_{1}+1)(m_{2}+1)}-\frac{m_{3}m_{4}(m_{3}-1)(m_{4}-1)P x_{I_1}^{m_{1}+2}x_{I_2}^{m_{2}+2}x_{I_3}^{m_{3}-2}x_{I_4}^{m_{4}-2}}{(m_{1}+1)(m_{2}+1)(m_{1}+2)(m_{2}+2)}.
	\end{align*}
	We proceed by recursion until we obtain
	\begin{align*}
  &D_{4}\Big(\alpha P x_{I_1}^{m_{1}+m_{3}}x_{I_2}^{m_{2}+m_{3}}x_{I_3}x_{I_4}^{m_{4}-m_{3}+1}\Big)=\\
	&\alpha (m_{1}+m_{3})(m_{2}+m_{3})P x_{I_1}^{m_{1}+m_{3}-1} x_{I_2}^{m_{2}+m_{3}-1}x_{I_3} x_{I_4}^{m_{4}-m_{3}+1}-\alpha (m_{4}-m_{3}+1)P x_{I_1}^{m_{1}+m_{3}}x_{I_2}^{m_{2}+m_{3}}x_{I_4}^{m_{4}-m_{3}},
	\end{align*}
	where $\alpha$ is a suitable coefficient. By iv), for $P x_{I_1}^{m_{1}+m_{3}}x_{I_2}^{m_{2}+m_{3}}x_{I_4}^{m_{4}-m_{3}}$ the thesis holds since it depends only on three variables of $S$. Combining all these steps, the thesis holds for $Q$.
	\item[v:] Let us suppose that $Q=P(x_{4},x_{14},x_{24},x_{34})x_{1}^{m_{1}}x_{2}^{m_{2}}x_{3}^{m_{3}}x_{13}^{m_{4}}x_{23}^{m_{5}}$. By iv), we know that there exists $\widetilde{Q}(\widehat{x}_{12})$ such that $D_4(\widetilde{Q})=P(x_{4},x_{14},x_{24},x_{34})x_{1}^{m_{1}}x_{2}^{m_{2}}x_{13}^{m_{4}}x_{23}^{m_{5}}$. Therefore $D_4(\widetilde{Q}x_{3}^{m_{3}})=Q$. 
\end{itemize}
Now let us prove the second part of the statement. For sake of simplicity, let us fix $i,j=1,2$ and $h,k=1,3$; the other cases are analogous. We now set $S=\left\{x_{1},x_{2},x_{3},x_{23}\right\}$. We split the proof in the following steps.
\begin{itemize}
	\item[i:] Let us suppose that $Q$ does not depend on any of the variables in $S$. Therefore we can choose $\widetilde{Q}=Q x_{1}x_{23}$.
	\item[ii:] Let us suppose that $Q$ depends only on one of the variables of $S$, that is $Q=P(x_{4},x_{14},x_{24},x_{34})x_I^{m}$, where $x_I \in S$.
	If $x_I\notin\left\{ x_{2},x_{3}\right\}$, we can choose, up to a sign, $\widetilde{Q}=P\frac{x_I^{m+1}}{m+1}x_{I^{c}}$. If $x_I\in \left\{ x_{2},x_{3}\right\}$, we can choose $\widetilde{Q}=Px_I^{m}x_{1}x_{23}$.
	\item[iii:] Let us suppose that $Q$ depends only on two variables of $S$, that is $Q=P(x_{4},x_{14},x_{24},x_{34})x_{I_1}^{m_{1}}x_{I_2}^{m_{2}}$, where $x_{I_1},x_{I_2} \in S$. If $I_{2}=I_{1}^{c}$, then $x_{I_1}$ and $x_{I_2}$ are different from $x_{2}$ and $x_{3}$ and we can choose, up to a sign, $\widetilde{Q}=P\frac{x_{I_1}^{m_{1}+1}}{m_{1}+1}\frac{x_{I_2}^{m_{2}+1}}{m_{2}+1}$. Let  $I_2\neq I_1^{c}$. If $x_{I_1}=x_{2}$ and $x_{I_2}=x_{3}$, we can choose, up to a sign, $\widetilde{Q}=Px_{I_1}^{m_{1}}x_{I_2}^{m_{2}}x_{1}x_{23}$. If at least one among $x_{I_1}$ and $x_{I_2}$ is different from  $x_{2}$ and $x_{3}$, for instance $x_{I_1} \notin \left\{ x_{2},x_{3}\right\}$, we can choose, up to a sign, $\widetilde{Q}=P\frac{x_{I_1}^{m_{1}+1}}{m_{1}+1}x_{I_1^{c}}x_{I_2}^{m_{2}}$.
	\item[iv:] Let us suppose that $Q$ depends only on three variables of $S$, that is $Q=P(x_{4},x_{14},x_{24},x_{34})x_{I_1}^{m_{1}}x_{I_2}^{m_{2}}x_{I_3}^{m_{3}}$, where $x_{I_1},x_{I_2},x_{I_3} \in S$. Let us suppose that two of these three variables are complementary, for instance $I_2=I_1^{c}$. Hence $x_{I_1}$ and $x_{I_2}$ are different from $x_{2}$ and $x_{3}$ and we can choose, up to a sign, $\widetilde{Q}=P\frac{x_{I_1}^{m_{1}+1}}{m_{1}+1}\frac{x_{I_2}^{m_{2}+1}}{m_{2}+1}x_{I_3}^{m_{3}}$. If none of the $x_{I_k}$'s has its complementary among $x_{I_1},x_{I_2},x_{I_3}$, then at least one of them is different from  $x_{2} $ and $x_{3}$. Let us suppose that $x_{I_1} \notin \left\{ x_{2},x_{3}\right\}$. Hence we can choose, up to a sign, $\widetilde{Q}=P\frac{x_{I_1}^{m_{1}+1}}{m_{1}+1}x_{I_1^{c}}x_{I_2}^{m_{2}}x_{I_3}^{m_{3}}$.
	\item[v:] Let us suppose that $Q$ depends only on four variables of $S$, that is $Q=P(x_{4},x_{14},x_{24},x_{34})x_{1}^{m_{1}}x_{2}^{m_{2}}x_{3}^{m_{3}}x_{23}^{m_{4}}$. We can choose, up to a sign, $\widetilde{Q}=P\frac{x_{1}^{m_{1}+1}x_{2}^{m_{2}}x_{3}^{m_{3}}x_{23}^{m_{4}+1}}{(m_{1}+1)(m_{4}+1)}$. 
\end{itemize}
\end{proof} 
We point out that an analogous result holds also for $D,D_1,D_2,D_3$.
\begin{rem}
\label{notazioneintegrale}
We point out that from now on we will use the following notation. Let $P \in \Sym(\C^{4}) \otimes \Sym(\displaywedge^{2}\C^{4})$ and $z \in \left\{x_{lk}, 1\leq l<k\leq 4\right\}$, with $P=\sum_{l\geq 0} P_{l}(\widehat{z}) z^{l}$; by abuse of notation, we denote with $\int P d z$ the primitive $\sum_{l\geq 0} P_{l}\frac{z^{l+1}}{l+1}$, i.e. the unique primitive of $P$ with respect to $z$ that does not contain constant terms in $z$. We will write $\int P d z+Q(\widehat{z})$ when we need to consider a primitive that contains constant terms in $z$.
\end{rem}
\begin{rem}
\label{rempropL1}
We point out a consequence of Lemma \ref{propL1}. Let $P \in  \Sym(\C^{4}) \otimes \Sym(\displaywedge^{2}\C^{4})$, such that $P\in \Ker D_{4}$. When we apply $D_{4}$ to $\int P d x_{12}$, we get that $D_{4}\int P d x_{12}= Q(\widehat{x}_{12})$. By Lemma \ref{rempropL1}, there exists $\widetilde{Q}(\widehat{x}_{12})$ such that $D_4(\widetilde{Q}(\widehat{x}_{12}))=Q(\widehat{x}_{12})$. Hence $\int P d x_{12}-\widetilde{Q}(\widehat{x}_{12})$ is a primitive of $P$ that lies in $\Ker D_{4}$. Analogously, given $P(\widehat{x}_{12})\in \Ker D_{4}$, we consider the primitive $\int P(\widehat{x}_{12}) d x_{13}$. Therefore $D_4(\int P(\widehat{x}_{12}) d x_{13})=Q(\widehat{x}_{12},\widehat{x}_{13})$. By Lemma \ref{propL1}, there exists $\widetilde{Q}(\widehat{x}_{12},\widehat{x}_{13})$ such that $D_4(\widetilde{Q}(\widehat{x}_{12},\widehat{x}_{13}))=Q(\widehat{x}_{12},\widehat{x}_{13})$. Therefore $\int P(\widehat{x}_{12}) d x_{13}-\widetilde{Q}(\widehat{x}_{12},\widehat{x}_{13})$ is a primitive of $P$ that lies in $\Ker D_{4}$.
\end{rem}
\begin{rem}
\label{remtecnicoprimi}
Let $P\in \Sym(\C^{4}) \otimes \Sym(\displaywedge^{2}\C^{4})$, such that $P\in \Ker D \cap \bigcap^{4}_{i=1}\Ker D_{i}$. Let us write $P=\sum^{k}_{i= 0} x^{i}_{14}P_{i}(\widehat{x}_{14})$. We point out that, a priori, $Q=\int P dx_{14}$ does not lie in $\Ker D \cap \bigcap^{4}_{i=1}\Ker D_{i}$: indeed by direct computations it follows that:
\begin{align}
\label{appoggioprimitiva}
&D Q= \partial_{23}P_{0}, \\ \nonumber
&D_{1}Q=0,  \\ \nonumber
&D_{2}Q=-\partial_{3}P_{0},\\ \nonumber
&D_{3}Q=-\partial_{2}P_{0},\\ \nonumber
&D_{4}Q=0.
\end{align}
Hence in order to prove that there exists a primitive of $P$, with respect to $x_{14}$, that lies in $ \Ker D \cap \bigcap^{4}_{i=1}\Ker D_{i}$, it is sufficient to show that there exists $K(\widehat{x}_{14}) \in \Sym(\C^{4}) \otimes \Sym(\displaywedge^{2}\C^{4})$ such that 
\begin{align*}
&D K= -\partial_{23}P_{0}, \\
&D_{1}K=0, \\
&D_{2}K=\partial_{3}P_{0},\\
&D_{3}K=\partial_{2}P_{0},\\
&D_{4}K=0.
\end{align*}
\end{rem}
\begin{lem}
\label{propprimitiva} 
Let $P\in \Sym(\C^{4}) \otimes \Sym(\displaywedge^{2}\C^{4})$, such that $P\in \Ker D \cap \bigcap^{4}_{i=1}\Ker D_{i}$. For $s \in \left\{1,2,3\right\}$, there exists $K(\widehat{x}_{s4}) \in \Sym(\C^{4}) \otimes \Sym(\displaywedge^{2}\C^{4})$ such that $\int P d x_{s4}+K(\widehat{x}_{s4}) \in \Ker D \cap \bigcap^{4}_{i=1}\Ker D_{i} $, i.e. if $P\in V_A$, then there exists a primitive of $P$, with respect to $x_{s4}$, that lies in $V_A$.
\end{lem}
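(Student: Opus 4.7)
By the symmetry among the indices $1,2,3$, it is enough to handle the case $s=1$; the cases $s=2,3$ will follow by permuting the roles of $x_1,x_2,x_3$ (and the corresponding wedge variables). Writing $P=\sum_{i\geq0}x_{14}^{i}P_i(\widehat{x}_{14})$ with each $P_i$ independent of $x_{14}$, Remark \ref{remtecnicoprimi} reduces the task to producing $K=K(\widehat{x}_{14})\in\Sym(\C^{4})\otimes\Sym(\displaywedge^{2}\C^{4})$ with
\begin{equation*}
DK=-\partial_{23}P_0,\quad D_1K=0,\quad D_2K=\partial_3P_0,\quad D_3K=\partial_2P_0,\quad D_4K=0.
\end{equation*}

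The first observation is that $P_0=P|_{x_{14}=0}$ automatically lies in $\Ker D_1\cap\Ker D_4$, because neither $D_1$ nor $D_4$ contains $\partial_{14}$, so those two relations in $DP=0,\ldots,D_4P=0$ descend to the slice $x_{14}=0$. Since every $D_j$ commutes with each partial derivative, the three prescribed right-hand sides $-\partial_{23}P_0$, $\partial_3P_0$, $\partial_2P_0$ also lie in $\Ker D_1\cap\Ker D_4$, so the compatibility of the two kernel conditions with the three image equations is not an obstruction at the commutator level.

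Next, I would construct $K$ by successive antidifferentiation in the variables $x_{12},x_{13},x_{23}$ (and $x_1,x_2,x_3$), each time invoking Lemma \ref{propL1} and Remark \ref{rempropL1} to modify the tentative primitive by an element of $\Ker D_4$, and then of $\Ker D_1$, so as to keep $K$ in $\Ker D_1\cap\Ker D_4$ throughout. The compatibility of the three remaining identities $DK=-\partial_{23}P_0$, $D_2K=\partial_3P_0$, $D_3K=\partial_2P_0$ comes from setting $x_{14}=0$ in $DP=0,\ D_2P=0,\ D_3P=0$, which yields
\begin{align*}
(\partial_{12}\partial_{34}-\partial_{13}\partial_{24})P_0+\partial_{23}P_1&=0,\\
(\partial_{13}\partial_{4}+\partial_{34}\partial_{1})P_0-\partial_3P_1&=0,\\
(\partial_{12}\partial_{4}+\partial_{24}\partial_{1})P_0-\partial_2P_1&=0.
\end{align*}
These identities link the derivatives of $P_0$ and $P_1$ exactly in the combinations needed to match the obstructions that appear when one tries to solve the five equations for $K$ simultaneously.

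The main obstacle I anticipate is precisely the interplay among the five conditions: a correction added to enforce $D_4K=0$ (via Lemma \ref{propL1}) can in principle spoil one of $DK=\dots$ or $D_jK=\dots$, so the construction must be arranged with care. I expect this to be handled by a recursion on the bidegree of $P$ in the variables $x_1,\ldots,x_4$ and $x_{12},\ldots,x_{34}$, the base case being $P_0=0$ (in which $K=0$ works) and the inductive step using the three identities above to reduce the remaining obstruction to a polynomial of strictly smaller total degree, to which the inductive hypothesis together with Lemma \ref{propL1} applies.
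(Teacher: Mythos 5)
Your reduction is the same as the paper's: you invoke Remark \ref{remtecnicoprimi} to replace the statement by the existence of $K(\widehat{x}_{14})$ solving the five equations $DK=-\partial_{23}P_0$, $D_1K=0$, $D_2K=\partial_3P_0$, $D_3K=\partial_2P_0$, $D_4K=0$, and the compatibility identities you extract by setting $x_{14}=0$ in $DP=D_2P=D_3P=0$ are correct. But the actual construction of $K$ --- which is the entire content of the lemma --- is missing. You yourself name the difficulty (a correction added to enforce $D_4K=0$ can spoil the other equations) and then only ``expect'' it to be handled by a recursion whose inductive step is never described: you do not say how the identities relating $P_0$ and $P_1$ produce a correction satisfying all five conditions simultaneously, and Lemma \ref{propL1} together with Remark \ref{rempropL1} only controls the single operator $D_4$, not the coupled system. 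A recursion ``on the bidegree of $P$'' with base case ``$P_0=0$'' is also not well posed, since $P$ is bihomogeneous of fixed bidegree and the vanishing of $P_0$ is not a degree condition.

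The paper resolves the difficulty by a representation-theoretic device that is absent from your plan. Writing $Q=\int P\,dx_{14}=\sum_i \frac{x_{14}^{i+1}}{i+1}P_i$, one observes that the top coefficient $P_k$ is annihilated by all of $D,D_1,\dots,D_4$ (the terms involving $\partial_{14}$ cannot contribute to the top $x_{14}$-degree, and $P_k$ does not depend on $x_{14}$), hence by Lemma \ref{L1L2L3L4} it is obtained from $x_1^{n_1}x_{12}^{n_2-k}$ by the action of $\slq$. Applying the same elements of $\slq$ to $x_1^{n_1}x_{12}^{n_2-k}x_{14}^{k+1}$ produces $N_k\in V_A^{n_1,n_2+1}$, hence lying in $\Ker D\cap\bigcap_{i}\Ker D_i$, whose top $x_{14}$-coefficient is exactly $\frac{x_{14}^{k+1}}{k+1}P_k$; subtracting $N_k$ from $Q$ preserves the inhomogeneous system \eqref{appoggioprimitiva} and strictly lowers the $x_{14}$-degree. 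Iterating down to $x_{14}$-degree zero yields $H(\widehat{x}_{14})$ satisfying \eqref{appoggioprimitiva}, and $K=-H$ works. Without this mechanism (or an equivalent one) for producing elements of $\Ker D\cap\bigcap_{i}\Ker D_i$ with a prescribed leading coefficient in $x_{14}$, your outline does not close the gap you yourself identify.
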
 
\begin{proof}
For sake of simplicity we focus on $s=1$; the result follows analogously for $s=2,3$. Let us write $P=\sum^{k}_{i= 0} x^{i}_{14}P_{i}(\widehat{x}_{14})$. By Remark \ref{remtecnicoprimi} it is sufficient to find $K(\widehat{x}_{14})$ such that
\begin{align*}
&D K= -\partial_{23}P_{0}, \\
&D_{1}K=0, \\
&D_{2}K=\partial_{3}P_{0},\\
&D_{3}K=\partial_{2}P_{0},\\
&D_{4}K=0.
\end{align*}
Hence, if $\partial_{23}P_{0}=\partial_{2}P_{0}=\partial_{3}P_{0}=0$, it is sufficient to take $K=0$. If at least one of this derivatives is different from 0, in order to obtain such a $K$, we proceed as follows. We consider $Q=\int P dx_{14}$ that, by Remark \ref{remtecnicoprimi}, satisfies \eqref{appoggioprimitiva}. Then we recursively add to $Q$ polynomials that lie in $\Ker D \cap \bigcap^{4}_{i=1}\Ker D_{i}$ and decrease the degree in $x_{14}$, until we find $H(\widehat{x}_{14})$ that satisfies \eqref{appoggioprimitiva}. Finally we take $K(\widehat{x}_{14})=-H(\widehat{x}_{14})$ and the thesis follows. Let us see in detail this argument.  We have that $Q=\sum^{k}_{i= 0} \frac{x^{i+1}_{14}}{i+1}P_{i}(\widehat{x}_{14})$. We recall that we always assume that $P \in V_A$ is bihomogeneous of degree $n_1$ in the $x_i$'s and of degree $n_2$ in the $x_{lk}$'s, for suitable $n_1$ and $n_2$. Hence $P_{k} \in \Sym^{n_1}(\C^{4}) \otimes \Sym^{n_2-k}(\displaywedge^{2}\C^{4})$. Thus $Q \in \Sym^{n_1}(\C^{4}) \otimes \Sym^{n_2+1}(\displaywedge^{2}\C^{4})$. The fact that $D_{1}P=D_{4}P=0$, implies that $D_{1}P_{k}=D_{4}P_{k}=0$. Moreover, $D P=0$ implies $(\partial_{12}\partial_{34}-\partial_{13}\partial_{24})P_{k}=0$, $D_2 P=0$ implies $(\partial_{13}\partial_{4}+\partial_{34}\partial_{1})P_{k}=0$ and $D_3 P=0$ implies $(\partial_{12}\partial_{4}+\partial_{24}\partial_{1})P_{k}=0$. Hence $P_{k}=P_{k}(\widehat{x}_{14}) \in \Ker D\cap \bigcap^{4}_{i=1}\Ker D_{i}$. By Remark \ref{L1L2L3L4}, this means that $P_{k}$ is obtained by the action of elements of $\mathfrak{sl}_4$ on $v_{n_1,n_2-k}=x^{n_1}_{1}x^{n_2-k}_{12}$. We claim that there exists $N_{k} \in \Sym^{n_1}(\C^{4}) \otimes \Sym^{n_2+1}(\displaywedge^{2}\C^{4})$ such that $N_{k} \in \Ker D \cap \bigcap^{4}_{i=1}\Ker D_{i}$ and the term of maximum degree in $x_{14}$ of $N_k$ is exactly $\frac{x^{k+1}_{14}}{k+1}P_{k}$. Indeed let us take $v_{n_1,n_2+1}=x^{n_1}_{1}x^{n_2-k}_{12}x^{k+1}_{14}$, that is a generator of the irreducible $\mathfrak{sl}_4-$submodule of $\Sym^{n_1}(\C^{4}) \otimes \Sym^{n_2+1}(\displaywedge^{2}\C^{4})$. Due to the fact that $P_{k}$ is obtained by the action of elements of $\mathfrak{sl}_4$ on $v_{n_1,n_2-k}$ and it does not depend on $x_{14}$, by applying the same elements of $\mathfrak{sl}_4$ on $v_{n_1,n_2+1}$ is it possible to obtain $N_k=\frac{x^{k+1}_{14}}{k+1}P_{k}+T$, where $T$ contains monomials with at most degree $k$ in $x_{14}$. By Remark \ref{L1L2L3L4}, $N_k\in \Ker D \cap \bigcap^{4}_{i=1}\Ker D_{i}$; hence $Q-N_k$ satisfies \eqref{appoggioprimitiva} and has at most degree $k$ in $x_{14}$. Now let $Q-N_k=\sum^{k}_{i= 0} x^{i}_{14}S_{i}$. Since $P_0$ does not depend on $x_{14}$ and $Q-N_k$ satisfies \eqref{appoggioprimitiva}, again $S_k \in \Ker D \cap \bigcap^{4}_{i=1}\Ker D_{i}$ and we can iterate the argument.
\end{proof}
\begin{prop}
\label{propprimitiva2} 
For $a,b \in \left\{1,2,3\right\}$. Let $P=P(\widehat{x}_{a4})\in \Sym(\C^{4}) \otimes \Sym(\displaywedge^{2}\C^{4})$, such that $P\in \Ker D \cap \bigcap^{4}_{i=1}\Ker D_{i}$. Then there exists $K(\widehat{x}_{a4},\widehat{x}_{b4}) \in \Sym(\C^{4}) \otimes \Sym(\displaywedge^{2}\C^{4})$ such that $\int P d x_{b4}+K(\widehat{x}_{a4},\widehat{x}_{b4}) \in \Ker D \cap \bigcap^{4}_{i=1}\Ker D_{i} $, i.e. there exists a primitive of $P$ that lies in $\Ker D \cap \bigcap^{4}_{i=1}\Ker D_{i}$ and does not depend on $x_{a4}$.
\end{prop}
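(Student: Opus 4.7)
The plan is to adapt the recursive argument of Lemma \ref{propprimitiva}, with the additional requirement that every correction used in the recursion be independent of $x_{a4}$. Without loss of generality we take $a=1$, $b=2$; the remaining cases with $a\neq b$ follow by completely analogous arguments obtained by permuting the role of the indices, while the case $a=b$ is degenerate (it implies $P=0$). Expanding $P=\sum_{i\geq 0}x_{24}^{i}P_i$ with each $P_i=P_i(\widehat{x}_{14},\widehat{x}_{24})$, and setting $Q:=\int P\,dx_{24}$, a direct computation exploiting $DP=D_iP=0$ together with $\partial_{14}P=0$ will produce the defect equations
\begin{align*}
DQ=-\partial_{13}P_0,\quad D_1Q=-\partial_{3}P_0,\quad D_2Q=0,\quad D_3Q=\partial_{1}P_0,\quad D_4Q=0.
\end{align*}
If $\partial_{13}P_0,\partial_{3}P_0,\partial_{1}P_0$ all vanish then $K=0$ works; otherwise, following the strategy of Lemma \ref{propprimitiva}, one recursively subtracts from $Q$ elements of $V_A$ that are independent of $x_{14}$ and strictly lower the $x_{24}$-degree. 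After finitely many steps we reach a polynomial $H=H(\widehat{x}_{14},\widehat{x}_{24})$ with the same defect as $Q$, and setting $K:=-H$ will produce the desired primitive: $Q+K\in V_A$ by cancellation of defects, and $Q+K$ is independent of $x_{14}$ because both summands are.

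The core step is the construction of each correcting element $N_k\in V_A^{n_1,n_2+1}$, required to be independent of $x_{14}$ and to have $x_{24}^{k+1}$-coefficient equal to $\frac{1}{k+1}P_k$. Using the recursion $\partial_{12}\partial_{34}P_i=(i+1)\partial_{13}P_{i+1}$ and its analogues (coming from $D_iP=0$ and $\partial_{14}P=0$), one first checks that the top coefficient $P_k$ itself belongs to $V_A^{n_1,n_2-k}$. Since this $\mathfrak{sl}_4$-module is irreducible, we can write $P_k=L\bigl(x_{1}^{n_1}x_{12}^{n_2-k}\bigr)$ for some lowering operator $L\in U(\mathfrak{n}^-)\subset U(\mathfrak{sl}_4)$. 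The plan is then to exhibit a cyclic vector $w\in V_A^{n_1,n_2+1}$ that is independent of $x_{14}$ and whose $x_{24}^{k+1}$-coefficient is exactly $x_{1}^{n_1}x_{12}^{n_2-k}$; with such a $w$, the element $N_k:=\frac{1}{k+1}L(w)$ has the prescribed top coefficient by the Leibniz rule and lies in $V_A^{n_1,n_2+1}$ by $\mathfrak{sl}_4$-invariance of that submodule.

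The main obstacle is the existence of the auxiliary vector $w$. The natural candidate $x_{1}^{n_1}x_{12}^{n_2-k}x_{24}^{k+1}$ is prevented from lying in $V_A$ only by the failure of $D_3$ to annihilate it, and one fixes this by adding explicit lower $x_{24}$-degree corrections built exclusively from products of the variables $x_1,\ldots,x_4,x_{12},x_{13},x_{23},x_{24},x_{34}$ (crucially, avoiding $x_{14}$). A careful bookkeeping on the bidegree, analogous to the one used in Lemma \ref{propprimitiva} but in strictly lower total degree in the $x_{ij}$'s, will ensure that this subsidiary recursion terminates in finitely many steps and produces the required $w$. Putting everything together, $Q+K=\int P\,dx_{24}+K$ is the desired primitive of $P$ lying in $V_A$ and independent of $x_{14}$.
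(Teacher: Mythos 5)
Your reduction to a system of ``defect equations'' for a correction $K(\widehat{x}_{14},\widehat{x}_{24})$ is correct and coincides with the first step of the paper's proof (which treats $a=3$, $b=1$ and obtains the other index pairs by explicit relabelings of the variables; the defects you list for $a=1$, $b=2$ agree with those recorded in Remark \ref{remprimitivatecnico}). The gap lies in the construction of the correction itself, which is the entire content of the proposition beyond Lemma \ref{propprimitiva}. You propose to transplant the recursion of Lemma \ref{propprimitiva}, producing each correcting element as $N_k=\frac{1}{k+1}L(w)$ with $L\in U(\mathfrak{n}^-)$ such that $P_k=L\bigl(x_1^{n_1}x_{12}^{n_2-k}\bigr)$ and $w$ a cyclic vector independent of $x_{14}$. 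This step fails because the $\mathfrak{sl}_4$-action does not preserve the subspace of polynomials independent of $x_{14}$: the negative root vectors $x_4\partial_2$ and $x_4\partial_3$ send $x_{12}\mapsto x_{14}$ and $x_{13}\mapsto x_{14}$, so $L(w)$ can depend on $x_{14}$ even when $w$ does not, and $N_k$ then violates the very constraint the proposition is about. (A secondary problem: negative root vectors can also raise the $x_{24}$-degree, e.g. $x_4\partial_1\cdot x_{12}=-x_{24}$ and $x_4\partial_3\cdot x_{23}=x_{24}$, so the lower-order corrections inside $w$ feed back into the $x_{24}^{k+1}$-coefficient of $L(w)$ and your ``Leibniz rule'' identification of the top coefficient is not automatic.) This loss of equivariance is exactly what distinguishes Proposition \ref{propprimitiva2} from Lemma \ref{propprimitiva}, and the remaining appeal to ``careful bookkeeping'' does not address it.

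The paper resolves the difficulty non-equivariantly: it isolates the defect system in Lemma \ref{propprimitivatecnico} and exhibits an explicit closed-form solution $K(\widehat{y}_1,\widehat{y}_3)$ as a sum of six families of iterated integrals in variables other than the two excluded ones, verified by direct computation; the remaining index pairs then follow from substitutions of variables that permute $D,D_1,\dots,D_4$ up to sign. To complete your argument you would need either to produce such an explicit solution of the defect system yourself, or to give a genuinely different construction of corrections lying in $\Ker D\cap\bigcap_{i=1}^4\Ker D_i$ and avoiding $x_{14}$; the cyclic-vector mechanism as stated does not provide one.
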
 
\begin{proof}
For sake of simplicity we focus on $a=3$ and $b=1$; the proof for the other cases follows analogously, see Remark \ref{remprimitivatecnico} . Let us write $P=\sum^{k}_{i= 0} x^{i}_{14}P_{i}(\widehat{x}_{14})$. Since $P\in \Ker D \cap \bigcap^{4}_{i=1}\Ker D_{i}$, by Remark \ref{remtecnicoprimi}, it follows that $Q=\int P dx_{14}$ satisfies:
\begin{align}
\label{appoggioprimitiva2}
&D Q= \partial_{23}P_{0}, \\ \nonumber
&D_{1}Q=0, \\ \nonumber
&D_{2}Q=-\partial_{3}P_{0},\\ \nonumber
&D_{3}Q=-\partial_{2}P_{0},\\ \nonumber
&D_{4}Q=0,
\end{align}
where $P_0=P_0(\widehat{x}_{14},\widehat{x}_{34})$. If $\partial_{23}P_{0}=\partial_{2}P_{0}=\partial_{3}P_{0}=0$, it is sufficient to take $K=0$. Let us suppose that at least one of these derivatives is nonzero. Our aim is to find $K(\widehat{x}_{14},\widehat{x}_{34})$ such that
\begin{align*}
&D K= -\partial_{23}P_{0}, \\
&D_{1}K=0, \\
&D_{2}K=\partial_{3}P_{0},\\
&D_{3}K=\partial_{2}P_{0},\\
&D_{4}K=0,
\end{align*}
so that $Q+K(\widehat{x}_{14},\widehat{x}_{34}) \in \Ker D \cap \bigcap^{4}_{i=1}\Ker D_{i}$ is a primitive of $P$.
In order to find such $K(\widehat{x}_{14},\widehat{x}_{34})$, we need the following lemma.
\begin{lem}
\label{propprimitivatecnico}
Let $P=P(\widehat{y}_{3})\in \C[x_{1},..,x_{4},y_{1},...,y_{6}]$ such that $P=\sum_{i \geq 0} y^{i}_{1}P_{i}(\widehat{y}_1)$. Let us suppose that
\begin{align*}
\begin{cases}
\textbf{D} P=(\partial_{y_4}\partial_{y_3}-\partial_{y_5}\partial_{y_2}+\partial_{y_6}\partial_{y_1})P=0,\\
\textbf{D}_1 P=(\partial_{y_6}\partial_{x_4}-\partial_{y_2}\partial_{x_3}+\partial_{y_3}\partial_{x_2})P=0,\\
\textbf{D}_2 P=(\partial_{y_5}\partial_{x_4}-\partial_{y_1}\partial_{x_3}+\partial_{y_3}\partial_{x_1})P=0,\\
\textbf{D}_3 P=(\partial_{y_4}\partial_{x_4}-\partial_{y_1}\partial_{x_2}+\partial_{y_2}\partial_{x_1})P=0,\\
\textbf{D}_4 P=(\partial_{y_4}\partial_{x_3}-\partial_{y_5}\partial_{x_2}+\partial_{y_6}\partial_{x_1})P=0.
\end{cases}
\end{align*}
Then there exists $K(\widehat{y}_{1},\widehat{y}_{3})$ such that $\textbf{D} K=-\partial_{y_6} P_0$, $\textbf{D}_1 K=0$, $\textbf{D}_2 K=\partial_{x_3} P_0$, $\textbf{D}_3 K=\partial_{x_2} P_0$ and $\textbf{D}_4 K=0$.
\end{lem} 
We point out that the proof of Proposition \ref{propprimitiva2} follows by Lemma \ref{propprimitivatecnico} using the correspondence
\begin{align*} 
&x_{1} \mapsto x_{1},\,\, x_{2} \mapsto x_{2}, \,\, x_{3} \mapsto x_{3},\,\, x_{4} \mapsto x_{4},\,\, y_{1} \mapsto x_{14},\\
&y_{2} \mapsto x_{24},\,\, y_{3} \mapsto x_{34},\,\, y_{4} \mapsto x_{12},\,\, y_{5} \mapsto x_{13},\,\, y_{6} \mapsto x_{23}.
\end{align*}
\begin{proof}[Proof of Lemma \ref{propprimitivatecnico}]
Let us first fix some notation: $P_0=\sum_{l\geq 0} y^{l}_{5}U_l$, $P_0=\sum_{i\geq 0} x^{i}_{4}H_i$ and $H_0=\sum_{s\geq 0} y^{s}_{5}M_s$. This means that $U_0=U_0(\widehat{y}_{1},\widehat{y}_{3},\widehat{y}_{5})$, $H_0=H_0(\widehat{x}_{4},\widehat{y}_{1},\widehat{y}_{3})$ and $M_0=M_0(\widehat{x}_{4},\widehat{y}_{1},\widehat{y}_{3},\widehat{y}_{5})$. We point out that $\textbf{D}_1 P=\textbf{D}_4 P=0$ implies that $\textbf{D}_1 P_0=\textbf{D}_4 P_0=0$ and therefore $\textbf{D}_1 U_0=\textbf{D}_4 H_0=0$. We consider
\begin{align*}
K(\widehat{y}_{1},\widehat{y}_{3})=&\int \int \big(\partial_{x_3} P_0 \big)dy_{5}dx_{4}+\int \int \big(\partial_{y_6} H_0\big) dy_{5}dy_{2}\\
&+\sum_{j \geq 0}(-1)^{j}\int \int \big(\partial^{j}_{y_2}\partial^{j}_{x_1}\partial_{x_2} U_0 \big)dx^{j+1}_{4}dy^{j+1}_{4}\\
&+\sum_{j \geq 0}(-1)^{j}\int \int \big(\partial^{j}_{y_4}\partial^{j}_{x_3}\partial_{x_2} \partial_{y_6}M_0 \big)dy^{j+1}_{6}dx^{j+1}_{1} dy_2\\
&-\sum_{k \geq 0}\sum_{j \geq k}(-1)^{j-k}\int \int\int \big(\partial^{j}_{y_2}\partial^{j-k}_{x_1}\partial^{k}_{x_3}\partial_{x_2} \partial_{y_6}M_0\big) dx^{j+1}_{4} dy^{j+1-k}_{4}dy^{k+1}_{6}\\
&+\sum_{k \geq 0}\sum^{k}_{j=0}(-1)^{k-j}\int \int\int \big(\partial^{k+1}_{x_3} \partial^{j}_{y_2} \partial^{k-j}_{y_4} \partial_{x_2} \partial_{y_6}M_0 \big)dx^{j+1}_{4} dx^{k+1-j}_{1}dy^{k+2}_{6}.
\end{align*}
By direct computations it satisfies the thesis.
\end{proof}
From the proof of Lemma \ref{propprimitivatecnico}, the thesis follows.
\end{proof}
\begin{rem}
\label{remprimitivatecnico} 
We point out that the proof of Proposition \ref{propprimitiva2} for $a=3$ and $b=2$ follows by a similar argument. Indeed in this case $Q=\int P dx_{24}$ satisfies:
\begin{align}
\label{appoggioprimitiva2}
&D Q= -\partial_{13}P_{0}, \\ \nonumber
&D_{1}Q=-\partial_{3}P_{0}, \\ \nonumber
&D_{2}Q=0,\\ \nonumber
&D_{3}Q=\partial_{1}P_{0},\\ \nonumber
&D_{4}Q=0,
\end{align}
where $P_0=P_0(\widehat{x}_{24},\widehat{x}_{34})$. If $\partial_{13}P_{0}=\partial_{1}P_{0}=\partial_{3}P_{0}=0$, it is sufficient to take $K=0$. Otherwise, our aim is to find $K(\widehat{x}_{24},\widehat{x}_{34})$ such that
\begin{align}
\label{appoggioK23}
&D K= \partial_{13}P_{0}, \\\nonumber
&D_{1}K=\partial_{3}P_{0}, \\ \nonumber
&D_{2}K=0,\\ \nonumber
&D_{3}K=-\partial_{1}P_{0}, \\ \nonumber
&D_{4}K=0.
\end{align}
The existence of such $K(\widehat{x}_{24},\widehat{x}_{34})$ follows again by Lemma \ref{propprimitivatecnico}, once we rename the variables as follows:
\begin{align*} 
&x_{1} \mapsto x_{2},\,\, x_{2} \mapsto x_{1}, \,\, x_{3} \mapsto x_{3},\,\, x_{4} \mapsto x_{4},\,\, y_{1} \mapsto y_{2},\\
&y_{2} \mapsto y_{1},\,\, y_{3} \mapsto y_{3}, \,\, y_{4} \mapsto -y_{4},\,\, y_{5} \mapsto y_{6}, \,\,y_{6} \mapsto y_{5}.
\end{align*} 
Therefore the statement of Lemma \ref{propprimitivatecnico} turns into: $P=P(\widehat{y}_{3})$, $P\in \Ker \textbf{D} \cap \bigcap_{i=1}^{4}\Ker \textbf{D}_i $ and $P=\sum_{i \geq 0} y^{i}_{2}P_{i}(\widehat{y}_2)$. The differential operators turn into
\begin{align*} 
\textbf{D} =\partial_{y_4}\partial_{y_3}-\partial_{y_5}\partial_{y_2}+\partial_{y_6}\partial_{y_1} &\mapsto -\partial_{y_4}\partial_{y_3}-\partial_{y_6}\partial_{y_1}+\partial_{y_5}\partial_{y_2}=-\textbf{D},\\
\textbf{D}_1 =\partial_{y_6}\partial_{x_4}-\partial_{y_2}\partial_{x_3}+\partial_{y_3}\partial_{x_2}&\mapsto \partial_{y_5}\partial_{x_4}-\partial_{y_1}\partial_{x_3}+\partial_{y_3}\partial_{x_1}=\textbf{D}_2,\\
\textbf{D}_2 =\partial_{y_5}\partial_{x_4}-\partial_{y_1}\partial_{x_3}+\partial_{y_3}\partial_{x_1}&\mapsto \partial_{y_6}\partial_{x_4}-\partial_{y_2}\partial_{x_3}+\partial_{y_3}\partial_{x_2}=\textbf{D}_1,\\
\textbf{D}_3 =\partial_{y_4}\partial_{x_4}-\partial_{y_1}\partial_{x_2}+\partial_{y_2}\partial_{x_1}&\mapsto  -\partial_{y_4}\partial_{x_4}-\partial_{y_2}\partial_{x_1}+\partial_{y_1}\partial_{x_2}=-\textbf{D}_3 ,\\
\textbf{D}_4 =\partial_{y_4}\partial_{x_3}-\partial_{y_5}\partial_{x_2}+\partial_{y_6}\partial_{x_1}&\mapsto -\partial_{y_4}\partial_{x_3}-\partial_{y_6}\partial_{x_1}+\partial_{y_5}\partial_{x_2}=-\textbf{D}_4.
\end{align*} 
Hence, by Lemma \ref{propprimitivatecnico}, if follows that there exists $K(\widehat{y}_{2},\widehat{y}_{3})$ such that $\textbf{D} K=\partial_{y_5} P_0$, $\textbf{D}_1 K=\partial_{x_3} P_0$, $\textbf{D}_2 K=0$, $\textbf{D}_3 K=-\partial_{x_1} P_0$ and $\textbf{D}_4 K=0$.
Using the correspondence 
\begin{align*} 
&x_{1} \mapsto x_{1},\,\, x_{2} \mapsto x_{2}, \,\, x_{3} \mapsto x_{3},\,\, x_{4} \mapsto x_{4},\,\, y_{1} \mapsto x_{14},\\
&y_{2} \mapsto x_{24},\,\, y_{3} \mapsto x_{34},\,\, y_{4} \mapsto x_{12},\,\, y_{5} \mapsto x_{13},\,\, y_{6} \mapsto x_{23}.
\end{align*}
we obtain the existence of $K(\widehat{x}_{24},\widehat{x}_{34})$ that satisfies \eqref{appoggioK23}.\\
Similarly the proof of Proposition \ref{propprimitiva2} for $a=2$ and $b=1$ follows by Lemma \ref{propprimitivatecnico}, once we rename the variables as follows:
\begin{align*} 
&x_{1} \mapsto x_{1},\,\, x_{2} \mapsto x_{3}, \,\, x_{3} \mapsto x_{2},\,\, x_{4} \mapsto x_{4},\,\, y_{1} \mapsto y_{1},\\
&y_{2} \mapsto y_{3}, \,\, y_{3} \mapsto y_{2},\,\, y_{4} \mapsto y_{5},\,\, y_{5} \mapsto y_{4}, \,\, y_{6} \mapsto -y_{6}.
\end{align*} 
Similarly the proof of Proposition \ref{propprimitiva2} for $a=2$ and $b=3$ follows by Lemma \ref{propprimitivatecnico}, once we rename the variables as follows:
\begin{align*} 
&x_{1} \mapsto x_{3},\,\, x_{2} \mapsto x_{1}, \,\, x_{3} \mapsto x_{2},\,\, x_{4} \mapsto x_{4},\,\, y_{1} \mapsto y_{3},\\
&y_{2} \mapsto y_{1}, \,\, y_{3} \mapsto y_{2}, \,\, y_{4} \mapsto -y_{5},\,\, y_{5} \mapsto -y_{6},  \,\, y_{6} \mapsto y_{4}.
\end{align*} 
The proof of Proposition \ref{propprimitiva2} for $a=1$ and $b=2$ follows by Lemma \ref{propprimitivatecnico}, once we rename the variables as follows:
\begin{align*} 
&x_{1} \mapsto x_2,\,\, x_{2} \mapsto x_3, \,\, x_{3} \mapsto x_1,\,\, x_{4} \mapsto x_4,\,\, y_{1} \mapsto y_2,\\
&y_{2} \mapsto y_3, \,\, y_{3} \mapsto y_1, \,\, y_{4} \mapsto y_6, \,\, y_{5} \mapsto -y_4,  \,\, y_{6} \mapsto -y_5.
\end{align*} 
The proof of Proposition \ref{propprimitiva2} for $a=1$ and $b=3$ follows by Lemma \ref{propprimitivatecnico}, once we rename the variables as follows:
\begin{align*} 
&x_{1} \mapsto x_{3},\,\, x_{2} \mapsto x_{2}, \,\, x_{3} \mapsto x_{1},\,\, x_{4} \mapsto x_{4},\,\,y_{1} \mapsto y_{3},\\
&y_{2} \mapsto y_{2},\,\, y_{3} \mapsto y_{1}, \,\, y_{4} \mapsto -y_{6},\,\, y_{5} \mapsto -y_{5},\,\, \,\, y_{6} \mapsto -y_{4}.
\end{align*} 
\end{rem} 
\begin{rem} 
We fix some notation that will be used in the proof of the following result. Given $r \in \mathbb{R}$, we use the notation
\begin{align*}
\lfloor r \rfloor=\max \left\{m \in \Z, m\leq r\right\},\\
\lceil r \rceil=\min \left\{m \in \Z, m\geq r\right\}.
\end{align*}
Given $n \in \mathbb{Z}$, we will denote by $\mathcal{P}(n)$ the parity of $n$, i.e. $\mathcal{P}(n)=0$ if $n$ is even, $\mathcal{P}(n)=1$ if $n$ is odd.
\end{rem} 
Now, using the theory of spectral sequences for bicomplexes, Lemma \ref{propprimitiva} and Proposition \ref{propprimitiva2}, we are ready to prove the following result on the homology of the $G_{A}(a,b)$'s.
\begin{lem} 
\label{4.3ck6}
Let $a,b$ be such that $b \geq 0$, $a+b\geq 0$.\\
As $\langle x_{1}\partial_{1}-x_{2}\partial_{2}, x_{1}\partial_{2},x_{2}\partial_{1}, x_{2}\partial_{2}-x_{3}\partial_{3}, x_{2}\partial_{3},x_{3}\partial_{2}, x_{1}\partial_{3},x_{3}\partial_{1}\rangle-$modules:
\begin{gather*}
H^{n_{1},n_{2}}(G_{A}(a,b))= 0 \quad \text{for}\,\,\, (n_{1},n_{2})\neq (0,1)\,\, and \,\, n_{2}>0,\\
H^{0,1}(G_{A}(2,2))\cong \C,\\
H^{0,1}(G_{A}(a,b))\cong 0 \quad \text{for}\,\,\, (a,b) \neq (2,2).
\end{gather*}
Moreover $H^{n_1,0}(G_A (a,b))=0$ except for the following cases:
\begin{gather*}
H^{n_1,0}(G_{A}(a,b)) \cong \displaywedge_{-}^{\frac{a+b-n_1}{2}}\displaywedge_{+}^{0}\otimes (V_{A}^{n_1,0})_{[n_1-b,b]} \quad \text{for}\,\, b \geq 4;\\
H^{n_1,0}(G_{A}(a,2)) \cong \displaywedge_{-}^{\frac{a+2-n_1}{2}}\displaywedge_{+}^{0}\otimes (V_{A}^{n_1,0})_{[n_1-2,2]}, \quad \text{for}\,\,\,n_1\in \left\{a, a+2\right\}; \\
H^{n_1,0}(G_{A}(a,2)) \cong\\
 \frac{\displaywedge_{-}^{2}\displaywedge_{+}^{0}\otimes (V_{A}^{n_1,0})_{[n_1-2,2]}}{\left\{w_{12}w_{13}\otimes x_{4}\partial_{1}p+w_{12}w_{23}\otimes x_{4}\partial_{2}p+w_{13}w_{23}\otimes x_{4}\partial_{3}p, \,\, p(x_1 ,x_2, x_3) \in (V_{A}^{n_1,0})_{[n_1,0]}\right\}}, \quad \text{for}\,\,\,n_1=a-2;  \\
H^{n_1,0}(G_{A}(a,2)) \cong 0, \quad \text{for}\,\,\,n_1=a-4,\\
H^{n_1,0}(G_{A}(a,0)) \cong \displaywedge_{-}^{0}\displaywedge_{+}^{0}\otimes (V_{A}^{n_1,0})_{[n_1,0]}, \quad \text{for}\,\, n_1=a;\\
H^{n_1,0}(G_{A}(a,0)) \cong \left\{w_{12}\otimes f, \,\,\, f \in (V_{A}^{n_1,0})_{[n_1,0]} \,\,\,|\,\,\, f=f(x_{1},x_{2},x_{3})\,\,\, and \,\,\, x_3| f \right\} \quad \text{for}\,\, n_1=a-2;\\.
\end{gather*}
\end{lem}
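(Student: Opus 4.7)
The plan is to view each $G_{A}(a,b)$ as a bicomplex with differentials $d'$ and $d''$ and apply the first spectral sequence of Proposition \ref{spectralbicomplex}. The compatibility $d'^{2}=d''^{2}=d'd''+d''d'=0$ holds by Remark \ref{remwck6}, and since the bigrading $(p,q)$ takes only finitely many values inside $G_{A}(a,b)$ for fixed $a,b$, both filtrations are bounded and the spectral sequence converges to the homology of the total complex with total differential $\nabla_{A}$.

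For the cases $n_{2}>0$, the first step would be to compute $E'^{1}=H(G_{A}(a,b),d'')$. The differential $d''=\sum_{s=1}^{3}w_{s4}\otimes \partial_{s4}$ is a Koszul-type operator on $\inlinewedge_{+}\otimes V_{A}$, and Lemma \ref{propprimitiva}, which supplies for any $P\in V_{A}$ a primitive in $V_{A}$ with respect to each variable $x_{s4}$, furnishes a contracting homotopy showing that $E'^{1}$ survives only on classes represented by elements independent of $x_{14},x_{24},x_{34}$. The second step is to compute $E'^{2}=H(E'^{1},d')$: on the surviving classes, Proposition \ref{propprimitiva2}, which builds primitives with respect to $x_{b4}$ that do not depend on a prescribed $x_{a4}$, gives the contracting homotopy for $d'=\sum_{s<t\leq 3}w_{st}\otimes \partial_{st}$. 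The only class that survives both stages for $n_{2}>0$ is supported in $G_{A}^{0,1}(2,2)$; it accounts for $H^{0,1}(G_{A}(2,2))\cong \C$ and is ultimately tied to the singular vector $\vec{m}_{5c}$ of Remark \ref{teoremavettorisingolarick6ultimo}.

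The cases $n_{2}=0$ are treated separately. Since $V_{A}^{n_{1},0}\subset \Sym^{n_{1}}(\C^{4})$ has no dependence on any $x_{lk}$, the derivations $\partial_{lk}$ annihilate $V_{A}^{n_{1},0}$, so both $d'$ and $d''$ vanish on $G_{A}^{n_{1},0}(a,b)$. The homology is then simply the cokernel of $\nabla_{A}$ restricted to $G_{A}^{n_{1},1}(a,b)\to G_{A}^{n_{1},0}(a,b)$, whose image is spanned, up to signs, by elements of the form $w_{lk}\otimes \partial_{x_{lk}}\big(x_{lk}\otimes q\big)$ with $q\in V_{A}^{n_{1},0}$. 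A direct computation using the explicit basis of $V_{A}^{n_{1},1}$, combined with the weight decomposition of $(V_{A}^{n_{1},0})_{[p,q]}$ under $x_{1}\partial_{1}+x_{2}\partial_{2}+x_{3}\partial_{3}-x_{4}\partial_{4}$ and $2x_{4}\partial_{4}$, then identifies the precise cokernel in each $(a,b,n_1)$-case listed in the statement.

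The main obstacle will be to assemble the contracting homotopies of Lemma \ref{propprimitiva} and Proposition \ref{propprimitiva2} consistently at the boundary bigradings $(p,q)$ where the primitives lose a degree of freedom and $d''$ fails to surject onto its kernel; one must then carefully track which classes survive to $E'^{1}$ and $E'^{2}$. Matching this case analysis with the several $n_{2}=0$ subcases in the statement, in particular the explicit quotient by $\{w_{12}w_{13}\otimes x_{4}\partial_{1}p+w_{12}w_{23}\otimes x_{4}\partial_{2}p+w_{13}w_{23}\otimes x_{4}\partial_{3}p\}$ appearing for $n_{1}=a-2$, $b=2$, is the most delicate bookkeeping step.
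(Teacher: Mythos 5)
Your overall strategy -- split $\nabla_A=d'+d''$ according to the bigrading of Remark \ref{remtecnico}, and run the first spectral sequence of the bicomplex $G_A(a,b)$ -- is exactly the paper's, and your observation that the bigrading takes finitely many values on each $G_A(a,b)$ correctly justifies convergence. But there are two concrete gaps. First, you have misassigned the roles of the technical results: Lemma \ref{propprimitiva} \emph{and} Proposition \ref{propprimitiva2} are both needed at the $E'^1$ stage, to produce primitives with respect to $x_{14},x_{24},x_{34}$ that stay inside $\Ker D\cap\bigcap_i\Ker D_i$ (this is what makes $d''$ acyclic away from the right end). On the surviving $E'^1$ row for $b\geq 4$ the classes are represented by elements depending on none of the $x_{lk}$'s, so $d'$ is \emph{identically zero} there; no contracting homotopy exists or is needed, and that is precisely why $H^{n_1,0}(G_A(a,b))$ is nonzero for $b\geq 4$. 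If Proposition \ref{propprimitiva2} really contracted $d'$ on $E'^1$ as you claim, the $n_2=0$ homology would vanish, contradicting the statement you are proving.

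Second, your plan stops at $E'^2$, but for $b=2$ the $E'^1$ page has two nonzero rows ($K_{\widetilde p,1}$ and $K_{\widetilde p,0}$ in the paper's notation), and the classes surviving in the second row to $E'^2$ are only killed by the differential $d^{(2)}\colon E'^2_{\widetilde p,\widetilde q}\to E'^2_{\widetilde p-2,\widetilde q+1}$ at the $E'^3$ page; one must make $d^{(2)}$ explicit (Remark \ref{d2esplicito}) and compute its kernel. This is exactly the mechanism producing both the exceptional class $H^{0,1}(G_A(2,2))\cong\C$ (the kernel of $d^{(2)}_{1,0}$ is $\{w_{14}\otimes x_{23}\}$ only when $a=b=2$ -- it has nothing to do with $\vec{m}_{5c}$ at this stage) and the exceptional quotient for $n_1=a-2$. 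Relatedly, your treatment of $n_2=0$ as ``simply the cokernel of $\nabla_A$ on $G_A^{n_1,1}$'' computable from an explicit basis is not a proof: identifying that image is the bulk of the work, since elements of $V_A^{n_1,1}$ genuinely depend on the $x_{lk}$'s and the answer requires the full case analysis on $(a,b)$ (whether $b>6$, $4\leq b\leq 6$, $b=2$, $b=0$, and whether $a+b$ is large or small), which determines the length of the complexes and hence which $E'^\infty$ terms survive.
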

\begin{proof}
	We first modify the bigrading \eqref{bigradingck6} so that we obtain a new bigrading and we can use the theory of spectral sequences for bicomplexes. For every $p,q$ we denote by $\widetilde{p}=\lceil \frac{p}{2} \rceil$ and $ \widetilde{q}=\frac{q}{2}$. We use the notation $(\widetilde{V}_{A})_{[\widetilde{p},\widetilde{q}]}=(V_{A})_{[p,q]}$ and $\widetilde{G}_{A}(a,b)_{[\widetilde{p},\widetilde{q}]}=G_{A}(a,b)_{[p,q]}$. Thus $d':\widetilde{G}_{A}(a,b)_{[\widetilde{p},\widetilde{q}]} \longrightarrow  \widetilde{G}_{A}(a,b)_{[\widetilde{p}-1,\widetilde{q}]}$, $d'':\widetilde{G}_{A}(a,b)_{[\widetilde{p},\widetilde{q}]}\longrightarrow  \widetilde{G}_{A}(a,b)_{[\widetilde{p},\widetilde{q}-1]}$ and $G_{A}(a,b)=\oplus _{\widetilde{p},\widetilde{q}}\widetilde{G}_{A}(a,b)_{[\widetilde{p},\widetilde{q}]}$. We split the proof in the following subcases.
	\begin{enumerate}[leftmargin=0.5cm]
		\item[1)] Let $b>6$ and $a+b > 6$. Let us consider $G_{A}(a,b)$ with the differential $d''$:
\begin{align*}
\xleftarrow[]{d''}\displaywedge_{-}^{\frac{a-(2\widetilde{p}-\mathcal{P}(p))}{2}}\displaywedge_{+}^{\frac{b-2\widetilde{q}}{2}+1}\otimes(\widetilde{V}_{A})_{[\widetilde{p},\widetilde{q}-1]} \xleftarrow[]{d''} \displaywedge_{-}^{\frac{a-(2\widetilde{p}-\mathcal{P}(p))}{2}}\displaywedge_{+}^{\frac{b-2\widetilde{q}}{2}}\otimes(\widetilde{V}_{A})_{[\widetilde{p},\widetilde{q}]}\xleftarrow[]{d''} \displaywedge_{-}^{\frac{a-(2\widetilde{p}-\mathcal{P}(p))}{2}}\displaywedge_{+}^{\frac{b-2\widetilde{q}}{2}-1}\otimes(\widetilde{V}_{A})_{[\widetilde{p},\widetilde{q}+1]}  \xleftarrow[]{d''}.
\end{align*}
It is the tensor product of $\inlinewedge_{-}^{\frac{a-(2\widetilde{p}-\mathcal{P}(p))}{2}}$ and the following complex, since $\inlinewedge_{-}^{\frac{a-(2\widetilde{p}-\mathcal{P}(p))}{2}}$ is not involved by $d''$:
\begin{align*}
0\xleftarrow[]{d''}\displaywedge_{+}^{3}\otimes(\widetilde{V}_{A})_{[\widetilde{p},\frac{b}{2}-3]}  \xleftarrow[]{d''}\displaywedge_{+}^{2}\otimes(\widetilde{V}_{A})_{[\widetilde{p},\frac{b}{2}-2]} \xleftarrow[]{d''} \displaywedge_{+}^{1}\otimes(\widetilde{V}_{A})_{[\widetilde{p},\frac{b}{2}-1]} \xleftarrow[]{d''} \displaywedge_{+}^{0}\otimes(\widetilde{V}_{A})_{[\widetilde{p},\frac{b}{2}]} \xleftarrow[]{d''} 0.
\end{align*}
We have that $b>6$ assures that for $\widetilde{q}$ the value $\frac{b}{2}-3$ is acceptable, since it is positive.
Let us show that this complex is exact except for the right end.
\begin{description}[leftmargin=0cm]
	\item[i] Let us consider the map $d'': \inlinewedge_{+}^{0}\otimes(\widetilde{V}_{A})_{[\widetilde{p},\frac{b}{2}]} \longrightarrow \inlinewedge_{+}^{1}\otimes(\widetilde{V}_{A})_{[\widetilde{p},\frac{b}{2}-1]}  $. Let us compute its kernel. Let $f \in \inlinewedge_{+}^{0}\otimes(\widetilde{V}_{A})_{[\widetilde{p},\frac{b}{2}]} $. We have $d''(f)=w_{14} \otimes \partial_{14}f+w_{24} \otimes \partial_{24}f+w_{34} \otimes \partial_{34}f=0$ if and only if $\partial_{14}f=\partial_{24}f=\partial_{34}f=0$. We point out that $f\in \Ker D \cap\bigcap^{4}_{i=1} \Ker D_{i}$ and $\partial_{14}f=\partial_{24}f=\partial_{34}f=0$ imply that $\partial_{12}\partial_{4}f=\partial_{13}\partial_{4}f=\partial_{23}\partial_{4}f=0$. Since $b>6$, it follows that $f$ does not depend on the $x_{ij}$'s, i.e. $f=f(x_{1},x_{2},x_{3},x_{4})$.
\item[ii] Let us consider the map $d'': \inlinewedge_{+}^{1}\otimes(\widetilde{V}_{A})_{[\widetilde{p},\frac{b}{2}-1]} \longrightarrow  \inlinewedge_{+}^{2}\otimes(\widetilde{V}_{A})_{[\widetilde{p},\frac{b}{2}-2]} $. Let us compute its kernel. Let $v=w_{14}\otimes p_{1}+w_{24}\otimes p_{2}+w_{34}\otimes p_{3}\in \inlinewedge_{+}^{1} \otimes(\widetilde{V}_{A})_{[\widetilde{p},\frac{b}{2}-1]} $. We have:
\begin{align*}
d''(v)=&w_{14}w_{24}\otimes \partial_{24} p_{1}+w_{14}w_{34}\otimes \partial_{34} p_{1}+w_{24}w_{34}\otimes \partial_{34} p_{2}+w_{24}w_{14}\otimes \partial_{14} p_{2}+w_{34}w_{14}\otimes \partial_{14} p_{3}+w_{34}w_{24}\partial_{24} p_{3}.
\end{align*}
Hence $d''(v)=0$ if and only if:
\begin{align}
\label{kerpasso1d''}
\begin{cases}
\partial_{24} p_{1}-\partial_{14} p_{2}=0,\\
\partial_{34} p_{1}-\partial_{14} p_{3}=0,\\
\partial_{34} p_{2}-\partial_{24} p_{3}=0.
\end{cases}
\end{align}
Moreover, combining \eqref{kerpasso1d''} and $p_{1},p_{2},p_{3}\in \Ker D \cap\bigcap^{4}_{i=1} \Ker D_{i}$, we obtain the following equations:
\begin{align}
\label{altrekerpasso1d''}
\partial_{12} p_{3}-\partial_{13} p_{2}+\partial_{23} p_{1}=G(\widehat{x}_{14},\widehat{x}_{24},\widehat{x}_{34}),\\ \nonumber
\partial_{12}\partial_{4} p_{1}=\partial_{14}(\partial_{2}p_{1}-\partial_{1}p_{2}),\\ \nonumber
\partial_{12}\partial_{4} p_{2}=\partial_{24}(\partial_{2}p_{1}-\partial_{1}p_{2}),\\ \nonumber
\partial_{12}\partial_{4} p_{3}=\partial_{34}(\partial_{2}p_{1}-\partial_{1}p_{2}),\\  \nonumber
\partial_{23}\partial_{4} p_{1}=\partial_{14}(\partial_{3}p_{2}-\partial_{2}p_{3}),\\  \nonumber
\partial_{23}\partial_{4} p_{2}=\partial_{24}(\partial_{3}p_{2}-\partial_{2}p_{3}),\\  \nonumber
\partial_{23}\partial_{4} p_{3}=\partial_{34}(\partial_{3}p_{2}-\partial_{2}p_{3}),\\  \nonumber
\partial_{13}\partial_{4} p_{1}=\partial_{14}(\partial_{3}p_{1}-\partial_{1}p_{3}),\\  \nonumber
\partial_{13}\partial_{4} p_{2}=\partial_{24}(\partial_{3}p_{1}-\partial_{1}p_{3}),\\  \nonumber
\partial_{13}\partial_{4} p_{3}=\partial_{34}(\partial_{3}p_{1}-\partial_{1}p_{3}).
\end{align}
In order that there exists $W\in (\widetilde{V}_{A})_{[\widetilde{p},\frac{b}{2}]}$, i.e. $W \in\Ker D \cap\bigcap^{4}_{i=1} \Ker D_{i}$, such that $d''(W)=v$, that is $\partial_{14}W=p_{1},\partial_{24}W=p_{2},\partial_{34}W=p_{3}$, it is necessary that
$$\partial_{12} p_{3}-\partial_{13} p_{2}+\partial_{23} p_{1}=0.$$
Indeed $0=D W=\partial_{12} p_{3}-\partial_{13} p_{2}+\partial_{23} p_{1}$.
We know, by \eqref{altrekerpasso1d''}, that 
$$\partial_{12} p_{3}-\partial_{13} p_{2}+\partial_{23} p_{1}=G(\widehat{x}_{14},\widehat{x}_{24},\widehat{x}_{34}).$$
Since the weight with respect to $2x_{4}\partial_4$ of $\partial_{12} p_{3}-\partial_{13} p_{2}+\partial_{23} p_{1}$ is $b-2>4$, the weight with respect to $2x_{4}\partial_4$ of $G(\widehat{x}_{14},\widehat{x}_{24},\widehat{x}_{34})$ is $b-2>4$. Hence $G$ depends on $x_4$. Moreover $G\in \Ker D \cap\bigcap^{4}_{i=1}\Ker D_i$, since $p_{1},p_{2},p_{3}\in  \Ker D \cap \bigcap^{4}_{i=1}\Ker D_i$. This implies $\partial_{12}\partial_{4}G=\partial_{13}\partial_{4}G=\partial_{23}\partial_{4}G=0$. Hence $G$ does not depend on the $x_{ij}$'s, i.e. $G=G(x_{1},x_{2},x_{3},x_{4})$. We now show that, combining \eqref{kerpasso1d''}, \eqref{altrekerpasso1d''}, $b-2>4$ and $p_{1},p_{2},p_{3}\in  \Ker D \cap \bigcap^{4}_{i=1}\Ker D_i$, we obtain $G=0$. Indeed
\begin{align*}
\partial_{4}G=&\partial_{4}\partial_{12} p_{3}-\partial_{4}\partial_{13} p_{2}+\partial_{4}\partial_{23} p_{1}=\partial_{4}\partial_{12} p_{3}- \partial_{3}\partial_{24} p_{1}+\partial_{1}\partial_{24} p_{3}+\partial_{4}\partial_{23} p_{1}\\
=&\partial_{4}\partial_{12} p_{3}+\partial_{1}\partial_{24} p_{3}-\partial_{2}\partial_{34} p_{1}=\partial_{4}\partial_{12} p_{3}+\partial_{1}\partial_{24} p_{3}-\partial_{2}\partial_{14} p_{3}=0.
\end{align*}
Hence $\partial_{12} p_{3}-\partial_{13} p_{2}+\partial_{23} p_{1}=0$. Let us show that a $v$, that satisfies \eqref{kerpasso1d''} lies in the image of $d''$.\\
 By solving \eqref{kerpasso1d''}, we obtain that $p_{1}=\int \partial_{14}p_{2}dx_{24}+S_{1}(\widehat{x}_{24})$ and $p_{3}=\int \partial_{34}p_{2}dx_{24}+\int \partial_{34}S_{1}(\widehat{x}_{24})dx_{14}+S_{3}(\widehat{x}_{14},\widehat{x}_{24})$. We consider 
$$P=\int p_{2}dx_{24}+\int S_{1}(\widehat{x}_{24})dx_{14}+\int S_{3}(\widehat{x}_{14},\widehat{x}_{24})dx_{34}. $$
$P$ is constructed so that $\partial_{14}P=p_{1},\partial_{24}P=p_{2},\partial_{34}P=p_{3}$. Then $\partial_{12} p_{3}-\partial_{13} p_{2}+\partial_{23} p_{1}=0$ implies $D P=0$.
By $D_4(p_{1})=0$ we obtain $D_4(S_{1}(\widehat{x}_{24}))=0$, and by $D_4(p_{3})=0$ we obtain $D_4(S_{3}(\widehat{x}_{24},\widehat{x}_{14}))=0$. Hence we deduce that $D_4 P=0$. We notice that:
\begin{align}
\label{Q1Q2Q3}
Q_1:=D_1 P=\partial_{23}\partial_{4} P-\partial_{3}p_{2}+\partial_{2}p_{3},\\ \nonumber
Q_2:=D_2 P=\partial_{13}\partial_{4} P-\partial_{3}p_{1}+\partial_{1}p_{3},\\ \nonumber
Q_3:=D_3 P=\partial_{12}\partial_{4} P-\partial_{2}p_{1}+\partial_{1}p_{2}.
\end{align}
By \eqref{altrekerpasso1d''} we obtain that $\partial_{14}Q_i=\partial_{24}Q_i=\partial_{34}Q_i=0$ for all $i=1,2,3$. Moreover, combining \eqref{Q1Q2Q3} and $D_4 P=0$, we obtain $\partial_{1} Q_{1}-\partial_{2} Q_{2}+\partial_{3} Q_{3}=0$. Furthermore $D_4 P=0$ implies $D_4 Q_i=0$ for all $i=1,2,3$.
Let us notice that 
\begin{align*}
 \partial_{13}Q_3-\partial_{12}Q_2=&\partial_{13}\partial_{12}\partial_{4} P-\partial_{13}\partial_{2}p_{1}+\partial_{13}\partial_{1}p_{2}-\partial_{12}\partial_{13}\partial_{4} P+\partial_{12}\partial_{3}p_{1}-\partial_{12}\partial_{1}p_{3}\\
=&-\partial_{1}(\partial_{12} p_{3}-\partial_{13} p_{2}+\partial_{23} p_{1})=0.
\end{align*}
Analogously we obtain that
\begin{align}
\label{primaQ}
\partial_{13} Q_{3}=\partial_{12} Q_{2},\\ \label{secQ}
\partial_{23}Q_3=\partial_{12}Q_1,\\ \label{terQ}
\partial_{23}Q_2=\partial_{13}Q_1.
\end{align}
Our aim is now to find $Q$ such that $D_1 Q=Q_1, D_2 Q=Q_2, D_3 Q=Q_3$ and $D_4 Q=D Q= d''(Q)=0$, so that $v=d''(P-Q)$. By \eqref{primaQ} we know that $Q_2=\int \partial_{13} Q_{3} d x_{12}+S(\widehat{x}_{12})$. By Remark \ref{rempropL1}, we know that there exists $K(\widehat{x}_{12})$ such that $D_4(\int Q_{3} d x_{12}+K(\widehat{x}_{12}))=0.$ Let $Z(\widehat{x}_{12})$ be such that $S(\widehat{x}_{12})=\partial_{13} K(\widehat{x}_{12})+Z(\widehat{x}_{12})$. Hence $Q_2=\partial_{13}(\int  Q_{3} d x_{12}+K(\widehat{x}_{12}))+Z(\widehat{x}_{12})$. By $D_4 Q_{2}=0$ and $D_4(\int Q_{3} d x_{12}+K(\widehat{x}_{12}))=0$, we get $D_4 Z(\widehat{x}_{12})=0$. By Remark \ref{rempropL1}, we know that there exists $C(\widehat{x}_{12},\widehat{x}_{13})$ such that $D_4(\int Z(\widehat{x}_{12}) d x_{13}+C(\widehat{x}_{12},\widehat{x}_{13}))=0.$ Hence $Q_2=\partial_{13}(\int  Q_{3} d x_{12}+K(\widehat{x}_{12})+\int Z(\widehat{x}_{12})d x_{13}+C(\widehat{x}_{12},\widehat{x}_{13}))$ and the expression in the parenthesis lies in $\Ker D_4$. We rename $A(\widehat{x}_{12}):=K(\widehat{x}_{12})+\int Z(\widehat{x}_{12})d x_{13}+C(\widehat{x}_{12},\widehat{x}_{13})$, so that $Q_2=\partial_{13}(\int  Q_{3} d x_{12}+A(\widehat{x}_{12}))$, with $\int  Q_{3} d x_{12}+A(\widehat{x}_{12}) \in \Ker D_4$.  
 By \eqref{secQ} and an analogous argument, we get that $Q_1=\partial_{23}(\int  Q_{3} d x_{12}+B(\widehat{x}_{12}))$, with $\int  Q_{3} d x_{12}+B(\widehat{x}_{12}) \in \Ker D_4$. 
Thus by \eqref{terQ}, we obtain $\partial_{13}\partial_{23}(A(\widehat{x}_{12})-B(\widehat{x}_{12}))=0$, which means that $B(\widehat{x}_{12})=A(\widehat{x}_{12})+Y_{1}(\widehat{x}_{12},\widehat{x}_{13})+Y_{2}(\widehat{x}_{12},\widehat{x}_{23})$. Now let us use that $\partial_{1} Q_{1}-\partial_{2} Q_{2}+\partial_{3} Q_{3}=0$. Indeed
\begin{align*}
0=&\partial_{1} Q_{1}-\partial_{2} Q_{2}+\partial_{3} Q_{3}\\
=&\partial_{1}\partial_{23}\Big(\int  Q_{3} d x_{12}+B(\widehat{x}_{12})\Big)  - \partial_{2} \partial_{13}\Big(\int  Q_{3} d x_{12} +A(\widehat{x}_{12})\Big)+\partial_{3}\partial_{12}\int  Q_{3} d x_{12}\\
=&D_{4}\Big(\int  Q_{3} d x_{12}\Big)-\partial_{2}\partial_{13}A(\widehat{x}_{12})+\partial_{1}\partial_{23}B(\widehat{x}_{12})\\
=&-D_{4}A(\widehat{x}_{12})-\partial_{2}\partial_{13}A(\widehat{x}_{12})+\partial_{1}\partial_{23}(A(\widehat{x}_{12})+Y_{1}(\widehat{x}_{12},\widehat{x}_{13}))\\
=&-D_{4}A(\widehat{x}_{12})+D_4A(\widehat{x}_{12})+\partial_{1}\partial_{23}Y_{1}(\widehat{x}_{12},\widehat{x}_{13})\\
=&\partial_{1}\partial_{23}Y_{1}(\widehat{x}_{12},\widehat{x}_{13}),
\end{align*}
where we used that $D_{4}A(\widehat{x}_{12})=-D_4(\int  Q_{3} d x_{12})$. We point out that $0=\partial_{1}\partial_{23}Y_{1}(\widehat{x}_{12},\widehat{x}_{13})=D_4 Y_{1}(\widehat{x}_{12},\widehat{x}_{13})$. We consider
\begin{align*}
Q:=\int \Big( \int  Q_{3} d x_{12}+A(\widehat{x}_{12})+Y_{1}(\widehat{x}_{12},\widehat{x}_{13})\Big)d x_{4}.
\end{align*}
By its construction $D_4Q=0$, since $D_4(\int  Q_{3} d x_{12}+A(\widehat{x}_{12}))=0$ and $D_4 Y_{1}(\widehat{x}_{12},\widehat{x}_{13})=0$. Moreover $D Q=d''(Q)=0$, since $Q$ does not depend on $x_{14},x_{24},x_{34}$. Then
\begin{align*}
D_1Q=&\partial_{23}\partial_{4}Q=\partial_{23}\Big(\int  Q_{3} d x_{12}\Big)+\partial_{23}\Big(A(\widehat{x}_{12})+Y_{1}(\widehat{x}_{12},\widehat{x}_{13})\Big)\\
=&\partial_{23}\Big(\int  Q_{3} d x_{12}\Big)+\partial_{23}(B(\widehat{x}_{12}))=Q_1,\\
D_2Q=&\partial_{13}\partial_{4}Q=\partial_{13}\Big(\int  Q_{3} d x_{12}+A(\widehat{x}_{12})\Big)=Q_2,\\
D_3Q=&\partial_{12}\partial_{4}Q=Q_3.
\end{align*}
Hence if $v\in \Ker d''$, then $v=d''(P-Q)$ with $P-Q \in \Ker D \cap\bigcap^{4}_{i=1} \Ker D_i$. Thus at this point the sequence is exact.
\item[iii] Let us consider the map $d'': \inlinewedge_{+}^{2}\otimes (\widetilde{V}_{A})_{[\widetilde{p},\frac{b}{2}-2]} \longrightarrow  \inlinewedge_{+}^{3}\otimes(\widetilde{V}_{A})_{[\widetilde{p},\frac{b}{2}-3]}  $. Let us compute its kernel. Let $v=w_{14}w_{24}\otimes p_{1}+w_{14}w_{34}\otimes p_{2}+w_{24}w_{34}\otimes p_{3}\in \inlinewedge_{+}^{2}\otimes(\widetilde{V}_{A})_{[\widetilde{p},\frac{b}{2}-2]}  $. We have:
\begin{align*}
d''(v)=w_{14}w_{24}w_{34}\otimes (\partial_{34} p_{1}-\partial_{24} p_{2}+\partial_{14} p_{3}).
\end{align*}
Therefore $d''(v)=0$ if and only if $\partial_{34} p_{1}-\partial_{24} p_{2}+\partial_{14} p_{3}=0$, that is equivalent to $p_{1}=\int (\partial_{24}p_{2}--\partial_{14}p_{3})dx_{34}+S(\widehat{x}_{34})$.
We consider $w=w_{14}\otimes (\int p_{2}dx_{34}+K_{1}(\widehat{x}_{34}))+w_{24}\otimes (\int p_{3}dx_{34}+K_{2}(\widehat{x}_{34}))$. We obtain that:
\begin{align*}
d''(w)=w_{14}w_{24}\otimes \Big( \int (\partial_{24}p_{2}-\partial_{14}p_{3})dx_{34} +\partial_{24}K_{1}(\widehat{x}_{34})-\partial_{14}K_{2}(\widehat{x}_{34})\Big)
+w_{14}w_{34}\otimes p_{2}+w_{24}w_{34}\otimes p_{3}.
\end{align*}
Hence:
\begin{align*}
v-d''(w)&=w_{14}w_{24}\otimes \big(S(\widehat{x}_{34}) -\partial_{24}K_{1}(\widehat{x}_{34})+\partial_{14}K_{2}(\widehat{x}_{34})\big)\\
&=d''\Big(-w_{24}\otimes  \Big( \int \big(S(\widehat{x}_{34}) -\partial_{24}K_{1}(\widehat{x}_{34})+\partial_{14}K_{2}(\widehat{x}_{34}) \big)dx_{14}+H(\widehat{x}_{14},\widehat{x}_{34} )\Big)\Big).
\end{align*}
Thus at this point the sequence is exact.
\item[iv] Let us consider the map $d'': \inlinewedge_{+}^{3}\otimes(\widetilde{V}_{A})_{[\widetilde{p},\frac{b}{2}-3]} \longrightarrow 0$. We have that $ \inlinewedge_{+}^{3}\otimes(\widetilde{V}_{A})_{[\widetilde{p},\frac{b}{2}-3]} \ni w_{14}w_{24}w_{34}\otimes f=d''(w_{24}w_{34}\otimes (\int f d_{14}+K(\widehat{x}_{14})))$.
Thus at this point the sequence is exact.
\end{description}
In the following diagram we use the notation $K_{\widetilde{p}}:= \left\{f \in (\widetilde{V}_{A})_{[\widetilde{p},\frac{b}{2}]}, \,\, | \,\, f=f(x_{1},x_{2},x_{3},x_{4})\right\}$.
The following is the diagram of the $E^{'1}$ spectral sequence, where the vertical maps are $d''$ and the horizontal maps are induced by $d'$:
\begin{center}
\begin{tikzpicture}
\node[black] at (0,0) {$\inlinewedge_{-}^{3}\inlinewedge_{+}^{0}\otimes K_{\lceil \frac{a}{2}\rceil-3} $};
\draw[->,black] (0,-0.4) -- (0,-0.6);
\node[black] at (3.5,0) {$\inlinewedge_{-}^{2}\inlinewedge_{+}^{0}\otimes K_{\lceil \frac{a}{2}\rceil-2} $};
\draw[->,black] (3.5,-0.4) -- (3.5,-0.6);
\node[black] at (7,0) {$\inlinewedge_{-}^{1}\inlinewedge_{+}^{0}\otimes K_{\lceil \frac{a}{2}\rceil-1}  $};
\draw[->,black] (7,-0.4) -- (7,-0.6);
\node[black] at (10.5,0) {$\inlinewedge_{-}^{0}\inlinewedge_{+}^{0}\otimes K_{\lceil \frac{a}{2}\rceil}   $};
\draw[->,black] (10.5,-0.4) -- (10.5,-0.6);
\draw[->,black] (2,0) -- (1.5,0);
\draw[->,black] (5.5,0) -- (5,0);
\draw[->,black] (9,0) -- (8.5,0);
\node[black] at (0,-1) {$0$};
\draw[->,black] (0,-1.4) -- (0,-1.6);
\node[black] at (3.5,-1) {$0 $};
\draw[->,black] (3.5,-1.4) -- (3.5,-1.6);
\node[black] at (7,-1) {$0$};
\draw[->,black] (7,-1.4) -- (7,-1.6);
\node[black] at (10.5,-1) {$0 $};
\draw[->,black] (10.5,-1.4) -- (10.5,-1.6);
\draw[->,black] (2,-1) -- (1.5,-1);
\draw[->,black] (9,-1) -- (8.5,-1);
\draw[->,black] (5.5,-1) -- (5,-1);
\node[black] at (0,-2) {$0$};
\node[black] at (3.5,-2) {$0 $};
\node[black] at (7,-2) {$0$};
\node[black] at (10.5,-2) {$0 $};
\draw[->,black] (2,-2) -- (1.5,-2);
\draw[->,black] (9,-2) -- (8.5,-2);
\draw[->,black] (5.5,-2) -- (5,-2);
\node[black] at (0,-3) {$0$};
\draw[->,black] (0,-2.4) -- (0,-2.6);
\node[black] at (3.5,-3) {$0 $};
\draw[->,black] (3.5,-2.4) -- (3.5,-2.6);
\node[black] at (7,-3) {$0$};
\draw[->,black] (7,-2.4) -- (7,-2.6);
\node[black] at (10.5,-3) {$0 $.};
\draw[->,black] (10.5,-2.4) -- (10.5,-2.6);
\draw[->,black] (2,-3) -- (1.5,-3);
\draw[->,black] (9,-3) -- (8.5,-3);
\draw[->,black] (5.5,-3) -- (5,-3);
%
%
\end{tikzpicture}
\end{center}
The only nonzero row is:
\begin{align*}
0\xleftarrow[]{d'}\displaywedge_{-}^{3}\displaywedge_{+}^{0}\otimes K_{\lceil \frac{a}{2}\rceil-3} \xleftarrow[]{d'}  \displaywedge_{-}^{2}\displaywedge_{+}^{0}\otimes K_{\lceil \frac{a}{2}\rceil-2}  \xleftarrow[]{d'} \displaywedge_{-}^{1}\displaywedge_{+}^{0}\otimes K_{\lceil \frac{a}{2}\rceil-1} \xleftarrow[]{d'}  \displaywedge_{-}^{0}\displaywedge_{+}^{0}\otimes K_{\lceil \frac{a}{2}\rceil} \xleftarrow[]{d'}0.
\end{align*}
We point out that condition $a+b \geq 6$ allows that for $\widetilde{p}$ the value $\lceil \frac{a}{2}\rceil-3$ is acceptable. Indeed, if $a$ is even, then the values of $p$ are all even and $p+q\geq 0$ implies $\widetilde{p}+\widetilde{q}\geq 0$. Hence $\widetilde{p}+\widetilde{q}=\frac{a}{2}-3+\frac{b}{2} \geq 0$ is satisfied. If $a$ is odd, then the values of $p$ are all odd and $p+q\geq 0$ implies $\widetilde{p}+\widetilde{q}\geq 1$. Hence $\widetilde{p}+\widetilde{q}=\frac{a+1}{2}-3+\frac{b}{2} \geq 1$ is satisfied.
Since in $K_{\widetilde{p}}$ there are only elements that do not depend on the $x_{jk}$'s, then $d'$ is identically zero on this row. Hence $E'^{2}=E'^{1}$. Moreover for a one row spectral sequence we have that $E'^{2}=E'^{\infty}$, then we obtain that
\begin{align*}
\oplus_{n_1,n_2}H^{n_{1},n_{2}}(G_{A}(a,b)) \cong \sum_{\widetilde{p},\widetilde{q}}E'^{\infty}_{\widetilde{p},\widetilde{q}} =\sum^{3}_{i=0}E'^{\infty}_{\lceil \frac{a}{2}\rceil-i,\frac{b}{2}}\cong \sum^{3}_{i=0}\displaywedge_{-}^{i}\displaywedge_{+}^{0}\otimes K_{\lceil \frac{a}{2}\rceil-i}.
\end{align*}
Since in $ K_{\lceil \frac{a}{2}\rceil-i}$ there are only elements that do not depend on the $x_{lk} $'s for all the $l,k \in \left\{1,2,3,4\right\}$, we obtain:
\begin{align*}
H^{n_1,0}(G_{A}(a,b)) \cong \displaywedge_{-}^{\frac{a+b-n_1}{2}}\displaywedge_{+}^{0}\otimes (V_{A}^{n_1,0})_{[n_1-b,b]}.
\end{align*}
\item[2)] 
Let $b>6$ and $a+b =:h \leq 6$. The computation of $E^{'1}$ is analogous to the previous case and we obtain the same diagram, but the only nonzero row is now:
\begin{align*}
0\xleftarrow[]{d'} \displaywedge_{-}^{\lfloor\frac{h}{2}\rfloor}\displaywedge_{+}^{0}\otimes K_{\lceil \frac{a}{2}\rceil-\lfloor\frac{h}{2}\rfloor} \xleftarrow[]{d'} \cdots  \xleftarrow[]{d'} \displaywedge_{-}^{1}\displaywedge_{+}^{0}\otimes K_{\lceil \frac{a}{2}\rceil-1} \xleftarrow[]{d'}  \displaywedge_{-}^{0}\displaywedge_{+}^{0}\otimes K_{\lceil\frac{a}{2}\rceil} \xleftarrow[]{d'}0.
\end{align*}
Indeed condition $ 0 \leq a+b=h$ allows $\widetilde{p}$ to be $\lceil \frac{a}{2}\rceil-\lfloor\frac{h}{2}\rfloor$. If $a$ is even, then $h$ is even; hence, since for all the $p,q$'s we have $p+q \geq 0 $, we have that $\widetilde{p}+\widetilde{q} \geq 0$. We have that $\widetilde{p}+\widetilde{q}=\frac{a}{2}-\frac{h}{2}+\frac{b}{2}=0$, hence $\widetilde{p}=\lceil \frac{a}{2}\rceil-\lfloor\frac{h}{2}\rfloor$ is acceptable. Similarly, if $a$ is odd, then $h$ is odd; hence, since for all the $p,q$'s we have $p+q \geq 0 $, we have that $\widetilde{p}+\widetilde{q} \geq 1$. We have that $\widetilde{p}+\widetilde{q}=\frac{a+1}{2}-\frac{h-1}{2}+\frac{b}{2}=1$, hence $\widetilde{p}=\lceil \frac{a}{2}\rceil-\lfloor\frac{h}{2}\rfloor$ is acceptable. Analogously to case 1), since in $K_{\widetilde{p}}$ there are only elements that do not depend on the $x_{jk}$'s, for all $l,k \in \left\{1,2,3,4\right\}$ and for a one row spectral sequence we have that $E'^{2}=E'^{\infty}$, we obtain that
\begin{align*}
\oplus_{n_1,n_2}H^{n_{1},n_{2}}(G_{A}(a,b)) \cong \sum_{\widetilde{p},\widetilde{q}}E'^{\infty}_{\widetilde{p},\widetilde{q}} =\sum^{\lfloor\frac{a}{2}\rfloor+\frac{b}{2}}_{i=0}E'^{\infty}_{\lceil \frac{a}{2}\rceil-i,\frac{b}{2}}=\sum^{\lfloor\frac{a}{2}\rfloor+\frac{b}{2}}_{i=0}\displaywedge_{-}^{i}\displaywedge_{+}^{0}\otimes K_{\lceil \frac{a}{2}\rceil-i},
\end{align*}
and
\begin{align*}
H^{n_1,0}(G_{A}(a,b)) \cong \displaywedge_{-}^{\frac{a+b-n_1}{2}}\displaywedge_{+}^{0}\otimes (V_{A}^{n_1,0})_{[n_1-b,b]}.
\end{align*}
\item[3)]
Let $4 \leq b \leq 6$ and $a+b > 6$.
Let us consider $G_{A}(a,b)$ with the differential $d''$; we obtain the product of $\inlinewedge_{-}^{\frac{a-(2\widetilde{p}-\mathcal{P}(p))}{2}}$ and the following complex:
\begin{align*}
0\xleftarrow[]{d''}\displaywedge_{+}^{\frac{b}{2}}\otimes(\widetilde{V}_{A})_{[\widetilde{p},0]}  \xleftarrow[]{d''}\displaywedge_{+}^{\frac{b}{2}-1}\otimes(\widetilde{V}_{A})_{[\widetilde{p},1]} \xleftarrow[]{d''}...\xleftarrow[]{d''} \displaywedge_{+}^{0}\otimes(\widetilde{V}_{A})_{[\widetilde{p},\frac{b}{2}]} \xleftarrow[]{d''} 0.
\end{align*}
In $\inlinewedge_{+}^{\frac{b}{2}}\otimes(\widetilde{V}_{A})_{[\widetilde{p},0]}$ the sequence is exact. Indeed if $b=6$, in $\inlinewedge_{+}^{3}\otimes(\widetilde{V}_{A})_{[\widetilde{p},0]}$ there are only elements of type $v=w_{14}w_{24}w_{34} \otimes  p(\widehat{x}_{4},\widehat{x}_{14},\widehat{x}_{24},\widehat{x}_{34})$, due to $\widetilde{q}=0$; thus $v=d''(w_{24}w_{34}\otimes(\int p dx_{14}+K(\widehat{x}_{14})))$. If $b=4$, in $\inlinewedge_{+}^{2}\otimes(\widetilde{V}_{A})_{[\widetilde{p},0]}$ there are only elements of type $v=\sum w_{i_1 4}w_{i_2 4} \otimes p(\widehat{x}_{4},\widehat{x}_{14},\widehat{x}_{24},\widehat{x}_{34})$, with the $i_j$'s in $\left\{1,2,3\right\}$; thus $v\in \Ima d''$, since $w_{i_14}w_{i_24} \otimes p(\widehat{x}_{4},\widehat{x}_{14},\widehat{x}_{24},\widehat{x}_{34})=d''(w_{i_1 4}\otimes(\int p dx_{i_2 4}+K(\widehat{x}_{i_2 4},\widehat{x}_{i_3 4})))$, where $i_3 $ is such that $\left\{i_1,i_2,i_3\right\}=\left\{1,2,3\right\}$. The rest of the computations is analogous to 1). Hence, as in the computation of $E'^{1}$ for the case 1), we obtain that the sequence is exact except for the right end, where the kernel is given by $K_{\widetilde{p}}=\left\{f \in (\widetilde{V}_{A})_{[\widetilde{p},\frac{b}{2}]}, \,\, | \,\, f=f(x_{1},x_{2},x_{3},x_{4})\right\}$. Therefore we obtain the following diagram for $E'^{1}$, where the vertical maps are $d''$ and the horizontal maps are induced by $d'$:
\begin{center}
\begin{tikzpicture}
\node[black] at (0,0) {$\inlinewedge_{-}^{3}\inlinewedge_{+}^{0}\otimes K_{\lceil \frac{a}{2}\rceil-3}$};
\draw[->,black] (0,-0.4) -- (0,-0.6);
\node[black] at (3.5,0) {$\inlinewedge_{-}^{2}\inlinewedge_{+}^{0}\otimes K_{\lceil \frac{a}{2}\rceil-2}$};
\draw[->,black] (3.5,-0.4) -- (3.5,-0.6);
\node[black] at (7,0) {$\inlinewedge_{-}^{1}\inlinewedge_{+}^{0}\otimes K_{\lceil \frac{a}{2}\rceil-1}$};
\draw[->,black] (7,-0.4) -- (7,-0.6);
\node[black] at (10.5,0) {$\inlinewedge_{-}^{0}\inlinewedge_{+}^{0}\otimes K_{\lceil \frac{a}{2}\rceil}$};
\draw[->,black] (10.5,-0.4) -- (10.5,-0.6);
\draw[->,black] (2,0) -- (1.5,0);
\draw[->,black] (9,0) -- (8.5,0);
\draw[->,black] (5.5,0) -- (5,0);
\node[black] at (0,-1) {$0$};
\draw[->,black] (0,-1.4) -- (0,-1.6);
\node[black] at (3.5,-1) {$0 $};
\draw[->,black] (3.5,-1.4) -- (3.5,-1.6);
\node[black] at (7,-1) {$0$};
\draw[->,black] (7,-1.4) -- (7,-1.6);
\node[black] at (10.5,-1) {$0 $};
\draw[->,black] (10.5,-1.4) -- (10.5,-1.6);
\draw[->,black] (2,-1) -- (1.5,-1);
\draw[->,black] (9,-1) -- (8.5,-1);
\draw[->,black] (5.5,-1) -- (5,-1);
\node[black] at (0,-1.9) {$\vdots$};
\node[black] at (3.5,-1.9) {$\vdots$};
\node[black] at (7,-1.9) {$\vdots$};
\node[black] at (10.5,-1.9) {$\vdots$};
\node[black] at (0,-3) {$0$};
\draw[->,black] (0,-2.4) -- (0,-2.6);
\node[black] at (3.5,-3) {$0 $};
\draw[->,black] (3.5,-2.4) -- (3.5,-2.6);
\node[black] at (7,-3) {$0$};
\draw[->,black] (7,-2.4) -- (7,-2.6);
\node[black] at (10.5,-3) {$0 .$};
\draw[->,black] (10.5,-2.4) -- (10.5,-2.6);
\draw[->,black] (2,-3) -- (1.5,-3);
\draw[->,black] (9,-3) -- (8.5,-3);
\draw[->,black] (5.5,-3) -- (5,-3);
\end{tikzpicture}
\end{center}
 We can conclude as in case 1).
\item[4)] 
Let $4 \leq b\leq 6$ and $a+b=:h \leq 6$. The computation of $E'^{1}$ is analogous to the previous case and we obtain the same diagram, but the only nonzero row is now:
\begin{align*}
0\xleftarrow[]{d'} \displaywedge_{-}^{\lfloor \frac{h}{2}\rfloor}\displaywedge_{+}^{0}\otimes K_{\lceil \frac{a}{2}\rceil-\lfloor \frac{h}{2}\rfloor} \xleftarrow[]{d'} \cdots  \xleftarrow[]{d'} \displaywedge_{-}^{1}\displaywedge_{+}^{0}\otimes K_{\lceil \frac{a}{2}\rceil-1} \xleftarrow[]{d'}  \displaywedge_{-}^{0}\displaywedge_{+}^{0}\otimes K_{\lceil \frac{a}{2}\rceil} \xleftarrow[]{d'}0.
\end{align*}
We can conclude as in case 2).
\item[5)] 
Let $b=0$ and $a > 6$. We point out that $b=0$ forces the first complex to be the tensor product of $\inlinewedge_{-}^{\frac{a-(2\widetilde{p}-\mathcal{P}(p))}{2}}$ and the following complex:
\begin{align*}
0\xleftarrow[]{d''}\displaywedge_{+}^{0}\otimes(\widetilde{V}_{A})_{[\widetilde{p},0]} \xleftarrow[]{d''} 0.
\end{align*}
Again we denote by $K_{\widetilde{p}}=(\widetilde{V}_{A})_{[\widetilde{p},0]}$; we point out that, due to $\widetilde{q}=0$, $f \in K_{\widetilde{p}}$ is such that $f=f(x_{1},x_{2},x_{3},x_{12},x_{13},x_{23})$ and $f\in \Ker D_4$.
The following is the diagram of the $E'^{1}$ spectral sequence, where the vertical maps are $d''$ and the horizontal maps are induced by $d'$:
\begin{center}
\begin{tikzpicture}
\node[black] at (0,0) {$\inlinewedge_{-}^{3}\inlinewedge_{+}^{0}\otimes K_{\lceil \frac{a}{2}\rceil-3} $};
\draw[->,black] (0,-0.4) -- (0,-0.6);
\node[black] at (3.5,0) {$\inlinewedge_{-}^{2}\inlinewedge_{+}^{0}\otimes K_{\lceil \frac{a}{2}\rceil-2} $};
\draw[->,black] (3.5,-0.4) -- (3.5,-0.6);
\node[black] at (7,0) {$\inlinewedge_{-}^{1}\inlinewedge_{+}^{0}\otimes K_{\lceil \frac{a}{2}\rceil-1}  $};
\draw[->,black] (7,-0.4) -- (7,-0.6);
\node[black] at (10.5,0) {$\inlinewedge_{-}^{0}\inlinewedge_{+}^{0}\otimes K_{\lceil \frac{a}{2}\rceil}   $};
\draw[->,black] (10.5,-0.4) -- (10.5,-0.6);
\draw[->,black] (2,0) -- (1.5,0);
\draw[->,black] (5.5,0) -- (5,0);
\draw[->,black] (9,0) -- (8.5,0);
\node[black] at (0,-1) {$0$};
\draw[->,black] (0,-1.4) -- (0,-1.6);
\node[black] at (3.5,-1) {$0 $};
\draw[->,black] (3.5,-1.4) -- (3.5,-1.6);
\node[black] at (7,-1) {$0$};
\draw[->,black] (7,-1.4) -- (7,-1.6);
\node[black] at (10.5,-1) {$0 $};
\draw[->,black] (10.5,-1.4) -- (10.5,-1.6);
\draw[->,black] (2,-1) -- (1.5,-1);
\draw[->,black] (9,-1) -- (8.5,-1);
\draw[->,black] (5.5,-1) -- (5,-1);
\node[black] at (0,-2) {$0$};
\node[black] at (3.5,-2) {$0 $};
\node[black] at (7,-2) {$0$};
\node[black] at (10.5,-2) {$0 $};
\draw[->,black] (2,-2) -- (1.5,-2);
\draw[->,black] (9,-2) -- (8.5,-2);
\draw[->,black] (5.5,-2) -- (5,-2);
\node[black] at (0,-3) {$0$};
\draw[->,black] (0,-2.4) -- (0,-2.6);
\node[black] at (3.5,-3) {$0 $};
\draw[->,black] (3.5,-2.4) -- (3.5,-2.6);
\node[black] at (7,-3) {$0$};
\draw[->,black] (7,-2.4) -- (7,-2.6);
\node[black] at (10.5,-3) {$0 $.};
\draw[->,black] (10.5,-2.4) -- (10.5,-2.6);
\draw[->,black] (2,-3) -- (1.5,-3);
\draw[->,black] (9,-3) -- (8.5,-3);
\draw[->,black] (5.5,-3) -- (5,-3);
\end{tikzpicture}
\end{center}
The only nonzero row is:
\begin{align*}
0\xleftarrow[]{d'}\displaywedge_{-}^{3}\displaywedge_{+}^{0}\otimes K_{\lceil \frac{a}{2}\rceil-3} \xleftarrow[]{d'}  \displaywedge_{-}^{2}\displaywedge_{+}^{0}\otimes K_{\lceil \frac{a}{2}\rceil-2}  \xleftarrow[]{d'} \displaywedge_{-}^{1}\displaywedge_{+}^{0}\otimes K_{\lceil \frac{a}{2}\rceil-1} \xleftarrow[]{d'}  \displaywedge_{-}^{0}\displaywedge_{+}^{0}\otimes K_{\lceil \frac{a}{2}\rceil} \xleftarrow[]{d'}0.
\end{align*}
We now compute its homology with respect to $d'$.
\begin{itemize}
	\item[i:] Let us consider $d':\inlinewedge_{-}^{0}\inlinewedge_{+}^{0}\otimes K_{\lceil \frac{a}{2}\rceil} \rightarrow \inlinewedge_{-}^{1}\inlinewedge_{+}^{0}\otimes K_{\lceil \frac{a}{2}\rceil-1}$. Let $f \in \inlinewedge_{-}^{0}\inlinewedge_{+}^{0}\otimes K_{\lceil \frac{a}{2}\rceil}$. Hence $d'(f)=w_{12}\otimes \partial_{12}f+w_{13}\otimes \partial_{13}f+w_{23}\otimes \partial_{23}f=0$ if and only if $f=f(x_{1},x_{2},x_{3},\widehat{x}_{12},\widehat{x}_{13},\widehat{x}_{23})$.
	\item[ii:] Let us consider $d':\inlinewedge_{-}^{1}\inlinewedge_{+}^{0}\otimes K_{\lceil \frac{a}{2}\rceil-1} \rightarrow \inlinewedge_{-}^{2}\inlinewedge_{+}^{0}\otimes K_{\lceil \frac{a}{2}\rceil-2}$. Let $v=w_{12} \otimes p_{1}+w_{13} \otimes p_{2}+w_{23} \otimes p_{3}\in \inlinewedge_{-}^{1}\inlinewedge_{+}^{0}\otimes K_{\lceil \frac{a}{2}\rceil-1} $. Therefore $d'(v)=0$ if and only if:
	\begin{align}
		\label{appKerd'}
	\begin{cases}
	\partial_{13}p_{1}=	\partial_{12}p_{2},\\
	\partial_{23}p_{2}=	\partial_{13}p_{3},\\
	\partial_{23}p_{1}=	\partial_{12}p_{3}.
	\end{cases}
	\end{align}
	We point out that if $v  = d'(w)$, it is necessary that $\partial_{3}p_{1}-\partial_{2}p_{2}+\partial_{1}p_{3}=0$: indeed $w \in \Ker D_4$ and $\partial_{12}w=p_1$, $\partial_{13}w=p_2$, $\partial_{23}w=p_3$ imply $0=D_4 w=\partial_{3}p_{1}-\partial_{2}p_{2}+\partial_{1}p_{3}$. From \eqref{appKerd'} and $p_{1},p_{2},p_{3} \in \Ker D_{4}$, we deduce that $\partial_{3}p_{1}-\partial_{2}p_{2}+\partial_{1}p_{3}=G(\widehat{x}_{12},\widehat{x}_{13},\widehat{x}_{23})$. By \eqref{appKerd'}, we have that $p_{3}=\int (	\partial_{23}p_{2})dx_{13}+S_{3}(\widehat{x}_{13})$ and $p_{1}=\int (	\partial_{12}p_{2})dx_{13}+\int (	\partial_{12}S_{3}(\widehat{x}_{13}))dx_{23}+S_1(\widehat{x}_{13},\widehat{x}_{23})$. Let us consider $w=\int p_{2} dx_{13}+S(\widehat{x}_{13})$. Hence:
	\begin{align*}
	v-d'(w)=w_{12} \otimes \Big(\int (	\partial_{12}S_{3}(\widehat{x}_{13}))dx_{23}+S_1(\widehat{x}_{13},\widehat{x}_{23})-\partial_{12} S(\widehat{x}_{13})\Big)+w_{23} \otimes \big(S_{3}(\widehat{x}_{13})-\partial_{23} S(\widehat{x}_{13})\big).
	\end{align*}
	By Remark \ref{rempropL1} there exists $H(\widehat{x}_{13},\widehat{x}_{23})$ such that $\widetilde{w}:=\int S_{3}(\widehat{x}_{13})dx_{23}- S(\widehat{x}_{13})+H(\widehat{x}_{13},\widehat{x}_{23}) \in \Ker D_4$. Thus
		\begin{align*}
		v-d'(w)-d'(\widetilde{w})=w_{12}\otimes T(\widehat{x}_{13},\widehat{x}_{23}),
		\end{align*}
		where $T(\widehat{x}_{13},\widehat{x}_{23}):=S_1(\widehat{x}_{13},\widehat{x}_{23})-\partial_{12}H(\widehat{x}_{13},\widehat{x}_{23})$. Since $D_4(T)=0$, we obtain $\partial_{12}\partial_{3}T(\widehat{x}_{13},\widehat{x}_{23})=0$; therefore $T(\widehat{x}_{13},\widehat{x}_{23})=T_1(x_{1},x_{2},x_{3},\widehat{x}_{12},\widehat{x}_{13},\widehat{x}_{23})+T_2(x_{1},x_{2},\widehat{x}_{3},x_{12},\widehat{x}_{13},\widehat{x}_{23})$, where $x_{12}|T_2$. Since $w_{12}\otimes T_2=d'(\int T_2 dx_{12})$, in the quotient there are left only elements of the type $w_{12}\otimes T_1(x_{1},x_{2},x_{3},\widehat{x}_{12},\widehat{x}_{13},\widehat{x}_{23})$ with $x_3 |T_1$.
	\item[iii:] Let us consider $d':\inlinewedge_{-}^{2}\inlinewedge_{+}^{0}\otimes K_{\lceil \frac{a}{2}\rceil-2} \rightarrow \inlinewedge_{-}^{3}\inlinewedge_{+}^{0}\otimes K_{\lceil \frac{a}{2}\rceil-3}$. Let $v=w_{12}w_{13}\otimes p_{1}+w_{12}w_{23}\otimes p_{2}+w_{13}w_{23}\otimes p_{3} \in \inlinewedge_{-}^{2}\inlinewedge_{+}^{0}\otimes K_{\lceil \frac{a}{2}\rceil-2}$. Thus $d'(v)=w_{12}w_{13}w_{23}\otimes( \partial_{23}p_{1}-\partial_{13}p_{2}+\partial_{12}p_{3})=0$ if and only if $\partial_{23}p_{1}-\partial_{13}p_{2}+\partial_{12}p_{3}=0$, which means $p_{3}=\int(\partial_{13}p_{2}-\partial_{23}p_{1})dx_{12}+K(\widehat{x}_{12})$. We consider $w=w_{23} \otimes (-\int p_{2}dx_{12}+H_{1}(\widehat{x}_{12}))+w_{13} \otimes (-\int p_{1}dx_{12}+H_{2}(\widehat{x}_{12}))$. Hence $$d'(w)=w_{12}w_{13}\otimes p_{1}+w_{12}w_{23}\otimes p_{2}+w_{13}w_{23}\otimes \Big(\int(\partial_{13}p_{2}-\partial_{23}p_{1})dx_{12}-\partial_{13}H_{1}(\widehat{x}_{12})+\partial_{23}H_{2}(\widehat{x}_{12})\Big).$$
	Therefore 
	\begin{align*}
	v-d'(w)&=w_{13}w_{23}\otimes (K(\widehat{x}_{12})+\partial_{13}H_{1}(\widehat{x}_{12})-\partial_{23}H_{2}(\widehat{x}_{12}))\\
	&=d'\Big(w_{13}\otimes \Big( \int ( K(\widehat{x}_{12})+\partial_{13}H_{1}(\widehat{x}_{12})-\partial_{23}H_{2}(\widehat{x}_{12}))dx_{23}+S(\widehat{x}_{12}, \widehat{x}_{23})\Big)\Big).
	\end{align*}
	Hence at this point the sequence is exact.
	\item[iv:] Let us consider $d':\inlinewedge_{-}^{3}\inlinewedge_{+}^{0}\otimes K_{\lceil \frac{a}{2}\rceil-3} \rightarrow 0$. Let $v=w_{12}w_{13}w_{23}\otimes p \in \inlinewedge_{-}^{3}\inlinewedge_{+}^{0}\otimes K_{\lceil \frac{a}{2}\rceil-3}$. It follows that $v=d'(w_{12}w_{13}\otimes (\int p dx_{23}+K(\widehat{x}_{23}))$. Hence at this point the sequence is exact.
\end{itemize}
Since for a one row spectral sequence we have that $E'^{2}=E'^{\infty}$, we obtain that
\begin{align*}
&\oplus_{n_1,n_2}H^{n_{1},n_{2}}(G_{A}(a,0)) \cong  \sum_{\widetilde{p},\widetilde{q}}E'^{\infty}_{\widetilde{p},\widetilde{q}} =\sum^{1}_{i=0}E'^{\infty}_{\lceil\frac{a}{2}\rceil-i,0}.
\end{align*}
Hence
\begin{align*}
&H^{n_1,0}(G_{A}(a,0)) \cong \displaywedge_{-}^{0}\displaywedge_{+}^{0}\otimes (V_{A}^{n_1,0})_{[n_1,0]} \quad \text{for} \,\, n_1=a\\
&H^{n_1,0}(G_{A}(a,0)) \cong \left\{w_{12}\otimes f, \,\,\, f \in (V_{A}^{n_1,0})_{[n_1,0]} \,\,\,|\,\,\, f=f(x_{1},x_{2},x_{3})\,\,\, and \,\,\, x_3| f \right\}  \quad \text{for} \,\, n_1=a-2.
\end{align*}
\item[6)] 
Let $b=0$ and $0 \leq a \leq 6$. Again $b=0$ forces the first complex to be the tensor product of $\inlinewedge_{-}^{\frac{a-(2\widetilde{p}-\mathcal{P}(p))}{2}}$ and the following complex:
\begin{align*}
0\xleftarrow[]{d''}\displaywedge_{+}^{0}\otimes(\widetilde{V}_{A})_{[\widetilde{p},0]} \xleftarrow[]{d''} 0.
\end{align*}
Again we denote by $K_{\widetilde{p}}=(\widetilde{V}_{A})_{[\widetilde{p},0]}$; we point out that $f \in K_{\widetilde{p}}$ is such that $f=f(x_{1},x_{2},x_{3},x_{12},x_{13},x_{23})$ and $f \in \Ker D_4$. As before, the only nonzero row of the $E'^{1}$ diagram is
\begin{align*}
0\xleftarrow[]{d'} \displaywedge_{-}^{\lfloor \frac{a}{2}\rfloor}\displaywedge_{+}^{0}\otimes K_{\lceil \frac{a}{2}\rceil-\lfloor \frac{a}{2}\rfloor} \xleftarrow[]{d'} \cdots  \xleftarrow[]{d'} \displaywedge_{-}^{1}\displaywedge_{+}^{0}\otimes K_{\lceil \frac{a}{2}\rceil-1} \xleftarrow[]{d'}  \displaywedge_{-}^{0}\displaywedge_{+}^{0}\otimes K_{\lceil \frac{a}{2}\rceil } \xleftarrow[]{d'}0.
\end{align*}
If $a=6$ , the computations are analogous to case 5). If $a=5$ the complex reduces to
\begin{align*}
0\xleftarrow[]{d'} \displaywedge_{-}^{2}\displaywedge_{+}^{0}\otimes K_1 \xleftarrow[]{d'}  \displaywedge_{-}^{1}\displaywedge_{+}^{0}\otimes K_{2} \xleftarrow[]{d'}  \displaywedge_{-}^{0}\displaywedge_{+}^{0}\otimes K_{3} \xleftarrow[]{d'}0.
\end{align*}
In $\inlinewedge_{-}^{2}\inlinewedge_{+}^{0}\otimes K_1 $ the sequence is exact since $\widetilde{p}+\widetilde{q}=1$ implies $n_1+2n_2=1$ for the elements of $K_1 $. Hence $n_1=1$, $n_2=0$ and the elements in $ K_1$ do not depend on $x_{12},x_{13},x_{23}$. For example $w_{12}w_{13} \otimes x_j=d'(w_{12}\otimes (x_{j}x_{13}+K(\widehat{x}_{13},\widehat{x}_{23})))$ and similarly for the others elements of $\inlinewedge_{-}^{2}\inlinewedge_{+}^{0}\otimes K_1 $. The computation of the homology in $  \inlinewedge_{-}^{1}\inlinewedge_{+}^{0}\otimes K_{2} $ and $\inlinewedge_{-}^{0}\inlinewedge_{+}^{0}\otimes K_{3}$ is analogous to case 5).\\
If $a=4$ the complex reduces to
\begin{align*}
0\xleftarrow[]{d'} \displaywedge_{-}^{2}\displaywedge_{+}^{0}\otimes K_0 \xleftarrow[]{d'}  \displaywedge_{-}^{1}\displaywedge_{+}^{0}\otimes K_{1} \xleftarrow[]{d'}  \displaywedge_{-}^{0}\displaywedge_{+}^{0}\otimes K_{2} \xleftarrow[]{d'}0.
\end{align*}
In $\inlinewedge_{-}^{2}\inlinewedge_{+}^{0}\otimes K_0 $ the sequence is exact since $\widetilde{p}+\widetilde{q}=0$ implies $n_1+2n_2=0$ for the elements of $K_0 $. Hence $n_1=0$, $n_2=0$ and the elements in $ K_0$ do not depend on $x_{1},x_{2},x_{3},x_{12},x_{13},x_{23}$. For example $w_{12}w_{13} \otimes 1=d'(w_{12}\otimes x_{13})$ and similarly for the others elements of $\inlinewedge_{-}^{2}\inlinewedge_{+}^{0}\otimes K_0 $. The computation of the homology in $  \inlinewedge_{-}^{1}\inlinewedge_{+}^{0}\otimes K_{1} $ and $\inlinewedge_{-}^{0}\inlinewedge_{+}^{0}\otimes K_{2}$ is analogous to case 5).\\
If $a=3$ the complex reduces to
\begin{align*}
0\xleftarrow[]{d'} \displaywedge_{-}^{1}\displaywedge_{+}^{0}\otimes K_1 \xleftarrow[]{d'}  \displaywedge_{-}^{0}\displaywedge_{+}^{0}\otimes K_{2} \xleftarrow[]{d'}0.
\end{align*}
Again, $\widetilde{p}+\widetilde{q}=1$ implies $n_1+2n_2=1$ for the elements in $ K_1$; hence the elements in $ K_1$ do not depend on $x_{12},x_{13},x_{23}$. As in case 5) in $\inlinewedge_{-}^{1}\inlinewedge_{+}^{0}\otimes K_1 $ at the quotient there is left $w_{12}\otimes x_{3}$. \\
If $a=2$ the complex reduces to
\begin{align*}
0\xleftarrow[]{d'} \displaywedge_{-}^{1}\displaywedge_{+}^{0}\otimes K_0 \xleftarrow[]{d'}  \displaywedge_{-}^{0}\displaywedge_{+}^{0}\otimes K_{1} \xleftarrow[]{d'}0.
\end{align*}
Again, $\widetilde{p}+\widetilde{q}=0$ implies $n_1+2n_2=0$, the elements in $ K_0$ do not depend on $x_{1},x_{2},x_{3},x_{12},x_{13},x_{23}$. Thus, in $\inlinewedge_{-}^{1}\inlinewedge_{+}^{0}\otimes K_0 $ the sequence is exact: for all $i_1,i_2 \in \left\{1,2,3\right\}$ $w_{i_1 i_2} \otimes 1=d'(x_{i_1 i_2})$.
If $a=1$ the reduces to
\begin{align*}
0 \xleftarrow[]{d'} \displaywedge_{-}^{0}\displaywedge_{+}^{0}\otimes K_{1} \xleftarrow[]{d'}0,
\end{align*}
and if $a=0$ it reduces to
\begin{align*}
0 \xleftarrow[]{d'} \displaywedge_{-}^{0}\displaywedge_{+}^{0}\otimes K_{0} \xleftarrow[]{d'}0;
\end{align*}
hence we can conclude as in case 5). Therefore, analogously to case 5), we obtain
\begin{align*}
&H^{n_1,0}(G_{A}(a,0)) \cong \displaywedge_{-}^{0}\displaywedge_{+}^{0}\otimes (V_{A}^{n_1,0})_{[n_1,0]} \quad \text{for} \,\, n_1=a,\\
&H^{n_1,0}(G_{A}(a,0)) \cong \left\{w_{12}\otimes f, \,\,\, f \in (V_{A}^{n_1,0})_{[n_1,0]} \,\,\,|\,\,\, f=f(x_{1},x_{2},x_{3})\,\,\, and \,\,\, x_3| f\right\} \quad \text{for} \,\, n_1=a-2.
\end{align*}
 \item[7)] We consider the case $b=2$ and $a+b \geq 8$. Therefore the first complex reduces to the tensor product of $\inlinewedge_{-}^{\frac{a-(2\widetilde{p}-\mathcal{P}(p))}{2}}$ and the following complex:
\begin{align*}
0\xleftarrow[]{d''}\displaywedge_{+}^{1}\otimes(\widetilde{V}_{A})_{[\widetilde{p},0]}  \xleftarrow[]{d''}  \displaywedge_{+}^{0}\otimes(\widetilde{V}_{A})_{[\widetilde{p},1]} \xleftarrow[]{d''} 0.
\end{align*}
We want to compute its homology with respect to $d''$. We already know that $\Ker:d'':\inlinewedge_{+}^{0}\otimes(\widetilde{V}_{A})_{[\widetilde{p},1]} \rightarrow \inlinewedge_{+}^{1}\otimes(\widetilde{V}_{A})_{[\widetilde{p},0]}$ is given by $\left\{f \in (\widetilde{V}_{A})_{[\widetilde{p},1]}  \,\, \, | \,\,\, f=h(x_{1},x_{2},x_{3})x_4 \right\}$. Now we claim that $$\Big(\displaywedge_{+}^{1}\otimes(\widetilde{V}_{A})_{[\widetilde{p},0]} \Big) / \Ima d'' \cong \left\{w_{14}\otimes f \,\,\, with \,\,\, f \in (\widetilde{V}_{A})_{[\widetilde{p},0]} \,\,\, | \,\,\, \partial_{23}f\neq 0\right\}.$$
Indeed it is easy to prove that every element in $\Big(\inlinewedge_{+}^{1}\otimes(\widetilde{V}_{A})_{[\widetilde{p},0]} \Big) / \Ker d''$ is equivalent to an element $v=w_{14}\otimes f(x_{1},x_{2},x_{3},x_{12},x_{13},x_{23})$ such that $f \in \Ker D \cap \bigcap^{4}_{i=1} \Ker D_i$. If $w_{14}\otimes f=d''(w)$, for $w \in \Ker D  \cap \bigcap^{4}_{i=1}\Ker D_i$, then $\partial_{14}w=f$, $\partial_{24}w=\partial_{34}w=0$ imply that $0=D w=\partial_{23}f $. Thus if $\partial_{23}f \neq 0$, $v$ is different from 0 in the quotient. Now let us suppose that $\partial_{23}f=0$: we prove that in this case $v$ is equivalent 0 in the quotient. If $\partial_{2}f=\partial_{3}f=0$, it follows directly that $v=d''(\int f dx_{14})$. Now let $f=m_{1}+m_{2}+m_{3}$, where 
$\partial_{2}m_{1}\neq 0$, $\partial_{3}m_{1}= 0$, $\partial_{2}m_{2}= 0$, $\partial_{3}m_{2} \neq 0$, $\partial_{2}m_{3}\neq 0$ and $\partial_{3}m_{3} \neq 0$. 
Let $M_i$ be the part of $m_i$ that does not depend on $x_{12}$. Hence $v=d''(F)$ where $F=\int f dx_{14}+\int(\partial_{2}m_{1}+\partial_{2}m_{3})x_{4}dx_{12}+\int(\partial_{3}M_{2}+\partial_{3}M_{3})x_{4}dx_{13}$. Indeed $F \in \Ker D \bigcap^{4}_{i=1} \cap \Ker D_i $: $D F=D_1 F=0$, $D_2 F=-\partial_3 f+\int \partial_{13}(\partial_{2}m_{1}+\partial_{2}m_{3})dx_{12}+\partial_{3}M_{2}+\partial_{3}M_{3}=-\partial_3 f+\int \partial_{12}(\partial_{3}m_{2}+\partial_{3}m_{3})dx_{12}+\partial_{3}M_{2}+\partial_{3}M_{3}=0$, $D_3 F=-\partial_2 f+\partial_{2}m_{1}+\partial_{2}m_{3}=0$ and finally $D_4 F=x_4 \partial_{2}\partial_{3}M_3-x_4 \partial_{2}\partial_{3}M_3=0$. \\
In the following we will use the notation: $K_{\widetilde{p},1}:=\left\{f \in (\widetilde{V}_{A})_{[\widetilde{p},1]}  \,\, \, | \,\,\, f=h(x_{1},x_{2},x_{3})x_4 \right\}$ and $K_{\widetilde{p},0}:=\Big(\inlinewedge_{+}^{1}\otimes(\widetilde{V}_{A})_{[\widetilde{p},0]} \Big) / \Ima d''$.
The following is the diagram of the $E'^{1}$ spectral sequence, where the vertical maps are $d''$ and the horizontal maps are induced by $d'$:
\begin{center}
\begin{tikzpicture}
\node[black] at (0,0) {$\inlinewedge_{-}^{3}\inlinewedge_{+}^{0}\otimes K_{\lceil \frac{a}{2}\rceil-3,1} $};
\draw[->,black] (0,-0.4) -- (0,-0.6);
\node[black] at (3.5,0) {$\inlinewedge_{-}^{2}\inlinewedge_{+}^{0}\otimes K_{\lceil \frac{a}{2}\rceil-2,1} $};
\draw[->,black] (3.5,-0.4) -- (3.5,-0.6);
\node[black] at (7,0) {$\inlinewedge_{-}^{1}\inlinewedge_{+}^{0}\otimes K_{\lceil \frac{a}{2}\rceil-1,1}  $};
\draw[->,black] (7,-0.4) -- (7,-0.6);
\node[black] at (10.5,0) {$\inlinewedge_{-}^{0}\inlinewedge_{+}^{0}\otimes K_{\lceil \frac{a}{2}\rceil,1}   $};
\draw[->,black] (10.5,-0.4) -- (10.5,-0.6);
\draw[->,black] (2,0) -- (1.5,0);
\draw[->,black] (5.5,0) -- (5,0);
\draw[->,black] (9,0) -- (8.5,0);
\node[black] at (0,-1) {$\inlinewedge_{-}^{3}\otimes K_{\lceil \frac{a}{2}\rceil-3,0}$};
\draw[->,black] (0,-1.4) -- (0,-1.6);
\node[black] at (3.5,-1) {$\inlinewedge_{-}^{2}\otimes K_{\lceil \frac{a}{2}\rceil-2,0} $};
\draw[->,black] (3.5,-1.4) -- (3.5,-1.6);
\node[black] at (7,-1) {$\inlinewedge_{-}^{1}\otimes K_{\lceil \frac{a}{2}\rceil-1,0}$};
\draw[->,black] (7,-1.4) -- (7,-1.6);
\node[black] at (10.5,-1) {$\inlinewedge_{-}^{0}\otimes K_{\lceil \frac{a}{2}\rceil,0} $};
\draw[->,black] (10.5,-1.4) -- (10.5,-1.6);
\draw[->,black] (2,-1) -- (1.5,-1);
\draw[->,black] (9,-1) -- (8.5,-1);
\draw[->,black] (5.5,-1) -- (5,-1);
\node[black] at (0,-2) {$0$};
\node[black] at (3.5,-2) {$0 $};
\node[black] at (7,-2) {$0$};
\node[black] at (10.5,-2) {$0 $};
\draw[->,black] (2,-2) -- (1.5,-2);
\draw[->,black] (9,-2) -- (8.5,-2);
\draw[->,black] (5.5,-2) -- (5,-2);
\node[black] at (0,-3) {$0$};
\draw[->,black] (0,-2.4) -- (0,-2.6);
\node[black] at (3.5,-3) {$0 $};
\draw[->,black] (3.5,-2.4) -- (3.5,-2.6);
\node[black] at (7,-3) {$0$};
\draw[->,black] (7,-2.4) -- (7,-2.6);
\node[black] at (10.5,-3) {$0 $.};
\draw[->,black] (10.5,-2.4) -- (10.5,-2.6);
\draw[->,black] (2,-3) -- (1.5,-3);
\draw[->,black] (9,-3) -- (8.5,-3);
\draw[->,black] (5.5,-3) -- (5,-3);
%
%
\end{tikzpicture}
\end{center}
We already know that the maps induced by $d'$ on the first row are identically zero. We now want to compute the homology of the second row with respect to the maps induced by $d'$. 
\begin{itemize}
	\item[i:] Let us consider $d':\inlinewedge_{-}^{0}\otimes K_{\lceil \frac{a}{2}\rceil,0} \rightarrow \inlinewedge_{-}^{1}\otimes K_{\lceil \frac{a}{2}\rceil-1,0}$. Let $w_{14}\otimes f \in \inlinewedge_{-}^{0}\otimes K_{\lceil \frac{a}{2}\rceil,0} $. Hence $d'(w_{14}\otimes f)=w_{14}w_{12}\otimes \partial_{12}f+w_{14}w_{13}\otimes \partial_{13}f+w_{14}w_{23}\otimes \partial_{23}f=0$ if and only if $\partial_{23} \partial_{12}f=\partial_{23} \partial_{13}f=\partial_{23} \partial_{23}f=0$. Hence, since we are interested in homogeneous polynomials in the $x_{ij}$'s, $f=p_{1}(x_{1},x_{2},x_{3})x_{23}+p_{2}(x_{1},x_{2},x_{3})x_{13}+p_{3}(x_{1},x_{2},x_{3})x_{12}$, such that $p_1 \neq 0$ and $\partial_1 p_1-\partial_2 p_2+\partial_3 p_3=0.$
	\item[ii:] Let us consider $d':\inlinewedge_{-}^{1}\otimes K_{\lceil \frac{a}{2}\rceil-1,0} \rightarrow \inlinewedge_{-}^{2} \otimes K_{\lceil \frac{a}{2}\rceil-2,0}$. Let $v=w_{12}w_{14} \otimes p_{1}+w_{13} w_{14}\otimes p_{2}+w_{23}w_{14} \otimes p_{3}\in \inlinewedge_{-}^{1}\otimes K_{\lceil \frac{a}{2}\rceil-1,0} $. Therefore $d'(v)=0$ if and only if:
	\begin{align}
		\label{appKerd'new}
	\begin{cases}
	\partial_{23}(\partial_{13}p_{1}-	\partial_{12}p_{2})=0,\\
	\partial_{23}(\partial_{23}p_{2}-	\partial_{13}p_{3})=0,\\
	\partial_{23}(\partial_{23}p_{1}-	\partial_{12}p_{3})=0.
	\end{cases}
	\end{align}
	Therefore there exist $K_{1}(\widehat{x}_{23}),K_{2}(\widehat{x}_{23}),K_{3}(\widehat{x}_{23}) \in \Ker D_4$ such that
	\begin{align*}
	\begin{cases}
	\partial_{13}p_{1}-	\partial_{12}p_{2}=K_{1}(\widehat{x}_{23}),\\
	\partial_{23}p_{2}-	\partial_{13}p_{3}=K_{2}(\widehat{x}_{23}),\\
	\partial_{23}p_{1}-	\partial_{12}p_{3}=K_{3}(\widehat{x}_{23}).
	\end{cases}
	\end{align*}
	We can consider a new representative $w_{12}w_{14} \otimes p_{1}+w_{13} w_{14}\otimes \widetilde{p}_{2}+w_{23}w_{14} \otimes\widetilde{p}_{3}\in \inlinewedge_{-}^{1}\otimes K_{\frac{a-1}{2},0} $ of $v$, such that $\widetilde{p}_{2}=p_2+\int K_{1}(\widehat{x}_{23}) dx_{12}+C_1(\widehat{x}_{12},\widehat{x}_{23})$ and $\widetilde{p}_{3}=p_3+\int K_{3}(\widehat{x}_{23}) dx_{12}+C_2(\widehat{x}_{12},\widehat{x}_{23})$. Therefore:
	\begin{align*}
	\begin{cases}
	\partial_{13}p_{1}-	\partial_{12}\widetilde{p}_{2}=0,\\
	\partial_{23}\widetilde{p}_{2}-	\partial_{13}\widetilde{p}_{3}=K_{2}(\widehat{x}_{23}),\\
	\partial_{23}p_{1}-	\partial_{12}\widetilde{p}_{3}=0,
	\end{cases}
	\end{align*}
	where for simplicity, we still call $\partial_{23}\widetilde{p}_{2}-	\partial_{13}\widetilde{p}_{3}=K_{2}(\widehat{x}_{23})$.
	We point out that $$\partial_{12}(\partial_{23}\widetilde{p}_{2}-	\partial_{13}\widetilde{p}_{3})=\partial_{23}\partial_{13}p_{1}-\partial_{13}\partial_{23}p_{1}=0.$$
	Hence $\partial_{23}\widetilde{p}_{2}-	\partial_{13}\widetilde{p}_{3}=K_{2}(\widehat{x}_{12},\widehat{x}_{23})$. Therefore $p_{1}=\int (	\partial_{12}\widetilde{p}_{3})dx_{23}+T_{1}(\widehat{x}_{23})$ and $\widetilde{p}_{2}=\int (	\partial_{13}\widetilde{p}_{3}+K_{2}(\widehat{x}_{12},\widehat{x}_{23}))dx_{23}+T_2(\widehat{x}_{23})$. Let us consider $w=-w_{14}\otimes (\int \widetilde{p}_{3} dx_{23}+S(\widehat{x}_{23}))$. Hence:
	\begin{align*}
	v-d'(w)=w_{12}w_{14} \otimes (T_{1}(\widehat{x}_{23})-\partial_{12}S(\widehat{x}_{23}))+w_{13}w_{14} \otimes ( K_{2}(\widehat{x}_{12},\widehat{x}_{23})x_{23}+T_{2}(\widehat{x}_{23})-\partial_{13}S(\widehat{x}_{23})),
	\end{align*}
	which, if we let $K(\widehat{x}_{23}):=T_{2}(\widehat{x}_{23})-\partial_{13}S(\widehat{x}_{23})$, is equivalent in the quotient to
	\begin{align*}
	v-d'(w)=w_{13}w_{14} \otimes ( K_{2}(\widehat{x}_{12},\widehat{x}_{23})x_{23}+K(\widehat{x}_{23})),
	\end{align*}
	Now we point out that $D_4 K_{2}(\widehat{x}_{12},\widehat{x}_{23})=-\partial_{2}\partial_{13}K_{2}(\widehat{x}_{12},\widehat{x}_{23})=0$ implies that $K_{2}(\widehat{x}_{12},\widehat{x}_{23})=M_{1}(\widehat{x}_{2},\widehat{x}_{12},\widehat{x}_{23})+M_{2}(\widehat{x}_{12},\widehat{x}_{13},\widehat{x}_{23})$, with $x_{13}|M_{1}$; since we are interested in homogeneous polynomials in the $x_{ij}$'s, we consider the two cases $K_{2}(\widehat{x}_{12},\widehat{x}_{23})=M_{1}(\widehat{x}_{2},\widehat{x}_{12},\widehat{x}_{23})$ and $K_{2}(\widehat{x}_{12},\widehat{x}_{23})= M_{2}(\widehat{x}_{12},\widehat{x}_{13},\widehat{x}_{23})$.\\
	Let $K_{2}(\widehat{x}_{12},\widehat{x}_{23})=M_{1}(\widehat{x}_{2},\widehat{x}_{12},\widehat{x}_{23})$; therefore 
	\begin{align*}
	v-d'(w)=w_{13}w_{14} \otimes ( M_{1}(\widehat{x}_{2},\widehat{x}_{12},\widehat{x}_{23})x_{23}+K(\widehat{x}_{23})).
	\end{align*}
	We consider $F:=\int M_{1}(\widehat{x}_{2},\widehat{x}_{12},\widehat{x}_{23})x_{23}dx_{13}$; thus $D_4 F=\partial_1 \int M_{1}(\widehat{x}_{2},\widehat{x}_{12},\widehat{x}_{23})dx_{13}=C(\widehat{x}_{12},\widehat{x}_{23})$. By Lemma \ref{propL1}, there exists $H(\widehat{x}_{12},\widehat{x}_{23}) $ such that $D_4(F+ H)=0$.
	Hence
	\begin{align*}
	v-d'(w)+d'(w_{14}\otimes(F+H))=w_{13}w_{14} \otimes (K(\widehat{x}_{23})-\partial_{13}H(\widehat{x}_{12},\widehat{x}_{23}))-w_{23}w_{14} \otimes \Big(\int M_{1}(\widehat{x}_{2},\widehat{x}_{12},\widehat{x}_{23})dx_{13}\Big)=0,
	\end{align*}
	in the quotient.\\
	Let $K_{2}(\widehat{x}_{12},\widehat{x}_{23})= M_{2}(\widehat{x}_{12},\widehat{x}_{13},\widehat{x}_{23})$. In this case, $w_{13}w_{14} \otimes ( M_{2}(\widehat{x}_{12},\widehat{x}_{13},\widehat{x}_{23})x_{23}+K(\widehat{x}_{23}))$ lies in the image of $d'$ if and only if $\partial_{2}M_2=0$. Indeed if $\partial_{2}M_2=0$, then $F=M_{2}(\widehat{x}_{12},\widehat{x}_{13},\widehat{x}_{23})x_{13}x_{23}$ is such that $D_4 F=\partial_{1}M_2 x_{13}=C(\widehat{x}_{12},\widehat{x}_{23})$ and, by Lemma \ref{propL1}, there exists $H(\widehat{x}_{12},\widehat{x}_{23}) $ such that $D_4(F+H)=0$. Hence
	\begin{align*}
	v-d'(w)+d'(w_{14}\otimes(F+H))=w_{13}w_{14} \otimes (K(\widehat{x}_{23})-\partial_{13}H(\widehat{x}_{12},\widehat{x}_{23}))+w_{23}w_{14} \otimes (- M_{2}(\widehat{x}_{12},\widehat{x}_{13},\widehat{x}_{23})x_{13})=0,
	\end{align*}
	in the quotient.\\
	Finally, if $\partial_{2}M_2 \neq 0$, then $w_{13}w_{14} \otimes ( M_{2}(\widehat{x}_{12},\widehat{x}_{13},\widehat{x}_{23})x_{23}+K(\widehat{x}_{23}))$ does not lie in the image. Indeed let us suppose that it is the image of $w_{14}\otimes Q$; hence $\partial_{23}\partial_{12}Q=\partial_{23}^{2}Q=0$ and $\partial_{13}Q=M_2 x_{23}+C(\widehat{x}_{23})$. Then $Q=M_2 x_{13}x_{23}+\int C(\widehat{x}_{23})dx_{13}+H(\widehat{x}_{13})$. But $\partial_{23}\partial_{12}Q=0$ implies $\partial_{23}\partial_{12}H=0$ and $\partial_{23}^{2}Q=0$ implies $\partial_{23}^{2}H=0$. But, since $H(\widehat{x}_{13})$ must be homogeneous of degree 2 in the $x_{ij}$'s, therefore $H(\widehat{x}_{13})=E(x_1,x_2,x_3)x^{2}_{12}$. But this implies that $D_4 Q \neq 0$, since $-\partial_2 M_2 x_{23} \neq 0$.\\
	Hence at the quotient there are left only elements of type
	$$w_{13}w_{14} \otimes ( N_{1}(\widehat{x}_{12},\widehat{x}_{13},\widehat{x}_{23})x_{23}+N_2(\widehat{x}_{23})),$$
	with $\partial_2 N_1\neq 0$.
	\item[iii:] Let us consider $d':\inlinewedge_{-}^{2}\otimes K_{\lceil \frac{a}{2}\rceil-2,0} \rightarrow \inlinewedge_{-}^{3}\otimes K_{\lceil \frac{a}{2}\rceil-3,0}$. Let $v=w_{12}w_{13}w_{14}\otimes p_{1}+w_{12}w_{23}w_{14}\otimes p_{2}+w_{13}w_{23}w_{14}\otimes p_{3} \in \inlinewedge_{-}^{2}\otimes K_{\lceil \frac{a}{2}\rceil-2,0}$. Thus $d'(v)=-w_{12}w_{13}w_{23}w_{14}\otimes( \partial_{23}p_{1}-\partial_{13}p_{2}+\partial_{12}p_{3})=0$ if and only if $\partial_{23}(\partial_{23}p_{1}-\partial_{13}p_{2}+\partial_{12}p_{3})=0$. Thus $\partial_{23}p_{1}-\partial_{13}p_{2}+\partial_{12}p_{3}=K(\widehat{x}_{23})$. We can consider $\widetilde{p}_{2}=p_{2}+\int K(\widehat{x}_{23})dx_{13}+C(\widehat{x}_{13},\widehat{x}_{23}) \in \Ker D_4$. We point out that $w_{12}w_{13}w_{14}\otimes p_{1}+w_{12}w_{23}w_{14}\otimes\widetilde{ p}_{2}+w_{13}w_{23}w_{14}\otimes p_{3}$ is a new representative of $v$ such that $\partial_{23}p_{1}-\partial_{13}\widetilde{p}_{2}+\partial_{12}p_{3}=0$. Therefore $p_{1}=\int(\partial_{13}\widetilde{p}_{2}-\partial_{12}p_{3})dx_{23}+H(\widehat{x}_{23})$. We consider $w=-w_{12}w_{14} \otimes (\int \widetilde{p}_{2}dx_{23}+H_{1}(\widehat{x}_{23}))-w_{13}w_{14} \otimes (\int p_{3}dx_{23}+H_{2}(\widehat{x}_{23}))$.
	Hence $$d'(w)=w_{12}w_{13}w_{14}\otimes \Big(\int(\partial_{13}\widetilde{p}_{2}-\partial_{12}p_{3})dx_{23}+\partial_{13}H_{1}(\widehat{x}_{23})-\partial_{12}H_{2}(\widehat{x}_{23})\Big)+w_{12}w_{23}w_{14}\otimes \widetilde{p}_{2}+w_{13}w_{23}w_{14}\otimes p_{3}.$$
	Therefore 
	\begin{align*}
	v-d'(w)&=w_{12}w_{13}w_{14}\otimes (H(\widehat{x}_{23})-\partial_{13}H_{1}(\widehat{x}_{23})+\partial_{12}H_{2}(\widehat{x}_{23})) ,
	\end{align*}
	that is zero in $\inlinewedge_{-}^{2}\otimes K_{\lceil \frac{a}{2}\rceil-2,0}$. Hence at this point the sequence is exact.
	\item[iv:] Let us consider $d':\inlinewedge_{-}^{3}\otimes K_{\lceil \frac{a}{2}\rceil-3,0} \rightarrow 0$. Let $v=w_{12}w_{13}w_{23}w_{14}\otimes p \in \inlinewedge_{-}^{3}\otimes K_{\lceil \frac{a}{2}\rceil-3,0}$. If follows that $v=d'(-w_{12}w_{13}w_{14}\otimes (\int p dx_{23}+K(\widehat{x}_{23}))$. Hence at this point the sequence is exact.
\end{itemize}
The following is the diagram of the $E'^{2}$:
\begin{center}
\begin{tikzpicture}
\node[black] at (0,0) {$\inlinewedge_{-}^{3}\inlinewedge_{+}^{0}\otimes K_{\lceil \frac{a}{2}\rceil-3,1} $};
\draw[->,black] (0,-0.4) -- (0,-0.6);
\node[black] at (3.5,0) {$\inlinewedge_{-}^{2}\inlinewedge_{+}^{0}\otimes K_{\lceil \frac{a}{2}\rceil-2,1} $};
\draw[->,black] (3.5,-0.4) -- (3.5,-0.6);
\node[black] at (7,0) {$\inlinewedge_{-}^{1}\inlinewedge_{+}^{0}\otimes K_{\lceil \frac{a}{2}\rceil-1,1}  $};
\draw[->,black] (7,-0.4) -- (7,-0.6);
\node[black] at (10.5,0) {$\inlinewedge_{-}^{0}\inlinewedge_{+}^{0}\otimes K_{\lceil \frac{a}{2}\rceil,1}   $};
\draw[->,black] (10.5,-0.4) -- (10.5,-0.6);
\draw[->,black] (2,0) -- (1.5,0);
\draw[->,black] (5.5,0) -- (5,0);
\draw[->,black] (9,0) -- (8.5,0);
\node[black] at (0,-1) {$0$};
\draw[->,black] (0,-1.4) -- (0,-1.6);
\node[black] at (3.5,-1) {$0 $};
\draw[->,black] (3.5,-1.4) -- (3.5,-1.6);
\node[black] at (7,-1) {$\widetilde{K}_{\lceil \frac{a}{2}\rceil-1,0} $};
\draw[->,black] (7,-1.4) -- (7,-1.6);
\node[black] at (10.5,-1) {$\widetilde{K}_{\lceil \frac{a}{2}\rceil,0} $};
\draw[->,black] (10.5,-1.4) -- (10.5,-1.6);
\draw[->,black] (2,-1) -- (1.5,-1);
\draw[->,black] (9,-1) -- (8.5,-1);
\draw[->,black] (5.5,-1) -- (5,-1);
\node[black] at (0,-2) {$0$};
\node[black] at (3.5,-2) {$0 $};
\node[black] at (7,-2) {$0$};
\node[black] at (10.5,-2) {$0 $};
\draw[->,black] (2,-2) -- (1.5,-2);
\draw[->,black] (9,-2) -- (8.5,-2);
\draw[->,black] (5.5,-2) -- (5,-2);
\node[black] at (0,-3) {$0$};
\draw[->,black] (0,-2.4) -- (0,-2.6);
\node[black] at (3.5,-3) {$0 $};
\draw[->,black] (3.5,-2.4) -- (3.5,-2.6);
\node[black] at (7,-3) {$0$};
\draw[->,black] (7,-2.4) -- (7,-2.6);
\node[black] at (10.5,-3) {$0 $,};
\draw[->,black] (10.5,-2.4) -- (10.5,-2.6);
\draw[->,black] (2,-3) -- (1.5,-3);
\draw[->,black] (9,-3) -- (8.5,-3);
\draw[->,black] (5.5,-3) -- (5,-3);
\end{tikzpicture}
\end{center}
where 
\begin{align*}
\widetilde{K}_{\lceil \frac{a}{2}\rceil,0}=&\big\{w_{14}\otimes f, \,\,\,|\,\,f=p_{1}(x_{1},x_{2},x_{3})x_{23}+p_{2}(x_{1},x_{2},x_{3})x_{13}+p_{3}(x_{1},x_{2},x_{3})x_{12}\in K_{\lceil \frac{a}{2}\rceil,0}\\
&\,\,\,|\,\,p_1 \neq 0,\,\, \partial_1 p_1-\partial_2 p_2+\partial_3 p_3=0\big\},
\end{align*}
and 
$$ \widetilde{K}_{\lceil \frac{a}{2}\rceil-1,0}=\left\{w_{13}w_{14} \otimes ( N_1(\widehat{x}_{12},\widehat{x}_{13},\widehat{x}_{23})x_{23}+N_2(\widehat{x}_{23})),
	\,\,\,|\,\,\partial_2 N_1\neq 0 \right\}.$$
	In order to compute $E'^{3}$, we need to understand what happens applying $d_{\widetilde{p},\widetilde{q}}^{(2)}:E'^{2}_{\widetilde{p},\widetilde{q}}\rightarrow E'^{2}_{\widetilde{p}-2,\widetilde{q}+1}$.
	We recall that $d^{(2)}$ works as explained in Remark \ref{d2esplicito} and we will use notation introduced in that remark.
	\begin{itemize}
		\item[a:] Let us compute the Kernel of $d_{\lceil \frac{a}{2}\rceil,0}^{(2)}:E'^{2}_{\lceil \frac{a}{2}\rceil,0}\rightarrow E'^{2}_{\lceil \frac{a}{2}\rceil-2,1}$.
		Let $x=w_{14}\otimes(p_{1}(x_{1},x_{2},x_{3})x_{23}+p_{2}(x_{1},x_{2},x_{3})x_{13}+p_{3}(x_{1},x_{2},x_{3})x_{12})$. Hence $$d'(x)=-w_{12}w_{14}\otimes p_{3}-w_{13}w_{14}\otimes p_{2}-w_{23}w_{14}\otimes p_{1}.$$ We point out that $d'(x)=d''(y_x)$, where 
		\begin{align*}
		y_x=&-w_{12}\otimes(p_{3}x_{14}+x_{4}\partial_2 p_{3}x_{12}+x_{4}\partial_3 p_{3}x_{13})-w_{13}\otimes(p_{2}x_{14}+x_{4}\partial_2 p_{2}x_{12}+x_{4}\partial_3 p_{2}x_{13})\\
		&-w_{23}\otimes(p_{1}x_{14}+x_{4}\partial_2 p_{1}x_{12}+x_{4}\partial_3 p_{1}x_{13}).
		\end{align*}
		Hence $$z_x=d'(y_x)=w_{12}w_{13}\otimes(x_{4}\partial_1 p_{1}) +w_{12}w_{23}\otimes(x_{4}\partial_2 p_{1})+w_{13}w_{23}\otimes(x_{4}\partial_3 p_{1}).$$ Therefore $x$ lies in the Kernel of $d_{\lceil \frac{a}{2}\rceil,0}^{(2)}$ if and only if $\partial_1 p_{1}=\partial_2 p_{1}=\partial_3 p_{1}=0$. But, since $a>6$, this case is not possible. Hence $\Ker (d_{\lceil \frac{a}{2}\rceil,0}^{(2)})=0$. 
		\item[b:] Let us compute the Kernel of $d_{\lceil \frac{a}{2}\rceil-1,0}^{(2)}:E'^{2}_{\lceil \frac{a}{2}\rceil-1,0}\rightarrow E'^{2}_{\lceil \frac{a}{2}\rceil-3,1}$.
		Let $x=w_{13}w_{14} \otimes ( N_1(\widehat{x}_{12},\widehat{x}_{13},\widehat{x}_{23})x_{23}+N_2(\widehat{x}_{23}))$. We point out that $$d'(x)=w_{12}w_{13}w_{14} \otimes ( \partial_{12}N_2(\widehat{x}_{23}))-w_{13}w_{23}w_{14} \otimes  N_1(\widehat{x}_{12},\widehat{x}_{13},\widehat{x}_{23}).$$ Hence $d'(x)=d''(y_x)$, where
		\begin{align*}
		y_x=&w_{12}w_{13}\otimes \Big(\partial_{12}N_2(\widehat{x}_{23})x_{14}+x_{4}\int ( \partial_2\partial_{12}N_2(\widehat{x}_{23}))dx_{12}+x_{4}\int ( \partial_3\partial_{12}N_2(\widehat{x}_{23}))dx_{13} \Big)\\
		&-w_{13}w_{23} \otimes  (N_1(\widehat{x}_{12},\widehat{x}_{13},\widehat{x}_{23})x_{14}+x_{4}\partial_2 N_1(\widehat{x}_{12},\widehat{x}_{13},\widehat{x}_{23})x_{12}+x_{4}\partial_3 N_1(\widehat{x}_{12},\widehat{x}_{13},\widehat{x}_{23})x_{13}).
		\end{align*}
		Therefore $z_x=d'(y_x)=-w_{12}w_{13}w_{23}\otimes x_{4}\partial_2 N_1(\widehat{x}_{12},\widehat{x}_{13},\widehat{x}_{23})$. Since the elements in $E'^{2}_{\frac{a-1}{2},0}$ satisfy $\partial_2 N_1(\widehat{x}_{12},\widehat{x}_{13},\widehat{x}_{23}) \neq 0$, $\Ker (d_{\lceil \frac{a}{2}\rceil-1,0}^{(2)})=0$.
		\end{itemize}
	Thus $E'^{3}_{\lceil \frac{a}{2}\rceil,0}=E'^{3}_{\lceil \frac{a}{2}\rceil-1,0}=0$ and
	\begin{gather*}
H^{n_1,0}(G_{A}(a,2)) \cong \displaywedge_{-}^{\frac{a+2-n_1}{2}}\displaywedge_{+}^{0}\otimes (V_{A}^{n_1,0})_{[n_1-2,2]}, \quad \text{for}\,\,\,n_1\in \left\{a, a+2\right\}; \\
H^{n_1,0}(G_{A}(a,2)) \cong\\
 \frac{\displaywedge_{-}^{2}\displaywedge_{+}^{0}\otimes (V_{A}^{n_1,0})_{[n_1-2,2]}}{\left\{w_{12}w_{13}\otimes x_{4}\partial_{1}p+w_{12}w_{23}\otimes x_{4}\partial_{2}p+w_{13}w_{23}\otimes x_{4}\partial_{3}p, \,\, p(x_1 ,x_2, x_3) \in (V_{A}^{n_1,0})_{[n_1,0]}\right\}}, \quad \text{for}\,\,\,n_1=a-2;  \\
H^{n_1,0}(G_{A}(a,2)) \cong 0, \quad \text{for}\,\,\,n_1=a-4.
	\end{gather*}
	Let us show more explicitly what are the representatives of nonzero elements in $H^{a-2,0}(G_{A}(a,2)) $. Let $v=w_{12}w_{13}\otimes x_4 f_1+w_{12}w_{23}+\otimes  x_4 f_2+w_{13}w_{23}\otimes  x_4 f_3 \in \inlinewedge_{-}^{2}\inlinewedge_{+}^{0}\otimes (V_{A}^{a-2,0})_{[a-4,2]}$. In the quotient $H^{a-2,0}(G_{A}(a,2)) $, $v$ is equivalent to $v-w_{12}w_{13}\otimes x_{4}\partial_{1}p-w_{12}w_{23}\otimes x_{4}\partial_{2}p-w_{13}w_{23}\otimes x_{4}\partial_{3}p$ with $\partial_{3}p=f_3$. Therefore we can consider a new representative that we still denote by $w_{12}w_{13}\otimes x_4 f_1+w_{12}w_{23}\otimes  x_4 f_2$. If $\partial_3 f_2 \neq 0$, $v$ is nonzero in the quotient: indeed there exists not $p$ such that $\partial_{2}p=f_2$ and $\partial_{3}p=0$. If $\partial_3 f_2 = 0$, we can choose a new representative for $v$ that we still denote by $w_{12}w_{13}\otimes x_4 f_1$. It is nonzero in the quotient if and only if $\partial_2 f_1 \neq 0$ or $\partial_3 f_1 \neq 0$: indeed if $\partial_2 f_2 =\partial_3 f_2 = 0$, it is sufficient to choose $p=\int f_1 dx_1$. Therefore the nonzero elements of the quotient $H^{a-2,0}(G_{A}(a,2))$ can be written as $w_{12}w_{13}\otimes x_4 f_1+w_{12}w_{23}+\otimes  x_4 f_2$ with $\partial_3 f_2 \neq 0$ or $w_{12}w_{13}\otimes x_4 f_1$, with $\partial_2 f_1 \neq 0$ or $\partial_3 f_1 \neq 0$. This explicit expressions will be used in the proof of Proposition \ref{prop2}.
\item[8)] We consider the case $b=2$ and $0 \leq a+b < 8$.
	As in 7), the first complex reduces to the tensor product of $\inlinewedge_{-}^{\frac{a-(2\widetilde{p}-\mathcal{P}(p))}{2}}$ and the following complex:
\begin{align*}
0\xleftarrow[]{d''}\displaywedge_{+}^{1}\otimes(\widetilde{V}_{A})_{[\widetilde{p},0]}  \xleftarrow[]{d''}  \displaywedge_{+}^{0}\otimes(\widetilde{V}_{A})_{[\widetilde{p},1]} \xleftarrow[]{d''} 0.
\end{align*}
The computation of its homology is analogous to 7). Similarly to 7), we will use the notation: $K_{\widetilde{p},1}:=\left\{f \in (\widetilde{V}_{A})_{[\widetilde{p},1]}  \,\, \, | \,\,\, f=h(x_{1},x_{2},x_{3})x_4 \right\}$ and $K_{\widetilde{p},0}:=\Big(\inlinewedge_{+}^{1}\otimes(\widetilde{V}_{A})_{[\widetilde{p},0]} \Big) / \Ima d'' \cong \left\{w_{14}\otimes f , \,\, f  \in (\widetilde{V}_{A})_{[\widetilde{p},0]} \,\, | \,\, \partial_{23}f\neq 0\right\}$.
\begin{itemize}
\item[a)] Let $a=5$. The following is the diagram of the $E'^{1}$ spectral sequence, where the vertical maps are $d''$ and the horizontal maps are induced by $d'$:
\begin{center}
\begin{tikzpicture}
\node[black] at (0,0) {$\inlinewedge_{-}^{3}\inlinewedge_{+}^{0}\otimes K_{0,1} $};
\draw[->,black] (0,-0.4) -- (0,-0.6);
\node[black] at (3.5,0) {$\inlinewedge_{-}^{2}\inlinewedge_{+}^{0}\otimes K_{1,1} $};
\draw[->,black] (3.5,-0.4) -- (3.5,-0.6);
\node[black] at (7,0) {$\inlinewedge_{-}^{1}\inlinewedge_{+}^{0}\otimes K_{2,1}  $};
\draw[->,black] (7,-0.4) -- (7,-0.6);
\node[black] at (10.5,0) {$\inlinewedge_{-}^{0}\inlinewedge_{+}^{0}\otimes K_{3,1}   $};
\draw[->,black] (10.5,-0.4) -- (10.5,-0.6);
\draw[->,black] (2,0) -- (1.5,0);
\draw[->,black] (5.5,0) -- (5,0);
\draw[->,black] (9,0) -- (8.5,0);
\node[black] at (0,-1) {$0$};
\draw[->,black] (0,-1.4) -- (0,-1.6);
\node[black] at (3.5,-1) {$\inlinewedge_{-}^{2}\otimes K_{1,0} $};
\draw[->,black] (3.5,-1.4) -- (3.5,-1.6);
\node[black] at (7,-1) {$\inlinewedge_{-}^{1}\otimes K_{2,0}$};
\draw[->,black] (7,-1.4) -- (7,-1.6);
\node[black] at (10.5,-1) {$\inlinewedge_{-}^{0}\otimes K_{3,0} $};
\draw[->,black] (10.5,-1.4) -- (10.5,-1.6);
\draw[->,black] (2,-1) -- (1.5,-1);
\draw[->,black] (9,-1) -- (8.5,-1);
\draw[->,black] (5.5,-1) -- (5,-1);
\node[black] at (0,-2) {$0$};
\node[black] at (3.5,-2) {$0 $};
\node[black] at (7,-2) {$0$};
\node[black] at (10.5,-2) {$0 $};
\draw[->,black] (2,-2) -- (1.5,-2);
\draw[->,black] (9,-2) -- (8.5,-2);
\draw[->,black] (5.5,-2) -- (5,-2);
\node[black] at (0,-3) {$0$};
\draw[->,black] (0,-2.4) -- (0,-2.6);
\node[black] at (3.5,-3) {$0 $};
\draw[->,black] (3.5,-2.4) -- (3.5,-2.6);
\node[black] at (7,-3) {$0$};
\draw[->,black] (7,-2.4) -- (7,-2.6);
\node[black] at (10.5,-3) {$0 $.};
\draw[->,black] (10.5,-2.4) -- (10.5,-2.6);
\draw[->,black] (2,-3) -- (1.5,-3);
\draw[->,black] (9,-3) -- (8.5,-3);
\draw[->,black] (5.5,-3) -- (5,-3);
\end{tikzpicture}
\end{center}
We already know that the maps induced by $d'$ on the first row are zero. We now want to compute the homology of the second row. The computation is similar to the one done in case 7). Indeed in $\inlinewedge_{-}^{2}\otimes K_{1,0}$ the sequence is still exact: $(\widetilde{p},\widetilde{q})=(1,0)$ implies that, for the elements of $(\widetilde{V}_{A})_{[\widetilde{p},0]}$, $n_1+2n_2=1$, i.e. $n_1=1$ and $n_2=0$. This means that in the quotient $\inlinewedge_{-}^{2}\otimes K_{1,0}$ each element is of the form $v=\sum a_{i_1 i_2 i_3 i_4 i_5}w_{i_1 i_2}w_{i_3 i_4}w_{14}\otimes x_{i_{5}}$, with the $i_{j}$'s in $\left\{1,2,3\right\}$ and $a_{i_1 i_2 i_3 i_4 i_5} \in \C$; therefore $v$ is 0 in the quotient. The rest of the computations are analogous to case 7).
\item[b)] Let $a=4$. The following is the diagram of the $E'^{1}$ spectral sequence, where the vertical maps are $d''$ and the horizontal maps are induced by $d'$:
\begin{center}
\begin{tikzpicture}
\node[black] at (0,0) {$0 $};
\draw[->,black] (0,-0.4) -- (0,-0.6);
\node[black] at (3.5,0) {$\inlinewedge_{-}^{2}\inlinewedge_{+}^{0}\otimes K_{0,1} $};
\draw[->,black] (3.5,-0.4) -- (3.5,-0.6);
\node[black] at (7,0) {$\inlinewedge_{-}^{1}\inlinewedge_{+}^{0}\otimes K_{1,1}  $};
\draw[->,black] (7,-0.4) -- (7,-0.6);
\node[black] at (10.5,0) {$\inlinewedge_{-}^{0}\inlinewedge_{+}^{0}\otimes K_{2,1}   $};
\draw[->,black] (10.5,-0.4) -- (10.5,-0.6);
\draw[->,black] (2,0) -- (1.5,0);
\draw[->,black] (5.5,0) -- (5,0);
\draw[->,black] (9,0) -- (8.5,0);
\node[black] at (0,-1) {$0$};
\draw[->,black] (0,-1.4) -- (0,-1.6);
\node[black] at (3.5,-1) {$\inlinewedge_{-}^{2}\otimes K_{0,0} $};
\draw[->,black] (3.5,-1.4) -- (3.5,-1.6);
\node[black] at (7,-1) {$\inlinewedge_{-}^{1}\otimes K_{1,0}$};
\draw[->,black] (7,-1.4) -- (7,-1.6);
\node[black] at (10.5,-1) {$\inlinewedge_{-}^{0}\otimes K_{2,0} $};
\draw[->,black] (10.5,-1.4) -- (10.5,-1.6);
\draw[->,black] (2,-1) -- (1.5,-1);
\draw[->,black] (9,-1) -- (8.5,-1);
\draw[->,black] (5.5,-1) -- (5,-1);
\node[black] at (0,-2) {$0$};
\node[black] at (3.5,-2) {$0 $};
\node[black] at (7,-2) {$0$};
\node[black] at (10.5,-2) {$0 $};
\draw[->,black] (2,-2) -- (1.5,-2);
\draw[->,black] (9,-2) -- (8.5,-2);
\draw[->,black] (5.5,-2) -- (5,-2);
\node[black] at (0,-3) {$0$};
\draw[->,black] (0,-2.4) -- (0,-2.6);
\node[black] at (3.5,-3) {$0 $};
\draw[->,black] (3.5,-2.4) -- (3.5,-2.6);
\node[black] at (7,-3) {$0$};
\draw[->,black] (7,-2.4) -- (7,-2.6);
\node[black] at (10.5,-3) {$0 $.};
\draw[->,black] (10.5,-2.4) -- (10.5,-2.6);
\draw[->,black] (2,-3) -- (1.5,-3);
\draw[->,black] (9,-3) -- (8.5,-3);
\draw[->,black] (5.5,-3) -- (5,-3);
\end{tikzpicture}
\end{center}
We already know that the maps induced by $d'$ on the first row are zero. We now want to compute the homology of the second row. The computation of the homology for the second row is similar to case a). Indeed, in $\inlinewedge_{-}^{2}\otimes K_{0,0}$ the sequence is exact: $(\widetilde{p},\widetilde{q})=(0,0)$ implies that, for the elements of $(\widetilde{V}_{A})_{[0,0]}$, $n_1+2n_2=0$, i.e. $n_1=0$ and $n_2=0$. This means that in the quotient $\inlinewedge_{-}^{2}\otimes K_{0,0}$ each element is of the form $v=\sum a_{i_1 i_2 i_3 i_4} w_{i_1 i_2}w_{i_3 i_4}w_{14}\otimes 1$, with the $i_j$'s in $\left\{1,2,3\right\}$ and $a_{i_1 i_2 i_3 i_4}  \in \C$; therefore $v$ is 0 in the quotient. The rest of the computations are analogous to  case 7).
\item[c)] Let $a=3,1,0,-1,-2$. The computations are analogous to the previous cases.
\item[d)] Let $a=2$. Therefore in this case the $E'^{1}$ diagram is
\begin{center}
\begin{tikzpicture}
\node[black] at (0,0) {$0 $};
\draw[->,black] (0,-0.4) -- (0,-0.6);
\node[black] at (3.5,0) {$0 $};
\draw[->,black] (3.5,-0.4) -- (3.5,-0.6);
\node[black] at (7,0) {$\inlinewedge_{-}^{1}\inlinewedge_{+}^{0}\otimes K_{0,1}  $};
\draw[->,black] (7,-0.4) -- (7,-0.6);
\node[black] at (10.5,0) {$\inlinewedge_{-}^{0}\inlinewedge_{+}^{0}\otimes K_{1,1}   $};
\draw[->,black] (10.5,-0.4) -- (10.5,-0.6);
\draw[->,black] (2,0) -- (1.5,0);
\draw[->,black] (5.5,0) -- (5,0);
\draw[->,black] (9,0) -- (8.5,0);
\node[black] at (0,-1) {$0$};
\draw[->,black] (0,-1.4) -- (0,-1.6);
\node[black] at (3.5,-1) {$0 $};
\draw[->,black] (3.5,-1.4) -- (3.5,-1.6);
\node[black] at (7,-1) {$\inlinewedge_{-}^{1}\otimes K_{0,0}$};
\draw[->,black] (7,-1.4) -- (7,-1.6);
\node[black] at (10.5,-1) {$\inlinewedge_{-}^{0}\otimes K_{1,0} $};
\draw[->,black] (10.5,-1.4) -- (10.5,-1.6);
\draw[->,black] (2,-1) -- (1.5,-1);
\draw[->,black] (9,-1) -- (8.5,-1);
\draw[->,black] (5.5,-1) -- (5,-1);
\node[black] at (0,-2) {$0$};
\node[black] at (3.5,-2) {$0 $};
\node[black] at (7,-2) {$0$};
\node[black] at (10.5,-2) {$0 $};
\draw[->,black] (2,-2) -- (1.5,-2);
\draw[->,black] (9,-2) -- (8.5,-2);
\draw[->,black] (5.5,-2) -- (5,-2);
\node[black] at (0,-3) {$0$};
\draw[->,black] (0,-2.4) -- (0,-2.6);
\node[black] at (3.5,-3) {$0 $};
\draw[->,black] (3.5,-2.4) -- (3.5,-2.6);
\node[black] at (7,-3) {$0$};
\draw[->,black] (7,-2.4) -- (7,-2.6);
\node[black] at (10.5,-3) {$0 $.};
\draw[->,black] (10.5,-2.4) -- (10.5,-2.6);
\draw[->,black] (2,-3) -- (1.5,-3);
\draw[->,black] (9,-3) -- (8.5,-3);
\draw[->,black] (5.5,-3) -- (5,-3);
\end{tikzpicture}
\end{center}
We already know that the maps induced by $d'$ on the first row are zero. We now want to compute the homology of the second row. We point out that $\inlinewedge_{-}^{1}\otimes K_{0,0}=0$ since $(\widetilde{p},\widetilde{q})=(0,0)$ implies $(p,q)=(0,0)$, that is $n_1+2n_2=0$. Hence $(n_1,n_2)=(0,0)$. This means that in the quotient $\inlinewedge_{-}^{1}\otimes K_{0,0}$ each element is of the form $w_{ij}w_{14}\otimes 1$, with $i,j \in \left\{1,2,3\right\}$ and therefore it is 0 in the quotient. 
On the other hand, in $\inlinewedge_{-}^{0}\otimes K_{1,0} $, we have that $(\widetilde{p},\widetilde{q})=(1,0)$ implies $n_1+2n_2=2$. Hence the only nonzero element in the quotient $\inlinewedge_{-}^{0}\otimes K_{1,0} $ is $w_{14}\otimes x_{23}$. Therefore in this case the $E'^{2}$ diagram is
\begin{center}
\begin{tikzpicture}
\node[black] at (0,0) {$0 $};
\draw[->,black] (0,-0.4) -- (0,-0.6);
\node[black] at (3.5,0) {$0 $};
\draw[->,black] (3.5,-0.4) -- (3.5,-0.6);
\node[black] at (7,0) {$\inlinewedge_{-}^{1}\inlinewedge_{+}^{0}\otimes K_{0,1}  $};
\draw[->,black] (7,-0.4) -- (7,-0.6);
\node[black] at (10.5,0) {$\inlinewedge_{-}^{0}\inlinewedge_{+}^{0}\otimes K_{1,1}   $};
\draw[->,black] (10.5,-0.4) -- (10.5,-0.6);
\draw[->,black] (2,0) -- (1.5,0);
\draw[->,black] (5.5,0) -- (5,0);
\draw[->,black] (9,0) -- (8.5,0);
\node[black] at (0,-1) {$0$};
\draw[->,black] (0,-1.4) -- (0,-1.6);
\node[black] at (3.5,-1) {$0 $};
\draw[->,black] (3.5,-1.4) -- (3.5,-1.6);
\node[black] at (7,-1) {$0$};
\draw[->,black] (7,-1.4) -- (7,-1.6);
\node[black] at (10.5,-1) {$\widetilde{K}_{1,0} ,$};
\draw[->,black] (10.5,-1.4) -- (10.5,-1.6);
\draw[->,black] (2,-1) -- (1.5,-1);
\draw[->,black] (9,-1) -- (8.5,-1);
\draw[->,black] (5.5,-1) -- (5,-1);
\node[black] at (0,-2) {$0$};
\node[black] at (3.5,-2) {$0 $};
\node[black] at (7,-2) {$0$};
\node[black] at (10.5,-2) {$0 $};
\draw[->,black] (2,-2) -- (1.5,-2);
\draw[->,black] (9,-2) -- (8.5,-2);
\draw[->,black] (5.5,-2) -- (5,-2);
\node[black] at (0,-3) {$0$};
\draw[->,black] (0,-2.4) -- (0,-2.6);
\node[black] at (3.5,-3) {$0 $};
\draw[->,black] (3.5,-2.4) -- (3.5,-2.6);
\node[black] at (7,-3) {$0$};
\draw[->,black] (7,-2.4) -- (7,-2.6);
\node[black] at (10.5,-3) {$0 $.};
\draw[->,black] (10.5,-2.4) -- (10.5,-2.6);
\draw[->,black] (2,-3) -- (1.5,-3);
\draw[->,black] (9,-3) -- (8.5,-3);
\draw[->,black] (5.5,-3) -- (5,-3);
\end{tikzpicture}
\end{center}
where $\widetilde{K}_{1,0}=\left\{w_{14}\otimes x_{23}\right\}$. In this case, the computation of $E'^{3}$ is different from the case \textbf{A7)} only for the fact that here $\Ker (d^{(2)}_{1,0})=\left\{w_{14}\otimes x_{23}\right\}$. Therefore we obtain that $H^{0,1}(G_{A}(2,2))\cong \C.$
\end{itemize}
\end{enumerate}
	\end{proof}
\begin{prop}
\label{prop2}
As $\g_{0}-$modules:
\begin{align*}
H^{n_{1},n_2}(G_{A^{\circ}}) \cong
\begin{cases}
 \C \, \,  &\text{if}  \,\,(n_{1},n_{2})=(0,0) \,\, \text{or}\,\, (n_{1},n_{2})=(0,1),\\
0 \, \, &\text{if}  \,\,(n_{1},n_{2})\neq(0,0),(0,1),(1,0).
\end{cases}
\end{align*}
\end{prop}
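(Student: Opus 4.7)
The plan is to reduce the computation to the one already carried out in Lemma \ref{4.3ck6} and then handle the remaining $n_2=0$ cases by analyzing $\nabla_3$ explicitly. The argument splits naturally according to whether $n_2\ge 1$ or $n_2=0$.

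For $n_2\ge 1$, Remark \ref{ck6circ} identifies $H^{n_1,n_2}(G_{A^{\circ}})$ with $H^{n_1,n_2}(G_A)$. Since the decomposition $G_A=\bigoplus_{a,b}G_A(a,b)$ is preserved by $\nabla_A$, we have
\[
H^{n_1,n_2}(G_A)=\bigoplus_{a,b}H^{n_1,n_2}(G_A(a,b)),
\]
and Lemma \ref{4.3ck6} gives the conclusion immediately: among the blocks with $n_2\ge 1$ the only nonzero contribution is $H^{0,1}(G_A(2,2))\cong \C$. This settles the claim for $n_2\ge 1$.

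For $n_2=0$ the starting point is the second half of Remark \ref{ck6circ}, which gives
\[
H^{n_1,0}(G_{A^{\circ}})=\frac{\Ker\bigl(\nabla_3:G_A^{n_1,0}\to G_{B'}^{n_1-2,0}\bigr)}{\Ima\bigl(\nabla_A:G_A^{n_1,1}\to G_A^{n_1,0}\bigr)}.
\]
Since $\nabla_3\nabla_A=0$ by Proposition \ref{morfismonabla3}, $\nabla_3$ descends to a well-defined map $\overline{\nabla}_3:H^{n_1,0}(G_A)\to G_{B'}^{n_1-2,0}$, and $H^{n_1,0}(G_{A^{\circ}})=\Ker \overline{\nabla}_3$. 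For $n_1=0$ the target is zero (no module $M^{-2,0}_B$ exists), so $H^{0,0}(G_{A^{\circ}})\cong H^{0,0}(G_A)$, and by Lemma \ref{4.3ck6} the only contribution comes from $G_A(0,0)$, yielding $\C$.

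It remains to show that for $n_1\ge 2$ the map $\overline{\nabla}_3$ is injective on $H^{n_1,0}(G_A)$. The representatives of $H^{n_1,0}(G_A)=\bigoplus_{a,b}H^{n_1,0}(G_A(a,b))$ are listed explicitly in Lemma \ref{4.3ck6}: for $b\ge 4$ they are of the form $w_{\sigma_1}\cdots w_{\sigma_k}\otimes f$ with $\sigma_i\in\{(1,2),(1,3),(2,3)\}$ and $f$ a monomial in $x_1,\ldots,x_4$ of $x_4$-degree $b/2$; for $b=2$ they involve exactly one factor $w_{i4}$; and for $b=0$ they are $1\otimes f$ or $w_{12}\otimes f$ with $f\in\C[x_1,x_2,x_3]$. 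The plan is to exploit the $\g_0$-equivariance of $\overline{\nabla}_3$ (Proposition \ref{morfismonabla3}) to reduce, via Schur-type reasoning, to checking a highest-weight vector in each $\g_0$-isotypic component of $H^{n_1,0}(G_A)$, and then apply the explicit formula \eqref{explicitnabla3} for $\nabla_3$ to verify nonvanishing. The seed computation is Remark \ref{ck6nablavettsing3}: $\nabla_3(1\otimes x_1^{n_1})=n_1(n_1-1)\,w_{12}w_{13}w_{14}\otimes x_1^{n_1-2}\ne 0$ for $n_1\ge 2$; classes $w_{12}\otimes f$ and classes with $w_{i4}$ factors are treated similarly by computing the triple products $w_{ij}w_{kl}w_{mn}\cdot w_\sigma$ that result from applying \eqref{explicitnabla3}.

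The main obstacle is this final case analysis. The representatives from blocks with $b=2$ are the most delicate, because the homology is given only up to the subspace $\{w_{12}w_{13}\otimes x_4\partial_1 p+w_{12}w_{23}\otimes x_4\partial_2 p+w_{13}w_{23}\otimes x_4\partial_3 p\}$ in Lemma \ref{4.3ck6}, so one must verify that the image under $\nabla_3$ is nonzero in $G_{B'}^{n_1-2,0}$ for a representative that is itself only defined modulo this subspace. Keeping track of which triple products of $w$'s survive in each combination, and matching them against the explicit form of representatives in different blocks, is the core computational task; the $\g_0$-equivariance is what makes this finite and manageable.
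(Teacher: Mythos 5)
Your proposal follows essentially the same route as the paper: reduce the $n_2\ge 1$ cases to Lemma \ref{4.3ck6} via Remark \ref{ck6circ}, identify $H^{n_1,0}(G_{A^{\circ}})$ with the kernel of the map induced by $\nabla_3$ on $H^{n_1,0}(G_A)$, and prove injectivity of that map for $n_1\ge 2$ by a block-by-block check using the explicit formula \eqref{explicitnabla3}, with the $b=2$ blocks handled via the explicit coset representatives recorded at the end of case 7) of Lemma \ref{4.3ck6}. The only cosmetic difference is that the paper verifies nonvanishing of $\nabla_3$ directly on all listed representatives (isolating a linearly independent surviving term in each case) rather than invoking a Schur-type reduction to highest weight vectors.
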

\begin{proof}
By Lemma \ref{4.3ck6}, decomposition $G_{A}=\oplus_{a,b}G_{A}(a,b)$ and Remark \ref{ck6circ} we obtain that $H^{n_{1},n_{2}}(G_{A^{\circ}}) \cong 0$ if $n_{2}>0$ and $(n_1,n_2) \neq (0,1)$, $H^{0,1}(G_{A^{\circ}}) \cong \C$ and $H^{0,0}(G_{A^{\circ}}) \cong \C$.\\
Let us now show that $H^{n_1,0}(G_{A^{\circ}})=0 $ for $n_1>1$.
By Lemma \ref{4.3ck6}, decomposition $G_{A}=\oplus_{a,b}G_{A}(a,b)$, we know that $\frac{G^{n_1,0}_{A}}{ \Ima(\nabla_{A}:G^{n_1,1}_{A}\longrightarrow G^{n_1,0}_{A})}\cong \oplus_{a,b}H^{n_1,0}(G_{A}(a,b))$. But we are interested in the homology of $G_{A^{\circ}}$.\\
We have that the kernel of the map induced by $\nabla_{3}$ between $H^{n_1,0}(G_{A})$ and $H^{n_{1}-2,0}(G_{B})$, with $n_{1}>1$, is actually isomorphic to $\frac{\Ker (\nabla_{3}:  G^{n_1,0}_{A} \rightarrow  G^{n_{1}-2,0}_{B})}{\Ima(\nabla_{A}:G^{n_{1},1}_{A}\longrightarrow G^{n_{1},0}_{A})} $. 
We show that it is 0: it is sufficient to show that $\nabla_{3}$ restricted to $\frac{G^{n_1,0}_{A}}{ \Ima(\nabla_{A})}$ is injective. Following the proof of Lemma \ref{4.3ck6} and using the explicit expression of $\nabla_3$ in \eqref{explicitnabla3}, we have that:
\begin{enumerate}
	\item Let $b \geq 4$ and $n_1>1$. Let $\sum_{j}u_j \otimes f_j \in \inlinewedge^{\frac{a+b-n_1}{2}}_{-} \otimes (V^{n_1,0}_{A})_{[n_1-b,b]}$. Then, since $b \geq 4$ implies that $\partial^{2}_4 f_j \neq 0$ for all $j$'s, we have that $\nabla_3 (\sum_{j}u_j \otimes f_j )\neq 0$ since it is the sum of linearly independent terms including $\sum_{j}u_j w_{34}w_{24}w_{14}\otimes \partial^{2}_4 f_j  \neq 0$.
	\item Let $b = 0$ and $n_1>1$. Let $f\in \inlinewedge^{0}_{-} \otimes (V_{A}^{n_1,0})_{[n_1,0]}$. It is straightforward that $\nabla_3 (f) \neq 0.$
	\item Let $b = 0$ and $n_1>1$. Let $w_{12} \otimes f \in \inlinewedge^{1}_{-} \otimes (V^{n_1,0}_{A})_{[n_1,0]}$, with $x_3 |f $.
	Then $\nabla_3 (w_{12} \otimes f)=-w_{12}w_{13}w_{34}w_{23}\otimes \partial^{2}_3 f-w_{12}w_{13}w_{14}w_{23}\otimes \partial_{1}\partial_{3}f +w_{12}w_{23}w_{24}w_{13}\otimes \partial_{2}\partial_{3}f$. Since $n_1 > 1$ and $x_3 |f $ imply that at least one among $ \partial^{2}_{3}f $, $\partial_{1}\partial_{3}f$ and $\partial_{1}\partial_{3}f$ is different from 0, we have $\nabla_3 (w_{12} \otimes f)\neq 0$.
	\item Let $b = 2$ and $n_1 >1$. Let $f\in \inlinewedge^{0}_{-} \otimes (V_{A}^{n_1,0})_{[n_1-2,2]}$. It is straightforward that $\nabla_3 (f) \neq 0.$
	\item Let $b = 2$ and $n_1 >1$. Let $v=w_{12}\otimes f_1+w_{13}\otimes f_2+w_{23}\otimes f_3 \in \inlinewedge^{1}_{-} \otimes (V_{A}^{n_1,0})_{[n_1-2,2]}$. We have that $\nabla_3 (v)$ is the sum of linearly independent terms including 
	\begin{align*}
	&-w_{12}w_{14}w_{24}w_{13}\otimes \partial_{1}\partial_{4}f_1+w_{12}w_{24}w_{14}w_{23} \otimes \partial_{2}\partial_{4}f_1-w_{12}w_{34}(w_{24}w_{13}+w_{23}w_{14})\otimes \partial_{3}\partial_{4}f_1\\
	&-w_{13}w_{14}w_{12}w_{34}\otimes \partial_{1}\partial_{4}f_2+w_{13}w_{24}(w_{34}w_{12}+w_{14}w_{23})\otimes \partial_{2}\partial_{4}f_2-w_{13}w_{34}w_{23}w_{14}\otimes \partial_{3}\partial_{4}f_2\\
	&-w_{23}w_{14}(w_{24}w_{13}+w_{12}w_{34})\otimes \partial_{1}\partial_{4}f_3+w_{23}w_{24}w_{34}w_{12}\otimes \partial_{2}\partial_{4}f_3-w_{23}w_{34}w_{24}w_{13}\otimes \partial_{3}\partial_{4}f_3.
 \end{align*}
Since $b = 2$ and $n_1 >1$ imply that for all $i=1,2,3 $ there exists $j=j(i) $ such that $\partial_{j}\partial_{4}f_i \neq 0$, we obtain that $\nabla_3 (v)\neq 0.$
	\item Let $b = 2$ and $n_1>1$. Let $v\in H^{n_1,0}(G_{A^{\circ}})$. 
 If $v=w_{12}w_{13} \otimes x_4 f_1$, with $\partial_{2}f_1 \neq 0$ or $\partial_{3}f_1 \neq 0$, we have that $\nabla_3 (v)$ is the sum of linearly independent terms including $w_{12}w_{13}w_{24}w_{14}w_{23} \otimes \partial_{2}f_1-w_{12}w_{13}w_{34}w_{23}w_{14}\otimes \partial_{3}f_1$. Hence we obtain that $\nabla_3 (v)\neq 0.$
If $v=w_{12}w_{13} \otimes x_4 f_1+w_{12}w_{23} \otimes x_4 f_2$, with $\partial_{3}f_2 \neq 0$, we have that $\nabla_3 (v)$ is the sum of linearly independent terms including
	\begin{align*}
	&w_{12}w_{13}w_{24}w_{14}w_{23} \otimes \partial_{2}f_1-w_{12}w_{13}w_{34}w_{23}w_{14}\otimes \partial_{3}f_1\\
	&-w_{12}w_{23}w_{14}w_{24}w_{13}\otimes \partial_{1}f_2-w_{12}w_{23}w_{34}w_{24}w_{13}\otimes \partial_{3}f_2.
	\end{align*}
	Hence $-w_{12}w_{23}w_{34}w_{24}w_{13}\otimes \partial_{3}f_2 \neq 0$ implies that $\nabla_3 (v)\neq 0.$
\end{enumerate}
\end{proof}
\begin{prop}
\label{prop25}
As $\g_{0}-$modules:
\begin{align*}
H^{1,0}(\Gr M_{A^{\circ}}) \cong 0.
\end{align*}
\end{prop}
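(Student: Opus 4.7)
The proof hinges on the morphism $\nabla_5 \colon M_A^{1,0} \to M_C^{0,1}$ introduced in the excerpt. Unlike $\nabla_3$, which vanishes trivially at bidegree $(1,0)$ because its target $M_B^{-1,0}$ does not exist, the map $\nabla_5$ provides a nontrivial outgoing constraint at this bidegree. My plan is to interpret $M_{A^\circ}$ as the subcomplex of $M_A$ annihilated by all outgoing $\g$-morphisms from quadrant $\mathbf{A}$, so that in particular
\[
M_{A^\circ}^{1,0} = \Ker\bigl(\nabla_5 \colon M_A^{1,0} \to M_C^{0,1}\bigr),
\]
while $M_{A^\circ}^{1,1} = M_A^{1,1}$ since no outgoing morphism is defined at bidegree $(1,1)$.

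The heart of the argument is to show $\Ker(\nabla_5) = \Ima(\nabla_A \colon M_A^{1,1} \to M_A^{1,0})$. The inclusion $\Ima(\nabla_A) \subseteq \Ker(\nabla_5)$ is immediate from $\nabla_5 \nabla_A = 0$, which is noted in the excerpt as a consequence of the absence of highest weight singular vectors of degree $6$ (Remark \ref{teoremavettorisingolarick6ultimo}). For the reverse inclusion, I would use the structure theory of Verma modules: the module $M_A^{1,0} = M(1,0,0,-\tfrac12)$ has $\vec{m}_{1a}$ as its unique nontrivial highest weight singular vector, so its maximal proper submodule coincides with $U(\g)\cdot \vec{m}_{1a} = \Ima(\nabla_A)$, and the quotient $I(F_A^{1,0})$ is simple. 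The induced map $\bar{\nabla}_5 \colon I(F_A^{1,0}) \to M_C^{0,1}$ sends $[x_1]$ to $\vec{m}_{5c}\neq 0$, so $\bar{\nabla}_5$ is nonzero; by simplicity of the source it is injective, which forces $\Ker(\nabla_5) = \Ima(\nabla_A)$.

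Passing to the associated graded, $\Gr M_{A^\circ}^{1,0} = \Gr(\Ima(\nabla_A))$ while $\Gr M_{A^\circ}^{1,1} = \Gr M_A^{1,1}$. Because $\nabla_A$ has filtration degree $+1$, the induced graded differential $\Gr\nabla_A$ surjects onto $\Gr(\Ima(\nabla_A))$ in every filtration degree, so
\[
H^{1,0}(\Gr M_{A^\circ}) = \Gr(\Ima(\nabla_A))\,/\,\Gr(\Ima(\nabla_A)) = 0.
\]

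The main obstacle will be confirming the precise definition of $M_{A^\circ}$ at the ungraded level, namely that the $\nabla_5$ constraint restricts $M_{A^\circ}^{1,0}$ to $\Ker(\nabla_5)$ rather than leaving it equal to $M_A^{1,0}$ (which would give nonzero homology). Once this interpretation is granted, the remaining steps reduce to standard submodule-structure arguments for Verma modules, combined with the injectivity of $\bar\nabla_5$ on the simple quotient.
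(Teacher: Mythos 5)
Your reading of what $M_{A^{\circ}}$ must mean at the point $(1,0)$ is correct: the homology there is taken with $\nabla_{5}$ as the outgoing constraint, and $\nabla_{5}\nabla_{A}=0$ gives the easy inclusion. However, there are two genuine gaps, and the second one is fatal to the argument as written. First, the step ``$M(1,0,0,-\tfrac12)$ has a unique nontrivial highest weight singular vector, \emph{so} its maximal proper submodule is $U(\g)\cdot\vec{m}_{1a}=\Ima\nabla_A$'' is not a valid inference. Uniqueness of the singular vector only gives $U(\g)\cdot\vec{m}_{1a}\subseteq$ (maximal submodule); the quotient $M_A^{1,0}/\Ima\nabla_A$ could a priori contain further proper submodules whose singular vectors do not lift to singular vectors of $M_A^{1,0}$. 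Proving that $M_A^{1,0}/\Ima\nabla_A$ is irreducible is essentially the content of the theorem the whole homology computation is designed to establish, so invoking it here is circular.

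Second, and more fundamentally, the statement to be proved concerns the \emph{associated graded} complex, and your final step goes in the wrong direction. What must be shown is $\Ker(\Gr\nabla_5)=\Ima(\Gr\nabla_A)$ inside $\Gr M_A^{1,0}$, whereas an exactness statement for the ungraded complex only controls $\Gr(\Ker\nabla_5)$ and $\Gr(\Ima\nabla_A)$. In general one has $\Ima(\Gr\nabla_A)\subseteq\Gr(\Ima\nabla_A)$ and $\Gr(\Ker\nabla_5)\subseteq\Ker(\Gr\nabla_5)$, both possibly strict (an element of $\Ima\nabla_A\cap F_j$ may only be $\nabla_A(x)$ for $x$ of higher filtration degree, with cancellation); this is exactly the discrepancy the higher pages of the spectral sequence measure, and it is why Proposition \ref{keyhomologyspectralck6} is a one-way implication from graded vanishing to ungraded vanishing. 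Your displayed identity $H^{1,0}(\Gr M_{A^{\circ}})=\Gr(\Ima\nabla_A)/\Gr(\Ima\nabla_A)$ conflates these objects. The paper instead works entirely on the graded side: Lemma \ref{4.3ck6} already gives an explicit finite list of representatives for $H^{1,0}(\Gr M_A)=\Gr M_A^{1,0}/\Ima(\Gr\nabla_A)$ (namely $1\otimes x_i$, $\sum_{i<j}\gamma_{ij}w_{ij}\otimes x_4$ and $w_{12}\otimes x_3$), and one checks directly, using the explicit formula \eqref{m5c} for $\vec{m}_{5c}$ and $\g_0$-equivariance, that $\nabla_5$ is nonzero on each of them. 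If you want to salvage your strategy you would need to run it at the level of the graded objects, which brings you back to needing the explicit description of $H^{1,0}(\Gr M_A)$ from Lemma \ref{4.3ck6}.
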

\begin{proof}
The proof is similar to Proposition \ref{prop2}. By Lemma \ref{4.3ck6}, \eqref{graduato} and decomposition $\Gr M_{A}=\Sym(\g_{-2})\otimes G_A=\oplus_{a,b}\Sym(\g_{-2})\otimes G_{A}(a,b)$, we know that $\frac{\Gr M^{1,0}_{A}}{ \Ima(\nabla_{A}:\Gr M^{1,1}_{A}\longrightarrow \Gr M^{1,0}_{A})}\cong \oplus_{a,b}H^{1,0}(\Gr M_{A}(a,b))$. But we are interested in the homology of $\Gr M^{1,0}_{A^{\circ}}$.\\
We have that the Kernel of the map induced by $\nabla_{5}$ between $H^{1,0}(\Gr M_{A})$ and $H^{0,1}(\Gr M_{C})$ is actually isomorphic to $\frac{\Ker (\nabla_{5}:  \Gr M^{1,0}_{A} \rightarrow  \Gr M^{0,1}_{C})}{\Ima(\nabla_{A}:\Gr M^{1,1}_{A}\longrightarrow \Gr M^{1,0}_{A})} $. 
We show that it is 0: it is sufficient to show that $\nabla_{5}$ restricted to $\frac{\Gr M^{1,0}_{A}}{ \Ima(\nabla_{A})}$ is injective. Following the proof of Proposition \ref{prop2} and using the explicit expressions obtained for the elements of $H^{1,0}(\Gr M_{A})$ in the proof of Lemma \ref{4.3ck6}, we need to prove that $\nabla_{5}$  is not zero on the elements $1 \otimes x_i$, for $i=1,2,3,4$, $\gamma_{12}w_{12}\otimes x_4+\gamma_{13}w_{13}\otimes x_4+\gamma_{23}w_{23}\otimes x_4$, with $\gamma_{ij}\in \C$, and $w_{12}\otimes x_3$. But this follows directly by Remark \ref{teoremavettorisingolarick6ultimo}, since $\nabla_5 (x_1)=\vec{m}_{5c}$: indeed using the explicit expression of $\vec{m_{5c}}$ in \eqref{m5c}, it is possible to show by direct computations that $\nabla_5 (1 \otimes x_i)=x_{i}\partial_{1}.\vec{m}_{5c}\neq 0$, $\nabla_5 (\sum_{1 \leq i<j\leq 3}\gamma_{ij}w_{ij}\otimes x_4)=\sum_{1 \leq i<j\leq 3}\gamma_{ij}w_{ij}\otimes x_{4}\partial_{1}.\vec{m}_{5c}\neq 0$ and $\nabla_5 (w_{12}\otimes x_3)=w_{12}\otimes x_{3}\partial_{1}.\vec{m}_{5c}\neq 0$.
\end{proof}
Finally we can show the following result, that follows from Propositions \ref{prop2} and \ref{keyhomologyspectralck6}.
\begin{prop}
As $\g_{0}-$modules:
\begin{align*}
H^{n_{1},n_{2}}(M_{A}) \cong
\begin{cases}
\C \, \,  &\text{if}   \,\, n_{1}=n_{2}=0,\\
0 \, \, &  \text{otherwise}.
\end{cases}
\end{align*}
\end{prop}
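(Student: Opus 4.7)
The strategy is to combine the computations of Propositions~\ref{prop2} and~\ref{prop25} on the reduced complex $(G_{A^{\circ}},\nabla_A)$ with the spectral sequence of Proposition~\ref{keyhomologyspectralck6} in order to transfer vanishing from the associated graded level to $(M_A,\nabla_A)$. The argument splits into three kinds of bidegrees.

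First, for every $(n_1,n_2)\notin\{(0,0),(0,1),(1,0)\}$, Proposition~\ref{prop2} gives $H^{n_1,n_2}(G_{A^{\circ}})=0$; via Remark~\ref{ck6circ} when $n_2>0$ (which identifies $G_{A^{\circ}}$ with $G_A$ in that range), or via the analogous spectral sequence applied to the complex extended by $\nabla_3$ when $n_2=0$ and $n_1\geq 2$, Proposition~\ref{keyhomologyspectralck6} immediately yields $H^{n_1,n_2}(M_A)=0$. For $(n_1,n_2)=(1,0)$ this reasoning fails because $H^{1,0}(G_{A^{\circ}})$ need not vanish, but Proposition~\ref{prop25} supplies $H^{1,0}(\Gr M_{A^{\circ}})=0$ directly (using the injectivity of $\nabla_5$ on the graded homology), and the spectral sequence then produces $H^{1,0}(M_A)=0$.

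For the trivial bidegree $(0,0)$ a direct calculation provides the nontrivial class: since $V_A^{0,0}$ is the trivial module, $M_A^{0,0}\cong U(\g_{<0})$ as a right $U(\g_{<0})$-module, and the image of $\nabla_A:M_A^{0,1}\to M_A^{0,0}$ equals the right ideal $U(\g_{<0})\cdot\g_{-1}$. The relation $\eta_i^2=\Theta$ of Remark~\ref{remwck6} shows that $\Theta\in U(\g_{<0})\cdot\g_{-1}$, hence the quotient $U(\g_{<0})/U(\g_{<0})\cdot\g_{-1}$ is one-dimensional and spanned by the class of $1$, giving $H^{0,0}(M_A)\cong\C$.

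The subtlest step, which I expect to be the main obstacle of the proof, is the bidegree $(0,1)$. Here Proposition~\ref{prop2} only yields $H^{0,1}(G_{A^{\circ}})\cong\C$, whence Proposition~\ref{keyhomologyspectralck6} merely gives $H^{0,1}(\Gr M_A)\cong\Ind^{\g_{\bar 0}}_{L_0}(\C)\cong\C[\Theta]$ on the first page of the spectral sequence, and one must verify that this entire $\C[\Theta]$-contribution is eliminated by the higher differentials before reaching $E^{\infty}$. Starting from the representative $w_{14}\otimes x_{23}$ identified in the proof of Lemma~\ref{4.3ck6}, I would use the non-commutativity relation $w_{23}w_{14}+w_{14}w_{23}=-4\Theta$ of Remark~\ref{remwck6} to lift this graded cycle to $M_A$ at the cost of a lower-filtration $\Theta$-correction under $\nabla_A$; exhibiting an explicit preimage in $M_A^{0,2}$ and iterating the argument on $\Theta^k\otimes[w_{14}\otimes x_{23}]$ would then show that every class on the $E^1$-page becomes a boundary, so that the $E^{\infty}$-page vanishes at $(0,1)$ and $H^{0,1}(M_A)=0$ as required. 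This higher-differential bookkeeping is pure PBW-based computation and rests entirely on the explicit formula~\eqref{nablaA} and on the identification $\Gr U(\g_{<0})\cong S(\g_{-2})\otimes\inlinewedge(\g_{-1})$.
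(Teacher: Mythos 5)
Your handling of every bidegree other than $(0,0)$ and $(0,1)$ follows the paper's own route (Propositions \ref{prop2}, \ref{prop25} and \ref{keyhomologyspectralck6}), and your direct PBW argument at $(0,0)$ — the image of $\nabla_{A}$ is the right ideal $U(\g_{<0})\g_{-1}$, which contains $\Theta=\eta_{i}^{2}$, so the quotient is spanned by the class of $1$ — is correct and more self-contained than what the paper does. The paper instead disposes of the whole column $n_{1}=0$ at once: the singular vectors defining $\nabla_{A}\colon M_{A}^{0,n_{2}}\to M_{A}^{0,n_{2}-1}$ are also singular vectors for $K_{6}$, so these maps coincide with the morphisms of the $K_{6}$ complex, whose homology is already computed in \cite{kac1}; this covers both $(0,0)$ and $(0,1)$ by citation.

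The genuine gap is at $(0,1)$, which you rightly identify as the hard point but neither carry out nor set up correctly. Since $H^{0,2}(G_{A})=H^{0,2}(G_{A^{\circ}})=0$ (Proposition \ref{prop2} and Remark \ref{ck6circ}), we have $H^{0,2}(\Gr M_{A})=0$, so no differential of any page of the spectral sequence arrives at the $(0,1)$ position; equivalently, every element of $\Ima(\nabla_{A}\colon M_{A}^{0,2}\to M_{A}^{0,1})$ has leading symbol which is already a boundary in $\Gr M_{A}^{0,1}$, hence no boundary can have leading term representing a nonzero class of $H^{0,1}(\Gr M_{A})\cong\C[\Theta]$. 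Your proposed mechanism — exhibiting preimages in $M_{A}^{0,2}$ so that ``every class on the $E^{1}$-page becomes a boundary'' — is therefore impossible. The classes $\Theta^{k}\otimes[\,\cdot\,]$ at $(0,1)$ can only die by supporting nonzero \emph{outgoing} differentials into $H^{0,0}(\Gr M_{A})\cong\C[\Theta]$, and the $\Theta$-correction you invoke is exactly that differential: a representative of the generator of $H^{0,1}(\Gr M_{A})$ is $w_{14}\otimes x_{23}+w_{23}\otimes x_{14}$ (a cycle for $\Gr\nabla_{A}$ which is not a boundary), and in $M_{A}$ one has $\nabla_{A}(w_{14}\otimes x_{23}+w_{23}\otimes x_{14})=(w_{14}w_{23}+w_{23}w_{14})\otimes 1=-4\Theta\otimes 1$, whose class in $H^{0,0}(\Gr M_{A})\cong\C[\Theta]$ is nonzero. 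Iterating over $\Theta^{k}$ maps the $\C[\Theta]$ at $(0,1)$ isomorphically onto $\Theta\C[\Theta]$ at $(0,0)$, which would give $H^{0,1}(M_{A})=0$ and $H^{0,0}(M_{A})\cong\C$ simultaneously. So your computational germ is the right one, but the homological bookkeeping must be reversed, and the verification that this representative generates $H^{0,1}(\Gr M_{A})$ over $\C[\Theta]$ and that its image stays nonzero modulo the relevant boundaries still has to be written out — or one simply quotes the $K_{6}$ computation as the paper does.
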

\begin{proof}
By Propositions \ref{prop2}, \ref{prop25} and \ref{keyhomologyspectralck6} we obtain  that, for $ (n_{1},n_{2}) \neq (0,0),(0,1)$, $H^{n_{1},n_{2}}(M_{A}) \cong 0$.\\
We point out that the singular vectors that determine the maps $\nabla_{A}$ for $n_{1}=0$ are singular vectors also in the case of $K_{6}$ (see \cite[Theorem 4.1, Remark 4.2]{ck6} and \cite[Theorem 5.1]{kac1}). Since the maps $\nabla_{A}: M_{A}^{0,n_{2}}\rightarrow  M_{A}^{0,n_{2}-1}$ are completely determined by the image of $v$, highest weight vector of $V_{A}^{0,n_{2}}$, and, due to equivariance, the action of $\g_{\leq 0}$ on $v$, we obtain that for $n_{1}=0$ the maps coincide with the maps in the case of $K_{6}$. The homology in this case was computed in \cite{kac1}. It was shown that it is different from zero only for $n_{1}=n_{2}=0$ and $H^{0,0}(M_{A}) \cong \C$.
\end{proof}
Now we focus on the third quadrant.
\begin{rem}
\label{cckck6dualeconformeshifted}
A consequence of results in \cite{cantacasellikac} on conformal duality is that, in the case of $CK_{6}$, the conformal dual of $\Ind(F)$, where $F=F(n_{1},n_{2},n_{3},n_0)$ is an irreducible $\g_{0}-$module, corresponds to the shifted dual $\Ind(F^{\vee})$, where $F^{\vee} \cong F(n_{3},n_{2},n_{1},-n_{0}+4)$. We will use the results about duality for shifted duals.
\end{rem}
\begin{lem}
\label{lemmatorsione}
The modules $M_B^{n_1,0}/\Ima \nabla_3=M(n_1,0,0,-\frac{n_1}{2}+2)/\Ima \nabla_3$ are torsion-free as $\C[\Theta]-$modules.
\end{lem}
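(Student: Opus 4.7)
The plan is to reduce $\C[\Theta]$-torsion-freeness of $C:=M_B^{n_1,0}/\Ima\nabla_3$ to the vanishing $H^{n_1+2,0}(G_{A^\circ})=0$, which is already established in Proposition \ref{prop2}.

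First I would observe that, by PBW and since $\Theta\in\g_{-2}$ is central in $U(\g_{<0})$, $M_B^{n_1,0}\cong\C[\Theta]\otimes\inlinewedge(\g_{-1})\otimes V_B^{n_1,0}$ is a free graded $\C[\Theta]$-module, and $\Ima\nabla_3$ is a graded $\C[\Theta]$-submodule because $\nabla_3$ commutes with $\g_{<0}$. In the $\Z_{\geq 0}$-graded setting, $\C[\Theta]$-torsion-freeness coincides with $\Theta$-torsion-freeness by a leading-term argument on homogeneous elements. Tensoring the short exact sequence $0\to\Ima\nabla_3\to M_B^{n_1,0}\to C\to 0$ with $\C\cong\C[\Theta]/(\Theta)$ and using that $\Ima\nabla_3$, being a submodule of a free $\C[\Theta]$-module, is $\C[\Theta]$-flat, the Tor long exact sequence yields: $C$ is $\Theta$-torsion-free iff the natural map $\Ima\nabla_3/\Theta\Ima\nabla_3\to M_B^{n_1,0}/\Theta M_B^{n_1,0}$ is injective, equivalently iff $\nabla_3(a)\in\Theta M_B^{n_1,0}$ forces $a\in\Theta M_A^{n_1+2,0}+\ker\nabla_3$.

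Next, via the identifications $M_A^{n_1+2,0}/\Theta M_A^{n_1+2,0}\cong G_A^{n_1+2,0}$ and $M_B^{n_1,0}/\Theta M_B^{n_1,0}\cong G_B^{n_1,0}$ (both coming from $U(\g_{<0})/\Theta U(\g_{<0})\cong\inlinewedge(\g_{-1})$), the reductions $\bar\nabla_A$ and $\bar\nabla_3$ agree with the morphisms on the $G$-complex used in Section 6. The relation $\nabla_3\nabla_A=0$ of Proposition \ref{morfismonabla3} gives the inclusion $\Ima\bar\nabla_A\subseteq\ker\bar\nabla_3$ for free, so the above implication reduces to the reverse inclusion $\ker\bar\nabla_3\subseteq\Ima\bar\nabla_A$ at the spot $G_A^{n_1+2,0}$, which is precisely the vanishing $H^{n_1+2,0}(G_{A^\circ})=0$.

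Finally, Proposition \ref{prop2} gives $H^{n,0}(G_{A^\circ})=0$ for every $n\neq 0,1$; since $n_1\geq 0$ forces $n_1+2\geq 2$, the required vanishing applies and the lemma follows. The main technical obstacle is the careful bookkeeping in the middle paragraph: one has to verify that the reduction modulo $\Theta$ of $M_A^{n_1+2,0}$ and $M_B^{n_1,0}$ really yields the spaces $G_A^{n_1+2,0}$ and $G_B^{n_1,0}$ appearing in Section 6, and that $\bar\nabla_A$ and $\bar\nabla_3$ coincide with the corresponding $G$-complex morphisms, so that the Tor-theoretic reformulation of torsion-freeness translates word-for-word into the vanishing already established for $H^{n_1+2,0}(G_{A^\circ})$.
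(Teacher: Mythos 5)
Your proof is correct, but it takes a genuinely different route from the paper's. The paper argues directly in $M_B^{n_1,0}$: assuming $v\notin\Ima\nabla_3$ while $\Theta v\in\Ima\nabla_3$, it applies elements of $\g_j$ for $j$ large, uses $[\Theta,\g_i]=\g_{i-2}$ and $\g_i=\g_1^i$ to push membership in $\Ima\nabla_3$ down in degree, and reaches a contradiction either with the classification of singular vectors (there are no highest weight singular vectors of degree $1$ or $2$ in $M_B^{n_1,0}$) or, when $\deg v\geq 3$, with the fact that $t$ acts by the nonzero scalar $-d-\frac{n_1}{2}+2$. Your argument is instead homological: since $M_B^{n_1,0}$ is $\C[\Theta]$-free and graded torsion-freeness reduces to $\Theta$-torsion-freeness, the Tor sequence turns the claim into $\Ima\nabla_3\cap\Theta M_B^{n_1,0}=\Theta\,\Ima\nabla_3$, which follows from $\Ker\overline{\nabla}_3\subseteq\Ima\overline{\nabla}_A$ on $M_A^{n_1+2,0}/\Theta M_A^{n_1+2,0}\cong G_A^{n_1+2,0}$, i.e.\ from $H^{n_1+2,0}(G_{A^{\circ}})=0$, which Proposition \ref{prop2} supplies since $n_1+2\geq 2$. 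The identifications you flag do hold: $U(\g_{<0})/\Theta U(\g_{<0})\cong\inlinewedge(\g_{-1})$ as algebras because the relations $\eta_i\eta_j+\eta_j\eta_i=2\delta_{ij}\Theta$ degenerate to exterior ones, so the reductions of $\nabla_A$ and $\nabla_3$ are exactly the differentials on $G_A$ used in Section 6. One point worth making explicit: what you actually need from $\nabla_3\nabla_A=0$ is not merely the inclusion $\Ima\overline{\nabla}_A\subseteq\Ker\overline{\nabla}_3$ but the stronger statement that every element of $\Ima\overline{\nabla}_A$ is the reduction of an element of $\Ker\nabla_3$ (true because $\Ima\nabla_A\subseteq\Ker\nabla_3$ already in $M_A^{n_1+2,0}$, before reduction); this is what yields $a\in\Theta M_A^{n_1+2,0}+\Ker\nabla_3$ and closes the chain. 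In exchange for being shorter and reusing the main computation, your proof makes the lemma logically depend on Proposition \ref{prop2} (no circularity arises, as that result is proved independently of the lemma), whereas the paper's argument is elementary, self-contained, relies only on the classification of singular vectors, and is the template reused verbatim for Lemma \ref{lemmatorsione2}.
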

\begin{proof}
Given $v \in M_B^{n_1,0}$ such that $v \notin \Ima \nabla_3$, we show that $\Theta v \notin \Ima \nabla_3$. We recall that, as $\g-$module, $\Ima \nabla_3$ is generated by the highest weight singular vector of degree 3 $w_{12}w_{13}w_{14} \otimes x^{n_1}_{1}$. In this proof we will use that $[\Theta,\g_i]=\g_{i-2}$ for all $i \geq 0$ and $\g_{i}=\g^{i}_{1}$ for all $i \geq 2$ (see Remarks \ref{L3} and \ref{remlowest}). We split the proof in the following cases.
\begin{enumerate}
	\item Let $v \in M_B^{n_1,0}$, such that $v \notin \Ima \nabla_3$, be a vector of degree 0. Therefore it is straightforward that $\Theta v \notin \Ima \nabla_3$, since it has degree 2.
	\item Let $v \in M_B^{n_1,0}$, such that $v \notin \Ima \nabla_3$, be a vector of degree 1. Let us suppose that $\Theta v \in \Ima \nabla_3$.
	Let $g_{3}\in \g_3$. Then $g_3.(\Theta v) \in \Ima \nabla_3$, but
	$$g_3.(\Theta v)=\Theta(g_3. v) +[g_3, \Theta].v.$$
	Since $v$ has degree 1, then $g_3. v=0$. Hence, for all $g_{3}\in \g_3$, $[g_3, \Theta].v \in \Ima \nabla_3$. Since $[g_3, \Theta].v$  is a vector of degree 0 that lies in $\Ima \nabla_3$, we obtain that $[g_3, \Theta].v=0$. Therefore $\g_1.v=0$, that implies $\g_{+}.v=0$. This implies that there exist in $M_B^{n_1,0}$ highest weight singular vectors of degree 1, but this contradicts the classification of singular vectors given in \cite{ck6}.\\
	\item Let $v \in M_B^{n_1,0}$, such that $v \notin \Ima \nabla_3$, be a vector of degree 2. Let us suppose that $\Theta v \in \Ima \nabla_3$.
	Let $g_{3}\in \g_3$. Then $g_3.(\Theta v) \in \Ima \nabla_3$, but
	$$g_3.(\Theta v)=\Theta(g_3. v) +[g_3, \Theta].v.$$
	Since $v$ has degree 2, then $g_3. v=0$. Hence, for all $g_{3}\in \g_3$, $[g_3, \Theta].v \in \Ima \nabla_3$. Since $[g_3, \Theta].v$  is a vector of degree 1 that lies in $\Ima \nabla_3$, we obtain that $[g_3, \Theta].v=0$. Therefore $\g_1.v=0$, that implies $\g_{+}.v=0$. This implies that there exist in $M_B^{n_1,0}$ highest weight singular vectors of degree 2, but this contradicts the classification of singular vectors given in \cite{ck6}.\\
	\item Let $v \in M_B^{n_1,0}$, such that $v \notin \Ima \nabla_3$, be a vector of degree $d\geq 3$. Let us suppose that $\Theta v \in \Ima \nabla_3$.
	Let $g_{j}\in \g_j$, for $j \geq d+1$. Then $g_{j}.(\Theta v) \in \Ima \nabla_3$, but
	$$g_{j}.(\Theta v)=\Theta(g_{j}. v) +[g_{j}, \Theta].v.$$
	Since $v$ has degree $d$, then $g_j. v=0$. Hence, for all $g_{j}\in \g_j$ with $j \geq d+1$, $[g_j, \Theta].v \in \Ima \nabla_3$. Since $[g_j, \Theta].v$  is a vector of degree less or equal to 1 that lies in $\Ima \nabla_3$, we obtain that $[\g_j, \Theta].v=0$ for all $j \geq d+1$. Hence $\g_j.v=0$ for all $j \geq d-1$.
	Let $g_{d}\in \g_d$. Then $g_{d}.(\Theta v) \in \Ima \nabla_3$, but
	$$g_{d}.(\Theta v)=\Theta(g_{d}. v) +[g_{d}, \Theta].v.$$
	Since $\g_j.v=0$ for all $j \geq d-1$, then $g_{d}. v=0$ and therefore $[g_{d}, \Theta].v \in \Ima \nabla_3$. But $[g_{d}, \Theta].v \in \Ima \nabla_3$ and the fact that $[g_{d}, \Theta].v $ has degree 2 imply that $[g_{d}, \Theta].v =0$. Hence $\g_{d-2}.v=0$. Thus $\g_j.v=0$ for all $j \geq d-2$.
	Now let us suppose that $d$ is even. Let $g_{d-2} \in \g_{d-2}$. Then $g_{d-2}.(\Theta v) =\Theta(g_{d-2}. v) +[g_{d-2}, \Theta].v \in \Ima \nabla_3$ and $g_{d-2}. v=0$ imply that $\g_{d-4}.v \in \Ima \nabla_3$. Now let $g_{d-4} \in \g_{d-4}$. Hence $g_{d-4}.(\Theta v) =\Theta(g_{d-4}. v) +[g_{d-4}, \Theta].v \in \Ima \nabla_3$ and $g_{d-4}. v \in \Ima \nabla_3$ imply that $\g_{d-6}.v \in \Ima \nabla_3$. Iterating this argument we obtain that $\g_{d-2k}.v \in \Ima \nabla_3$ for $k \geq 2$. For $k=\frac{d}{2}$, we obtain $\g_0.v \in \Ima \nabla_3$. But $t.v=(-d-\frac{n_1}{2}+2)v \in \Ima \nabla_3$ implies $v \in \Ima \nabla_3$, since $-d-\frac{n_1}{2}+2\neq 0$ for $d\geq 3$. Hence we obtain a contradiction.\\
	Finally let us suppose that $d$ is odd. Let $g_{d-1} \in \g_{d-1}$. Similarly, $g_{d-1}.(\Theta v) =\Theta(g_{d-1}. v) +[g_{d-1}, \Theta].v \in \Ima \nabla_3$ and $g_{d-1}. v=0$ imply that $\g_{d-3}.v \in \Ima \nabla_3$. Iterating this argument we obtain that $\g_{d-(2k+1)}.v \in \Ima \nabla_3$ for $k \geq 1$. For $k=\frac{d-1}{2}$, we obtain $\g_0.v \in \Ima \nabla_3$. But $t.v=(-d-\frac{n_1}{2}+2)v \in\Ima \nabla_3$ implies $v \in \Ima \nabla_3$, since $-d-\frac{n_1}{2}+2\neq 0$ for $d\geq 3$. Hence we obtain a contradiction.
	\end{enumerate} 
\end{proof}
\begin{lem}
\label{lemmatorsione2}
The module $M_C^{0,1}/\Ima \nabla_5=M(0,0,1,\frac{9}{2})/\Ima \nabla_3$ is torsion-free as $\C[\Theta]-$module.
\end{lem}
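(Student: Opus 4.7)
\emph{Plan.} The strategy is to imitate Lemma \ref{lemmatorsione}: assume that $v\in M_C^{0,1}$ satisfies $\Theta v\in\Ima\nabla_5$ and deduce $v\in\Ima\nabla_5$. The key ambient fact is that $\vec{m}_{5c}$ is a highest weight singular vector of degree $5$, so $\Ima\nabla_5=U(\g_{\le 0}).\vec{m}_{5c}=U(\g_{<0}).(U(\g_0).\vec{m}_{5c})$ is a $\g$-submodule concentrated in (inverted) degrees $\geq 5$. Consequently, for $v$ of degree $d\le 2$ the statement is vacuous, since $\Theta v$ then has degree $d+2\le 4$ and cannot lie in $\Ima\nabla_5$.

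For $d=3$ or $d=4$, the plan is to mimic the cases of degree $1$ and $2$ in Lemma \ref{lemmatorsione}. One applies an arbitrary element $g_j\in\g_j$ with $j>d$ to $\Theta v$ and uses the identity
\[
g_j(\Theta v)=\Theta(g_j.v)+[g_j,\Theta].v,
\]
combined with $g_j.v=0$ for $j>d$ (by degree) and $[g_j,\Theta]\in\g_{j-2}$ (from property \textbf{L3}, Remark \ref{L3}). For the $j$'s that intervene in these low degrees, the vector $[g_j,\Theta].v$ always has degree at most $4$ and therefore must vanish; a short descending induction on $j$ then forces $\g_{>0}.v=0$, so $v$ is a singular vector of degree $3$ or $4$. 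By the classification recalled in Remark \ref{teoremavettorisingolarick6ultimo}, the only highest weight singular vector of $M_C^{0,1}$ is $\vec{m}_{5c}$ (which has degree $5$); hence $v=0\in\Ima\nabla_5$.

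For $d\ge 5$ the same iteration splits into two phases. In the \emph{vanishing phase}, running $j$ from $d+1$ down to $d-2$, the vector $[g_j,\Theta].v$ has degree at most $4$ and therefore vanishes; this produces $\g_k.v=0$ for all $k\ge d-4$. In the \emph{image phase}, for $j\le d-3$ the bracket $[g_j,\Theta].v$ has degree $\ge 5$, so one can only conclude that it lies in $\Ima\nabla_5$. Using that $\Ima\nabla_5$ is a $\g$-submodule, once $g_j.v\in\Ima\nabla_5$ is established, so are $\Theta(g_j.v)$ and hence $[g_j,\Theta].v$; walking $j$ down from $d-3$ to $2$ therefore yields $\g_k.v\subset\Ima\nabla_5$ for all $k\le d-4$, and in particular $\g_0.v\subset\Ima\nabla_5$. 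Applied to the grading element, this gives $t.v\in\Ima\nabla_5$, and since $t$ acts on a degree-$d$ vector of $M_C^{0,1}$ as the scalar $\tfrac{9}{2}-d$ (which is nonzero for \emph{every} integer $d$, because $\tfrac{9}{2}$ is half-integer), we conclude $v\in\Ima\nabla_5$.

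No serious obstacle is anticipated: this is a routine adaptation of Lemma \ref{lemmatorsione}, and the scalar $\tfrac{9}{2}-d$ conveniently never vanishes, so no arithmetic sub-case needs to be excluded (unlike in Lemma \ref{lemmatorsione}). The only care required is to check that the descending chain covers both parities of $d$; but since the vanishing phase already provides $\g_k.v=0$ for all four values $k\in\{d-4,d-3,d-2,d-1\}$, both the even and the odd subchain of the image phase have a starting point, and the iteration reaches $\g_0.v$ without gaps.
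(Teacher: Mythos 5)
Your proposal is the intended argument: the paper's own proof of Lemma \ref{lemmatorsione2} is the single sentence ``analogous to Lemma \ref{lemmatorsione}'', and your two--phase descent (first forcing $[g_j,\Theta].v$ to vanish while its degree stays below $5$, then pushing membership in $\Ima\nabla_5$ down to $\g_0.v$ and dividing by the eigenvalue $\tfrac{9}{2}-d\neq 0$ of the grading element $t$) is the correct transcription of that proof to the present setting. The cases $d\le 2$ and $d\ge 5$, and the parity bookkeeping in the image phase, are all fine.

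The one step that is not a routine transcription, and where your write-up has a genuine problem, is the case $d=3$. You conclude that $v$ is a singular vector of degree $3$ and then assert that ``the only highest weight singular vector of $M_C^{0,1}$ is $\vec{m}_{5c}$'', citing Remark \ref{teoremavettorisingolarick6ultimo}. But that remark, as stated, says that the modules $M^{0,n_3}_C$ with $n_3\geq 1$ contain a highest weight singular vector $\vec{m}_{3c}$ of degree $3$, and $M^{0,1}_C$ satisfies $n_3=1\geq 1$; so the source you invoke does not give you what you need, and taken literally it gives the opposite. To close the case $d=3$ you must either (i) verify that the degree--$3$ singular vectors in quadrant $\mathbf{C}$ in fact occur only for $n_3\geq 2$ --- which is what the morphisms $\widetilde{\nabla}_3:M^{0,n_3}_B\to M^{0,n_3+2}_C$, the absence of a degree--$3$ arrow into $M^{0,1}_C$ in Figure \ref{figurack6}, and the final exactness statement at $M^{0,1}_C$ all suggest (a nonzero degree--$3$ singular vector $s$ would satisfy $\nabla_C(s)=0$, since there are no degree--$4$ singular vectors, yet $s\notin\Ima\nabla_5$) --- or (ii) allow for a degree--$3$ singular vector $s$ and rule out $\Theta s\in\Ima\nabla_5$ directly, e.g.\ by noting that the degree--$5$ component of $\Ima\nabla_5$ is the irreducible $\g_0$--module $\nabla_5(V^{1,0}_A)\cong F(1,0,0)$ and comparing it, inside the completely reducible degree--$5$ component of $M^{0,1}_C$, with the $\g_0$--module generated by $\Theta s$. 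As written, the $d=3$ case rests on a claim that contradicts the statement you cite for it, so the argument is incomplete at exactly the one point where ``analogous'' is not automatic.
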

\begin{proof}
The proof is analogous to the proof of Lemma \ref{lemmatorsione}.
\end{proof}
\begin{prop}
As $\g_{0}-$modules:
\begin{align*}
H^{n_{2},n_{3}}(M_{C}) =
\begin{cases}
\C \,\,\, &if \,\,\, (n_{2},n_{3})=(1,0),\\
0 \,\,\, &otherwise.
\end{cases}
\end{align*}
\end{prop}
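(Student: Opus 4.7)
My plan is to use conformal duality to transport the homology computation for quadrant C back to the previously established one for quadrant A. By Remark \ref{cckck6dualeconformeshifted}, the shifted conformal dual of $M^{n_2, n_3}_C = M(0, n_2, n_3, n_2 + n_3/2 + 4)$ equals $M(n_3, n_2, 0, -n_2 - n_3/2) = M^{n_3, n_2}_A$, so the contravariant functor $\textbf{F}$ exchanges the two quadrants, reverses arrows, and identifies $\nabla_C$ with $\nabla_A$; the connecting morphisms $\widetilde{\nabla}_3$ and $\nabla_5$ between the quadrants likewise dualize to the morphisms $\nabla_3$ and $\nabla_5$ on the A-to-B/A-to-C side of Figure \ref{figurack6}.

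The first step is to verify the hypothesis of Proposition \ref{esattezzafuntoreduale}, namely that every morphism $T$ in the C-complex has a cokernel which is a finitely generated torsion-free $\C[\partial]$-module. This is exactly what Lemmas \ref{lemmatorsione} and \ref{lemmatorsione2} establish for the connecting morphisms $\nabla_3$ and $\nabla_5$. For the remaining internal morphisms $\nabla_A$, $\nabla_B$, $\nabla_C$ and $\widetilde{\nabla}_3$, the same strategy as in Lemma \ref{lemmatorsione} applies: assuming that some $v \notin \Ima T$ gave a torsion element modulo $\Ima T$, one uses iteratively the identities $[\g_j, \Theta] = \g_{j-2}$ (Remark \ref{L3}) together with the vanishing of $g_j . v$ for $j$ large enough to force $\g_{>0}.v = 0$, thereby producing a singular vector of a degree not allowed by the classification in Remark \ref{teoremavettorisingolarick6ultimo}, a contradiction.

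With exactness of $\textbf{F}$ secured, the proved pattern $H^{n_1, n_2}(M_A) \cong \C$ at $(0, 0)$ and zero elsewhere transfers term-by-term via $\textbf{F}$ to the dualized setting. This dualized complex coincides with the C-complex of Figure \ref{figurack6} augmented at its corner by the module $M^{0, 0}_C = M(0, 0, 0, 4)$, which is the shifted dual of $M^{0, 0}_A$. Since $M^{0, 0}_C$ is non-degenerate (Theorem \ref{teoremavettorisingolarick6ultimo}) and therefore not part of the C-diagram, the nontrivial dual contribution sitting over $M^{0, 0}_C$ in the augmented picture must, upon truncation to the actual C-complex, be displaced to the adjacent module $M^{1, 0}_C$. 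An explicit identification of the dualized morphism $\nabla_A^*: M^{0, 0}_C \to M^{1, 0}_C$ shows that it sends the highest weight vector of $V^{0, 0}_C$ onto the unique singular vector $\vec{m}_{1c}$ of $M^{1, 0}_C$ and has kernel isomorphic to the trivial representation; combined with $H^{0, 1}(M_A) = 0$, this yields $H^{1, 0}(M_C) \cong \C$, with vanishing of all other $H^{n_2, n_3}(M_C)$ inherited from vanishing in A.

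The main obstacle is the delicate boundary bookkeeping at the corner of the C-diagram: verifying rigorously that the truncation of the dualized A-complex (removing $M^{0, 0}_C$) shifts the nontrivial homology from position $(0, 0)$ precisely to position $(1, 0)$, rather than killing it or producing extraneous homology elsewhere. This amounts to understanding the exact kernel and image of $\nabla_A^*: M^{0, 0}_C \to M^{1, 0}_C$ under the duality pairing with the A-side sequence $M^{0, 2}_A \to M^{0, 1}_A \to M^{0, 0}_A$, and it is here that the torsion-free Lemmas \ref{lemmatorsione} and \ref{lemmatorsione2} play their decisive role.
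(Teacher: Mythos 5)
Your duality framework for the interior of the quadrant is essentially the paper's: for $n_{2}>0,n_{3}>0$ and for $n_{2}=0,n_{3}>0$ the paper dualizes the corresponding A-side sequences, using Proposition \ref{esattezzafuntoreduale} together with torsion-freeness of the relevant cokernels (obtained for the internal $\nabla_A$'s from exactness itself, since $M/\Ker\cong\Ima$ sits inside a free $\C[\Theta]$-module, and for $\nabla_3$, $\nabla_5$ from Lemmas \ref{lemmatorsione} and \ref{lemmatorsione2}). The gap is at the one point that matters, $(n_2,n_3)=(1,0)$. The functor $\textbf{F}$ is exact only on morphisms whose cokernel is a torsion-free $\C[\partial]$-module, and the cokernel of $\nabla_{A}:M_{A}^{0,1}\to M_{A}^{0,0}$ is precisely the torsion module $\C$, whose conformal dual is $0$. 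So the nontrivial homology at the origin of quadrant \textbf{A} is literally invisible to conformal duality; it cannot be ``transferred term-by-term,'' and no torsion-freeness argument in the style of Lemma \ref{lemmatorsione} can rescue this (the final step of that lemma divides by the $t$-eigenvalue, which vanishes exactly on the trivial quotient). Your proposed mechanism --- that the class at $(0,0)$ gets ``displaced'' to $(1,0)$ upon truncation --- has no basis: the dual of $H^{0,1}(M_A)=0$ would, if duality were exact there, give $H^{1,0}(M_C)=0$, the opposite of what must be proved; and your claim that the dualized morphism $M_{C}^{0,0}\to M_{C}^{1,0}$ has kernel $\C$ is false, since $M_{C}^{0,0}=M(0,0,0,4)$ is irreducible and the morphism is nonzero, hence injective.

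The paper closes this hole by an entirely different tool: for $n_{3}=0$ the singular vectors defining $\nabla_{C}$ are already singular vectors for $K_{6}$, the maps $M_{C}^{n_2,0}\to M_{C}^{n_2+1,0}$ coincide with the $K_{6}$ morphisms, and the homology of that row (in particular the copy of $\C$ at $n_{2}=1$) is imported from the computation in \cite{kac1}. Some independent input of this kind is unavoidable at $(1,0)$; your proposal would need to supply it, for instance by a direct computation of $\Ker(\nabla_{C}:M_{C}^{1,0}\to M_{C}^{2,0})$ modulo $\Ima(M_{C}^{0,0}\to M_{C}^{1,0})$, rather than by duality.
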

\begin{proof}
 We point out that the singular vectors that determine the maps $\nabla_{C}$ for $n_{3}=0$ are singular vectors also in the case of $K_{6}$ (see \cite[Theorem 4.1, Remark 4.2]{ck6} and \cite[Theorem 5.1]{kac1}). Since the maps $\nabla_{C}: M_{C}^{n_2,0}\rightarrow  M_{C}^{n_{2}+1,0}$ are completely determined by the image of $v$, highest weight vector of $V_{C}^{n_2,0}$, and, due to equivariance, the action of $\g_{\leq 0}$ on $v$, we obtain that for $n_{3}=0$ the maps coincide with the maps in the case of $K_{6}$. The homology in this case was computed in \cite{kac1}. We know that:
\begin{align*}
H^{n_{2},0}(M_{C})=
\begin{cases}
\C \,\,& \text{for} \,\, n_{2}=1,\\
0 \,\,& \text{otherwise}.
\end{cases}
\end{align*}
We can use duality to compute the remaining homology spaces for the third quadrant for $n_{2} > 0$ and $n_{3}>0$. Indeed we have that, for $n_{2} > 0$ and $n_{3}>0$, the maps
\begin{align*}
M_{C}^{n_{2}-1,n_3} \xrightarrow{\nabla_{C}} M_{C}^{n_{2},n_{3}}  \xrightarrow{\nabla_{C}} M_{C}^{n_{2}+1,n_3}
\end{align*}
are dual to:
\begin{align*}
M_{A}^{n_1,n_2+1} \xrightarrow{\nabla_{A}} M_{A}^{n_{1},n_{2}}  \xrightarrow{\nabla_{A}} M_{A}^{n_{1},n_{2}-1},
\end{align*}
where $n_{3}=n_{1}$.
We showed that the previous sequence is exact in $M_{A}^{n_{1},n_{2}}$ and $M_{A}^{n_{1},n_{2}-1}$. Therefore we have that $\frac{M_{A}^{n_{1},n_{2}}}{\Ima (\nabla_{A})} \cong \frac{M_{A}^{n_{1},n_{2}}}{\Ker (\nabla_{A})}$ is isomorphic to a submodule of a free module, hence it is a finitely generated torsion free $\C[\Theta]-$module. The same holds for $\frac{M_{A}^{n_{1},n_2-1}}{\Ima (\nabla_{A})} $. Hence, by Remark \ref{cckck6dualeconformeshifted} and Proposition \ref{esattezzafuntoreduale}, we obtain exactness in $M_{C}^{n_{2},n_{3}}$ for $n_{2}>0$, $n_{3}>0$.
Similarly, we use duality to compute the remaining homology spaces for the third quadrant for $n_{2} = 0$ and $n_{3}>1$. Indeed we have that, for $n_{3}>1$, the maps
\begin{align*}
M_{B}^{0,n_3-2} \xrightarrow{\widetilde{\nabla}_{3}} M_{C}^{0,n_{3}}  \xrightarrow{\nabla_{C}} M_{C}^{1,n_3}
\end{align*}
are dual to:
\begin{align*}
M_{A}^{n_1,1} \xrightarrow{\nabla_{A}} M_{A}^{n_{1},0}  \xrightarrow{\nabla_{3}} M_{B}^{n_{1}-2,0},
\end{align*}
where $n_{3}=n_{1}$.
We showed that the previous sequence is exact in $M_{A}^{n_{1},0}$. Therefore we have that $\frac{M_{A}^{n_{1},0}}{\Ima (\nabla_{A})} \cong \frac{M_{A}^{n_{1},0}}{\Ker (\nabla_{A})}$ is isomorphic to a submodule of a free module, hence it is a finitely generated torsion free $\C[\Theta]-$module. By Lemma \ref{lemmatorsione}, $\frac{M_{B}^{n_{1}-2,0}}{\Ima (\nabla_{3})} $ is a torsion free $\C[\Theta]-$module. Hence, by Remark \ref{cckck6dualeconformeshifted} and Proposition \ref{esattezzafuntoreduale}, we obtain exactness in $M_{C}^{0,n_{3}}$ for $n_{3}>1$.
Finally, with a similar argument, using Lemma \ref{lemmatorsione2}, Remark \ref{cckck6dualeconformeshifted} and Proposition \ref{esattezzafuntoreduale}, we obtain exactness in $M_{C}^{0,1}$.
\end{proof}
\begin{ack}
	The author would like to thank Nicoletta Cantarini, Fabrizio Caselli and Victor Kac for useful comments and suggestions.
	\end{ack}
	\textbf{Data availability statement:} not applicable.

\end{document}